\sloppy\allowdisplaybreaks[4]
  \def\cA{{\cal A}}  
  \def\cB{{\cal B}}  
\def\dbC{\mathbb{C}}    
 \def\sD{\mathscr{D}}   
\def\dbE{\mathbb{E}}  \def\cE{{\cal E}}  
\def\dbF{\mathbb{F}} \def\sF{\mathscr{F}} \def\cF{{\cal F}}  
\def\dbH{\mathbb{H}}   \def\BH{{\bm H}} 
  \def\cJ{{\cal J}}  
  \def\cL{{\cal L}}  
  \def\cM{{\cal M}}
\def\dbP{\mathbb{P}}    
\def\dbR{\mathbb{R}}    
\def\dbS{\mathbb{S}}    
\def\dbU{\mathbb{U}} \def\sU{\mathscr{U}}
 \def\sX{\mathscr{X}} \def\cX{{\cal X}}  
  \def\cY{{\cal Y}}  
  \def\cZ{{\cal Z}}  
\def\BTh{\boldsymbol\Theta}
\def\ss{\smallskip}             \def\hb{\hbox}
\def\ms{\medskip}              
        \def\lan{\langle}    \def\as{\hb{a.s.}}
\def\ds{\displaystyle}   \def\ran{\rangle}    \def\tr{\hb{tr$\,$}}
\def\no{\noindent}          \def\var{\hb{var$\,$}}
         \def\det{\hb{det\,}}
\def\nn{\nonumber}         
\def\rf{\eqref}            
\def\cd{\cdot}             \def\essinf{\hb{essinf}}
\def\cds{\cdots}
\def\deq{\triangleq}     \def\({\Big (}       \def\ba{\begin{aligned}}
\def\les{\leqslant}      \def\){\Big )}       \def\ea{\end{aligned}}
\def\ges{\geqslant}      \def\[{\Big[}        \def\bel{\begin{equation}\label}
\def\ti{\tilde}          \def\]{\Big]}        \def\ee{\end{equation}}
\def\wt{\widetilde}      \def\q{\quad}        
\def\h{\widehat}         \def\qq{\qquad}      
\def\ns{\noalign{\ss}}                                              
\def\a{\alpha}  \def\G{\Gamma}   \def\g{\gamma}   \def\Om{\Omega}  
   \def\D{\Delta}   \def\d{\delta}   \def\F{\Phi}     
   \def\Th{\Theta}  \def\th{\theta}    \def\si{\sigma}
\def\f{\varphi}   \def\l{\lambda}  \def\m{\mu}      \def\e{\varepsilon}
\def\t{\tau}    \def\i{\infty}   \def\k{\kappa}   
\def\ba{\begin{array}}                \def\ea{\end{array}}
\def\bel{\begin{equation}\label}      \def\ee{\end{equation}}
\def\5n{\negthinspace \negthinspace \negthinspace \negthinspace \negthinspace }
\def\4n{\negthinspace \negthinspace \negthinspace \negthinspace }
\def\3n{\negthinspace \negthinspace \negthinspace }
\def\2n{\negthinspace \negthinspace }
\def\1n{\negthinspace }
\def\liminf{\mathop{\underline{\rm lim}}}
\def\essinf{\mathop{\rm essinf}}
\def\be{\begin{equation}}
\def\bel{\begin{equation}\label}
\def\ee{\end{equation}}
\def\bea{\begin{eqnarray}}
\def\eea{\end{eqnarray}}
\def\bt{\begin{theorem}\label}
\def\et{\end{theorem}}
\def\bc{\begin{corollary}\label}
\def\ec{\end{corollary}}
\def\bex{\begin{example}\label}
\def\ex{\end{example}}
\def\bl{\begin{lemma}\label}
\def\el{\end{lemma}}
\def\bp{\begin{proposition}\label}
\def\ep{\end{proposition}}
\def\br{\begin{remark}\label}
\def\er{\end{remark}}
\def\ba{\begin{array}}
\def\ea{\end{array}}
\def\bde{\begin{definition}\label}
\def\ede{\end{definition}}
\newtheoremstyle{thry}
{}      
{}      
{\sl}   
{}      
{\bf}   
{.}     
{.5em}  
{}      
\theoremstyle{thry}
\newtheorem{theorem}{Theorem}[section]
\newtheorem{proposition}[theorem]{Proposition}
\newtheorem{corollary}[theorem]{Corollary}
\newtheorem{lemma}[theorem]{Lemma}
\theoremstyle{definition}
\newtheorem{definition}[theorem]{Definition}
\newtheorem{example}[theorem]{Example}
\newenvironment{taggedassumption}[1]
 {\taggedassumptionx}
 {\endtaggedassumptionx}
\theoremstyle{remark}
\newtheorem{remark}[theorem]{Remark}
   \newcommand{\setword}[2]{%
   \phantomsection
   #1\def\@currentlabel{\unexpanded{#1}}\label{#2}%
   }
\begin{document}

\title{\bf Optimal Controls for Forward-Backward Stochastic Differential Equations:
Time-Inconsistency and Time-Consistent Solutions}


\author{
Hanxiao Wang\thanks{ College of Mathematics and Statistics, Shenzhen University, Shenzhen 518060, China
 (Email: {\tt hxwang@szu.edu.cn}). }
                           ~~~
Jiongmin Yong\thanks{Department of Mathematics, University of Central Florida,
                           Orlando 32816, USA (Email: {\tt jiongmin.yong@ucf.edu}).
                           This author is supported in part by NSF Grant DMS-1812921.}
                          ~~~
Chao Zhou\thanks{ Department of Mathematics and Risk Management Institute, National University of Singapore,
                           Singapore 119076, Singapore (Email: {\tt matzc@nus.edu.sg}).
                           This author is supported by  NSFC Grant 11871364
                           and Singapore MOE  AcRF Grants A-800453-00-00, R-146-000-271-112, R-146-000-284-114.}}

\maketitle

\no\bf Abstract. \rm
This paper is concerned with an optimal control problem for a forward-backward stochastic differential equation
(FBSDE, for short) with a recursive cost functional determined by a backward stochastic Volterra integral equation
(BSVIE, for short). It is found that such an optimal control problem is time-inconsistent in general,
even if the cost functional is reduced to a classical Bolza  type one as in Peng \cite{Peng1993},
Lim--Zhou \cite{Lim-Zhou2001}, and Yong \cite{Yong2010}. Therefore, instead of finding a global optimal control
(which is time-inconsistent), we will look for a time-consistent and locally optimal equilibrium strategy,
which can be constructed via the solution of an associated equilibrium Hamilton--Jacobi--Bellman (HJB, for short) equation.
A verification theorem for the local optimality of the equilibrium strategy is proved by means of
the generalized Feynman--Kac formula for BSVIEs and some stability estimates of the representation for
parabolic partial differential equations (PDEs, for short). Under certain conditions,
it is proved that the equilibrium HJB equation, which is a nonlocal PDE, admits a unique classical solution.
As special cases and applications, the linear-quadratic problems, a mean-variance model,
a social planner problem with heterogeneous Epstein--Zin utilities,
and a Stackelberg game are briefly investigated.
It turns out that our framework can cover not only the optimal control problems for FBSDEs studied in \cite{Peng1993,Lim-Zhou2001,Yong2010}, and so on,
but also the problems of the general discounting and some nonlinear appearance of conditional expectations for the terminal state,
studied in Yong \cite{Yong2012,Yong2014} and Bj\"{o}rk--Khapko--Murgoci \cite{Bjork-Khapko-Murgoci2017}.

\ms

\no\bf Keywords. \rm Time-inconsistent optimal control problem,
controlled forward-backward stochastic differential equation,
backward stochastic Volterra integral equation,
equilibrium strategy, equilibrium Hamilton--Jacobi--Bellman equation,
recursive utility, Feynman--Kac formula.

\ms

\no\bf AMS subject classifications. \rm 93E20, 49N70, 60H10, 60H20, 35K10, 49L20, 90C39.

\section{Introduction}

Let $(\Om,\cF,\dbP)$ be a complete probability space on which a standard one-dimensional Brownian motion
$W=\{W(t);0\les t<\i\}$ is defined. The augmented natural filtration of $W(\cd)$ is denoted by $\dbF=\{\cF_t\}_{t\ges0}$.
Let $T>0$ be a fixed time horizon. We denote
\begin{align*}
&\sX_t=L^2_{\cF_t}(\Om;\dbR^n)=\big\{\xi:\Om\to\dbR^n~| ~\xi \hbox{ is  $\cF_{t}$-measurable, } \dbE[|\xi|^2]<\i\big\},\q t\in[0,T],\\
&\sD=\big\{(t,\xi)~|~t\in[0,T),\,\xi\in\sX_t\big\},\\
&\sU[t,T]=\Big\{\f:[t,T]\times\Om\to U\bigm|\f(\cd)\hb{~is $\dbF$-progressively measurable},~\dbE\int^T_t|\f(s)|^2ds<\i\Big\},\q t\in[0,T],
\end{align*}
where $U\subseteq\dbR^\ell$ is a nonempty measurable set (either bounded or unbounded).
For any given {\it initial pair} $(t,\xi)\in\sD$ and {\it control process} $u(\cd)\in\sU[t,T]$,
consider the following controlled (decoupled) forward-backward stochastic differential equation (FBSDE, for short) on the time horizon $[t,T]$:
\bel{State}\left\{\2n\ba{ll}
\ds dX(s)=b(s,X(s),u(s))ds+\si(s,X(s),u(s))dW(s),\\
\ns\ds dY(s)=-g(s,X(s),u(s),Y(s),Z(s))ds+Z(s)dW(s),\\
\ns X(t)=\xi,\q Y(T)=h(X(T)),\ea\right.\ee
where $b,\si:[0,T]\times\dbR^n\times U\to\dbR^n$, $g:[0,T]\times\dbR^n\times U\times\dbR^m\times\dbR^m\to\dbR^m$,
and $h:\dbR^n\to\dbR^m$ are given deterministic mappings. Under certain mild conditions,
for any $(t,\xi)\in\sD$ and $u(\cd)\in\sU[t,T]$, \rf{State} admits a unique adapted solution $(X(\cd),Y(\cd),Z(\cd))\equiv \big(X(\cd\,;t,\xi,u(\cd)),Y(\cd\,;t,\xi,u(\cd)),Z(\cd\,;t,\xi,u(\cd))\big)$, which is called a {\it state process}.
To measure the performance of the control $u(\cd)$, we introduce the following {\it recursive cost functional}:
\bel{cost-BSVIE0}
J(t,\xi;u(\cd))=Y^0(t),\ee
where $Y^0(\cd)$ is uniquely determined by the following backward stochastic Volterra integral equation (BSVIE, for short):
\begin{align}
Y^0(r)&=h^0(r,X(r),X(T),Y(r))+\int_r^Tg^0(r,s,X(r),X(s),u(s),Y(s),Z(s),Y^0(s),Z^0(r,s))ds\nn\\
&\q-\int_r^TZ^0(r,s)dW(s),\qq\qq r\in[t,T],\label{cost-BSVIE1}
\end{align}
for which $(Y^0(\cd),Z^0(\cd\,,\cd))$ is the adapted solution. Here,  $h^0:[0,T]\times\dbR^n\times\dbR^n\times\dbR^m\to\dbR$ and $g^0:\D^*[0,T]\times\dbR^n\times\dbR^n\times U\times\dbR^m\times\dbR^m
\times\dbR\times\dbR\to\dbR$ are given deterministic mappings with
\bel{D}\D^*[0,T]=\big\{(t,s)\in[0,T]^2\bigm|0\les t\les s\les T\big\}\ee
being the upper triangle domain in the square $[0,T]^2$. Note that in the case that
\bel{Bolza1}h^0(r,\ti x,x,y)=h^0(x,y),\qq g^0(r,s,\ti x,x,u,y,z,y^0,z^0)=g^0(s,x,u,y,z),\ee
the recursive cost functional \rf{cost-BSVIE0}--\rf{cost-BSVIE1} is reduced to a Bolza type cost functional
for  FBSDE state equation (see Peng \cite{Peng1993} and Yong \cite{Yong2010}, for examples):
\bel{Bolza2}J(t,\xi;u(\cd))=\dbE_t\[h^0(X(T),Y(t))+\int_t^T g^0(s,X(s),u(s),Y(s),Z(s))ds\],\ee
where $\dbE_t[\,\cd\,]=\dbE[\,\cd\,|\sF_t]$ is the conditional expectation operator. Further, if
\bel{Bolza3}h^0(r,\ti x,x,y)=h^0(x),\qq g^0(r,s,\ti x,x,u,y,z,y^0,z^0)=g^0(s,x,u),\ee
then the cost functional is reduced to the most familiar classical Bolza functional:
\bel{Bolza4}J(t,\xi;u(\cd))=\dbE_t\[h^0(X(T))+\int_t^T g^0(s,X(s),u(s))ds\],\ee
where the two terms on the right-hand side are called {\it terminal} and {\it running} costs, respectively.
Thus, our recursive cost functional is an extension of Bolza type cost functional.
With the state equation \rf{State} and the recursive cost functional \rf{cost-BSVIE0}--\rf{cost-BSVIE1},
we may pose the following optimal control problem:

\ms

{\bf Problem (N)}. For any given initial pair $(t,\xi)\in\sD$, find a control $\bar u(\cd)\in\sU[t,T]$ such that
\bel{inf}J(t,\xi;\bar u(\cd))=\essinf_{u(\cd)\in\sU[t,T]}J(t,\xi;u(\cd))=V(t,\xi).
\ee

Any $\bar u(\cd)\in\sU[t,T]$ satisfying \rf{inf} is called an ({\it open-loop}) {\it optimal control} of Problem (N) for the initial pair $(t,\xi)$;
the corresponding state process $(\bar X(\cd),\bar Y(\cd),\bar Z(\cd))\equiv \big(X(\cd\,;t,\xi,u(\cd)),Y(\cd\,;t,\xi,u(\cd)),Z(\cd\,;t,\xi,u(\cd))\big)$
is called an ({\it open-loop}) {\it optimal state process}; and $V(\cd\,,\cd):\sD\to\dbR$ is called the {\it value function} of Problem (N).

\ms

We now briefly illustrate the major {\bf motivation} of the above framework as follows:
The (vector-valued) process $X(\cd)$ follows a (forward) stochastic differential equation (FSDE, for short).
Components of $X(\cd)$ consist of two types processes: uncontrolled ones (by the individuals),
including  prices of securities (such as bonds, stocks), some economic factors (such as interest rates, unemployment rates, GDP, etc.), and controlled ones (by the individuals), including market values of the investor's wealth (subject to trading strategies),
inventory of commodities (subject to the ordering), amounts of goods (subject to the production), etc.
On the other hand, the components of $Y(\cd)$, following a multi-dimensional backward stochastic differential equation
(BSDE, for short), could include the prices of some European type contingent claims of the underlying assets
(whose prices are some components of $X(\cd)$), and some dynamic risk measures, and so on.
Therefore, it is natural to have an FBSDE as a state equation. Further, the dynamic expected utility/disutility of the total assets will be calculated in the recursive way, which can be described by the adapted solution to a BSVIE (see below).
Putting all the above together, we have the framework and the formulation of the problem.

\ms

Let us now briefly illustrate the recursive cost functional of form \rf{cost-BSVIE0}--\rf{cost-BSVIE1}.
In 1992, Duffie--Epstein \cite{Duffie-Epstein1992,Duffie-Epstein1992-1} introduced a stochastic differential formulation
of recursive utility in the case of information generated by a Brownian motion. In 1997, El Karoui--Peng--Quenez \cite{El Karoui-Peng-Quenez1997}
showed that such a process actually is a part of the adapted solution to a particular BSDE and then they defined a more general class of recursive utilities,
through a general BSDE (see also Lazrak--Quenez \cite{Lazrak-Quenez2003} and Lazrak \cite{Lazrak2004} for further developments).
The main feature of such a recursive process, denoted by $Y^R(\cd)$, is that the current value $Y^R(t)$ depends on the future values $Y^R(s)$,
$t<s\les T$ of the process. Then, on top of the classical Bolza type cost functional \rf{Bolza2}, mimicking \cite{El Karoui-Peng-Quenez1997},
for our FBSDE state equation, it is natural to introduce the following {\it recursive} cost functional:
\bel{cost-R}J^R(t,\xi;u(\cd))= Y^R(t),\ee
where $Y^R(\cd)$ is determined by the following equation:
\bel{cost-R1}
Y^R(r)=\dbE_r\[h^0(X(T),Y(r))+\int_r^Tg^0(s,X(s),u(s),Y(s),Z(s),Y^R(s))ds\],\qq r\in[t,T].\ee
From the above, we see that the value $Y^R(t)$ depends on the values $Y^R(s)$ for $s\in[t,T]$,
through equation \rf{cost-R1}. Hence the cost process $Y^R(\cd)$ has a recursive feature, and thus its name.
By Yong \cite{Yong2008}, for some process $Z^R(\cd\,,\cd)$,
the pair $(Y^R(\cd),Z^R(\cd\,,\cd))$ is the adapted solution to the following BSVIE:
\bel{cost-BSVIE-R}
Y^R(r)=h^0(X(T),Y(r))+\int_r^Tg^0(s,X(s),u(s),Y(s),Z(s),Y^R(s))ds-\int_r^TZ^R(r,s)dW(s),\q r\in[t,T].
\ee
Note that \rf{cost-BSVIE-R} (i.e., \rf{cost-R1}) is not a BSDEs on $[t,T]$,
because the free term $h^0(X(T),Y(r))$ depends on the time variable $r$,
which leads to the adjustment process $Z^R(r,s)$ depending on $s$ and $r$.
Inspired by the above, we introduce the general recursive cost functional \rf{cost-BSVIE0}--\rf{cost-BSVIE1}.
Note that in BSVIE \rf{cost-BSVIE1}, the {\it free term} $h^0(\cd)$ and the {\it generator} $g^0(\cd)$
are allowed to depend on the initial pair $(r,X(r))$ at the current time $r$,
which is motivated by the {\it non-exponential discounting} \cite{Karp2007,Ekeland2008,Yong2012}
and the {\it state-dependent risk aversion} \cite{Bjork-Murgoci-Zhou2014,Hu-Jin-Zhou2017} in finance.
The recursive cost functional of form \rf{cost-BSVIE0}--\rf{cost-BSVIE1} was introduced the first time
by Wang--Yong \cite{Wang-Yong2021}, motivated by the recursive utility/disutility process for classical optimal control problems.
Comparing with the cost functional studied in \cite{Wang-Yong2021}, we see that the free term $h^0(\cd)$
and the generator $g^0(\cd)$ of BSVIE \rf{cost-BSVIE1} are additionally allowed to depend on the initial state $X(r)$ and the backward process $(Y(\cd),Z(\cd))$.
Moreover, we highlight that \rf{cost-BSVIE0}--\rf{cost-BSVIE1} can also be regarded as a recursive version of the cost functional studied in Bj\"{o}rk--Khapko--Murgoci \cite{Bjork-Khapko-Murgoci2017}, because $\dbE_\cd[X(T)]$ is the backward state process $Y(\cd)$ of a trivial BSDE.

\ms

It is well-known by now that the introduction of BSDEs by Bismut \cite{Bismut1973, Bismut1978}
in the early 1970s was for the purpose of studying optimal control of FSDEs.
The later developments of general BSDEs by Pardoux--Peng \cite{Pardoux-Peng1990}
(see also Duffie--Epstein \cite{Duffie-Epstein1992} and El Karoui--Peng--Quenez \cite{El Karoui-Peng-Quenez1997}),
and the extension to FBSDEs by Antonelli \cite{Antonelli1993},  Ma--Protter--Yong \cite{Ma-Protter-Yong1994},
Hu--Peng \cite{Hu-Peng1995} (see also the books of Ma--Yong \cite{Ma-Yong1999} and Zhang \cite{Zhang2017})
have been attracting many researchers' attention.
Among many other publications, a big number of literature on the optimal control problems for FBSDEs/BSDEs keep appearing.
See, Peng \cite{Peng1993}, Xu \cite{Xu1995}, Dokuchaev--Zhou \cite{Dokuchaev1999}, Ji--Zhou \cite{Ji-Zhou2006},
Shi--Wu \cite{Shi-Wu2006}, Huang--Wang--Xiong \cite{Huang-Wang-Xiong2009}, Yong \cite{Yong2010},
Wang--Wu--Xiong \cite{Wang-Wu-Xiong2013}, and Hu--Ji--Xue \cite{Hu-Ji-Xue2018} on the Pontryagin's maximum principle
for controlled BSDEs/FBSDEs; Lim--Zhou \cite{Lim-Zhou2001}, Wang--Wu--Xiong \cite{Wang-Wu-Xiong2015},
Huang--Wang--Wu \cite{Huang-Wang-Wu2016}, Wang--Xiao--Xiong \cite{Wang-Xiao-Xiong2018}, Li--Sun--Xiong \cite{Li-Sun-Xiong2019},
Hu--Ji--Xue \cite{Hu-Ji-Xue2019}, Sun--Wang \cite{Sun-Wang2019}, Sun--Wu--Xiong \cite{Sun-Wu-Xiong2021}, Sun--Wang--Wen \cite{Sun-Wang-Wen2021} on the linear-quadratic (LQ, for short) optimal control problems for BSDEs/FBSDEs; and so on. It is observed that the problems investigated in the above listed works are all essentially the special cases of Problem (N),
and have been treated as usual stochastic optimal control problems.
There is an {\it essential feature} has been overlooked in all the above, which we now indicate that.

\ms

For a dynamic optimal control problem, suppose that at a given initial pair $(t,\xi)\in\sD$,
the problem has an (open-loop) optimal control $\bar u(\cd)\equiv\bar u(\cd\,;t,\xi)$
with the (open-loop) optimal state being $\bar X(\cd)\equiv X(\cd\,;t,\xi,\bar u(\cd))$.
Then, we could not expect the following:
\bel{J=J*}J\big(\t,\bar X(\t);\bar u(\cd)\big|_{[\t,T]}\big)=\inf_{u(\cd)\in\sU[\t,T]}J(\t,\bar X(\cd);u(\cd)),\qq\forall\t\in(t,T],~\as\ee
In other words, an optimal control selected at a given initial pair might not stay optimal thereafter. Then, we say that the optimal control problem is {\bf time-inconsistent}.
It turns out that, in general, Problem (N) is time-inconsistent,
as the {\it dynamical programming principle} (DPP, for short) does not hold.
This reveals a surprising feature of Problem (N). To see that, let us elaborate the time-inconsistency
in a little more details, from which we will see how Problem (N) is generally time-inconsistent.

\ms

$\bullet$ \it Time-preferences and discounting.
\rm Suppose the continuously compound interest rate is a constant $\l>0$.
Then one needs to deposit an amount $e^{-\l T_0}$ at $\t$ in order to get 1 unit at $\t+T_0$.
We call $e^{-\l T_0}$ the {\it discount factor} of the time interval $[\t,\t+T_0]$,
which could also be defined as the {\it value} of this time interval.
Clearly, such a value $e^{-\l T_0}$ of $[\t,\t+T_0]$ is independent of the initial time $\t$
and it is also independent of the time $t\in[0,\infty)$ at which $[\t,\t+T_0]$ is evaluated, either $t\les\t$ or $t>\t$.
Because of this, such an exponential evaluation is said to be {\it rational}. Or equivalently,
rationality can be described by the {\it exponential discounting}. On the other hand,
it is common that most people overweight the utility of the immediate future events,
which can be convinced by the fact that one often regrets the (optimal) decisions made earlier.
This means that people evaluate the immediate future time period more expensively than it should be,
which amounts to saying that the discount factor for that time interval is larger than the rational one.
Hence, we  need to replace the exponential discounting by more general ones to more precisely describe the real situations.

\ms

In the above recursive cost functional \rf{cost-BSVIE0}--\rf{cost-BSVIE1}, if we have
$$h^0(r,\ti x,x,y)=e^{-\l(T-r)}h^0(x),\qq g^0(r,s,\ti x,x,u,y,z,y^0,z^0)=e^{-\l(s-r)}g^0(s,x,u),$$
for some discount rate $\l>0$, then the cost functional is reduced to the classical exponential discounting Bolza cost functional:
$$J(t,\xi;u(\cd))=\dbE_t\[e^{-\l(T-t)}h^0(X(T))+\int_t^T
e^{-\l(s-t)}g^0(s,X(s),u(s))ds\].$$
In this case, there are no (European type) contingent claims involved, and there are no dynamic risks taken into account. Therefore, the BSDE for $(Y(\cd),Z(\cd))$ in \rf{State} is irrelevant. Also, the involved individual is completely {\it rational} (as far as the time-preferences are concerned). For such a case, the corresponding Problem (N) is time-consistent.
Now, if $e^{-\l(T-t)}$ and $e^{-\l(s-t)}$ are replaced by some non-exponential decay functions,
the cost functionals are referred to as non-exponential ones, which describe some kinds of {\it irrationality} of time-preferences for the involved individuals. In this case, namely, the cost functional is given by \rf{cost-BSVIE0}--\rf{cost-BSVIE1}, our Problem (N) is time-inconsistent. The earliest mathematical consideration in this aspect was given by Strotz \cite{Strotz1955}, followed by Pollak \cite{Pollak1968}, and the recent works of  Ekeland--Pirvu \cite{Ekeland2008}, Ekeland--Lazrak \cite{Ekeland2010}, Yong \cite{Yong2012,Yong2014,Yong2017}, Wei--Yong--Yu \cite{Wei-Yong-Yu2017},
Mei--Yong \cite{Mei-Yong2019}, Mei--Zhu \cite{Mei-Zhu2020}, Wang--Yong \cite{Wang-Yong2021}, Hamaguchi \cite{Hamaguchi2020}, and Hern\'{a}ndez--Possamai \cite{Hernandez-2020} for various kinds of problems relevant to non-exponential discounting.

\ms

$\bullet$ \it Risk-preferences and nonlinear appearance of conditional expectations of the (terminal) state.
\rm Different groups of people should have different opinions of risks on the in-coming events. This is referred to as people's {\it subjective} risk-preferences. One way to describe this is to allow the conditional expectation of the state to (nonlinearly) appear in the cost functional. It turns out that such a formulation will lead to time-inconsistency of the optimal control problem in general.
See Basak--Chabakauri \cite{Basak-Chabakauri2010}, Hu--Jin--Zhou \cite{Hu-Jin-Zhou2012,Hu-Jin-Zhou2017},
Bj\"ork--Murgoci \cite{Bjork-Murgoci2014}, Bj\"ork--Murgoci--Zhou \cite{Bjork-Murgoci-Zhou2014},
Bj\"{o}rk--Khapko--Murgoci \cite{Bjork-Khapko-Murgoci2017},  Yong \cite{Yong2017},
and He--Jiang \cite{He-Jiang2021} for some relevant results.

\ms

Let us now make an interesting observation for our Problem (N). Let $m=n$, and
$$h(x)=x,\q g(s,x,u,y,z)\equiv0,\q h^0(r,\ti x,x,y)=h^0(x,y),\q g^0(r,s,\ti x,x,u,y,z,y^0,z^0)=g^0(s,x,u,y),$$
then
$$Y(s)=\dbE_s[X(T)],\qq s\in[t,T],$$
and the recursive cost functional \rf{cost-BSVIE0}--\rf{cost-BSVIE1} becomes
$$
J(t,\xi;u(\cd))=\dbE_t\[h^0\big(X(T),\dbE_t[X(T)]\big)+\int_t^Tg^0\big(s,X(s),u(s),
\dbE_s[X(T)]\big)ds\].$$
In the above, $\dbE_t[X(T)]$ appears nonlinearly and the corresponding optimal control problem is time-inconsistent. From the above observation, we see that the state equation being an FBSDE can include many situations of nonlinear appearance of conditional expectations. Therefore, Problem (N) is intrinsically time-inconsistent. Some special cases were investigated by Basak--Chabakauri \cite{Basak-Chabakauri2010}, Hu--Jin--Zhou \cite{Hu-Jin-Zhou2012,Hu-Jin-Zhou2017},  Bj\"{o}rk--Murgoci--Zhou \cite{Bjork-Murgoci-Zhou2014}, Bj\"{o}rk--Khapko--Murgoci \cite{Bjork-Khapko-Murgoci2017}.
Our Problem (N) also partially covers the case studied in Yong \cite{Yong2017}.

\ms

We have seen that Problem (N) is generally time-inconsistent.
Therefore, we should treat it from the angle differently from the usual classical ones.
Before going further, let us present the following simple example,
from which we will see further the essential reason for Problem (N) to be time-inconsistent.

\bex{example2} Consider the one-dimensional (degenerate) FBSDE state equation
\bel{state-example2}\left\{\2n\ba{ll}
\ds\dot X(s)=0,\qq s\in[t,T],\\
\ns\ds\dot{Y}(s)=u(s),\qq s\in[t,T],\\
\ns\ds X(t)=x,\qq Y(T)=0,\ea\right.\ee
with the cost functional
\bel{cost-example2}J(t,x;u(\cd))=\int_t^T[Y(s)+u(s)+|u(s)|^2]ds.\ee
A straightforward calculation (see \autoref{example-y} for details) shows that at the initial pair $(t,x)$,
the unique optimal control $\bar u(\cd\,;t,x)$ is given by
$$\bar u(s)\equiv\bar u(s;t,x)={s-t-1\over 2},\qq s\in[t,T].$$
Then, for any $\t\in(t,T)$, the unique optimal control at $(\t,\bar X(\t))\equiv(\t,x)$ is given by
$$\ti u(s)\equiv\ti u(s;\t,\bar X(\t))={s-\t-1\over2},\qq s\in[\t,T].$$
Clearly,
$$
\bar u(s)\neq \ti u(s),\qq s\in[\t,T].
$$
Thus, the problem is time-inconsistent.

\ex

It is worthy of pointing out that in the above example, \rf{cost-example2} is a  Bolza type cost functional for FBSDE state equations, and unlike Yong \cite{Yong2012} and Bj\"{o}rk--Khapko--Murgoci \cite{Bjork-Khapko-Murgoci2017}, neither non-exponential discounting
nor conditional expectations (nonlinearly) appear. Furthermore, the controlled system \rf{state-example2} is a deterministic ordinary differential equation, and the terminal cost of \rf{cost-example2} equals zero,
due to which \rf{cost-example2} is also a Lagrange type cost functional.
This tells us that \textbf{an optimal control problem could be time-inconsistent solely
because the state equation is a forward-backward one}.
Hence, the time-inconsistency feature is intrinsically contained in the optimal control problems for FBSDEs. Such a feature distinguishes the current paper from the previous ones concerning the time-inconsistency, in other aspects.

\ms

Having the above time-inconsistent feature of the problem, we now highlight the main results of this paper.

\begin{enumerate}[(i)]

\item
Using Pontryagin's maximum principle, we will show that Problem (N) is generically time-inconsistent.
The advantage of such an approach is that we are not satisfied with just some counterexamples,
instead, we will show that if $\bar u(\cd)$ is optimal at $(t,\xi)$, which will satisfy the Pontragin's type maximum principle (MP, for short) on $[t,T]$, the $\bar u(\cd)\big|_{[t,T]}$ hardly satisfies MP on $[\t,T]$. Therefore, \rf{J=J*} should not be expected in general.

\item
Since Problem (N) is time-inconsistent in general,
finding an optimal control at any given initial pair $(t,\xi)$ is not very useful.
Instead, one should find an {\it equilibrium strategy}, which is time-consistent and possesses certain kind of local optimality. Inspired by Yong \cite{Yong2012}, we derive the {\it equilibrium HJB equation} associated with Problem (N),
through which an equilibrium strategy can be constructed. Our equilibrium HJB equation can cover the results obtained in
Yong \cite{Yong2012} and  Bj\"{o}rk--Khapko--Murgoci \cite{Bjork-Khapko-Murgoci2017}.
In the case that the recursive cost functional is governed by a BSDE, one could apply the method of multi-person differential games,
by viewing that the controller is playing a cooperative game with all his incarnations in the future.
Such an idea can be traced back to the work of Pollak \cite{Pollak1968} in 1968.
Later, the approach was adopted and further developed in \cite{Ekeland2010, Ekeland2008, Yong2012,Yong2014,Yong2017,
Bjork-Murgoci2014, Bjork-Murgoci-Zhou2014, Bjork-Khapko-Murgoci2017, Wei-Yong-Yu2017, Mei-Yong2019, Mei-Zhu2020, Wang-Yong2021}.
We point out that the multi-person differential game approach used in \cite{Yong2012,Wei-Yong-Yu2017}
does not directly apply to Problem (N) of the current paper, because the DPP does not hold
for controlled FBSDEs even if the cost functional does not depend on the initial values $(t,X(t),Y(t))$.
We overcome the difficulty by making use of the Feymann--Kac formula for BSVIEs,
which has been recently well-developed in our works \cite{Wang-Yong2019,Wang2021,Wang-Yong-Zhang2021}.
In the proof of the verification theorem, some technical assumptions imposed in
\cite{Wei-Yong-Yu2017,Wang-Yong2021} and \cite{Bjork-Khapko-Murgoci2017} are relaxed.

\item
When the diffusion term of the forward state equation does not depend on the control $u(\cd)$,
the equilibrium HJB equation associated with Problem (N) is a system of semi-linear parabolic partial differential equations
with non-local terms. Under the non-degenerate condition, the well-posdness of the equilibrium HJB equation is established
in the sense of classical solutions.

\item
Some comparisons between our equilibrium HJB equations and those derived by Peng \cite{Peng19971},
by Yong \cite{Yong2012,Yong2014}, and by Bj\"{o}rk--Khapko--Murgoci \cite{Bjork-Khapko-Murgoci2017} are carefully made, respectively. We find that the backward controlled equation has a significant influence on  the form of the
associated equilibrium HJB equation.
When Problem (N) is reduced to the problem studied by Bj\"{o}rk--Khapko--Murgoci \cite{Bjork-Khapko-Murgoci2017},
the form of our equilibrium HJB equations is more natural than their so-called {\it extended HJB equation}.
We note that there was no rigorous proof on the well-posedness of the extended HJB equation presented in \cite{Bjork-Khapko-Murgoci2017}.
In addition to the above, the ``HJB equation" associated with the value function of Problem (N) is formally derived,
provided Problem (N) has an optimal control with the closed-loop representation,
which can be regarded as a PDE approach version of Peng \cite{Peng1993} and Yong \cite{Yong2010}.
By comparing the equilibrium HJB equation and the ``HJB equation", we find that the optimality condition
of the ``HJB equation" is not a minimization problem in the finite
dimensional space, due to which the ``HJB equation" is not useful and Problem (N) is time-inconsistent in general.

\item
The linear-quadratic optimal control problems for FBSDEs are briefly studied
and a linear equilibrium strategy is  obtained, provided the associated Riccati equation is solvable. This partially covers the work of Yong \cite{Yong2017}. Further, as applications, a mean-variance model, a social planner model of Merton's  consumption--portfolio selection with heterogeneous Epstein--Zin utilities, and a Stackelberg game are investigated, which are all special cases of Problem (N). It is shown that these specific problems are all time-inconsistent, and by the theoretical results obtained in the paper, the associated equilibrium strategies can be explicitly constructed.
\end{enumerate}

The rest of this paper is organized as follows.
In Section \ref{sec:main-results}, we state the main results of our paper, with some explanations.
In Section \ref{sec:comparison}, we compare the results obtained in the paper with the existing ones.
The linear-quadratic problem is studied in Section \ref{sec:LQ},
and three applications are presented in Section \ref{sec:SC}.
In Section \ref{sec:Verification}, the verification theorem is proved.
Some technical and lengthy proofs are given in Section \ref{sec:Proofs}.

\section{The Main Results}\label{sec:main-results}

\subsection{Preliminaries: Notations and Feynman--Kac formula}

Let $T>0$ be a given time horizon and recall the upper triangle domain $\D^*[0,T]$ from \rf{D}. Let $\dbS^n$  be the subspace of $\dbR^{n\times n}$ consisting of symmetric matrices and
$U\subseteq \dbR^l$ be a nonempty measurable set which could be bounded or unbounded.
We will use $K>0$ to represent a generic constant which could be different from line to line. For any Euclidean space $\dbH$ (as well as $\dbH_1$, $\dbH_2$),
we introduce the following spaces:
\begin{align*}
%
& L_\dbF^2(\Om;C([0,T];\dbH))
=\Big\{\f:[0,T]\times\Om\to\dbH\bigm|\f(\cd)~\hb{is $\dbF$-adapted, pathwise continuous, }\dbE\big[\ds\sup_{0\les s\les T}|\f(s)|^2\big]<\i \Big\};\\
& C([0,T];L_\dbF^2(\Om;\dbH))
=\Big\{\f:[0,T]\times\Om\to\dbH\bigm|\f(\cd)~\hb{is $\dbF$-adapted, $\dbE[\f(\cd)]$ is continuous, }
\ds\sup_{0\les s\les T}\dbE\big[|\f(s)|^2\big]<\i \Big\};\\
& L_\dbF^2(0,T;\dbH)
=\Big\{\f:[0,T]\times\Om\to\dbH\bigm|\f(\cd)~\hb{is $\dbF$-progressively measurable on $[0,T]$, }\dbE\int_0^T|\f(s)|^2ds<\i \Big\};\\
& C([0,T];L_\dbF^2(\cd\,,T;\dbH))
\1n=\1n\Big\{\f\1n:\1n\D^*[0,T]\1n\times\1n\Om\1n\to\1n\dbH\bigm|\f(t,\cd)\1n\in\1n L_\dbF^2(t,T;\dbH),~t\1n\in\1n[0,T],~\dbE\1n\int_\cd^T\3n|\f(\cd\,,s)|^2ds\1n\in\1n C([0,T]) \Big\};\\
&L^\i(\dbH_1;\dbH_2)=\big\{\f:\dbH_1\to\dbH_2\bigm|\f(\cd)~\hb{is essentially bounded}\big\};\\
&C^k(\dbH_1;\dbH_2)=\big\{\f:\dbH_1\to\dbH_2\bigm|\f(\cd)~\hb{is $j$-th continuously differentiable for any $0\les j\les k$}\big\};\\
& C_b^k(\dbH_1;\dbH_2)=\big\{\f:\dbH_1\to\dbH_2\bigm|\f(\cd)\in C^k(\dbH_1;\dbH_2),~\hb{the $j$-th derivatives are bounded, $0\les j\les k$}\big\}.
\end{align*}

To guarantee the well-posedness of the controlled FBSDE \rf{State} and BSVIE \rf{cost-BSVIE1} governing the recursive cost functional, we introduce the following assumptions.

\begin{taggedassumption}{(H1)}\label{ass:H1}
Let the mappings $b,\si:[0,T]\times\dbR^n\times U\to\dbR^n$,
$g:[0,T]\times\dbR^n\times U\times\dbR^m\times\dbR^m\to\dbR^m$,
and $h:\dbR^n\to\dbR^m$ be continuous. There exists a constant $L>0$ such that
\begin{align*}
&|b(s,0,u)|+|\si(s,0,u)|+|h(0)|+|g(s,0,u,0,0)|\les L(1+|u|),\\
&|b(s,x_1,u)-b(s,x_2,u)|+|\si(s,x_1,u)-\si(s,x_2,u)|+|h(x_1)-h(x_2)|\\
&+|g(s,x_1,u,y_1,z_1)-g(s,x_2,u,y_2,z_2)|\les L\big[|x_1-x_2|+|y_1-y_2|+|z_1-z_2|\big],\\
&\qq\qq\qq\q\forall(s,u)\in [0,T]\times U,~ (x_i,y_i,z_i)\in\dbR^n\times\dbR^m\times\dbR^m,~ i=1,2.
\end{align*}
\end{taggedassumption}

\begin{taggedassumption}{(H2)}\label{ass:H2}
Let the mappings $h^0:[0,T]\times\dbR^n\times\dbR^n\times\dbR^m\to\dbR$
and $g^0:\D^*[0,T]\times\dbR^n\times\dbR^n\times U\times\dbR^m\times\dbR^m\times\dbR\times\dbR\to\dbR$ be continuous. There exists a constant $L>0$ such that
\begin{align*}
&|h^0(t,0,0,0)|+|g^0(t,s,0,0,u,0,0,0,0)|\les L(1+|u|),\\
&|g^0(t_1,s,\ti x_1,x_1,u,y_1,z_1,y^0_1,z^0_1)-g^0(t_2,s,\ti x_2,x_2,u,y_2,z_2,y^0_2,z^0_2)|\\
&+|h^0(t_1,\ti x_1,x_1,y_1)-h^0(t_2,\ti x_2, x_2,y_2)|\\
&\q\les L\big[|t_1-t_2|+|\ti x_1-\ti x_2|+|x_1-x_2|+|y_1-y_2|+|z_1-z_2|+|y^0_1-y^0_2|+|z^0_1-z^0_2|\big],\\
&\qq\qq\qq\forall(t_i,s)\in\D^*[0,T],~\ti x_i,x_i\in\dbR^n,~u\in U,~y_i,z_i\in\dbR^m,~y^0_i,z^0_i\in\dbR,~i=1,2.
\end{align*}
\end{taggedassumption}

\ms

By Yong--Zhou \cite[Chapter 7]{Yong-Zhou1999} and Yong \cite{Yong2008},
we have the following results about the well-posedness of (decoupled) FBSDE \rf{State} and BSVIE \rf{cost-BSVIE1}.

\begin{lemma}\label{lmm:well-posedness-SDE}
Let {\rm\ref{ass:H1}} hold. Then for any initial pair $(t,\xi)\in\sD$ and control $u(\cd)\in\sU[t,T]$,
state equation \rf{State} admits a unique adapted solution $(X(\cd),Y(\cd),Z(\cd))\in L_\dbF^2(\Om;C([t,T];\dbR^n))\times L_\dbF^2(\Om;C([t,T];\dbR^m)) \times L_\dbF^2(t,T;\dbR^m)$. Moreover, there exists a constant $K>0$, independent of $(t,\xi)$ and $u(\cd)$, such that
\bel{|X|}\dbE_t\[\sup_{t\les r\les T}\big[|X(r)|^2+|Y(r)|^2\big]+\int_t^T|Z(s)|^2ds\]\les K\dbE_t\[1+|\xi|^2+ \int_t^T|u(s)|^2ds\].\ee
In addition, if {\rm\ref{ass:H2}} also holds, then for any initial pair $(t,\xi)\in\sD$, control $u(\cd)\in\sU[t,T]$,
and the corresponding state process $(X(\cd),Y(\cd),Z(\cd))$, BSVIE \rf{cost-BSVIE1} admits a unique adapted solution
$(Y^0(\cd),Z^0(\cd\,,\cd))\in C([t,T];L_\dbF^2(\Om;\dbR))\times C([t,T];L_\dbF^2(\cd\,, T;\dbR))$.
Moreover, there exists a constant $K>0$, independent of $(t,\xi)$ and $u(\cd)$, such that
\bel{|Y|+|Z|}
\sup_{t\les r\les T}\dbE_t\[|Y^0(r)|^2+\int_r^T|Z^0(r,s)|^2ds\]\les K\dbE_t\[1+|\xi|^2+\int_t^T|u(s)|^2ds\].\ee
\end{lemma}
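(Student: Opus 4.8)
The plan is to prove the two well-posedness assertions separately, treating the FBSDE \rf{State} first (which is decoupled, so effectively a forward SDE followed by a BSDE) and then the BSVIE \rf{cost-BSVIE1} with the state process as data. For the forward equation $dX(s)=b(s,X(s),u(s))ds+\si(s,X(s),u(s))dW(s)$, $X(t)=\xi$, the Lipschitz and linear-growth bounds in \ref{ass:H1} are exactly the hypotheses of the classical It\^o existence--uniqueness theorem (e.g.\ Yong--Zhou \cite[Chapter 1]{Yong-Zhou1999}); I would quote it to get a unique $X(\cd)\in L^2_\dbF(\Om;C([t,T];\dbR^n))$, together with the standard a priori estimate $\dbE_t[\sup_{t\les r\les T}|X(r)|^2]\les K\dbE_t[1+|\xi|^2+\int_t^T|u(s)|^2ds]$ obtained from Burkholder--Davis--Gundy plus Gr\"onwall. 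With $X(\cd)$ in hand, the backward equation $dY(s)=-g(s,X(s),u(s),Y(s),Z(s))ds+Z(s)dW(s)$, $Y(T)=h(X(T))$, has a generator that is Lipschitz in $(y,z)$ uniformly and whose ``free'' parts $g(s,X(s),u(s),0,0)$ and $h(X(T))$ lie in the right $L^2$ spaces by the growth condition and the estimate just obtained; so Pardoux--Peng's theorem gives a unique $(Y(\cd),Z(\cd))\in L^2_\dbF(\Om;C([t,T];\dbR^m))\times L^2_\dbF(t,T;\dbR^m)$, and the accompanying BSDE a priori estimate, combined with the bound on $X(\cd)$, yields \rf{|X|}. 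The only mild care needed is that all estimates are conditional on $\cF_t$; this is routine since the coefficients are deterministic and one works on $[t,T]$, so the conditional versions of BDG and Gr\"onwall apply verbatim.

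For the second part, I would regard BSVIE \rf{cost-BSVIE1} as an equation for the pair $(Y^0(\cd),Z^0(\cd\,,\cd))$ with the (already constructed) processes $X(\cd),Y(\cd),Z(\cd)$ and the control $u(\cd)$ frozen as inputs. Under \ref{ass:H2} the free term $h^0(r,X(r),X(T),Y(r))$ is, for each fixed $r$, an $\cF_T$-measurable random variable, and the generator $(r,s,\om)\mapsto g^0(r,s,X(r),X(s),u(s),Y(s),Z(s),y^0,z^0)$ is Lipschitz in $(y^0,z^0)$ uniformly in $(r,s,\om)$, with the requisite integrability of its value at $(0,0)$ following from \rf{|X|} and the linear-growth bound in \ref{ass:H2}. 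These are precisely the structural hypotheses under which Yong \cite{Yong2008} establishes existence, uniqueness and the a priori estimate for Type-I BSVIEs; I would invoke that result to obtain the unique adapted solution $(Y^0(\cd),Z^0(\cd\,,\cd))\in C([t,T];L^2_\dbF(\Om;\dbR))\times C([t,T];L^2_\dbF(\cd\,,T;\dbR))$, and the estimate \rf{|Y|+|Z|} by substituting the bound \rf{|X|} for $\sup_r\dbE_t[|X(r)|^2+|Y(r)|^2]+\int_t^T|Z(s)|^2ds$ into the generic BSVIE estimate. Again one keeps everything conditional on $\cF_t$, which is legitimate because the time interval is $[t,T]$ and the data are adapted.

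The main obstacle, such as it is, is bookkeeping rather than mathematics: one must check that the $r$-dependence of the free term and generator of \rf{cost-BSVIE1} does not introduce measurability or integrability pathologies — in particular that $r\mapsto \dbE_t[|Y^0(r)|^2+\int_r^T|Z^0(r,s)|^2ds]$ is genuinely finite and the solution lives in the stated continuity-in-$r$ spaces — and that the constants $K$ in \rf{|X|} and \rf{|Y|+|Z|} can be taken independent of $(t,\xi)$ and $u(\cd)$. Both points are handled by the structure of \ref{ass:H1}--\ref{ass:H2}: the Lipschitz constants are uniform, the growth is at most linear in $u$, and the reference estimates of \cite{Yong-Zhou1999,Yong2008} already produce constants depending only on $L$, $T$, and the Lipschitz data. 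Hence I would present the proof as a two-paragraph reduction to the cited theorems, spelling out the identification of the free terms and generators and the propagation of the estimate from the FBSDE into the BSVIE, and leave the standard Gr\"onwall/BDG computations to the references.
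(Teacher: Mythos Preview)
Your proposal is correct and matches the paper's approach exactly: the paper does not give a proof but simply states the lemma as a consequence of Yong--Zhou \cite[Chapter 7]{Yong-Zhou1999} for the decoupled FBSDE and Yong \cite{Yong2008} for the BSVIE. Your write-up merely unpacks the verification of hypotheses that the paper leaves implicit, so there is nothing to add.
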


\ms

As another preparation, we consider the following system of FBSDEs and BSVIEs without controls:
\bel{FBSDE-BSVIE}\left\{\2n\ba{ll}
\ds X(r)=\xi+\int_t^rb(s,X(s))ds+\int_t^r\si(s,X(s))dW(s),\q r\in[t,T],\\
\ns\ds Y(r)=h(X(T))+\int_r^T g(s,X(s),Y(s),Z(s))ds-\int_r^T Z(s)dW(s),\q r\in[t,T],\\
\ns\ds Y^0(r)=h^0(r,X(r),X(T),Y(r))+\int_r^Tg^0(r,s,X(r),X(s),Y(s),Z(s),Y^0(s),
Z^0(r,s))ds\\
\ns\ds\qq\q\,~-\int_r^TZ^0(r,s)dW(s),\qq r\in[t,T],\ea\right.\ee
where the coefficients $b(\cd),\si(\cd),h(\cd),g(\cd),h^0(\cd),g^0(\cd)$ satisfy \ref{ass:H1}--\ref{ass:H2}
(independent of the control $u$). Suggested by Wang--Yong \cite{Wang-Yong2019}
and Wang--Yong--Zhang \cite{Wang-Yong-Zhang2021},
we introduce the following system of semi-linear PDEs:
\bel{FK-PDE}\left\{\2n\ba{ll}
\ds\Th_s^k(s,x)+{1\over 2}\tr[\Th_{xx}^k(s,x)\si(s,x)\si(s,x)^\top]+ \Th^k_x(s,x)b(s,x)\\
\ns\ds\q+g\big(s,x,\Th(s,x),\Th_x(s,x)\si(s,x)\big)=0,\q(s,x)\in[t,T]\times\dbR^n,\q 1\les k\les m,\\
\ns\ds\Th^0_s(r,s,\ti x,x,y)+{1\over 2}\tr[\Th^0_{xx}(r,s,\ti x,x,y)\si(s,x)\si(s,x)^\top]+\Th^0_x(r,s,\ti x,x,y)b(s,x)\\
\ns\ds\q+g^0\big(r,s,\ti x,x,\Th(s,x),\Th_x(s,x)\si(s,x),\Th^0(s,s,x,x,\Th(s,x)),\Th^0_x(r,s,\ti x,x,y)\si(s,x)\big)=0,\\
\ns\ds\qq\qq\qq\qq\qq\qq\qq\qq\qq\qq(r,s,\ti x,x,y)\in\D^*[t,T]\times\dbR^n\times\dbR^n\times\dbR^m,\\
\ns\ds\Th(T,x)=h(x),\q\Th^0(r,T,\ti x,x,y)=h^0(r,\ti x,x,y),\qq(r,\ti x,x,y)\in[t,T]\times\dbR^n\times\dbR^n\times\dbR^m,
\ea\right.\ee
with $\Th=(\Th^1,\cds,\Th^m)^\top$. Note that $\Th^0(\cd)$ is a function of $(r,s,\ti x,x,y)$,
and $\Th^0_x(\cd), \Th^0_{xx}(\cd)$ are the derivatives with respect to the 4th argument.
We have the following representation theorem.

\bp{Prop:FK-BSVIE}
Suppose that PDE \rf{FK-PDE} admits a classical solution $(\Th(\cd\,,\cd),\Th^0(\cd\,,\cd\,,\cd\,,\cd\,,\cd))$.
Assume that the system \rf{FBSDE-BSVIE} of FBSDEs and BSVIEs  admits a unique adapted solution $(X(\cd),Y(\cd),Z(\cd),Y^0(\cd),Z^0(\cd,\cd))$.
Then the following representation holds:
\begin{align}
&Y(r)=\Th(r,X(r)),\q Z(r)=\Th_x(r,X(r))\si(r,X(r)),\qq r\in[t,T],\,\as,\nn\\
&Y^0(r)=\Th^0\big(r,r,X(r),X(r),\Th(r,X(r))\big),\qq r\in[t,T],\,\as\nn\\
& Z^0(r,s)=\Th^0_x\big(r,s,X(r),X(s),\Th(r,X(r))\big)\si(s,X(s)),\q (r,s)\in\D^*[t,T],\,\as
\label{rep}
\end{align}
\ep

\begin{proof}
By the Feynman--Kac formula for BSDEs (see Pardoux--Peng \cite{Pardoux--Peng1992}, for example), we get
$$Y(r)=\Th(r,X(r)),\q Z(r)=\Th_x(r,X(r))\si(r,X(r)),\q r\in[t,T].$$
Substituting the above into the BSVIE in \rf{FBSDE-BSVIE}, we get
\begin{align*}
Y^0(r)&=h^0(r,X(r),X(T),\Th(r,X(r)))-\int_r^TZ^0(r,s)dW(s)\\
&\q+\int_r^Tg^0\big(r,s,X(r),X(s),\Th(s,X(s)),\Th_x(s,X(s))\si(s,X(s)),Y^0(s),Z^0(r,s)\big)ds,\q r\in[t,T].
\end{align*}
Then by the Feynman--Kac formula for BSVIEs (see \cite{Wang-Yong2019,Wang2021}), we have
\begin{align*}
&Y^0(r)=\h\Th^0\big(r,r,X(r),X(r)\big),\qq r\in[t,T],\,\as,\nn\\
& Z^0(r,s)=\h\Th^0_x\big(r,s,X(r),X(s))\big)\si(s,X(s)),\q (r,s)\in\D^*[t,T],\,\as
\end{align*}
with $\h\Th^0(\cd\,,\cd\,,\cd\,,\cd)$ being the classical solution to the following PDE:
$$\left\{\begin{aligned}
&\h\Th^0_s(r,s,\ti x,x)+{1\over 2}\tr[\h\Th^0_{xx}(r,s,\ti x,x)\si(s,x)\si(s,x)^\top]+\h\Th^0_x(r,s,\ti x,x)b(s,x)\\
&\q+g^0\big(r,s,\ti x,x,\Th(s,x),\Th_x(s,x)\si(s,x),\h\Th^0(s,s,x,x),\h\Th^0_x(r,s,\ti x,x)\si(s,x)\big)=0,\\
&\qq\qq\qq\qq\qq\qq\qq\qq\qq\qq(r,s,\ti x,x)\in\D^*[t,T]\times\dbR^n\times\dbR^n,\\
&\h\Th^0(r,T,\ti x,x)=h^0(r,\ti x,x,\Th(r,\ti x)),\qq(r,\ti x,x)\in[t,T]\times\dbR^n\times\dbR^n.
\end{aligned}\right.$$
Clearly, $\h\Th^0(\cd\,,\cd\,,\cd\,,\cd)$ is actually given by
$$
\h\Th^0(r,s,\ti x,x)=\Th^0(r,s,\ti x,x,\Th(r,\ti x)),\q (r,s,\ti x,x)\in\D^*[0,T]\times\dbR^n\times\dbR^n.
$$
Then the desired results can be obtained immediately.
\end{proof}

\begin{remark} \autoref{Prop:FK-BSVIE} is a generalization of the representation/Feynman--Kac formula for Markovian BSVIEs,
which was established by Wang--Yong \cite{Wang-Yong2019} and Wang \cite{Wang2021}, in the sense of classical solutions.
Under the non-degenerate assumption, the well-posedness of PDE \rf{FK-PDE} will be established by an analytic method, as a byproduct of \autoref{thm:HJB}. The probabilistic approach, without the non-degenerate assumption,
can be also obtained by the arguments in Wang--Yong--Zhang \cite{Wang-Yong-Zhang2021}.
\end{remark}

\subsection{Time-Inconsistency Analysis of Problem (N)}

In this subsection, we shall discuss the time-inconsistency of Problem (N) from the Pontryagin's maximum principle viewpoint.
For simplicity, we consider the case that \rf{Bolza1} holds so that the cost functional reads as \rf{Bolza2}
(of Bolza type, without involving BSVIEs). Also, we suppose that the control domain $U\equiv\dbR^l$ and all involved functions are continuously differentiable. Let $\left(\bar X^{t,\xi}(\cd),\bar u^{t,\xi}(\cd),\bar Y^{t,\xi}(\cd),\bar Z^{t,\xi}(\cd)\right)$ be an optimal 4-tuple (supposing it exists) of Problem (N) on $[t,T]$ with a given initial pair $(t,\xi)\in\sD$, for which we assume to be time-consistent. Then, for any $\t\in(t,T]$,
\bel{J=J}J(\t,\bar X^{t,\xi}(\t);\bar u^{t,\xi}(\cd){\bf1}_{[\t,T]}(\cd))
=\inf_{u(\cd)\in\sU[\t,T]}J(\t,\bar X^{t,\xi}(\t);u(\cd)),\ee
and
\begin{align}
&\big(\bar X^{t,\xi}(s),\,\bar u^{t,\xi}(s),\,\bar Y^{t,\xi}(s),\,\bar Z^{t,\xi}(s)\big)
\nn\\
&\q=\big(\bar X^{\t,\bar X^{t,\xi}(\t)}(s),\,\bar u^{\t,\bar X^{t,\xi}(\t)}(s),\,
\bar Y^{\t,\bar X^{t,\xi}(\t)}(s),\,\bar Z^{\t,\bar X^{t,\xi}(\t)}(s)\big),\q s\in[\t,T],~\as\label{u=u-1}
\end{align}
Now, we denote
\begin{align*}
&\bar b^{t,\xi}_x(s)=b_x(s,\bar X^{t,\xi}(s),\bar u^{t,\xi}(s)),\q&&\bar b_u^{t,\xi}(s)=b_u(s,\bar X^{t,\xi}(s),\bar u^{t,\xi}(s)),\\
&\bar g^{t,\xi}_x(s)=g_x(s,\bar X^{t,\xi}(s),\bar u^{t,\xi}(s),\bar Y^{t,\xi}(s),\bar  Z^{t,\xi}(s)),
\q&&\bar g^{t,\xi}_u(s)=g_u(s,\bar X^{t,\xi}(s),\bar u^{t,\xi}(s),\bar Y^{t,\xi}(s),\bar  Z^{t,\xi}(s)),\\
&\bar g^{t,\xi}_y(s)=g_y(s,\bar X^{t,\xi}(s),\bar u^{t,\xi}(s),\bar Y^{t,\xi}(s),\bar  Z^{t,\xi}(s)),\q
&&\bar g^{t,\xi}_z(s)=g_z(s,\bar X^{t,\xi}(s),\bar u^{t,\xi}(s),\bar Y^{t,\xi}(s),\bar Z^{t,\xi}(s)),\\
&\bar h^{t,\xi}_x(T)=h_x(\bar X^{t,\xi}(T)),\q
&&\bar h^{0,t,\xi}_x(t)=h_x(\bar X^{t,\xi}(T),\bar Y^{t,\xi}(t)),\end{align*}
and $\bar\si^{t,\xi}_x(s)$, $\bar\si^{t,\xi}_u(s)$, $\bar g^{0,t,\xi}_x(s)$, $\bar g^{0,t,\xi}_u(s)$, $\bar g^{0,t,\xi}_y(s)$, $\bar g^{0,t,\xi}_z(s)$, $\bar h^{0,t,\xi}_y(t)$ are defined similarly.
Then by applying the Pontryagin's maximum principle (see \cite{Peng1993,Hu-Ji-Xue2018,Hu-Ji-Xue2019}, for examples), to the optimal 4-tuple on $[t,T]$ and $[\t,T]$, respectively,
we get the following {\it stationarity conditions}:
\begin{align}
&\bar g^{0,t,\xi}_u(s)^\top+\bar g_u^{t,\xi}(s)^\top\cX^{t,\xi}(s)+\bar b^{t,\xi}_u(s)^\top\cY^{t,\xi}(s)+\bar\si_u^{t,\xi}(s)^\top
\cZ^{t,\xi}(s)=0,\qq s\in[t,T],\label{MP-BSDE}\\
&\bar g^{0,t,\xi}_u(s)^\top+\bar g_u^{t,\xi}(s)^\top\cX^{\t,\bar X^{t,\xi}(\t)}(s)+\bar b^{t,\xi}_u(s)^\top\cY^{\t,\bar X^{t,\xi}(\t)}(s)+\bar\si_u^{t,\xi}(s)^\top\cZ^{\t,\bar X^{t,\xi}(\t)}(s)=0,\qq s\in[\t,T],\label{MP-BSDE*}
\end{align}
where $(\cY^{t,\xi}(\cd),\cZ^{t,\xi}(\cd))$ is the co-state process pair of $\bar X^{t,\xi}(\cd)$, and $\cX^{t,\xi}(\cd)$
is the co-state process of $(\bar Y^{t,\xi}(\cd)$, $\bar Z^{t,\xi}(\cd))$, for which the following holds on $[t,T]$ almost surely:
\bel{Optimality-system-BSDE}\left\{\begin{aligned}
\ds d\cY^{t,\xi}(s)&=-\big[\bar g_x^{t,\xi}(s)^\top\cX^{t,\xi}(s)+\bar b_x^{t,\xi}(s)^\top\cY^{t,\xi}(s)
+\bar\si_x^{t,\xi}(s)^\top\cZ^{t,\xi}(s)+\bar g^{0,t,\xi}_x(s)^\top\big]ds+\cZ^{t,\xi}(s)dW(s),\\
\ns\ds d\cX^{t,\xi}(s)&=\big[\bar g_y^{t,\xi}(s)^\top\cX^{t,\xi}(s)+\bar g^{0,t,\xi}_y(s)^\top\big]ds
+\big[\bar g_z^{t,\xi}(s)^{\top}\cX^{t,\xi}(s)+\bar g^{0,t,\xi}_z(s)^\top\big]dW(s),\\
\ns\ds\cY^{t,\xi}(T)&=\bar h_x^{t,\xi}(T)^\top\cX^{t,\xi}(T)+\bar h^{0,t,\xi}_x(t)^\top,\qq
\cX^{t,\xi}(t)=\dbE_t[\bar h^{0,t,\xi}_y(t)^\top],\end{aligned}\right.\ee
and $(\cY^{\t,\bar X^{t,\xi}(\t)}(\cd),\cZ^{\t,\bar X^{t,\xi}(\t)}(\cd))$ is the co-state process pair of $\bar X^{\t,\bar X^{t,\xi}(\t)}(\cd)$, and $\cX^{\t,\bar X^{t,\xi}(\t)}(\cd)$ is the co-state process of $(\bar Y^{\t,\bar X^{t,\xi}(\t)}(\cd)$, $\bar Z^{\t,\bar X^{t,\xi}(\t)}(\cd))$, for which the following holds on $[\t,T]$ almost surely:
\bel{Optimality-system-BSDE-tau}\3n\left\{\2n\begin{aligned}
 d\cY^{\t,\bar X^{t,\xi}(\t)}(s)&=-\big[\bar g_x^{t,\xi}(s)^\top\cX^{\t,\bar X^{t,\xi}(\t)}(s)
\1n+\1n\bar b_x^{t,\xi}(s)^\top\cY^{\t,\bar X^{t,\xi}(\t)}(s)\1n
+\1n\bar\si_x^{t,\xi}(s)^\top\cZ^{\t,\bar X^{t,\xi}(\t)}(s)\1n+\1n\bar g^{0,t,\xi}_x(s)^\top\big]ds\\
&\q+\cZ^{\t,\bar X^{t,\xi}(\t)}(s)dW(s),\\
 d\cX^{\t,\bar X^{t,\xi}(\t)}(s)&=\big[\bar g_y^{t,\xi}(s)^\top\cX^{\t,\bar X^{t,\xi}(\t)}(s)+\bar g^{0,t,\xi}_y(s)^\top\big]ds
+\big[\bar g_z^{t,\xi}(s)^{\top}\cX^{\t,\bar X^{t,\xi}(\t)}(s)+\bar g^{0,t,\xi}_z(s)^\top\big]dW(s),\\
\cY^{\t,\bar X^{t,\xi}(\t)}(T)&=\bar h_x^{t,\xi}(T)^\top\cX^{\t,\bar X^{t,\xi}(\t)}(T)+\bar h^{0,t,\xi}_x(\t)^\top,\qq
\cX^{\t,\bar X^{t,\xi}(\t)}(\t)=\dbE_\t[\bar h^{0,t,\xi}_y(\t)^\top].\end{aligned}\right.\ee
We may state the conclusion as follows.

\begin{proposition}\label{Prop:MP}
If the optimal 4-tuple $(\bar X^{t,\xi}(\cd),\bar u^{t,\xi}(\cd),\bar Y^{t,\xi}(\cd),\bar Z^{t,\xi}(\cd))$ is time-consistent, then \rf{MP-BSDE*} holds for any $\t\in(t,T]$, subject to \rf{Optimality-system-BSDE-tau}.
\end{proposition}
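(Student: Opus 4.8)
The plan is to obtain \rf{MP-BSDE*} as a direct consequence of the definition of time-consistency together with the Pontryagin maximum principle for controlled FBSDEs applied on the shorter horizon $[\t,T]$. First I would unwind what ``time-consistent'' means for the optimal 4-tuple: by \rf{J=J} and \rf{u=u-1}, for every fixed $\t\in(t,T]$ the restriction $\bar u^{t,\xi}(\cd)\big|_{[\t,T]}$ is an (open-loop) optimal control of Problem (N) for the initial pair $(\t,\bar X^{t,\xi}(\t))\in\sD$, and the corresponding optimal state process on $[\t,T]$ is exactly the restriction of $(\bar X^{t,\xi}(\cd),\bar Y^{t,\xi}(\cd),\bar Z^{t,\xi}(\cd))$. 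Under the standing assumptions of this subsection ($U=\dbR^\ell$ and all data continuously differentiable), the maximum principle is a genuine first-order necessary condition for this optimality.

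Next I would apply that maximum principle (see Peng \cite{Peng1993} and Hu--Ji--Xue \cite{Hu-Ji-Xue2018,Hu-Ji-Xue2019}) to the optimal 4-tuple of the $(\t,\bar X^{t,\xi}(\t))$-problem on $[\t,T]$. For a state equation of FBSDE type, the first-order adjoint system has two parts: a backward equation for the co-state $(\cY,\cZ)$ of the forward component $X$, whose generator is assembled from $b_x,\si_x,g_x$ and the running-cost derivative $g^0_x$ and whose terminal value involves $h_x$ and $\partial_x h^0$; and a \emph{forward} linear SDE for the co-state $\cX$ of the backward component $(Y,Z)$, driven by $g_y,g_z,g^0_y,g^0_z$, whose value at the initial time $\t$ equals the derivative of the cost functional with respect to the initial backward datum $Y(\t)$. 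Since the Bolza cost \rf{Bolza2}, restricted to $[\t,T]$, contains the term $h^0(X(T),Y(\t))$, this last condition reads $\cX(\t)=\dbE_\t\big[\partial_y h^0(\bar X^{t,\xi}(T),\bar Y^{t,\xi}(\t))^\top\big]=\dbE_\t[\bar h^{0,t,\xi}_y(\t)^\top]$. Because time-consistency forces the optimal trajectory of the $(\t,\bar X^{t,\xi}(\t))$-problem to coincide on $[\t,T]$ with $(\bar X^{t,\xi}(\cd),\bar u^{t,\xi}(\cd),\bar Y^{t,\xi}(\cd),\bar Z^{t,\xi}(\cd))$, every coefficient process entering this adjoint system is the corresponding barred quantity $\bar b^{t,\xi}_x,\bar\si^{t,\xi}_x,\bar g^{t,\xi}_x,\bar g^{0,t,\xi}_x,\bar g^{t,\xi}_y,\dots$ --- exactly as displayed in \rf{Optimality-system-BSDE-tau} --- and the stationarity of the associated Hamiltonian in $u$ along the optimal control is precisely \rf{MP-BSDE*}.

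For completeness I would observe that \rf{Optimality-system-BSDE-tau} is well posed: $\cX^{\t,\bar X^{t,\xi}(\t)}(\cd)$ solves a linear SDE on $[\t,T]$ that does not involve $\cY^{\t,\bar X^{t,\xi}(\t)}$, after which $(\cY^{\t,\bar X^{t,\xi}(\t)}(\cd),\cZ^{\t,\bar X^{t,\xi}(\t)}(\cd))$ solves a linear BSDE whose terminal datum is determined by $\cX^{\t,\bar X^{t,\xi}(\t)}(T)$; the system is therefore triangularly decoupled and, under \ref{ass:H1} and the smoothness/boundedness of the coefficients assumed here, has a unique adapted solution.

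The step requiring the most care is not an estimate but the bookkeeping in the second paragraph: one has to check that a direct application of the FBSDE maximum principle to the sub-problem on $[\t,T]$ reproduces the adjoint equations \rf{Optimality-system-BSDE-tau} with \emph{precisely} the coefficient processes written there, and in particular that $\cX$ enters as the co-state of the \emph{backward} component, carrying an \emph{initial}-time constraint at $\t$ that comes from the $Y(\t)$-dependence of the terminal cost (rather than a terminal-time constraint). Once this structural correspondence is verified, \rf{MP-BSDE*} follows at once, and the proposition is proved.
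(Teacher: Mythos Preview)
Your proposal is correct and follows essentially the same route as the paper: the proposition is stated as a summary of the argument immediately preceding it, namely that time-consistency \rf{J=J}--\rf{u=u-1} makes $\bar u^{t,\xi}|_{[\t,T]}$ optimal for the $(\t,\bar X^{t,\xi}(\t))$-problem, and then the Pontryagin maximum principle for FBSDEs \cite{Peng1993,Hu-Ji-Xue2018,Hu-Ji-Xue2019} applied on $[\t,T]$ yields \rf{MP-BSDE*} with adjoint system \rf{Optimality-system-BSDE-tau}. Your write-up is in fact more explicit than the paper's in spelling out the decoupled structure of \rf{Optimality-system-BSDE-tau} and the origin of the initial condition $\cX(\t)=\dbE_\t[\bar h^{0,t,\xi}_y(\t)^\top]$.
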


The necessary condition \rf{MP-BSDE*} with $\t\in[t,T]$
can be regarded as a {\it dynamic} version of the Pontryagin's maximum principle.
Interestingly, we can use it to characterize the time-consistency of the optimal controls.

\ms
Now, let us make a careful comparison between \rf{Optimality-system-BSDE} and \rf{Optimality-system-BSDE-tau}.
First of all, these decoupled FBSDEs have exactly the same coefficients.
If we restrict \rf{Optimality-system-BSDE} on $[\t,T]$, then it has the initial condition $\cX^{t,\xi}(\t)$,
and we do not expect the following:
\bel{}
\cX^{t,\xi}(\t)=\dbE_\t[\bar h^{0,t,\xi}_y(\t)^\top]\equiv\dbE_\t\big[h_x(\bar X^{t,\xi}(T),\bar Y^{t,\xi}(t))^\top\big],\qq\t\in[t,T].
\ee
Hence, in general, the following cannot be guaranteed:
$$\ba{ll}
\ns\ds\big(\cX^{\t,\bar X^{t,\xi}(\t)}(s),\,\bar u^{\t,\bar X^{t,\xi}(\t)}(s),\,
\cY^{\t,\bar X^{t,\xi}(\t)}(s),\,\cZ^{\t,\bar X^{t,\xi}(\t)}(s)\big)=\big(\cX^{t,\xi}(s),\,\bar u^{t,\xi}(s),\,\cY^{t,\xi}(s),\,\cZ^{t,\xi}(s)\big),\\
\ns\ds\qq\qq\qq\qq\qq\qq\qq\qq\qq\qq\qq\qq\qq\qq s\in[\t,T],~\as,~\forall\t\in[t,T].\ea$$
Consequently, having \rf{MP-BSDE}, it is too much to request \rf{MP-BSDE*}. From this, we see that Problem (N) is intrinsically time-inconsistent.

\subsection{Equilibrium Strategy and Equilibrium HJB Equation}

Since Problem (N) is time-inconsistent in general, we shall find the equilibrium strategy,
whose definition is given as follows.

\begin{definition}\label{feedback-strategy-In}
A mapping $\Psi:[0,T]\times\dbR^n\to U$ is called a {\it feedback strategy} (of state equation \rf{State}) on $[0,T]$
if for every $(t,\xi)\in\sD$, the following {\it closed-loop} system:
\bel{closed-loop-system}\left\{\2n\ba{ll}
\ds dX(s)=b(s,X(s),\Psi(s,X(s)))ds+\si(s,X(s),\Psi(s,X(s)))dW(s),\q s\in[t,T],\\
\ns\ds dY(s)=-g(s,X(s),\Psi(s,X(s)),Y(s),Z(s))ds+Z(s)dW(s),\q s\in[t,T],\\
\ns\ds X(t)=\xi,\qq Y(T)=h(X(T)),\ea\right.\ee
admits a unique adapted solution $(X(\cd),Y(\cd),Z(\cd))\equiv \big(X(\cd\,;t,\xi,\Psi(\cd\,,\cd)),
Y(\cd\,;t,\xi,\Psi(\cd\,,\cd)),Z(\cd\,;t,\xi,\Psi(\cd\,,\cd))\big)\equiv (X^\Psi(\cd)$, $Y^\Psi(\cd),Z^\Psi(\cd))$
and the {\it outcome} $u^\Psi(\cd)\equiv\Psi(\cd\,,X^\Psi(\cd))$ of $\Psi(\cd\,,\cd)$ belongs to $\sU[t,T]$.
\end{definition}

We now introduce the following definition. 

\begin{definition}\label{def-equilibrium-In} A feedback strategy $\bar\Psi(\cd\,,\cd)$, with $\bar X(\cd)$ being the forward component of the corresponding state process, is called an {\it equilibrium strategy} if
\bel{def-equilibrium1}
\liminf_{\e\to 0^+} {J(t,\bar X(t);\Psi^\e(\cd))-J(t,\bar X(t);\bar\Psi(\cd))\over\e}\ges 0,\ee
for any $t\in[0,T)$ and $u\in L_{\cF_t}^2(\Om;U)$, where
\bel{def-equilibrium2}
\Psi^\e(s)\deq\left\{\2n\ba{ll}
\ds\bar\Psi(s,X^\e(s)),\q&s\in[t+\e,T];\\
\ns\ds u,\q&s\in[t,t+\e),\ea\right.\ee
with $X^\e(\cd)\deq X^{\Psi^\e}(\cd)\equiv X(\cd\,;t,\bar X(t),\Psi^\e(\cd\,,\cd))$ being the forward component of the state process corresponding to $\Psi^\e(\cd\,,\cd)$.

\end{definition}

The intuition behind \autoref{def-equilibrium-In} is similar to that in \cite{Yong2012,Hu-Jin-Zhou2012,Bjork-Khapko-Murgoci2017,Wang-Yong2021}.
At any given time $t$, the controller is playing a game (cooperatively) with all his/her incarnations in the future by minimizing his/her cost functional on $[t,t+\e)$, and knowing that he/she will lose the control of the system beyond $t+\e$. We now briefly list our  main results as follows.
\ms

To find an equilibrium strategy of Problem (N), we introduce the following spaces:
\begin{align*}
&\cA[0,T]:=[0,T]\times\dbR^n\times U\times\dbR^m\times\dbR^{m\times n},\\
&\cA^0[0,T]:=\D^*[0,T]\times\dbR^n\times\dbR^n\times U\times\dbR^m
\times\dbR^{m\times n}\times\dbR\times\dbR^{1\times n}.
\end{align*}
For simplicity, we denote
$$\BTh=(\th,p)\in\dbR^m\times\dbR^{m\times n},\qq\BTh^0=(\th^0,p^0)\in\dbR\times\dbR^{1\times n}.$$
Now, we define the following Hamiltonians:
\begin{align}
&\BH(s,x,u,\BTh,P)\1n=\1n\tr[Pa(s,x,u)]\1n+\1n pb(s,x,u)\1n+\1n g\big(s,x,u,\th,p\si(s,x,u)
\big),\nn\\
&\BH^0(t,s,\ti x,x,u,\BTh,\BTh^0,P^0)\1n=\1n
\tr[P^0a(s,x,u)]\1n+\1n p^0b(s,x,u)\nn\\
&\qq\qq\qq\qq\qq\qq~\1n+\1n g^0(t,s,\ti x,x,u,\th,p\si(s,x,u),\th^0,p^0\si(s,x,u)),\nn\\
&\h\BH^0(t,s,\ti x,x,u,\BTh,P,\BTh^0,q^0,P^0)\1n=\1n{\BH^0(t,s,\ti x,x,u,\BTh,\BTh^0,P^0)}
+q^0\BH(s,x,u,\BTh,P),\nn\\
&\qq\qq\qq(t,s,\ti x,x,u,\BTh,\BTh^0)\in\cA^0[0,T],~P\in\big[\dbS^n\big]^m,~q^0\in
\dbR^{1\times m},~P^0\in\dbS^n,\label{HH}
\end{align}
where $a(\cd)$ is defined by
$$
a(s,x,u)={1\over 2}\si(s,x,u)\si(s,x,u)^\top,\q (s,x,u)\in[0,T]\times\dbR^n\times U,
$$
and, with $P=(P^1,P^2,\cds,P^m)^\top\in[\dbS^n]^m$,
$$\tr\big[Pa(s,x,u)\big]=\begin{pmatrix}\tr\big[P^1a(s,x,u)\big]\\
\tr\big[P^2a(s,x,u)\big]\\ \vdots\\ \tr\big[P^ma(s,x,u)\big]\end{pmatrix}.$$
In what follows, we will use the following hypothesis.

\begin{taggedassumption}{(H3)}\label{ass:H3}
Suppose that there exists a unique mapping $\psi(\cd)$ such that
\begin{align}
\h\BH^0(t,s,\ti x,x,\psi(t,s,\ti x,x,\BTh,P,\BTh^0,q^0,P^0),\BTh,P,\BTh^0,q^0,P^0)
=\inf_{u\in U}\h\BH^0(t,s,\ti x,x,u,\BTh,P,\BTh^0,q^0,P^0),\nn\\
\forall(t,s,\ti x,x,u,\BTh,\BTh^0)\in\cA^0[0,T],~P\in\big[\dbS^n\big]^m,~q^0\in
\dbR^{1\times m},~P^0\in\dbS^n.\label{local-opt}
\end{align}
Moreover, we suppose that $\psi(\cd)$ is smooth enough with bounded derivatives.
\end{taggedassumption}

We now introduce the following {\it equilibrium HJB equation}:
\bel{HJB-BSVIE-Main}\left\{\2n\ba{ll}
\ds\Th_s(s,x)+\BH\big(s,x,\bar\Psi(s,x),\Th(s,x),\Th_x(s,x),\Th_{xx}(s,x)\big)
=0,\q(s,x)\in[0,T]\times\dbR^n,\\
\ns\ds\Th^0_s(t,s,\ti x,x,y)\1n+\1n\BH^0\big(t,s,\ti x,x,\bar\Psi(s,x),\Th(s,x),\Th_x(s,x),\Th^0(s,s,x,x,\Th(s,x)),\\
\ns\ds\qq \Th_x^0(t,s,\ti x,x,y),\Th^0_{xx}(t,s,\ti x,x,y)\big)=0,
\q(t,s,\ti x,x,y)\in\D^*[0,T]\times\dbR^n\times\dbR^n\times\dbR^m,\\
\ns\ds\Th(T,x)=h(x),\q x\in\dbR^n,\\
\ns\ds\Th^0(t,T,\ti x,x,y)=h^0(t,\ti x,x,y),\q(t,\ti x,x,y)\in[0,T]\times\dbR^n\times\dbR^n\times\dbR^m,\ea\right.\ee
where
\begin{align}
\bar\Psi(s,x)&=\psi\big(s,s,x,x,\Th(s,x),\Th_x(s,x),\Th_{xx}(s,x),\Th^0(s,s,x,x,
\Th(s,x)),\Th^0_x(s,s,x,x,\Th(s,x)),\nn\\
&\qq\q\Th^0_y(s,s,x,x,\Th(s,x)),\Th^0_{xx}(s,s,x,x,\Th(s,x))\big),\qq(s,x)\in[0,T]\times\dbR^n.\label{bar Psi}
\end{align}
We have the following result.

\bt{theorem-equilibrium-BSVIE-Main}
Let $\bar\Psi:[0,T]\times\dbR^n\to U$ be defined by \rf{bar Psi},
with $(\Th(\cd),\Th^0(\cd))$ being the classical solution to the equilibrium HJB equation \rf{HJB-BSVIE-Main}.
Let $\bar\Psi(\cd,\cd)$ be a feedback strategy.
Then it is an equilibrium strategy of {\rm Problem (N)}.
\et

\begin{remark}
If  the cost functional reads as \rf{Bolza2}, then we have
\begin{align}
&\BH^0(s,x,u,\BTh,\BTh^0,P^0)\1n=\1n
\tr[P^0a(s,x,u)]\1n+\1n p^0b(s,x,u)\1n+\1n g^0(s,x,u,\th,p\si(s,x,u)),\nn\\
&\h\BH^0(s,x,u,\BTh,P,\BTh^0,q^0,P^0)\1n=\1n
\tr[P^0a(s,x,u)]\1n+\1n p^0b(s,x,u)\1n+\1n g^0(s,x,u,\th,p\si(s,x,u))\nn\\
&\qq\qq+q^0\(\tr[Pa(s,u,x)]+pb(s,x,u)+g(s,x,u,\th,p\si(s,x,u))\),\nn\\
&\qq\qq\q\forall(s,x,u,\BTh,P,\BTh^0,q^0,P^0)\in\cA[0,T]\1n\times\1n[\dbS^n]^m\times\dbR\times\dbR^{1\times n}\times\dbR^{1\times m}\times\dbS^n.\label{h-H^0}
\end{align}
\end{remark}

\begin{remark}\label{EHJB-Prob}
\autoref{theorem-equilibrium-BSVIE-Main} is a verification theorem for Problem (N),
whose proof is given in Section \ref{sec:Verification}.
Taking the equilibrium strategy $\bar\Psi(\cd,\cd)$ in \rf{State} and \rf{cost-BSVIE1},
we get the following equilibrium system on $[0,T]$:
$$\left\{\2n\ba{ll}
\ds d\bar X(s)=b(s,\bar X(s),\bar\Psi(s,\bar X(s)))ds+\si(s,\bar X(s),\bar\Psi(s,\bar X(s)))dW(s),\\
\ns\ds d\bar Y(s)=-g(s,\bar X(s),\bar\Psi(s,
\bar X(s)),\bar Y(s),\bar Z(s))ds+\bar Z(s)dW(s),\\
\ns\ds\bar X(0)=\xi,\qq\bar Y(T)=h(\bar X(T)),\ea\right.
$$
and
\begin{align*}
\bar Y^0(r)&=h^0(r,\bar X(r),\bar X(T),\bar Y(r))
\1n+\2n\int_r^T\3n g^0\big(r,s,\bar X(r),\bar X(s),\bar\Psi(s,\bar X(s)),\bar Y(s),\bar Z(s),\bar Y^0(s),\bar Z^0(r,s)\big)ds\\
&\q-\int_r^T\bar Z^0(r,s)dW(s),\q r\in[0,T].
\end{align*}
Then by \autoref{Prop:FK-BSVIE}, we have the following representation formula:
\begin{align*}
&\bar Y(r)=\Th(r,\bar X(r)),\q \bar Z(r)=\Th_x(r,\bar X(r))\si(r,\bar X(r),\bar\Psi(r,\bar X(r))),\\
&\bar Y^0(r)=\Th^0\big(r,r,\bar X(r),\bar X(r),\Th(r,\bar X(r))\big),\\
&\bar Z^0(r,s)=\Th^0_x\big(r,s,\bar X(r),\bar X(s),\Th(r,\bar X(r))\big)\si(s,\bar X(s),\bar\Psi(s,\bar X(s))),
\end{align*}
provided the equilibrium HJB equation \rf{HJB-BSVIE-Main} admits a classical solution $(\Th(\cd),\Th^0(\cd))$.
Thus, the form  of the equilibrium HJB equation \rf{HJB-BSVIE-Main} is very natural, though it seems a little bit complicated.
Using the local optimality condition \rf{local-opt}, the  equilibrium strategy value $\bar\Psi(s,x)$ is determined by
$\Th(s,x)$ and the diagonal value $\Th^0(s,s,x,x,\Th(s,x))$.
\end{remark}

\subsection{Well-posedness of the Equilibrium HJB Equation}

In this subsection, we will present the well-posedness of equation \rf{HJB-BSVIE-Main} to some extent.
Note that \rf{HJB-BSVIE-Main} is a coupled system of fully nonlinear  parabolic PDEs with a non-local feature,
whose well-posedness  is a very challenging problem.
Indeed, even for the equilibrium HJB equation associated with the time-inconsistent problems for SDEs (see Yong \cite{Yong2012}),
the well-posedness is still widely open, except in the small time case (see Lei--Pun \cite{Lei-Pun2021}).
For the small time case, one can construct  a contraction mapping
in a Banach space depending on the terminal conditions and does not need to establish a prior estimate,
which is exactly the main difficulty in establishing the well-posedness.

\ms

We now assume that
\bel{def-si}\si(s,x,u)=\si(s,x),\q (s,x,u)\in[0,T]\times\dbR\times U.\ee
In this case, we denote
\begin{align*}
&\BH(s,x,u,\BTh)=pb(s,x,u)+g(s,x,u,\th,p\si(s,x)),\qq
(s,x,u,\BTh)\in\cA[0,T],\\
&\BH^0(t,s,\ti x,x,u,\BTh,\BTh^0)=p^0b(s,x,u)\1n+\1n g^0(t,s,\ti x,x,u,\th,p\si(s,x),\th^0,p^0\si(s,x)),\\
&\h\BH^0(t,s,\ti x,x,u,\BTh,\BTh^0,q^0)=\BH^0(t,s,\ti x,x,u,\BTh,\BTh^0)+q^0\BH(s,x,u,\BTh),\\
&\qq\qq\qq\qq\qq\qq\qq\q(t,s,\ti x,x,u,\BTh,\BTh^0)\in\cA^0[0,T],~q^0\in\dbR^{1\times m}.
\end{align*}
Then the mapping $\psi(\cd)$ determined by \ref{ass:H3} can be determined by the following:
\begin{align}
\h\BH^0(t,s,\ti x,x,\psi(t,s,\ti x,x,\BTh,\BTh^0,q^0),\BTh,\BTh^0,q^0)
=\inf_{u\in U}\h\BH^0(t,s,\ti x,x,u,\BTh,\BTh^0,q^0),\nn\\
\forall(t,s,\ti x,x,u,\BTh,\BTh^0)\in\cA^0[0,T],~q^0\in\dbR^{1\times m}.
\label{def-psi-si}
\end{align}
Namely, in the current case, $\psi(\cd)$ is independent of $P$ and $P^0$. Then
\rf{HJB-BSVIE-Main} is reduced to the following system of  semilinear PDEs:
\bel{HJB-BSVIE-si}\3n\3n\left\{\2n\ba{ll}
\ns\ds\Th_s(s,x)\1n+\1n\tr[\Th_{xx}(s,x)a(s,x)]\1n+\1n\Th_x(s,x)b(s,x,\bar\Psi(s,x))\\
\ns\ds\q\1n+\1n g(s,x,\bar\Psi(s,x),\Th(s,x),\Th_x(s,x)\si(s,x))\1n=\1n0,\\
\ns\ds\Th^0_s(t,s,\ti x,x,y)+\tr[\Th^0_{xx}(t,s,\ti x,x,y)a(s,x)]+\Th^0_x(t,s,\ti x,x,y)b(s,x,\bar\Psi(s,x))\\
\ns\ds\q+g^0\big(t,s,\ti x,x,\bar\Psi(s,x),\Th(s,x),\Th_x(s,x)\si(s,x),\Th^0(s,s,x,x,\Th(s,x)),
\Th^0_x(t,s,\ti x,x,y)\si(s,x)\big)=0,\\
\ns\ds\Th(T,x)=h(x),\q\Th^0(t,T,\ti x,x,y)=h^0(t,\ti x,x,y),\ea\right.
\ee
with
\begin{align}
\bar\Psi\1n(s,x)\1n=\1n\psi\big(s,s,x,x,\1n\Th(s,x),\1n\Th_x(s,x),\1n\Th^0(s,s,x,x,\1n\Th(s,x)),\1n \Th^0_x(s,s,x,x,\1n\Th(s,x)),\Th^0_y(s,s,x,x,\1n\Th(s,x))\big),\nn\\
 (s,x)\in[0,T]\times\dbR^n.\label{barPsi}
\end{align}
Now, we denote
\begin{align*}
&\ti b\big(s,x,\Th(s,x),\Th_x(s,x),\Th^0(s,s,x,x,\Th(s,x)),\Th^0_x(s,s,x,x,\Th(s,x)),
\Th^0_y(s,s,x,x,\Th(s,x))\big)\\
&\q:= b\big(s,x,\psi\big(s,s,x,x,\Th(s,x),\Th_x(s,x),\Th^0(s,s,x,x,\Th(s,x)),
\Th^0_x(s,s,x,x,\Th(s,x)),\\
&\qq\qq\Th^0_y(s,s,x,x,\Th(s,x))\big)\big),\\
&\ti g\big(s,x,\Th(s,x),\Th_x(s,x),\Th^0(s,s,x,x,\Th(s,x)),\Th^0_x(s,s,x,x,\Th(s,x)),
\Th^0_y(s,s,x,x,\Th(s,x))\big)\\
&\q:= g\big(s,x,\psi\big(s,s,x,x,\Th(s,x),\Th_x(s,x),\Th^0(s,s,x,x,\Th(s,x)),
\Th^0_x(s,s,x,x,\Th(s,x)),\\
&\qq\qq\Th^0_y(s,s,x,x,\Th(s,x))\big),\Th(s,x),\Th_x(s,x)\si(s,x)\big),\\
&\ti g^0\big(t,s,\ti x,x,\Th(s,x),\Th_x(s,x),\Th^0(s,s,x,x,\Th(s,x)),\Th^0_x(s,s,x,x,\Th(s,x)),\\
&\qq\Th^0_y(s,s,x,x,\Th(s,x)),\Th^0_x(t,s,\ti x,x,y)\big)\\
&\q:= g^0\big(t,s,\ti x,x,\psi\big(s,s,x,x,\Th(s,x),\Th_x(s,x),\Th^0(s,s,x,x,\Th(s,x)),
\Th^0_x(s,s,x,x,\Th(s,x)),\\
&\qq\qq\Th^0_y(s,s,x,x,\Th(s,x))\big),\Th(s,x),\Th_x(s,x)\si(s,x),\Th^0_x(t,s,\ti x,x,y)\si(s,x)\big).
\end{align*}
Because of the above dependence, we may write the above \rf{HJB-BSVIE-si} as follows:
\bel{HJB-si-ti}\3n\3n\left\{\2n\ba{ll}
\ds\Th_s(s,x)+\tr[\Th_{xx}(s,x)a(s,x)]+\Th_x(s,x)\\
\ns\ds\q \cd\ti b\big(s,x,\Th(s,x),\Th_x(s,x),\Th^0(s,s,x,x,\Th(s,x)),\Th^0_x(s,s,x,x,\Th(s,x)),
\Th^0_y(s,s,x,x,\Th(s,x))\big)\\
\ns\ds\q+\ti g\big(s,x,\Th(s,x),\Th_x(s,x),\Th^0(s,s,x,x,\Th(s,x)),\Th^0_x(s,s,x,x,\Th(s,x)),\Th^0_y(s,s,x,x,\Th(s,x))\big)=0,\\
\ns\ds\Th^0_s(t,s,\ti x,x,y)+\tr[\Th^0_{xx}(t,s,\ti x,x,y)a(s,x)]+\Th^0_x(t, s,\ti x, x, y)\\
\ns\ds\q\cd\ti b\big(s, x,\Th(s, x),\Th_x(s, x),\Th^0(s, s, x, x,\Th(s, x)),
\Th^0_x(s, s,x, x,\Th(s, x)),\Th^0_y(s, s, x, x,\Th(s, x))\big)\\
\ns\ds\q+\ti g^0\big(t,s,\ti x,x,\Th(s,x),\Th_x(s,x),\Th^0(s,s,x,x,\Th(s,x)),\Th^0_x(s,s,x,x,\Th(s,x)),\\
\ns\ds\qq\Th^0_y(s,s,x,x,\Th(s,x)),\Th^0_x(t,s,\ti x,x,y)\big)=0,\\
\ns\ds\Th(T,x)=h(x),\qq\Th^0(t,T,\ti x,x,y)=h^0(t,\ti x,x,y).\ea\right.\ee
Although the above looks complicated, it actually has a usual HJB equation form, which can be abbreviated as follows, if we suppress some lengthy arguments:
\bel{HJB-si-ti*}\3n\3n\left\{\2n\ba{ll}
\ds\Th_s(s,x)+\tr[\Th_{xx}(s,x)a(s,x)]+\Th_x(s,x)\ti b(s,x)+\ti g(s,x)=0,\\
\ns\ds\Th^0_s(t,s,\ti x,x,y)+\tr[\Th^0_{xx}(t,s,\ti x,x,y)a(s,x)]+\Th^0_x(t,s,\ti x,x,y)\ti b(s,x)+\ti g^0(t,s,\ti x,x,y)=0,\\
\ns\ds\Th(T,x)=h(x),\qq\Th^0(t,T,\ti x,x,y)=h^0(t,\ti x,x,y).\ea\right.\ee
For the above system, we introduce the following assumption.

\begin{taggedassumption}{(H4)}\label{ass:H4}
The mappings
$$\left\{\ba{ll}
\ds(s,x)\mapsto a(s,x),\q x\mapsto h(x),\q(t,\ti x,x,y)\mapsto h^0(t,\ti x,x,y),\\
\ns\ds(s,x,\BTh,\BTh^0,q^0)\mapsto\ti b(s,x,\BTh,\BTh^0,q^0),\q (s,x,\BTh,\BTh^0,q^0)\mapsto \ti g(s,x,\BTh,\BTh^0,q^0),\\
\ns\ds(t,s,\ti x,x,\BTh,\BTh^0,q^0,\h p^0)\mapsto\ti g^0(t,s,\ti x,x,\BTh,\BTh^0,q^0,\h p^0)\ea\right.$$
are bounded, have all required differentiability with bounded derivatives.
Moreover, there exist two constants $\l_0,\l_1>0$ such that
$$\l_0 I\les a(t,x)\les \l_1 I,\q \forall(t,x)\in[0,T]\times\dbR^n.$$
\end{taggedassumption}

\begin{theorem}\label{thm:HJB}
Let {\rm\ref{ass:H4}} hold. Then the equilibrium HJB equation \rf{HJB-si-ti} admits a unique classical solution.
\end{theorem}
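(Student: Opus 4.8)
The plan is to treat \rf{HJB-si-ti} — equivalently its abbreviated form \rf{HJB-si-ti*} — as a coupled, nonlocal system of uniformly parabolic equations and to solve it by a continuation argument built on local well-posedness together with a global a priori Schauder estimate. The structural observation I would exploit is the following. Under \ref{ass:H4} the composite coefficients $a,\ti b,\ti g,\ti g^0$ are bounded with bounded derivatives (hence globally Lipschitz), $\psi$ inherits the same by \ref{ass:H3}, and each equation is uniformly parabolic with ellipticity constants $\l_0,\l_1$. The $\Th$-equation is a \emph{weakly coupled} semilinear system for the $\dbR^m$-valued $\Th$: its principal part is diagonal ($\Th^k_s+\tr[\Th^k_{xx}a]$ for each $k$) and its lower-order part depends only on $(\Th,\Th_x)$ and on the \emph{diagonal trace} $w(s,x):=\big(\Th^0(s,s,x,x,\Th(s,x)),\,\Th^0_x(s,s,x,x,\Th(s,x)),\,\Th^0_y(s,s,x,x,\Th(s,x))\big)$ of $\Th^0$. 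The $\Th^0$-equation, for each fixed parameter triple $(t,\ti x,y)$, is a \emph{scalar} semilinear parabolic equation in $(s,x)$ whose coefficients depend on $(\Th,\Th_x,w)$ and — only Lipschitz-continuously — on the current gradient $\Th^0_x(t,s,\ti x,x,y)$; the parameter $y$ enters solely through the terminal datum $h^0(t,\ti x,x,y)$. Thus, once $\Th$ and $w$ are frozen, the whole family $\{\Th^0(t,\cd,\ti x,\cd,y)\}$ decouples over $(t,\ti x,y)$, and the entire nonlocal coupling of the system is concentrated in the single object $w$; the diagonal $\{t=s,\ \ti x=x\}$ and the parameter-derivative $\Th^0_y$ are the only nonstandard features.

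\emph{Step 1 (local well-posedness).} On a short interval $[T-\d,T]$ I would set up a contraction: take $(\Th,\Th^0)$ in a closed ball of $C^{1+\alpha/2,\,2+\alpha}$ (for $\Th$) times the analogous space for $\Th^0$ — the latter also encoding $C^1$-dependence on the parameters $(t,\ti x,y)$ — centered at the terminal data; form $w$ from the candidate; and solve, by linear parabolic Schauder theory, the $\Th$-equation and then the (now genuinely decoupled) family of $\Th^0$-equations with these frozen coefficients. The solution map gains a full derivative, so for $\d$ small depending only on the bounds in \ref{ass:H3}--\ref{ass:H4} it maps the ball into itself and contracts — the merely Lipschitz gradient-dependence of $\ti g,\ti g^0$ being absorbed by the small factor $\d$. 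This gives a unique classical solution on $[T-\d,T]$; this is the routine part noted in the text.

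\emph{Step 2 (a priori estimates — the crux).} For an arbitrary classical solution on an arbitrary $[\t,T]\subseteq[0,T]$ I would run the parabolic bootstrap, verifying at each stage that the coupling does not break it and that the constants depend only on $T$ and the \ref{ass:H4}-bounds, never on $\t$ or on the solution: (i) since $\ti g$ and $\ti g^0$ are bounded (\ref{ass:H4}) and $\ti b$ is bounded, the maximum principle — applied componentwise to the weakly coupled system — gives a uniform $L^\i$ bound on $\Th$ and on $\Th^0$ (uniformly in the parameters); (ii) De Giorgi--Nash--Moser for uniformly parabolic equations with bounded measurable lower-order coefficients and bounded right-hand side upgrades this to uniform $C^{\alpha/2,\alpha}$ bounds; (iii) interior/global $L^p$ (Calder\'on--Zygmund) parabolic estimates yield $\Th,\Th^0\in W^{1,2}_p$ for every $p<\i$, hence uniform $C^{(1+\alpha)/2,\,1+\alpha}$ bounds, at which point the composite coefficients of \emph{both} equations — smooth functions of $\Th,\Th_x$ and of the diagonal trace of $\Th^0,\Th^0_x,\Th^0_y$ — become uniformly $C^{\alpha}$ in $(s,x)$; (iv) differentiating the $\Th^0$-equation in the parameters $(t,\ti x,y)$ (legitimate since $h^0$ is $C^1$ in these with bounded derivatives) and reapplying (ii)--(iii) gives uniform $C^{\alpha}$ control of $\Th^0_y$, $\Th^0_{\ti x}$, $\Th^0_t$, so that $w$ is uniformly $C^{\alpha}$; (v) classical Schauder then produces uniform $C^{1+\alpha/2,\,2+\alpha}$ bounds on $\Th$ and on the $\Th^0$-family. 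The essential point is that this bootstrap \emph{closes} despite $w$ feeding a first derivative of $\Th^0$ into the $\Th$-equation and $\Th_x$ into the $\Th^0$-equation: the two chains are run in tandem, and at every stage the quantities entering the coefficients are already controlled by the previous stage.

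\emph{Step 3 (globalization and uniqueness).} Because the bound of Step 2 is independent of the left endpoint, the local solution on $[T-\d,T]$ has $\Th(T-\d,\cd)$ and $\Th^0(\cd,T-\d,\cd,\cd,\cd)$ bounded in $C^{2+\alpha}$ by the same constant, so Step 1 can be restarted at $T-\d$ with the \emph{same} $\d$; after finitely many steps one reaches $[0,T]$ and obtains a global classical solution (alternatively, the Step-2 estimate may be fed into a Leray--Schauder degree argument for existence directly). For uniqueness, two classical solutions on $[0,T]$ both obey the Step-2 regularity bounds; they coincide on $[T-\d,T]$ by the local uniqueness of Step 1; and on each successive $\d$-interval a Gronwall estimate for the difference of the two PDEs — using the Lipschitz continuity of $\psi,\ti b,\ti g,\ti g^0$ from \ref{ass:H3}--\ref{ass:H4}, the uniform regularity already in hand, and the finite horizon — propagates the agreement backward, giving uniqueness on all of $[0,T]$. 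The well-posedness of \rf{FK-PDE} then follows as the special case in which the minimization in \ref{ass:H3} is vacuous, as anticipated after \autoref{Prop:FK-BSVIE}. I expect the genuine obstacle to be Step 2: obtaining the Schauder estimate with constants depending only on $T$ and the data for the \emph{coupled, nonlocal} system — in particular, handling the parameter-derivative $\Th^0_y$ and the restriction to the diagonal $\{t=s,\ \ti x=x\}$ without any loss of regularity, and ensuring that the two interleaved bootstraps do not form a vicious circle. This is precisely the a priori estimate that the small-time contraction referred to in the text manages to sidestep.
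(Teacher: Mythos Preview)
Your proposal is sound and shares the paper's three-step architecture (local contraction, global a priori estimate, iteration to $[0,T]$), but the execution of the a priori estimate differs substantially. The paper does \emph{not} invoke De Giorgi--Nash--Moser, Calder\'on--Zygmund, or abstract Schauder theory at all. Instead it works entirely through the explicit fundamental-solution (Gaussian heat-kernel) representation: after freezing $(\theta,\theta^0)$, a chain of lemmas writes $\Th,\Th^0,\Th_x,\Th^0_x,\Th^0_y,\Th^0_{xy},\Th^0_{yy},\dots$ as heat-kernel integrals and estimates them directly --- first $C^{0,1}$ bounds (\autoref{lem:HJB-Th-x}), then parameter-regularity in $(t,\ti x,y)$ by differentiating the integral formulas (\autoref{lem:HJB-par}), then H\"older bounds via the change of variables $\mu\mapsto x-\sqrt{r-s}\,\ti\mu$ and the pointwise kernel estimates \rf{Th-x-alpha} (Propositions \ref{Prop:Holder}, \ref{Prop:Holder-t}). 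The contraction is run in the ball carved out by these bounds, and the global extension is a single Volterra-type integral inequality for $\|\Th_x(s,\cd)\|_\a+\|\Th^0_x(t,s,\ti x,\cd,y)\|_\a$ whose kernel $2K/\sqrt{r-s}$ is integrable on $[0,T]$.

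Your abstract bootstrap should also close; the decisive structural point (which you identify) is that under \ref{ass:H4} the composites $\ti b,\ti g,\ti g^0$ are \emph{globally bounded functions of all their arguments}, so the stages up through $W^{1,2}_p\hookrightarrow C^{1+\a}$ need no control whatsoever on the diagonal trace $w$ --- the nonlocal feedback first matters only at the Schauder stage, by which time $\Th^0_x$ and $\Th^0_y$ are already in hand from (iii) and (iv). One caveat: the paper's proof uses only $\|h^0\|_{C^{\a/2,\a,1,2}}$, i.e.\ $h^0$ and $\ti g^0$ are only H\"older in $t$ and $\ti x$, so your ``differentiate in $(t,\ti x)$'' step should be done by difference quotients rather than true derivatives. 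What the paper's route buys is self-containment, explicit constants, and a one-line globalization via the Volterra inequality; what your route buys is modularity and a lighter computational burden, at the price of importing heavy black boxes and verifying at each stage that the nonlocal coupling does not obstruct them.
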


The proof of \autoref{thm:HJB} is technical and lengthy, which will be given in Section 7. Note that \rf{HJB-si-ti} contains the diagonal term $\Th^0_y(s,s,x,x,\Th(s,x))$.
To our best knowledge, it is the first time that such an equilibrium HJB equation is derived.

\section{Comparisons with the Existing Results}\label{sec:comparison}

\subsection{Comparison with Peng \cite{Peng19971}}

In \cite{Peng19971}, Peng established the dynamic programming principle (DPP, for short) for the optimal control problem with the state equation
\bel{state-Peng}\left\{\2n\ba{ll}
\ds dX(s)=b(s,X(s),u(s))ds+\si(s,X(s),u(s))dW(s),\q s\in[t,T],\\
\ns\ds dY(s)=-g(s,X(s),u(s),Y(s),Z(s))ds+Z(s)dW(s),\q s\in[t,T],\\
\ns\ds X(t)=\xi,\q Y(T)=h(X(T)),\ea\right.\ee
and the cost functional
\bel{Cost-Peng}J(t,\xi;u(\cd))=Y(t),\ee
with the backward process $Y(\cd)$ being one-dimensional. Such a problem is denoted by Problem (R).
It turns out that in this case, the optimal control problem is time-consistent.
The following provides a time-consistency analysis of  Problem (R) from the viewpoint of Pontryagin's maximum principle.

\begin{proposition}\label{prop:TI-R}
Suppose that $\bar u^{t,\xi}(\cd)$ is an optimal control of {\rm Problem (R)} with the initial pair $(t,\xi)\in\sD$.
Then $\bar u^{t,\xi}(\cd)$ satisfies the necessary condition \rf{MP-BSDE*} for any $\t\in(t,T]$.
\end{proposition}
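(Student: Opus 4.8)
The plan is to reduce the proposition to the time-consistency of Problem (R) together with Pontryagin's maximum principle. First I would observe that Problem (R) is precisely the instance of Problem (N) in Bolza form \rf{Bolza1}--\rf{Bolza2} obtained by taking $h^0(r,\ti x,x,y)=y$ and $g^0\equiv0$: for these data the BSVIE \rf{cost-BSVIE1} collapses to $Y^0(r)=Y(r)-\int_r^TZ^0(r,s)dW(s)$, and taking $\dbE_r$ on both sides gives $Y^0(r)=Y(r)$, so $J(t,\xi;u(\cd))=Y(t)$ as in \rf{Cost-Peng}. Consequently the co-state system \rf{Optimality-system-BSDE} and the stationarity condition \rf{MP-BSDE} specialize to: $\bar g^{0,t,\xi}_u\equiv0$, the $\cX$-equation becomes the homogeneous linear SDE $d\cX^{t,\xi}=\bar g_y^{t,\xi}\cX^{t,\xi}\,ds+\bar g_z^{t,\xi}\cX^{t,\xi}\,dW$, and the data are $\cX^{t,\xi}(t)=\dbE_t[1]=1$, $\cY^{t,\xi}(T)=\bar h_x^{t,\xi}(T)^\top\cX^{t,\xi}(T)$; likewise for \rf{MP-BSDE*}--\rf{Optimality-system-BSDE-tau}.

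Next I would invoke Peng \cite{Peng19971}: the DPP holds for Problem (R), so Problem (R) is time-consistent, and hence for every $\t\in(t,T]$ the restriction $\bar u^{t,\xi}(\cd)\big|_{[\t,T]}$ is an optimal control of Problem (R) for the initial pair $(\t,\bar X^{t,\xi}(\t))$, with $\bar X^{\t,\bar X^{t,\xi}(\t)}(\cd)=\bar X^{t,\xi}(\cd)$ on $[\t,T]$. Applying the Pontryagin maximum principle for optimal control of (decoupled) FBSDEs (see \cite{Peng1993,Hu-Ji-Xue2018,Hu-Ji-Xue2019}) to this optimal pair on the horizon $[\t,T]$ then yields exactly \rf{MP-BSDE*} subject to \rf{Optimality-system-BSDE-tau}, which is the assertion.

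For transparency I would also record the direct route, which both confirms that the co-state system of the $\t$-problem is genuinely the ``restriction'' of that of the $t$-problem and pinpoints why this is special to Problem (R). Since $g^0\equiv0$, the equation for $\cX^{t,\xi}$ is a stochastic exponential, so $\cX^{t,\xi}(\t)>0$ a.s., and by uniqueness $\cX^{t,\xi}(s)=\cX^{t,\xi}(\t)\,\cX^{\t,\bar X^{t,\xi}(\t)}(s)$ on $[\t,T]$; substituting this into the linear BSDE for $(\cY^{t,\xi},\cZ^{t,\xi})$ and using that $\cX^{t,\xi}(\t)$ is $\cF_\t$-measurable gives $\cY^{t,\xi}(s)=\cX^{t,\xi}(\t)\,\cY^{\t,\bar X^{t,\xi}(\t)}(s)$ and $\cZ^{t,\xi}(s)=\cX^{t,\xi}(\t)\,\cZ^{\t,\bar X^{t,\xi}(\t)}(s)$ there. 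Dividing \rf{MP-BSDE}, restricted to $[\t,T]$, by the strictly positive $\cX^{t,\xi}(\t)$ recovers \rf{MP-BSDE*}. The only step that needs real care is this compatibility of the two co-state systems; it rests on the $\cX$-equation being homogeneous and on its initial datum being the constant $1$ --- a coincidence that disappears for general Problem (N), where \rf{Optimality-system-BSDE} carries the source $\bar g^{0,t,\xi}_y(s)^\top$ and the initial value $\dbE_t[\bar h^{0,t,\xi}_y(t)^\top]$ depends on the whole optimal trajectory, which is exactly why \rf{MP-BSDE*} fails in that generality. Beyond this, the argument is merely Peng's theorem plus the standard FBSDE maximum principle, so no substantial obstacle remains.
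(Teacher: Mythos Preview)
Your proposal is correct, and your ``direct route'' in the final paragraph is essentially the paper's proof verbatim: specialize the co-state system to $h^0(x,y)=y$, $g^0\equiv0$, observe that $\cX^{t,\xi}$ is a strictly positive stochastic exponential with $\cX^{t,\xi}(t)=1$, define the $\t$-adjoints by rescaling with $[\cX^{t,\xi}(\t)]^{-1}$, verify they solve \rf{Optimality-system-BSDE-tau}, and divide \rf{MP-BSDE} through by $\cX^{t,\xi}(\t)$ to obtain \rf{MP-BSDE*}.

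The paper does \emph{not} take your first route via Peng's DPP. Its explicit purpose (stated just before the proposition) is to give ``a time-consistency analysis of Problem (R) from the viewpoint of Pontryagin's maximum principle,'' so it deliberately avoids invoking time-consistency as a black box and instead derives \rf{MP-BSDE*} directly from the homogeneity and positivity of the $\cX$-equation. Your DPP argument is a legitimate alternative, but it somewhat inverts the logical flow the paper intends: the paper wants to \emph{explain} time-consistency through the structure of the adjoint system, whereas your first route assumes it.
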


\begin{proof}
By \rf{MP-BSDE} and the fact that  $h^0(x,y)=y;(x,y)\in\dbR^n\times\dbR$ and $g^0(\cd)\equiv0$, we have
\bel{MP-BSDE1}
\bar b_u^{t,\xi}(s)^\top\h Y^{t,\xi}(s)+\bar\si_u^{t,\xi}(s)^\top\h Z^{t,\xi}(s)+\bar g^{t,\xi}_u(s)^\top\h X^{t,\xi}(s)=0,\q s\in[t,T],
\ee
with
\bel{Optimality-system-BSDE1}\left\{\2n\ba{ll}
\ds d\h Y^{t,\xi}(s)=-\big[\bar g^{t,\xi}_x(s)^\top\h X^{t,\xi}(s)+\bar b^{t,\xi}_x(s)^\top\h Y^{t,\xi}(s)+\bar\si^{t,\xi}_x(s)^\top\h Z^{t,\xi}(s)\big]ds+\h Z^{t,\xi}(s)dW(s),\\
\ns\ds d\h X^{t,\xi}(s)=\bar g^{t,\xi}_y(s)^{\top}\h X^{t,\xi}(s)ds+\bar g^{t,\xi}_z(s)^{\top}\h X^{t,\xi}(s)dW(s),\\
\ns\ds\h Y^{t,\xi}(T)=\bar h^{t,\xi}_x(T)^\top\h X^{t,\xi}(T),\q \h X^{t,\xi}(t)=1.\ea
\right.\ee
Clearly, $\h X^{t,\xi}(s)$ is non-zero for any $s\in[t,T]$. For any $\t\in(t,T]$, define
\bel{hat-X-0t}\left\{
\begin{aligned}
\h X^{\t,\bar X^{t,\xi}(\t)}(s)=\h X^{t,\xi}(s)[\h X^{t,\xi}(\t)]^{-1},\q s\in[\t,T],\\
\h Y^{\t,\bar X^{t,\xi}(\t)}(s)=\h Y^{t,\xi}(s)[\h X^{t,\xi}(\t)]^{-1},\q s\in[\t,T],\\
\h  Z^{\t,\bar X^{t,\xi}(\t)}(s)=\h Z^{t,\xi}(s)[\h X^{t,\xi}(\t)]^{-1},\q s\in[\t,T].
\end{aligned}\right.\ee
From \rf{Optimality-system-BSDE1}, it is easily seen that
$(\h X^{\t,\bar X^{t,\xi}(\t)}(\cd),\h Y^{\t,\bar X^{t,\xi}(\t)}(\cd),\h Z^{\t,\bar X^{t,\xi}(\t)}(\cd))$
is the unique solution to the following FBSDE on $[\t,T]$:
$$\left\{\begin{aligned}
d\h Y^{\t,\bar X^{t,\xi}(\t)}(s)&=-\big[\bar g^{t,\xi}_x(s)^\top\h X^{\t,\bar X^{t,\xi}(\t)}(s)
+\bar b^{t,\xi}_x(s)^\top\h Y^{\t,\bar X^{t,\xi}(\t)}(s)+\bar\si^{t,\xi}_x(s)^\top\h Z^{\t,\bar X^{t,\xi}(\t)}(s)\big]ds\\
&\q+\h Z^{\t,\bar X^{t,\xi}(\t)}(s)dW(s),\\
d\h X^{\t,\bar X^{t,\xi}(\t)}(s)&=\bar g^{t,\xi}_y(s)^{\top}\h X^{\t,\bar X^{t,\xi}(\t)}(s)ds
+\bar g^{t,\xi}_z(s)^{\top}\h X^{\t,\bar X^{t,\xi}(\t)}(s)dW(s),\\
\h Y^{\t,\bar X^{t,\xi}(\t)}(T)&=\bar h^{t,\xi}_x(T)^\top\h X^{\t,\bar X^{t,\xi}(\t)}(T),\q \h X^{\t,\bar X^{t,\xi}(\t)}(\t)=1.
\end{aligned}\right.$$
Then by \rf{MP-BSDE1}, we get
\begin{align*}
&\bar g^{t,\xi}_u(s)^\top\h X^{\t,\bar X^{t,\xi}(\t)}(s)+\bar b^{t,\xi}_u(s)^\top\h Y^{\t,\bar X^{t,\xi}(\t)}(s)
+\bar\si^{t,\xi}_u(s)^\top\h Z^{\t,\bar X^{t,\xi}(\t)}(s)\\
&\q=\big[\bar g^{t,\xi}_u(s)^\top\h X^{t,\xi}(s)+\bar b^{t,\xi}_u(s)^\top\h Y^{t,\xi}(s)
+\bar\si^{t,\xi}_u(s)^\top\h Z^{t,\xi}(s)\big][\h X^{t,\xi}(\t)]^{-1}=0,\q s\in[\t,T],~ \t\in[t,T],
\end{align*}
which implies the necessary condition \rf{MP-BSDE*} holds.
\end{proof}

We remark that Problem (R) is a very special case of Problem (N)
and the relationship \rf{hat-X-0t} does not hold in general.
When $Y(\cd)$ is multi-dimensional, the problem is time-inconsistent in general,
even if $J(t,\xi;u(\cd))$ is a linear function of $Y(t)$. Here is a simple example.

\begin{example}\label{example-y}
Consider the  (degenerate) FBSDE state equation
$$\left\{\2n\ba{ll}
\ds\dot X(s)=0,\q s\in[t,T],\\
\ns\ds\dot{Y}_1(s)=u(s),\q s\in[t,T],\\
\ns\ds\dot{Y}_2(s)=-Y_1(s)-u(s)-|u(s)|^2,\q s\in[t,T],\\
\ns\ds X(t)=x,\q Y_1(T)=0,\q Y_2(T)=0,\ea\right.$$
with the cost functional
$$J(t,x;u(\cd))=Y_2(t).$$
Then
\begin{align*}
J(t,x;u(\cd))&=\int_t^T [Y_1(s)+u(s)+|u(s)|^2]ds=\int_t^T\[u(s)+|u(s)|^2 -\int_s^T u(r)dr\]ds\\
&=\int_t^T\[(1+t-s)u(s)+|u(s)|^2 \]ds.
\end{align*}
Thus, the unique optimal control $\bar u(\cd\,;t,x)$ for initial pair $(t,x)$ is given by
$$\bar u(s)\equiv\bar u(s;t,x)={s-t-1\over 2},\qq s\in[t,T].$$
And for any $0\les t<\t\les T$, the optimal control at $(\t,\bar X(\t))\equiv(\t,x)$ is given by
$$\ti u(s)\equiv\ti u(s;\t,\bar X(\t))={s-\t-1\over2},\qq s\in[\t,T].$$
Clearly,
$$
\bar u(s)\neq \ti u(s),\q s\in[\t,T],
$$
which implies that the problem is time-inconsistent.
\end{example}

\ms
We now show that in the case of \rf{state-Peng}--\rf{Cost-Peng} (with $m=1$),
the equilibrium HJB equation \rf{HJB-BSVIE-Main} is reduced to the classical HJB equation
associated with  recursive stochastic optimal control problems.
In fact, the associated equilibrium HJB equation is given as follows
\bel{HJB-BSVIE-Peng}\left\{\2n\ba{ll}
\ds\Th_s(s,x)+\Th_x(s,x)b(s,x,\bar\Psi(s,x))+\tr[\Th_{xx}(s,x)a(s,x,
\bar\Psi(s,x))]\\
\ns\ds\q+g(s,x,\bar\Psi(s,x),\Th(s,x),\Th_x(s,x)\si(s,x,\bar\Psi(s,x)))=0,\\
\ns\ds \Th^0_s(t,s,\ti x,x,y)+\Th^0_x(t,s,\ti x,x,y)b(s,x,\bar\Psi(s,x))+\tr[\Th^0_{xx}
(t,s,\ti x,x,y)a(s,x,\bar\Psi(s,x))]=0,\\
\ns\ds\Th(T,x)=h(x),\qq\Th^0(t,y,z,T,x)=y,\ea\right.\ee
where $\bar\Psi(\cd,\cd)$ satisfies the local optimality condition \rf{def-psi-si}.
Clearly, $\Th^0(\cd)\equiv y$ is a classical solution to the second PDE in \rf{HJB-BSVIE-Peng}.
Thus, the local optimality condition \rf{def-psi-si} can be rewritten as follows:
For any $(s,x)\in[0,T]\times\dbR^n$,
\begin{align*}
&\Th_x(s,x)b(s,x,\bar\Psi(s,x))\1n+\1n\tr[\Th_{xx}(s,x)a(s,x,\bar\Psi(s,x))]
\1n+\1ng(s,x,\bar\Psi(s,x),\Th(s,x),\Th_x(s,x)\si(s,x,\bar\Psi(s,x)))\\
&\q=\inf_{u\in U}\Big\{\Th_x(s,x)b(s,x,u)+\tr[\Th_{xx}(s,x)a(s,x,u)]+g
(s,x,u,\Th(s,x),\Th_x(s,x)\si(s,x,u))\Big\}.
\end{align*}
Then the equilibrium value function can be given by
$$\Th^0(s,s,x,x,\Th(s,x))=\Th(s,x),\qq(s,x)\in[0,T]\times\dbR^n,$$
with $\Th(\cd\,,\cd)$ being uniquely determined by
$$\left\{\2n\ba{ll}
\ds\Th_s(s,x)+\inf_{u\in U}\Big\{
\Th_x(s,x)b(s,x,u)+\tr[\Th_{xx}(s,x)a(s,x,u)],\\
\ns\ds\qq\qq\qq+g(s,x,u,\Th(s,x),\Th_x(s,x)\si(s,x,u))\Big\}=0,\q (s,x)\in[0,T]\times\dbR^n,\\
\ns\ds\Th(T,x)=h(x),\q x\in\dbR^n,\ea\right.$$
which is exactly the classical HJB equation derived by Peng \cite{Peng19971}.

\subsection{Comparison with  Yong \cite{Yong2012,Yong2014},  Wei--Yong--Yu \cite{Wei-Yong-Yu2017}, and Wang--Yong \cite{Wang-Yong2021}}

As an equilibrium recursive version of  \cite{Yong2012,Yong2014,Wei-Yong-Yu2017},
Wang--Yong \cite{Wang-Yong2021} considered the optimal control problems with the state equation
\bel{state-Yong}\left\{\2n\ba{ll}
\ds dX(s)=b(s,X(s),u(s))ds+\si(s,X(s),u(s))dW(s),\q s\in[t,T],\\
\ns\ds X(t)=\xi,\ea\right.\ee
and the cost functional
\bel{cost-BSVIE-Yong}J(t,\xi;u(\cd))=Y^0(t),\ee
where $Y^0(\cd)$ is uniquely determined by the following BSVIE:
\bel{cost-BSVIE1-Yong}Y^0(r)=h^0(r,X(T))
+\int_r^Tg^0(r,s,X(s),u(s),Y^0(s),Z^0(r,s))ds-\int_r^TZ^0(r,s)dW(s),\q r\in[t,T].\ee
Then by comparing the above with \rf{cost-BSVIE0} and \rf{cost-BSVIE1},
we see that in our problem, the cost functional can additionally depend on the initial state $X(r)$
and the backward process $(Y(\cd),Z(\cd))$.
If the diffusion term of \rf{state-Yong} does not depend on the control $u(\cd)$,
the associated equilibrium HJB equation admits the followng form:
\bel{HJB-Yong}\left\{\2n\ba{ll}
\ds\Th^0_s(t,s,x)+\tr[\Th^0_{xx}(t,s,x)a(s,x)]+\Th^0_x(t,s,x)\ti b\big(s,x,\Th^0(s,s,x),\Th^0_x(s,s,x)\big)\\
\ns\ds\q+\ti g^0\big(t,s,x,\Th^0(s,s,x),\Th^0_x(s,s,x),\Th^0_x(t,s,x)\big)=0,\q(t,s)\in\D^*[0,T],~x\in\dbR^n,\\
\ns\ds\Th^0(t,T,x)=h^0(t,x),\q(t,x)\in[0,T]\times\dbR^n.\ea\right.\ee
Compared with the equilibrium HJB equation \rf{HJB-Yong} derived in \cite{Yong2012,Wei-Yong-Yu2017,Wang-Yong2021},
\rf{HJB-si-ti} has the following new features:

\ms

$\bullet$
Equilibrium HJB equation \rf{HJB-si-ti} is a coupled system of parabolic PDEs.
It is interesting  that the last PDE in \rf{HJB-si-ti} is coupled with the first $m$ equations
not only through the appearance of $\Th(s,x)$ and $\Th_x(s,x)$ in the function $g^0(\cd)$,
but also through the non-local form $\Th^0(s,s,x,x,\Th(s,x))$,
$\Th_x^0(s,s,x,x,\Th(s,x))$, $\Th_y^0(s,s,x,x,\Th(s,x))$ of the unknown function $\Th^0(\cd)$.

\ms

$\bullet$
Equilibrium HJB equation \rf{HJB-si-ti} depends on the partial derivative $\Th^0_y(t,s,\ti x,x,y)$
along the ``diagonal'' points $(s,s,x,x,\Th(s,x))$,
by which we see that the backward controlled equation has a significant influence on deducing the equilibrium HJB equation.
To be more clear, we take a look at this from a probabilistic viewpoint. By the It\^{o}'s formula,  the stochastic system associated with \rf{HJB-si-ti} reads
$$\left\{\2n\ba{ll}
\ds X(t)=x\1n+\2n\int_0^t\1n\ti b(s,X(s), Y(s),Z(s)\si(s,X(s))^{-1},Y^0(s),Z^0(s,s)\si(s,X(s))^{-1},\h Y^0(s))ds\\
\ns\ds\qq\q+\2n\int_0^t\3n\si(s,X(s))dW(s),\\
\ns\ds Y(t)\1n=\1n h(X(T))\1n+\2n\int_t^T\3n\ti g(s,X(s), Y(s),Z(s)\si(s,X(s))^{-1},Y^0(s),Z^0(s,s)\si(s,X(s))^{-1}\2n,\h Y^0(s))ds\\
\ds\qq\q-\2n\int_t^T\3n Z(s)dW(s),\\
\ns\ds Y^0(t)=h^0(t,X(t),X(T),Y(t))+\int_t^T\wt g^0\big(t,s,X(t),X(s), Y(s),Z(s)\si(s,X(s))^{-1},Y^0(s),\\
\ns\ds\qq\qq Z^0(s,s)\si(s,X(s))^{-1},\h Y^0(s),Z^0(t,s)\si(s,X(s))^{-1}\big)ds-\int_t^TZ^0(t,s)dW(s),\\
\ns\ds\h Y^0(t)=h^0_y(t,X(t),X(T),Y(t))-\int_t^T\h Z^0(t,s)dW(s)+\int_t^T\wt g^0_{p^0}\big(t,s,X(t),X(s), Y(s),Z(s)\si(s,X(s))^{-1},\\
\ns\ds\qq\qq\qq Y^0(s), Z^0(s,s)\si(s,X(s))^{-1},\h Y^0(s),Z^0(t,s)\si(s,X(s))^{-1}\big) Z^0(t,s)\si(s,X(s))^{-1}ds.\ea\right.$$
Compared with \cite[Theorem 5.1]{Wang-Yong2021}, the first backward equation and the third backward equation are new.
The appearance of the first backward equation is natural, because the state system \rf{State} is a controlled FBSDE.
However, the appearance of the third backward equation is surprising.
Indeed, the process $\h Y^0(\cd)$ is introduced for providing a probabilistic representation for the term $\Th^0_y(s,\Th(s,x),x,s,x)$,
which comes from the local optimality condition of the Hamiltonian \rf{def-psi-si}.

\ms

$\bullet$
In \rf{HJB-si-ti}, there are three diagonal/non-local terms $\Th^0(s,s,x,x,\Th(s,x))$, $\Th^0_x(s,s,x,x,\Th(s,x))$ and $\Th^0_y(s,s,x,x,\Th(s,x))$. Equilibrium HJB equation \rf{HJB-si-ti} is non-local not only in the time variables $(t,s)$ (as in \cite{Yong2012,Yong2014,Wei-Yong-Yu2017,Wang-Yong2021}) but also in the space variables $(\ti x,x,y)$.
More interestingly, at the variable $y$, the diagonal/non-local terms are obtained by setting $y=\Th(s,x)$,
instead of by letting $y=x$. Thus, the non-locality of  \rf{HJB-si-ti} is much more complicated than the ones derived in \cite{Yong2012,Yong2014,Wei-Yong-Yu2017,Wang-Yong2021}.
We need to make some very careful analysis  in establishing the well-posedness.

\textcolor[rgb]{0.00,0.00,1.00}{}\subsection{Comparison with Bj\"{o}rk--Khapko--Murgoci \cite{Bjork-Khapko-Murgoci2017}}

In \cite{Bjork-Khapko-Murgoci2017}, Bj\"{o}rk, Khapko, and Murgoci
considered the optimal control problems with the state equation
$$\left\{\2n\ba{ll}
\ds dX(s)=b(s,X(s),u(s))ds+\si(s,X(s),u(s))dW(s),\\
\ns\ds X(t)=\xi,\ea\right.$$
and the cost functional
$$
J(t,x;u(\cd))=\dbE_t[\h F(X(t),X(T))]+\h G (X(t),\dbE_t[X(T)]),
$$
where $\h F(\cd)$ and $\h G(\cd)$ are given deterministic functions.
The so-called {\it extended HJB equation} derived by Bj\"{o}rk--Khapko--Murgoci  \cite{Bjork-Khapko-Murgoci2017} reads
\bel{EHJB-B}
\left\{
\begin{aligned}
&\inf_{u\in U}\Big(({\bf A}^u\h V)(t,x)-({\bf A}^u\h f)(t,x,x)+({\bf A}^u\h f^x)(t,x)-{\bf A}^u(\h G\diamond \h g)(t,x)+({\bf H}^u\h g)(t,x)\Big)=0,\\
&{\bf A}^{\h u} \h f^{\ti x}(t,x)=0,\q {\bf A}^{\h u} \h g(t,x)=0,\q (t,x)\in[0,T]\times\dbR^n,\\
& \h V(T,x)=\h F(x,x)+\h G(x,x),\q\h f^{\ti x}(T,x)=\h F(\ti x,x),\q \h g(T,x)=x,\q x,\ti x\in\dbR^n,
\end{aligned}\right.
\ee
where $\h u(\cd,\cd)$ denotes the strategy which realizes the infimum in the first equation; that is
\begin{align}
&({\bf A}^{\h u}\h V)(t,x)-({\bf A}^{\h u}\h f)(t,x,x)+({\bf A}^{\h u}\h f^x)(t,x)-{\bf A}^{\h u}(\h G\diamond \h g)(t,x)+({\bf H}^{\h u}\h g)(t,x)\nn\\
&\q=\inf_{u\in U}\Big(({\bf A}^u\h V)(t,x)-({\bf A}^u\h f)(t,x,x)+({\bf A}^u\h f^x)(t,x)-{\bf A}^u(\h G\diamond \h g)(t,x)+({\bf H}^u\h g)(t,x)\Big).
\end{align}
In the above, the following notations are used
\begin{align*}
\h f(t,x,\ti x)=\h f^{\ti x} (t,x),\q (\h G\diamond \h g)(t,x)= \h G(x,\h g(t,x)),\q {\bf H}^u\h g(t,x)=\h G_y (s,\h g(t,x)) {\bf A}^u\h g(t,x),
\end{align*}
and the operator ${\bf A}^{ u}$ is determined by
\bel{def-Au}
\dbE_t[k(t+h,X(t+h;t,x,u))]= k(t,x)+h {\bf A}^u k(t,x)+o(h),\q \forall k(\cd\,,\cd)\in C^{1,2}([0,T]\times\dbR^n;\dbR),
\ee
where $X(\cd\,;t,x,u)$ is the unique solution to the forward equation in \rf{State}.

\ms
The associated equilibrium HJB equation \rf{HJB-BSVIE-Main} reads
\bel{HJB-BSVIE-B}\left\{\2n\ba{ll}
\ds\Th_s(s,x)+\Th_x(s,x)b(s,x,\bar\Psi(s,x))+\tr[\Th_{xx}(s,x)a(s,x,\bar\Psi(s,x))]=0,\\
\ns\ds\Th^0_s(s,\ti x,x,y)+\tr[\Th^0_{xx}(s,\ti x,x,y)
a(s,x,\bar\Psi(s,x))]+\Th^0_x(s,\ti x,x,y)b(s,x,\bar\Psi(s,x))=0,\\
\ns\ds\Th(T,x)=x,\q\Th^0(T,\ti x,x,y)=\h F(\ti x,x)+\h G(\ti x,y),\ea\right.\ee
where $\bar\Psi(\cd\,,\cd)$ satisfies the {\it local optimality condition}:
\begin{align}
&\Th^0_x(t,x,x,\Th(t,x))b(t,x,\bar\Psi(t,x))+\tr[ \Th^0_{xx}(t,x,x,\Th(t,x))a(t,x,\bar\Psi(t,x))]\nn\\
&\qq+\Th^0_y(t,x,x,\Th(t,x))\big\{\Th_x(t,x)b(t,x,\bar\Psi(t,x)) +\tr[\Th_{xx}(t,x)a(t,x,\bar\Psi(t,x))]\big\}\nn\\
&\q=\inf_{u\in U}\Big\{\Th^0_x(t,x,x,\Th(t,x))b(t,x,u)+\tr[ \Th^0_{xx}(t,x,x,\Th(t,x))a(t,x,u)]\nn\\
&\qq+\Th^0_y(t,x,x,\Th(t,x))\big\{\Th_x(t,x)b(t,x,u)+\tr[\Th_{xx}(t,x)
a(t,x,u)]\big\}\Big\},\q (t,x)\in[0,T]\times\dbR^n.\label{HJB-BSVIE-OP-B}
\end{align}

\ms

Now we compare the equilibrium HJB equation \rf{HJB-BSVIE-B} derived in the paper
with the extended HJB equation \rf{EHJB-B} obtained by
Bj\"{o}rk--Khapko--Murgoci  \cite{Bjork-Khapko-Murgoci2017} carefully.

\begin{proposition}
Suppose that the equilibrium HJB equation \rf{HJB-BSVIE-B} admits a classical solution $(\Th(\cd),\Th^0(\cd))$.
Then the solution $\h V(\cd\,,\cd)$ of the extended HJB equation \rf{EHJB-B} and the equilibrium control law $\h u(\cd\,,\cd)$ can be given by
\bel{}\h V(t,x)=\Th^0(t,x,x,\Th(t,x)),\q
\h u(t,x)=\bar\Psi(t,x),\q (t,x)\in[0,T]\times\dbR^n.\ee
\end{proposition}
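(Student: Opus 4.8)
The plan is to build a solution of the extended HJB system \rf{EHJB-B} out of the given classical solution $(\Th,\Th^0)$ of \rf{HJB-BSVIE-B} and then read off its shape; since \rf{EHJB-B} is being used as a (uniquely solvable) characterisation of $\h V$ and $\h u$, this verification is all that is needed. First I would set $\h u(t,x):=\bar\Psi(t,x)$ and $\h g(t,x):=\Th(t,x)$, and I would define the ``frozen initial state'' function by $\h f(t,x,\ti x):=\Th^0(t,\ti x,x,y)-\h G(\ti x,y)$. The right--hand side is independent of $y$: the second PDE in \rf{HJB-BSVIE-B} is linear in the running variables with coefficients $b(\cd,\cd,\bar\Psi)$, $a(\cd,\cd,\bar\Psi)$ not depending on $y$, and differentiating it in $y$ shows that $\Th^0_y(\cd,\ti x,\cd,y)$ solves the same linear equation with terminal value $\h G_y(\ti x,y)$, hence is constant in $(s,x)$ and equal to $\h G_y(\ti x,y)$. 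Finally I would set $\h V(t,x):=\Th^0(t,x,x,\Th(t,x))$; taking $y=\Th(t,x)$ in the identity for $\h f$ gives the key decomposition $\h V=\h f^x+\h G\diamond\h g$, where $\h f^x(t,x)=\h f(t,x,x)$.

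With these choices the ``easy'' parts of \rf{EHJB-B} are immediate. The terminal conditions $\h g(T,x)=x$, $\h f^{\ti x}(T,x)=\h F(\ti x,x)$ and $\h V(T,x)=\h F(x,x)+\h G(x,x)$ follow from $\Th(T,x)=x$ and $\Th^0(T,\ti x,x,y)=\h F(\ti x,x)+\h G(\ti x,y)$. The relation ${\bf A}^{\h u}\h g=0$ is exactly the first PDE in \rf{HJB-BSVIE-B}; and, since the second PDE in \rf{HJB-BSVIE-B} says ${\bf A}^{\bar\Psi}\big(\Th^0(\cd,\ti x,\cd,y)\big)=0$, subtracting the $(s,x)$--constant $\h G(\ti x,y)$ gives ${\bf A}^{\h u}\h f^{\ti x}=0$. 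Thus every equation of \rf{EHJB-B} except its first line already holds.

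It remains to establish the first line of \rf{EHJB-B} with minimiser $\h u=\bar\Psi$. Let $\mathcal{O}^u(t,x)$ denote the bracketed expression there. Each of the five terms ${\bf A}^u\h V$, $({\bf A}^u\h f)(t,x,x)$, ${\bf A}^u\h f^x$, ${\bf A}^u(\h G\diamond\h g)$ and ${\bf H}^u\h g$ is affine in $\big(b(t,x,u),a(t,x,u)\big)$, with coefficients assembled from the derivatives of $\Th$ and of $\Th^0$ evaluated along the diagonal point $(t,x,x,\Th(t,x))$; I would expand each of them by the chain rule, use the decomposition $\h V=\h f^x+\h G\diamond\h g$, and then eliminate the $\Th_s$-- and $\Th^0_s$--terms by means of the two PDEs of \rf{HJB-BSVIE-B}. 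The two ``correction'' combinations $-({\bf A}^u\h f)(t,x,x)+{\bf A}^u\h f^x$ and $-{\bf A}^u(\h G\diamond\h g)+{\bf H}^u\h g$ are designed precisely to cancel, respectively, the chain--rule contributions produced by the $\ti x$--dependence of $\Th^0$ (the mixed second derivatives $\Th^0_{\ti x\ti x}$, $\Th^0_{\ti x x}$) and by the substitution $y=\Th(t,x)$ (the terms in $\Th_x$, $\Th_{xx}$); once this cancellation is carried out, the difference $\mathcal{O}^u(t,x)-\mathcal{O}^{\bar\Psi(t,x)}(t,x)$ reduces to exactly the difference between the two sides of the local optimality condition \rf{HJB-BSVIE-OP-B}, hence is $\ges 0$ and vanishes at $u=\bar\Psi(t,x)$, while substituting $u=\bar\Psi(t,x)$ and using ${\bf A}^{\bar\Psi}\h g=0$, $({\bf A}^{\bar\Psi}\h f)(t,x,x)=0$ and the two PDEs of \rf{HJB-BSVIE-B} gives $\mathcal{O}^{\bar\Psi(t,x)}(t,x)=0$. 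Then $\inf_{u\in U}\mathcal{O}^u(t,x)=0$ is attained, uniquely by \ref{ass:H3}, at $u=\bar\Psi(t,x)$, which yields both the first equation of \rf{EHJB-B} and $\h u(t,x)=\bar\Psi(t,x)$, and therefore $\h V(t,x)=\Th^0(t,x,x,\Th(t,x))$. The step I expect to be the real work is this last one: keeping exact track of the several diagonal and non--local substitutions and of the Bj\"{o}rk--Khapko--Murgoci conventions for the operators $({\bf A}^u\h f)(t,x,x)$ versus $({\bf A}^u\h f^x)(t,x)$ (which argument of $\h f$ is differentiated, and in which order the diagonal is imposed) and for $\diamond$ and ${\bf H}^u$, so that the bookkeeping of cross--derivatives and chain--rule terms comes out correctly; the subsequent reductions to the PDEs of \rf{HJB-BSVIE-B} and to \rf{HJB-BSVIE-OP-B} are then routine.
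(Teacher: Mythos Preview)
Your proposal is correct and follows essentially the same route as the paper: the same choices $\h g=\Th$, $\h f(t,x,\ti x)=\Th^0(t,\ti x,x,y)-\h G(\ti x,y)$, $\h V=\Th^0(t,x,x,\Th(t,x))$, $\h u=\bar\Psi$, verification of the auxiliary equations and terminal conditions, and then the main chain--rule expansion showing that the bracketed quantity in \rf{EHJB-B} collapses to $\Th^0_t+\Th^0_x b+\tr[\Th^0_{xx}a]+\Th^0_y\{\Th_t+\Th_x b+\tr[\Th_{xx}a]\}$, after which \rf{HJB-BSVIE-OP-B} and the two PDEs in \rf{HJB-BSVIE-B} finish the argument. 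Your justification that $\h f$ is independent of $y$ (via the linear PDE satisfied by $\Th^0_y$ with $x$--independent terminal datum $\h G_y(\ti x,y)$) is a slightly more explicit version of what the paper simply asserts.
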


\begin{proof}
By the definition \rf{def-Au} of the operator ${\bf A}^u$, we have
$$({\bf A}^u k)(t,x) = k_t(t,x)+ k_x(t,x) b(t,x,u)+\tr[ k_{xx}(t,x)a(t,x,u)],\q (t,x)\in[0,T]\times\dbR^n,$$
for any $k(\cd\,,\cd)\in C^{1,2}([0,T]\times\dbR^n;\dbR)$.
Then
\begin{align}
&({\bf A}^u\h V)(t,x)-({\bf A}^u\h f)(t,x,x)+({\bf A}^u\h f^x)(t,x)-{\bf A}^u(\h G\diamond\h g)(t,x)+({\bf H}^u\h g)(t,x)\nn\\
&\q =\h V_t(t,x)+\h V_x(t,x)b(t,x,u)+\tr[\h V_{xx}(t,x)a(t,x,u)]-\big\{ \h f_{\ti x}(t,x,x)b(t,x,u)\nn\\
&\qq+2\tr[\h f_{x\ti x}(t,x,x)a(t,x,u)]+\tr[\h f_{\ti x\ti x}(t,x,x)a(t,x,u)]\big\}-\big\{\h G_{\ti x}(x,\h g(t,x))b(t,x,u)\nn\\
&\qq+\tr[\h G_{\ti x\ti x}(x,\h g(t,x))a(t,x,u)] + 2 \tr[ \h G_{\ti x y}(x,\h g(t,x))\h g_x(t,x)a(t,x)]\nn\\
&\qq +  \tr[ \h G_{yy}(x,\h g(t,x))\h g_x(t,x)^2a(t,x)]\big\},\label{A-u-V}
\end{align}
where
\begin{align*}
&\tr[ \h G_{\ti xy}(x,\h g(t,x))\h g_x(t,x)a(t,x)]=\sum _{i,j,k=1}^n \h G_{\ti x_i y_j}(x,\h g(t,x))\h g^j_{x_k}(t,x)\si^i(t,x)\si^k(t,x),\\
& \tr[ \h G_{yy}(x,\h g(t,x))\h g_x(t,x)^2a(t,x)]=\sum _{i,j,k,l=1}^n \h G_{y_i y_j}(x,\h g(t,x))\h g^i_{x_l}(t,x)\h g^j_{x_k}(t,x)\si^l(t,x)\si^k(t,x).
\end{align*}
Let
\bel{def-bar-u}
\bar V(t,x)=\Th^0(t,x,x,\Th(t,x)),\q
\bar u(t,x)=\bar\Psi(t,x),\q \bar g(t,x)=\Th(t,x),\q (t,x)\in[0,T]\times\dbR^n,
\ee
and
\bel{def-bar-f}\bar f(t,\ti x,x)=\Th^0(t,\ti x,x,y)-\h G(\ti x,y),\q (t,\ti x,x,y)\in[0,T]\times\dbR^n\times\dbR^n\times\dbR^n.\ee
We note that in the above, we use the fact that $\Th^0(t,\ti x,x,y)-\h G(\ti x,y)$ is independent of $y$.
Then it is clearly seen that
$$
 {\bf A}^{\bar u} \bar g(t,x)=0,\q (t,x)\in[0,T]\times\dbR^n,\q \bar g(T,x)=x,\q x\in\dbR^n.
$$
and
$$
{\bf A}^{\bar u} \bar f^z(t,x)=0,\q  (t,x)\in[0,T]\times\dbR^n,\q\bar f^z(T,x)=\h F(z,x).
$$
Note that $\Th^0_y(s,\ti x,x,y)=\h G_y(\ti x,y)$.
From \rf{A-u-V}, by some straightforward calculations we get
\begin{align}
\nn&({\bf A}^u\bar V)(t,x)-({\bf A}^u\bar f)(t,x,x)+({\bf A}^u\bar f^x)(t,x)-{\bf A}^u(\h G\diamond\bar g)(t,x)+({\bf H}^u\bar g)(t,x)\\
\nn&\q=\Th^0_t(t,x,x,\Th(t,x))+\Th^0_y(t,x,x,\Th(t,x))\Th_t(t,x)\\
\nn&\qq+\big\{\Th^0_x(t,x,x,\Th(t,x))+\Th^0_{\ti x}(t,x,x,\Th(t,x))+\Th^0_y(t,x,x,\Th(t,x))\Th_x(t,x)\big\}b(t,x,u)\\
\nn&\qq+\tr[\Th^0_{xx}(t,x,x,\Th(t,x))a(t,x,u)]+\tr[\Th^0_y(t,x,x,\Th(t,x))
\Th_{xx}(t,x)a(t,x,u)]\\
\nn&\qq +\tr[\Th^0_{\ti x\ti x}(t,x,x,\Th(t,x))a(t,x,u)]+2\tr[\Th^0_{x\ti x}(t,x,x\Th(t,x))a(t,x,u)]\\
\nn&\qq-\big\{\bar f_{\ti x}z(t,x,x)b(t,x,u)+2\tr[\bar f_{x\ti x}(t,x,x)a(t,x,u)]+\tr[\bar f_{\ti x\ti x}(t,x,x)a(t,x,u)]\big\}\\
\nn&\qq-\big\{\h G_{\ti x}(x,\Th(t,x))b(t,x,u)+\tr[\h G_{\ti x\ti x}(x,\Th(t,x))a(t,x,u)]\big\}\\
\nn&\q=\Th^0_t(t,x,x,\Th(t,x))+\Th^0_x(t,x,x,\Th(t,x))b(t,x,u)
+\tr[\Th^0_{xx}(t,x,x,\Th(t,x))a(t,x,u)]\\
\nn&\qq+\Th^0_y(t,x,x,\Th(t,x))\big\{\Th_t(t,x)+\Th_x(t,x) b(t,x,u)+\tr[\Th_{xx}(t,x)a(t,x,u)]\big\}\\
\nn&\qq+\big\{\Th^0_{\ti x}(t,x,x,\Th(t,x))-\bar f_{\ti x}(t,x,x) -\bar G_{\ti x}(x,\Th(t,x))\big\}b(t,x,u)\\
\nn&\qq +\tr\big\{[\Th_{\ti x\ti x}(t,x,x,\Th(t,x))-\bar f_{\ti x\ti x}(t,x,x)-\bar G_{\ti x\ti x}(x,\Th(t,x))]a(t,x,u)\big\}\\
\nn&\qq+2\tr[\Th_{x\ti x}(t,x,x,\Th(t,x))a(t,x,u)]-2\tr[\bar f_{x\ti x}(t,x,x)a(t,x,u)]\\
\nn&\q=\Th^0_t(t,x,x,\Th(t,x))+\Th^0_x(t,x,x,\Th(t,x))b(t,x,u)
+\tr[\Th^0_{xx}(t,x,x,\Th(t,x))a(t,x,u)]\\
&\qq+\Th^0_y(t,x,x,\Th(t,x))\big\{\Th_t(t,x)+\Th_x(t,x) b(t,x,u)+\tr[\Th_{xx}(t,x)a(t,x,u)]\big\}.\label{V-hat-V}
\end{align}
Then from \rf{HJB-BSVIE-OP-B} and \rf{def-bar-u}, we get
\begin{align*}
&({\bf A}^{\bar u}\bar V)(t,x)-({\bf A}^{\bar u}\bar f)(t,x,x)+({\bf A}^{\bar u}\bar f^x)(t,x)-{\bf A}^{\bar u}(\h G\diamond\bar g)(t,x)+({\bf H}^{\bar u}\bar g)(t,x)\nn\\
&\q=\inf_{u\in U}\big\{({\bf A}^u\bar V)(t,x)-({\bf A}^u\bar f)(t,x,x)+({\bf A}^u\bar f^x)(t,x)-{\bf A}^u(\h G\diamond\bar g)(t,x)+({\bf H}^u\bar g)(t,x)\big\}.
%
\end{align*}
Taking $u=\bar u$ in \rf{V-hat-V} and then by \rf{HJB-BSVIE-B}, we get
\begin{align*}
\inf_{u\in U}\big\{({\bf A}^u\bar V)(t,x)-({\bf A}^u\bar f)(t,x,x)+({\bf A}^u\bar f^x)(t,x)-{\bf A}^u(\h G\diamond\bar g)(t,x)+({\bf H}^u\bar g)(t,x)\big\}=0.
%
\end{align*}
This completes the proof.
\end{proof}

\ms

Compared with  Bj\"{o}rk--Khapko--Murgoci  \cite{Bjork-Khapko-Murgoci2017},
our approach has the following advantages.

\ms

$\bullet$
The equilibrium value function given by $V(t,x)\equiv \Th^0(t,x,x,\Th(t,x))$,
$\Th^0(\cd\,,\ti x,\cd\,,y)$ can be regarded as an auxiliary function  with parameters $(\ti x,y)$.
By introducing this auxiliary function,
the structure of equilibrium HJB equations is much clearer than that of extended HJB equations
(compare \rf{HJB-BSVIE-B} with \rf{EHJB-B}, for example), and
the meaning of the two PDEs in equilibrium HJB equations is also very clear (see  \autoref{EHJB-Prob}).

\ms

$\bullet$
The state term $X(T)$ and the conditional expectation term $\dbE_t[X(T)]$ in the terminal cost of \rf{cost-BSVIE1} could be inseparable,
while in \cite{Bjork-Khapko-Murgoci2017}, they are required to be separable.
The reason is that in our approach, we do not need to introduce a PDE to give an additional representation for
$\h G (X(t),\dbE_t[X(T)])$.
Thus, there are only two PDEs in the equilibrium HJB equation, while the extended HJB equation \rf{EHJB-B} is involved with three PDEs.

\ms

$\bullet$
More importantly, by \autoref{thm:HJB} the well-posedness of equilibrium HJB equations is established  under assumption \ref{ass:H4},
while there is no rigorous argument about the well-posedness of the extended HJB equation \rf{EHJB-B} given in \cite{Bjork-Khapko-Murgoci2017}.
More generally, the problem studied in the paper can depend on a  controlled backward process and
have a recursive cost functional, which is determined by a BSVIE.
In Subsections \ref{sub:example} and \ref{subsec:SG}, two examples are presented to show that
the adoption of backward  controlled processes is necessary in some important applications.

\subsection{Comparison with Peng \cite{Peng1993} and Yong \cite{Yong2010}}
Under different assumptions on the control domain $U$,
Peng \cite{Peng1993} and Yong \cite{Yong2010} studied the Pontryagin's maximum principle for
the optimal control problem of FBSDE \rf{State}
with the Bolza type cost functional \rf{Bolza2}.
This provides a probabilistic approach to the optimal control of Problem (N).
Next,  we are going to formally derive the ``HJB equation" associated with the value function,
which can be regarded as a PDE approach version of \cite{Peng1993,Yong2010}.
By  this approach, we can see how the backward state process $(Y(\cd),Z(\cd))$ in the running cost
affects the  time-consistency and the ``HJB equation" of the problem.

\ms
We assume that the value function $V(\cd\,,\cd)$ is smooth, and
\bel{Bolza-h(x)}
h^0(r,\ti x,x,y)=h^0(x),\qq g^0(r,s,\ti x,x,u,y,z,y^0,z^0)=g^0(s,x,u,y,z).
\ee
Suppose the problem has an optimal control $u^*(\cd)$  with the closed-loop representation $u^*(s)=\Psi^*(s,X^*(s))$.
For any closed-loop strategy $\Psi(\cd\,,\cd)$, with the state $X(\cd)$, we have
\begin{align}
V(t,x)&=\dbE_t\Big\{h(X(T))+\int_t^T g^0(s,X(s),\Psi(s,X(s)),Y(s),Z(s))ds\nn\\
&\q-\int_t^T\[V_s(s,X(s))+V_x(s,X(s))b(s,X(s),\Psi(s,X(s)))+\tr[V_{xx}(s,X(s))a(s,X(s),\Psi(s,X(s)))]\nn\\
&\q+g^0(s,X(s),\Psi(s,X(s)),Y(s),Z(s))\]\Big\}ds\nn\\
&=\dbE_t\Big\{h(X(T))+\int_t^T g^0\big(s,X(s),\Psi(s,X(s)),\Th^\Psi(s,X(s)),\Th_x^\Psi(s,X(s))\si(s,X(s),\Psi(s,X(s)))\big)ds\nn\\
&\q-\int_t^T\[V_s(s,X(s))+V_x(s,X(s))b(s,X(s),\Psi(s,X(s)))+\tr[V_{xx}(s,X(s))a(s,X(s),\Psi(s,X(s)))]\nn\\
&\q+g^0\big(s,X(s),\Psi(s,X(s)),\Th^\Psi(s,X(s)),\Th_x^\Psi(s,X(s))\si(s,X(s),\Psi(s,X(s)))\big)\]ds\Big\},\nn
\end{align}
with
\bel{HJB-PY1}\left\{
\begin{aligned}
&\Th^\Psi_s(s,x)+\Th^\Psi_x(s,x)b(s,x,\Psi(s,x))+\tr[\Th^\Psi_{xx}(s,x)a(s,x,\Psi(s,x))]\\
&\q+g\big(s,x,\Psi(s,x),\Th^\Psi(s,x),\Th_x^\Psi(s,x)\si(s,x,\Psi(s,x))\big)=0,\\
&\Th^\Psi(T,x)=h(x).
\end{aligned}\right.\ee
Note that
$$
V(t,x)=J(t,x;\Psi^*(\cd,\cd))\les J(t,x;\Psi(\cd,\cd)).
$$
By the optimality,  $\Psi^*(\cd\,,\cd)$ should satisfy
\begin{align}
&V_s(s,x)+V_x(s,x)b(s,x,\Psi^*(s,x))+\tr[V_{xx}(s,x)a(s,X(s),\Psi^*(s,x))]\nn\\
&\q+g^0\big(s,x,\Psi^*(s,x),\Th^{\Psi^*}(s,x),\Th_x^{\Psi^*}(s,x)\si(s,x,\Psi^*(s,x))\big)=0,
\label{HJB-PY2}
\end{align}
and
\begin{align}
&V_x(s,x)b(s,x,\Psi^*(s,x))+\tr[V_{xx}(s,x)a(s,x,\Psi^*(s,x))]\nn\\
&+g^0\big(s,x,\Psi^*(s,x),\Th^{\Psi^*}(s,x),\Th_x^{\Psi^*}(s,x)\si(s,x,\Psi^*(s,x))\big)\nn\\
&\q=\inf_{\Psi(\cd,\cd)}\[ V_x(s,x)b(s,x,\Psi(s,x))+\tr[V_{xx}(s,x)a(s,x,\Psi(s,x))]\nn\\
&\qq+g^0\big(s,x,\Psi(s,x),\Th^{\Psi}(s,x),\Th_x^{\Psi}(s,x)\si(s,x,\Psi(s,x))\big)\].\label{HJB-PY3}
\end{align}
On the other hand, under \rf{Bolza-h(x)}, the equilibrium HJB equation \rf{HJB-BSVIE-Main}--\rf{bar Psi} reads
\bel{HJB-BSVIE-Bolza-PY}\left\{\begin{aligned}
&\Th_s(s,x)+\Th_x(s,x)b(s,x,\bar\Psi(s,x))+\tr[\Th_{xx}(s,x)a(s,x,\bar\Psi(s,x))]\\
&\q+g\big(s,x,\bar\Psi(s,x),\Th(s,x),\Th_x(s,x)\si(s,x,\bar\Psi(s,x))\big)=0,\q (s,x)\in[0,T]\times\dbR^n,\\
&\Th^0_s(s,x)+\Th^0_x(s,x)b(s,x,\bar\Psi(s,x))+\tr[\Th^0_{xx}(s,x)a(s,x,\bar\Psi(s,x))]\\
&\q+g^0\big(s,x,\bar\Psi(s,x),\Th(s,x),\Th_x(s,x)\si(s,x,\bar\Psi(s,x))\big)=0,\q (s,x)\in[0,T]\times\dbR^n,\\
&\Th(T,x)=h(x),\q\Th^0(T,x)=h^0(x),\q x\in\dbR^n,
\end{aligned}\right.\ee
where
\bel{bar-Psi-PY}\ba{ll}
\bar\Psi(s,x)=\psi\big(s,x,\Th(s,x),\Th_x(s,x),\Th_{xx}(s,x),\Th^0(s,x),\Th^0_x(s,x),\Th^0_{xx}
(s,x)\big),\qq(s,x)\in[0,T]\times\dbR^n,\ea\ee
with $\psi(\cd)$ being determined by \rf{HH}.
The equilibrium strategy $\bar\Psi(\cd,\cd)$ satisfies the following local optimality condition:
\begin{align}
&\Th^0_x(s,x)b(s,x,\bar\Psi(s,x))+\tr[\Th^0_{xx}(s,x)a(s,x,\bar\Psi(s,x))]
+g^0\big(s,x,\bar\Psi(s,x),\Th(s,x),\Th_x(s,x)\si(s,x,\bar\Psi(s,x))\big)\nn\\
&\q=\inf_{u\in U}\[\Th^0_x(s,x)b(s,x,u)+\tr[\Th^0_{xx}(s,x)a(s,x,u)]
+g^0\big(s,x,u,\Th(s,x),\Th_x(s,x)\si(s,x,u)\big)\].
\end{align}

It is remarkable that if the terminal cost  does not depend on $y$,
the associated equilibrium HJB equation \rf{HJB-BSVIE-Bolza-PY} is reduced
to an $(m+1)$-dimensional fully nonlinear parabolic equation without  nonlocal terms.
By comparing \rf{HJB-PY1}--\rf{HJB-PY3} and \rf{HJB-BSVIE-Bolza-PY}--\rf{bar-Psi-PY},
we see that the difference between the  optimal strategy and the equilibrium strategy
mainly lies on the corresponding optimality conditions \rf{HJB-PY3} and \rf{bar-Psi-PY}.
Note that at $(s,x)$, $\Th^\Psi(s,x)$ depends on the values of $\Psi(\cd\,,\cd)$ on $[s,T]\times\dbR^n$,
\rf{HJB-PY3} is not an optimality condition in the finite dimensional space $U$ and
$\Psi^*(\cd\,,\cd)$ has an aftereffect.
Because of this, Problem (N) is time-inconsistent and the usual DPP/HJB approach does not work.

\section{Linear-Quadratic Problems}\label{sec:LQ}

Consider the controlled linear FBSDEs:
\bel{state-LQ}\left\{\2n\ba{ll}
\ds dX(s)=[A(s)X(s)+B(s)u(s)]ds+[C(s)X(s)+D(s)u(s)]dW(s),\\
\ns\ds dY(s)=-\big[\h A(s)X(s)+\h B(s)u(s)+\h C(s)Y(s)+\h D(s)Z(s)\big]ds+Z(s)dW(s),\\
\ns\ds X(t)=x,\qq Y(T)=HX(T).\ea\right.\ee
We introduce the following cost functional:
\begin{align}
&\cJ(t,x;u(\cd))\1n=\1n
{1\over 2}\dbE_t\Big\{\1n\int_t^T\3n\big[\lan Q(s)X(s),X(s)\ran\1n+\1n\lan M(s)Y(s),Y(s)\ran
\1n+\1n\lan N(s)Z(s),Z(s)\ran\1n+\1n\lan R(s)u(s),u(s)\ran\big]ds\nn\\
&\qq\qq\qq\qq+\lan G_1 X(T), X(T)\ran+\lan  G_2 Y(t) , Y(t)\ran+\lan G_3 X(t) , Y(t)\ran
 +2\lan g,X(T)\ran\Big\}.\label{cost-LQ}
\end{align}
The above problem is  referred to as a {\it linear-quadratic} (LQ,
for short) optimal control problem for FBSDEs, due to the linearity of the
state equation \rf{state-LQ} and the quadratic form of the cost functional \rf{cost-LQ}.
For simplicity, we shall denote the optimal control problem with state equation \rf{state-LQ}
and cost functional \rf{cost-LQ} by Problem (FBLQ).
We refer \cite{Lim-Zhou2001,Wang-Wu-Xiong2015,Huang-Wang-Wu2016,Wang-Xiao-Xiong2018,Li-Sun-Xiong2019,Hu-Ji-Xue2019,Sun-Wang2019,Sun-Wu-Xiong2021}
again for some related results of the LQ control problems for FBSDEs/BSDEs.

\begin{remark}
Note that in the cost functional \rf{cost-LQ}, we introduce a cross term $\lan G_3 X(t) , Y(t)\ran$.
In the literature, the dependence of initial states is motivated by the so-called
{\it state-dependent risk aversions} in finance
(see Bj\"{o}rk--Murgoci--Zhou \cite{Bjork-Murgoci-Zhou2014}).
Indeed,  the initial state $X(t)$, with a form of $\lan X(t),Y(t)\ran$, will also arise naturally when we study
the leader's problem of an LQ Stackelberg game (see  \cite[Subsection 3.2]{Sun-Wang-Wen2021}).
%
\end{remark}

Let us take a look at a special case of the above LQ problem.

\begin{example}\label{example-LQ}
Let $m=n=1$; $A,D\equiv0$, $C,B\equiv1$; $\h A,\h B,\h C,\h D\equiv0$, $H=1$; and $Q,M,N\equiv0$, $R\equiv2I$, $G_1=0$, $G_2=2I$, $G_3=0$, $g=0$.
Note that $Y(t)=\dbE_t[X(T)]$.
Then the state equation \rf{state-LQ} and the cost functional \rf{cost-LQ} are reduced to
$$\left\{\2n\ba{ll}
\ds dX(s)=u(s)ds+X(s)dW(s),\q s\in[t,T],\\
\ns\ds X(t)=x,\ea\right.$$
and
$$J(t,x;u(\cd))=\dbE_t\[\int_t^T|u(s)|^2ds+|\dbE_t[X(T)]|^2\].$$
By Yong \cite{Yong2013}, the unique optimal control $\bar u(\cd;t,x)$ with initial pair $(t,x)$ is given by
$$\bar u(s;t,x)=-{x\over T-t+1},\q s\in[t,T].$$
Then the optimal state process $\bar X(\cd)\equiv\bar X(\cd\,;t,x)$ is given by
$$\bar
X(s)=e^{-{1\over2}(s-t)+W(s)-W(t)}x+{x\over T-t+1}\int_t^s e^{-{1\over2}(s-r)+W(s)-W(r)}dr,\q s\in[t,T].$$
For any $\t\in(t,T)$, the optimal control with initial pair $(\t,\bar X(\t))$ is given by
$$\bar u(s;\t,\bar X(\t))=-{\bar X(\t)\over T-\t+1},\q s\in[\t,T].$$
Thus, on $[\t,T]$,
$$\bar u(\cd\,;t,x)\ne\bar u(\cd\,;\t,\bar X(\t)),$$
which implies that the problem is time-inconsistent.
\end{example}

From the above example and \autoref{example2},
we see that the LQ optimal control problem for FBSDEs  is also time-inconsistent in general.
Recently, an LQ problem for coupled FBSDEs was studied by Hu--Ji--Xue \cite{Hu-Ji-Xue2019},
in which, however, the time-consistency was not realized.
Thus the optimal control obtained in \cite{Hu-Ji-Xue2019} is a pre-committed optimal control.

\ms
In the following, we will mainly look at the corresponding forms of our equilibrium HJB equations.
The well-posedness of the associated Riccati equation is left for our future research.
The associated equilibrium HJB equation reads
$$\left\{\2n\ba{ll}
\ds\Th_s(s,x)+\Th_x(s,x)[A(s)x+B(s)\bar\Psi(s,x)]\\
\ns\ds\q+{1\over 2}\lan\Th_{xx}(s,x)[C(s)x+D(s)\bar\Psi(s,x)],\,C(s)x+D(s)\bar\Psi(s,x)\ran\\
\ns\ds\q+\h A(s)x+\h B(s)\bar\Psi(s,x)+\h C(s)\Th(s,x)+\h D(s)\Th_x(s,x)[C(s)x+D(s)\bar\Psi(s,x)]=0,\\
\ns\ds\Th^0_s(s,\ti x,x,y)+\Th^0_x(s,\ti x,x,y)[A(s)x+B(s)\bar\Psi(s,x)]\\
\ns\ds\q+{1\over 2}\lan \Th^0_{xx}(s,\ti x,x,y)[C(s)x+D(s)\bar\Psi(s,x)],\,C(s)x+D(s)\bar\Psi(s,x)\ran\\
\ns\ds\q+{1\over 2}\big\{\lan Q(s)x,x\ran+\lan M(s)\Th(s,x),\Th(s,x)\ran+\lan N(s)\Th_x(s,x)[C(s)x+D(s)\bar\Psi(s,x)],\\
\ns\ds\qq\Th_x(s,x)[C(s)x+D(s)\bar\Psi(s,x)]\ran+\lan R(s)\bar\Psi(s,x),\bar\Psi(s,x)\ran\big\}=0,\\
\ns\ds\Th(T,x)=Hx,\q\Th^0(T,\ti x,x,y)={1\over 2}\lan G_1x,x\ran+{1\over 2}\lan G_2 y,y\ran+{1\over 2}\lan G_3\ti x,y\ran+\lan g,x\ran,\ea\right.$$
where
\begin{align}
\bar\Psi(s,x)&=-[D^\top \Th^0_{xx}(s, x,x,\Th(s,x))D+R+D^\top\Th_x(s,x)^\top N\Th_x(s,x)D]^{-1}\nn\\
&\q\times \big\{[D^\top\Th^0_{xx}(s,x,x,\Th(s,x))C+ D^\top\Th_x(s,x)^\top N\Th_x(s,x)C]x+B^\top\Th^0_x(s,x,x,\Th(s,x))\nn\\
&\q+[B^\top\Th_x(s,x)^\top +\h B^\top+ D^\top\Th_x(s,x)^\top\h D^\top]\Th^0_y(s,x,x,\Th(s,x)\big\}.\nn
\end{align}
In the above, we have taken the ansatz $\Th_{xx}(\cd)\equiv 0$.
Now let us take the following ansatz for $\Th^0(s,\ti x,x,y)$ and $\Th(s,x)$:
\begin{align*}
&\Th^0(s,\ti x,x,y)={1\over2}\lan\F_1(s)x,x\ran+{1\over2}\lan\F_2(s)y,y\ran+{1\over2}\lan
\F_3(s)\ti x,y\ran+\F_4(s)x+\F_5(s),\\
&\Th(s,x)=\F_6(s)x+\F_7(s),
\end{align*}
where $\F_i(\cd)$ $(i=1,...,7)$ are undetermined functions (of proper dimensions).
Then the equilibrium strategy is given by
\begin{align}
\bar\Psi(s,x)&=-[D^\top\F_1D+R+D^\top\F_6^\top N\F_6D]^{-1}\Big\{D^\top\F_1C+D^\top\F_6^\top N\F_6C+ B^\top\F_1\nn\\
&\q+B^\top\F_6^\top\F_2\F_6+\h B^\top\F_2\F_6+D^\top\F_6^\top\h D^\top\F_2\F_6
+{1\over 2}B^\top\F_6^\top\F_3+{1\over 2}\h B^\top\F_3+{1\over 2}D^\top\F_6^\top\h D^\top\F_3\Big\}x\nn\\
&\q-[D^\top\F_1D+R+D^\top\F_6^\top N\F_6D]^{-1}\big\{B^\top\F_4+[B^\top\F_6^\top+\h B^\top+D^\top
\F_6^\top\h D^\top]\F_2\F_7\big\}\nn\\
&=:\bar\Psi(s)x+\bar v(s),\label{Psi-star-LQ2}
\end{align}
where $\F_i(\cd)$ is determined by the following {\it Riccati-type} ordinary differential equation
(ODE, for short):
\bel{Ri-LQ1}\left\{\2n\ba{ll}
\ds\dot\F_1+\F_1(A+B\bar\Psi)+(A+B\bar\Psi)^\top\F_1+(C+D\bar\Psi)^\top\F_1(C+D\bar\Psi)\\
\ns\ds\q+Q+\F_6^\top M\F_6+(C+D\bar\Psi)^\top\F_6^\top N\F_6(C+D\bar\Psi)+\bar\Psi^\top R\bar\Psi=0,\\
\ns\ds\dot\F_2=0,\q\dot\F_3=0,\q \dot\F_4+\bar v^\top B^\top\F_1+\F_4(A+B\bar\Psi)+\bar v^\top D^\top\F_1(C+D\bar\Psi)\\
\ns\ds\q+\F_7^\top M\F_6+\bar v^\top\F_6^\top N\F_6(C+D\bar\Psi)+\bar v^\top R\bar\Psi=0,\\
\ns\ds\dot\F_5+\F_4 B\bar v+{1\over 2}\bar v^\top D^\top\F_1 D\bar v+{1\over2}\bar v^\top\F_6^\top N\F_6\bar v+{1\over 2}\bar v^\top R\bar v
+{1\over 2}\F_7^\top M\F_7=0,\\
\ns\ds\dot\F_6+\F_6(A+B\bar\Psi)+\h A+\h B\bar\Psi+\h C\F_6+\h D\F_6(C+D\bar\Psi)=0,\\
\ns\ds\dot\F_7+\F_6 B\bar v+\h B\bar v+\h C\F_7+\h D\F_6 D\bar v=0,\\
\ns\ds\F_1(T)=G_1,\q\F_2(T)=G_2,\q\F_3(T)=G_3,\q\F_4(T)=g,\\
\ns\ds\F_5(T)=0,\q\F_6(T)=H,\q\Phi_7(T)=0.\ea\right.\ee

\begin{proposition}\label{Prop:LQ}
Suppose that the Riccati equation \rf{Ri-LQ1} admits a unique solution.
Then the strategy $\bar\Psi(\cd,\cd)$ given by \rf{Psi-star-LQ2}
is an equilibrium strategy of {\rm Problem (FBLQ)}.
\end{proposition}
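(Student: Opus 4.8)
The plan is to realize Problem (FBLQ) as a special case of Problem (N) and then invoke the verification theorem \autoref{theorem-equilibrium-BSVIE-Main}; the whole task then reduces to producing a classical solution of the equilibrium HJB equation \rf{HJB-BSVIE-Main} of the prescribed polynomial form. The coefficients of \rf{state-LQ}--\rf{cost-LQ} are linear (resp.\ quadratic), so \ref{ass:H1}--\ref{ass:H2} hold and, for every initial pair and admissible control, the state FBSDE and the cost BSVIE are well-posed by \autoref{lmm:well-posedness-SDE}. Note that here $\si(s,x,u)=C(s)x+D(s)u$ genuinely depends on $u$, so we work with the full Hamiltonians $\BH,\BH^0,\h\BH^0$ of \rf{HH} and with \rf{HJB-BSVIE-Main} rather than with the reduced equation \rf{HJB-si-ti}.

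First I would carry out the pointwise minimization required by \ref{ass:H3}. Inserting the affine ansatz $\Th(s,x)=\F_6(s)x+\F_7(s)$ (so $\Th_{xx}\equiv0$) and the quadratic ansatz $\Th^0(s,\ti x,x,y)=\frac12\lan\F_1(s)x,x\ran+\frac12\lan\F_2(s)y,y\ran+\frac12\lan\F_3(s)\ti x,y\ran+\F_4(s)x+\F_5(s)$ (so $\Th^0_{xx}(\cd)=\F_1(s)$, while $\Th^0_x(\cd),\Th^0_y(\cd)$ are affine), the map $u\mapsto\h\BH^0(s,s,x,x,u,\cdots)$ is quadratic, with Hessian $D^\top\F_1D+R+D^\top\F_6^\top N\F_6D$ when evaluated on the diagonal with $y=\Th(s,x)$. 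When this matrix is positive definite along the solution of \rf{Ri-LQ1} --- a property implicit in the solvability of \rf{Ri-LQ1}, since otherwise the gain in \rf{Psi-star-LQ2} is undefined --- the minimizer exists, is unique, and is smooth with the regularity required by \ref{ass:H3}; completing the square gives precisely the feedback law \rf{Psi-star-LQ2}, $\bar\Psi(s,x)=\bar\Psi(s)x+\bar v(s)$.

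Next I would substitute $\bar\Psi(s,x)=\bar\Psi(s)x+\bar v(s)$ into the two PDEs of \rf{HJB-BSVIE-Main} and match powers of the variables, using that the diagonal/non-local terms $\Th^0(s,s,x,x,\Th(s,x))$, $\Th^0_x(s,s,x,x,\Th(s,x))$, $\Th^0_y(s,s,x,x,\Th(s,x))$ remain at most affine in $x$ once $y=\F_6(s)x+\F_7(s)$ is inserted. The first PDE is then an identity of degree one in $x$, yielding the ODEs for $\F_6,\F_7$; the second PDE is an identity of degree two in $(x,\ti x,y)$, and collecting the coefficients of $\lan\cd\,x,x\ran$, $\lan\cd\,y,y\ran$, $\lan\cd\,\ti x,y\ran$, the linear part in $x$, and the constant term yields the ODEs for $\F_1,\F_2,\F_3,\F_4,\F_5$, respectively --- precisely the Riccati system \rf{Ri-LQ1}, whose terminal data come from $\Th(T,x)=Hx$ and $\Th^0(T,\ti x,x,y)=\frac12\lan G_1x,x\ran+\frac12\lan G_2y,y\ran+\frac12\lan G_3\ti x,y\ran+\lan g,x\ran$. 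Given a solution of \rf{Ri-LQ1} on $[0,T]$, the resulting $(\Th,\Th^0)$ is $C^{1,2}$, solves \rf{HJB-BSVIE-Main}, and the $\bar\Psi$ of \rf{Psi-star-LQ2} coincides with \rf{bar Psi}. Since $\bar\Psi(\cd\,,\cd)$ is affine with coefficients continuous on $[0,T]$, the closed-loop system \rf{closed-loop-system} has a unique adapted solution whose outcome lies in $\sU[t,T]$, so $\bar\Psi(\cd\,,\cd)$ is a feedback strategy; \autoref{theorem-equilibrium-BSVIE-Main} then yields that it is an equilibrium strategy of Problem (FBLQ).

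The main obstacle is essentially bookkeeping with one genuine catch. On the bookkeeping side, one must verify that the ansatz is self-consistent, i.e.\ that substituting $y=\Th(s,x)$ into the non-local terms does not generate powers of $x$ beyond those already present, so that coefficient-matching closes into the finite ODE system \rf{Ri-LQ1} instead of an infinite hierarchy; this works because $\Th$ is affine and $\Th^0$ is quadratic. The analytic point to be tracked carefully --- and which should be regarded as part of the standing hypothesis --- is the positive-definiteness (hence invertibility) of $D^\top\F_1D+R+D^\top\F_6^\top N\F_6D$ along the solution, which is exactly the non-degeneracy needed both for the minimization in \ref{ass:H3} and for formula \rf{Psi-star-LQ2} to make sense; in the general control-dependent-diffusion LQ setting one may have to restrict the domain on which \ref{ass:H3} is imposed to a neighborhood of this solution. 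Everything else is routine matrix calculus.
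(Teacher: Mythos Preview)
Your proposal is correct and follows essentially the same route as the paper: the proposition is not given a separate proof there, but is stated as the outcome of the derivation immediately preceding it, in which the quadratic/affine ansatz for $(\Th^0,\Th)$ is substituted into the equilibrium HJB equation \rf{HJB-BSVIE-Main}, the pointwise minimization is carried out to obtain \rf{Psi-star-LQ2}, coefficient matching yields \rf{Ri-LQ1}, and \autoref{theorem-equilibrium-BSVIE-Main} is invoked. Your discussion of the positive-definiteness of $D^\top\F_1D+R+D^\top\F_6^\top N\F_6D$ and of the feedback-strategy property of the affine $\bar\Psi$ makes explicit points the paper leaves implicit.
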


When the weighting matrices $G_3=0$ and $g=0$, then $\Phi_i\equiv0$ for $i=3,4,5,7$,
and the Riccati equation \rf{Ri-LQ1} can be simplified.
The following result shows that the LQ problem for FBSDEs is closely related to
the so-called {\it mean-field LQ optimal control problems} (see \cite{Yong2013,Yong2017,Sun-Wang-Wu2021}, for example).
Let $\h A,\, \h B,\,\h C,\,\h D,\,M,\,N\equiv 0$, $G_3=0$, $g=0$ and $H=I_n$.
Then the state equation \rf{state-LQ} and the cost functional \rf{cost-LQ} are reduced to
$$\left\{\begin{aligned}
dX(s)&=[A(s)X(s)+B(s)u(s)]ds+[  C(s)X(s)+D(s)u(s)]dW(s),\\
X(t)&=x,
\end{aligned}\right.
$$
and
\begin{align*}
 J(t,x;u(\cd))&={1\over 2}\dbE_t\Big\{\int_t^T \big[\lan Q(s)X(s),X(s)\ran+\lan R(s)u(s),u(s)\ran \big]ds\\
&\qq\qq+\lan G_1 X(T), X(T)\ran+\lan G_2\dbE_t[X(T)] , \dbE_t[X(T)]\ran\Big\},
\end{align*}
respectively. The associated Riccati equation \rf{Ri-LQ1} reads
\bel{Ri-LQ1-S}\left\{\2n\ba{ll}
\ds\dot\F_1+\F_1(A+B\bar\Psi)+(A+B\bar\Psi)^\top\F_1
+(C+D\bar\Psi)^\top\F_1(C+D\bar\Psi)+Q+\bar\Psi^\top R\bar\Psi=0,\\
\ns\ds\dot\F_2=0,\q\dot\F_6+\F_6(A+B\bar\Psi)=0,\\
\ns\ds\F_1(T)=G_1,\q\F_2(T)=G_2,\q\F_6(T)=I_n,\ea\right.\ee
with
\bel{Ri-LQ12-S}\bar\Psi=-[D^\top\F_1D+R]^{-1}[D^\top\F_1C+ B^\top\F_1+B^\top\F_6^\top\F_2\F_6].\ee
Denote $\F=\F_1$ and $\h\F=\F_1+\F_6^\top\F_2\F_6$.
Then we can rewrite \rf{Ri-LQ1-S}--\rf{Ri-LQ12-S} as follows:
\bel{Ri-LQ2-S}\left\{\2n\ba{ll}
\ds\dot\F+\F(A+B\bar\Psi)+(A+B\bar\Psi)^\top\F+(C+D
\bar\Psi)^\top\F(C+D\bar\Psi)+Q+\bar\Psi^\top R\bar\Psi=0,\\
\ns\ds\dot{\h\F}+\h\F(A+B\bar\Psi)+(A+B\bar\Psi)^\top\h\F
+(C+D\bar\Psi)^\top\F(C+D\bar\Psi)+Q+\bar\Psi^\top R\bar\Psi=0,\\
\ns\ds\F(T)=G_1,\q\h\F(T)=G_1+G_2,\ea\right.\ee
with
\bel{}\bar\Psi=-[D^\top\F D+R]^{-1}[D^\top\F C+B^\top\h\F].\ee
We emphasize that \rf{Ri-LQ2-S} is exactly a special case of the Riccati-type equation derived by Yong \cite{Yong2017}.
Thus, under some positivity conditions, one can obtain
the well-posedness of \rf{Ri-LQ2-S}  from \cite[Theorem 4.6]{Yong2017} easily. The well-posedness of Riccati equation \rf{Ri-LQ1} will be published separately.

\section{Applications}\label{sec:SC}

In this section, we shall investigate three important applications, which are also the main motivations
of studying forward-backward optimal control problems mentioned in Introduction.

\subsection{Mean-variance Models}\label{sub:MV}

Consider a Black--Scholes market model in which there is one bond with the riskless interest rate $r>0$ and one stock with the appreciation rate $\m>0$ and volatility $\si>0$. Then a standard argument leads to the following SDE for the wealth process $X(\cd)$:
\bel{MV-state}\left\{\2n\ba{ll}
\ds dX(s)=[rX(s)+(\mu-r)u(s)]ds+\si u(s)dW(s),\q s\in[t,T],\\
\ns\ds X(t)=\xi,\ea\right.\ee
where $u(\cd)$ is the dollar amount invested in the stock.
The investor wishes to minimize the following functional:
\bel{MV-utility}
J(t,\xi;u(\cd))= -\dbE_t[X(T)]+{\g\over 2}\var_t[X(T)]= -\dbE_t[X(T)]+{\g\over 2}\dbE_t[|X(T)|^2]-{\g\over 2}|\dbE_t[X(T)]|^2.
\ee
It is known (see Basak--Chabakauri \cite{Basak-Chabakauri2010}) that the optimal control of the above mean-variance model is time-inconsistent.
We shall apply \autoref{theorem-equilibrium-BSVIE-Main}  to find a time-consistent equilibrium.

\ms

Note that the  cost functional \rf{MV-utility} can be rewritten as
$$J(t,\xi;u(\cd))= -\dbE_t[X(T)]+{\g\over 2}\dbE_t[|X(T)|^2]-{\g\over 2}|Y(t)|^2,$$
with
$$\left\{\2n\ba{ll}
\ns\ds dX(s)=[rX(s)+(\m-r)u(s)]ds+\si u(s)dW(s),\q s\in[t,T],\\
\ns\ds dY(s)=Z(s)dW(s),\q s\in[t,T],\\
\ns\ds X(t)=\xi,\q Y(T)=X(T).\ea\right.
$$
Thus, the mean-variance model is a special case of the linear-quadratic problems for FBSDEs.
By  \autoref{Prop:LQ}, the equilibrium strategy $\bar\Psi(\cd,\cd)$ can be given by
$$\bar\Psi(t,x)=\bar\Psi(s)x+\bar v(s),\q(t,x)\in[0,T]\times\dbR,$$
where
\begin{align*}
&\bar\Psi(s)=-[\si^2\F_1]^{-1}\big[(\m-r)\F_1+(\m-r)\F_2\F^2_6
+{1\over 2}(\m-r)\F_3\F_6\big],\\
&\bar v(s)=-[\si^2\F_1]^{-1}\big[(\m-r)\F_4+(\m-r)\F_6\F_2\F_7\big],
\end{align*}
with
$$\left\{\2n\ba{ll}
\ds\dot\F_1+2\F_1(r+(\m-r)\bar\Psi)+\si\bar\Psi\F_1\si\bar\Psi=0,\\
\ns\ds\dot\F_2=0,\q\dot\F_3=0,\\
\ns\ds\dot\F_4+\bar v(\m-r)\F_1+\F_4(r+(\m-r)\bar\Psi)+\bar v\si\F_1\si\bar\Psi=0,\\
\ns\ds\dot\F_5+\F_4(\m-r)\bar v+{1\over 2}\bar v\si\F_1\si\bar v=0,\\
\ns\ds\dot\F_6+\F_6(r+(\m-r)\bar\Psi)=0,\\
\ns\ds\dot\F_7+\F_6(\m-r)\bar v=0,\\
\ns\ds\F_1(T)=\g,\q\F_2(T)=-\g,\q\F_3(T)=0,\q\F_4(T)=-1,\\
\ns\ds\F_5(T)=0,\q\F_6(T)=1,\q\F_7(T)=0.\ea\right.
$$
From the above, it is easily seen that $\F_2\equiv-\g$, $\F_3\equiv 0$ and $\F_1+\F_6\F_2\F_6\equiv 0$. Thus,
\bel{Psi-star-MV1}\bar\Psi=0,\q\bar v=-[\si^2 \F_1]^{-1}\big[(\m-r)\F_4-(\m-r)\F_6\g\F_7\big],\ee
and
\bel{Ri-MV1}\left\{\2n\ba{ll}
\ds\dot\F_1+2r\F_1=0,\\
\ns\ds\dot\F_4-\si^{-2}(\m-r)^2\big[\F_4-\F_6\g\F_7\big]\F_1+r\F_4=0,\\
\ns\ds\dot\F_6+r\F_6=0,\\
\ns\ds\dot\F_7-\F_6(\m-r)[\si^2\F_1]^{-1}\big[(\m-r)\F_4-(\m-r)\F_6\g\F_7\big]=0,\\
\ns\ds\F_1(T)=\g,\q\F_4(T)=-1,\\
\ns\ds\F_6(T)=1,\q\F_7(T)=0.\ea\right.\ee
By first solving the unknown variables $\F_1$ and $\F_6$, equation \rf{Ri-MV1} becomes a linear equation. By the variation of
constants formula, the unique solution $(\F_1,\F_4,\F_6,\F_7)$ of equation \rf{Ri-MV1} can be explicitly solved. Then the equilibrium strategy can be given by  \rf{Psi-star-MV1}. Indeed, we can observe that
$$
{d[\F_4-\F_6\g\F_7\big]\over dt}=-\g[\F_4-\F_6\g\F_7\big],\q [\F_4-\F_6\g\F_7\big](T)=-1,
$$
which implies that
$$[\F_4-\F_6\g\F_7\big]=-e^{-\g(t-T)}.$$
Substituting the above and $\F_1(t)=e^{-2\g(t-T)} $ into \rf{Psi-star-MV1}, the equilibrium strategy $\bar\Psi(\cd,\cd)$ is  explicitly given by
$$\bar\Psi(t,x)={\m-r\over\g\si^2}e^{-r(t-T)},\q (t,x)\in[0,T]\times\dbR.$$

\ms

From the above, we see that the optimal control problem of FBSDEs is a natural extension of the conditional mean-variance problem,
with the dynamic risk measure $\var\1n_t[X(T)]$ replaced by some more general ones,
which can be described by some process $Y(\cd)$ satisfying a BSDE.
We refer the reader to Riedel \cite{Riedel2004}, Barrieu--El Karoui \cite{Barrieu2005} and Detlefsen--Scandolo \cite{Detlefsen-Scandolo2005}
for the theory of risk measures.
Mathematically, the dynamic risk measure $Y(t)$ could depend on the whole path $X(s);t\les s\les T$ of the state $X(\cd)$,
while $\var\1n_t[X(T)]$ is only affected by the terminal value $X(T)$. Moreover, Problem (N) can also be regarded as an extension of
the dynamic mean-variance models with the conditional expectation operator $\dbE_t[\,\cd\,]$ replaced by the so-called {\it g-expectation} operator $\cE_{g,t}[\,\cd\,]$,
which was introduced by Peng \cite{Peng1997} and then was widely applied in finance; see Chen--Epstein \cite{Chen-Epstein2002},
Coquet \cite{Coquet2002} and Chen--Chen--Davison \cite{Chen-Chen-Davison2005}, for example.

\subsection{Social Planner Problems
with Heterogeneous Epstein-Zin Utilities}\label{sub:example}

In this subsection, we shall consider a social planner problem for Merton's investment-consumption models,
in which  each agent's objective is given by an  Epstein--Zin utility.
The social planner would like to maximize the utility of the coalition,
which is a convex combination of each agent's utility.
The main feature of our model is that the discount rate in each agent's utility can be different.
We will reveal two interesting facts: (1) the model is time-inconsistent;
(ii) the situation of controlled backward state equations is not avoidable in this model.

\ms

Consider the following SDE for the wealth process $X(\cd)$:
$$\left\{\2n\ba{ll}
\ds dX(s)=\big\{rX(s)+(\m-r)[u_1(s)+u_2(s)]-[c_1(s)+c_2(s)]\big\}ds+\si \big[u_1(s)+u_2(s)\big]dW(s),\\
\ns\ds X(t)=\xi,\ea\right.
$$
where $u_i(\cd)$ and $c_i(\cd)$ are the dollar amount invested in the stock and the consumption of agent $i$ ($i$=1,2), respectively.
Naturally, agent $i$ wants to maximize his/her utility functional
$$J_i(t,\xi;u_1(\cd),u_2(\cd),c_1(\cd),c_2(\cd))= Y_i(t),$$
where $Y_i(\cd)$, called an Epstein--Zin utility (see \cite{Duffie-Epstein1992,El Karoui-Peng-Quenez1997}, for example),
is determined by
\bel{M-BSDE}Y_i(s)=\dbE_s\[\int_s^T g_i(c_1(r)+c_2(r),Y_i(r))dr+h_i(X(T))\],\q s\in[t,T],\ee
with
$$g_i(c,y)=\a^{-1}((1-\g)y)^{1-{\a\over 1-\g}}[c^\a-\rho_i((1-\g)y)^{{\a\over 1-\g}}],\q h_i(x)={x^{1-\g}\over 1-\g}.$$
The parameter  $\g>0$ controls the
risk aversion of the agents,  $ {\a\over 1-\g}>0$ gives the agents' IES,
and $\rho_i$ is the discount rate of agent $i$ (which could be different for different $i$).

\ms

Such kind of models was initially studied by Duffie--Geoffard--Skiadas \cite{Duffie-Geoffard-Skiadas-1994}
(also see Ma--Yong \cite[Page 6]{Ma-Yong1999}),
however, the time-inconsistency issue was not realized.
If the agents decide to cooperate, then the social planer would try to maximize
\begin{align*}
&J^\l (t,\xi;u_1(\cd),u_2(\cd),c_1(\cd),c_2(\cd))\\
&\q= \l J_1(t,\xi;u_1(\cd),u_2(\cd),c_1(\cd),c_2(\cd))+(1-\l)J_2(t,\xi;u_1(\cd),u_2(\cd),c_1(\cd),c_2(\cd)),
\end{align*}
where $\l\in[0,1]$  is a weighting parameter of the two agents.
Denote $c(\cd)=c_1(\cd)+c_2(\cd)$ and $u(\cd)=u_1(\cd)+u_2(\cd)$.
Then the state equation and the utility functional of the social planner (or called the group decision-maker) become
\bel{M-state1}\left\{\begin{aligned}
dX(s)&=[rX(s)+(\mu-r)u(s)-c(s)]ds+\si u(s)dW(s),\\
dY_i(s)&=- g_i(c(s),Y_i(s))ds+Z_i(s)dW(s);\q i=1,2,\\
X(t)&=\xi,\q Y_i(T)=h_i(X(T)),
\end{aligned}\right.
\ee
and
\begin{align}
&J^\l (t,\xi;u(\cd),c(\cd))\equiv J^\l (t,\xi;u_1(\cd),u_2(\cd),c_1(\cd),c_2(\cd))\nn\\
&\q=\dbE_t\Big\{{X(T)^{1-\g}\over 1-\g}+\int_t^T \[\l \a^{-1}((1-\g)Y_1(r))^{1-{\a\over 1-\g}}\big(c(r)^\a-\rho_1((1-\g)Y_1(r))^{{\a\over 1-\g}}\big)\nn\\
&\qq+(1-\l) \a^{-1}((1-\g)Y_2(r))^{1-{\a\over 1-\g}}\big(c(r)^\a-\rho_2((1-\g)Y_2(r))^{{\a\over 1-\g}}\big)\]dr\Big\}.
\label{M-utility1}
\end{align}

\begin{remark}
Note that when $\a=1-\g$, the  Epstein--Zin utility $Y_i(\cd)$ is reduced to the standard
constant relative risk aversion  utility case,
because the corresponding BSDE \rf{M-BSDE} is linear with respect to the unknown process $Y_i(\cd)$.
Then the corresponding  utility \rf{M-utility1} becomes
\begin{align}
J^\l (t,\xi;u(\cd),c(\cd))&=\dbE_t\Big\{\big[\l e^{- \rho_1(T-t)}+(1-\l)e^{-\rho_2(T-t)}\big]  {X(T)^{\a}\over \a}\nn\\
&\q\qq+\int_t^T \big[\l e^{- \rho_1(r-t)}+(1-\l)e^{-\rho_2(r-t)}\big] c(r)^\a dr\Big\}.\label{cost-qe}
\end{align}
The control problem with state equation \rf{M-state1} and utility functional \rf{cost-qe}
is exactly the Merton's problem with a quasi-exponential discounting function $\l e^{- \rho_1\cd}+(1-\l)e^{-\rho_2\cd}$.
We refer the reader to \cite{Ekeland2008,Ekeland2010,Marin-Solano2010,Marin-Solano2011,Yong2012,Breton2014} for
more  results on this special case. In the general case, that is $\a$ could not equal $1-\g$,
the  Epstein--Zin utility $Y_i(\cd)$ is described by the solution to a nonlinear BSDE.
Then the situation of controlled BSDEs is not avoidable.
\end{remark}

It is clearly seen that the  control problem with state equation \rf{M-state1} and utility functional \rf{M-utility1} is time-inconsistent.
Thus, the group decision-maker should look for an equilibrium strategy for the coalition.
The associated equilibrium HJB equation reads
\bel{M-HJB0}
\left\{
\begin{aligned}
&\Th^1_t(t,x)+\Th^1_x(t,x)[rx+(\mu-r)\dbU(t,x)-\dbC(t,x)]+{1\over 2}\Th^1_{xx}(t,x)[\si\dbU(t,x)]^2\\
&\qq+\a^{-1}((1-\g)\Th^1(t,x))^{1-{\a\over 1-\g}}[\dbC(t,x)^\a-\rho_1((1-\g)\Th^1(t,x))^{{\a\over 1-\g}}]=0;\\
&\Th^2_t(t,x)+\Th^2_x(t,x)[rx+(\mu-r)\dbU(t,x)-\dbC(t,x)]+{1\over 2}\Th^2_{xx}(t,x)[\si\dbU(t,x)]^2\\
&\qq+\a^{-1}((1-\g)\Th^2(t,x))^{1-{\a\over 1-\g}}[\dbC(t,x)^\a-\rho_2((1-\g)\Th^2(t,x))^{{\a\over 1-\g}}]=0;\\
&\Th^0_t(t,x)+\Th^0_x(t,x)[rx+(\mu-r)\dbU(t,x)-\dbC(t,x)]+{1\over 2}\Th^0_{xx}(t,x)[\si\dbU(t,x)]^2\\
&\qq+\l\a^{-1}((1-\g)\Th^1(t,x))^{1-{\a\over 1-\g}}[\dbC(t,x)^\a-\rho_1((1-\g)\Th^1(t,x))^{{\a\over 1-\g}}]\\
&\qq +(1-\l)\a^{-1}((1-\g)\Th^2(t,x))^{1-{\a\over 1-\g}}[\dbC(t,x)^\a-\rho_2((1-\g)\Th^2(t,x))^{{\a\over 1-\g}}]=0;\\
&\Th^1(T,x)={x^{1-\g}\over 1-\g},\q \Th^2(T,x)={x^{1-\g}\over 1-\g},\q \Th^0(T,x)={x^{1-\g}\over 1-\g},
\end{aligned}\right.
\ee
with the equilibrium investment strategy:
\bel{EIS}
\dbU(t,x)={(r-\mu)\Th^0_x(t,x)\over \si^2\Th^0_{xx}(t,x)},
\ee
and the equilibrium consumption strategy:
\bel{ECS}
\dbC(t,x)={\Th^0_x(t,x)^{1\over\a-1}\over[\l((1-\g)\Th^1(t,x))^{1-\g-\a\over 1-\g}+(1-\l)((1-\g)\Th^2(t,x))^{1-\g-\a\over 1-\g}]^{1\over\a-1}}.
\ee
Let us make the ansatz:
\begin{align}
&\Th^1(t,x)={1\over 1-\g}x^{1-\g}\th_1(t),\q \Th^2(t,x)={1\over 1-\g} x^{1-\g}\th_2(t),\nn\\
& \Th^0(t,x)={\th_0(t)\over 1-\g} x^{1-\g}={\l\th_1(t)+(1-\l)\th_2(t)\over 1-\g} x^{1-\g}.\nn
\end{align}
Then
\begin{align}
&\Th^i_x(t,x)=\th_i(t) x^{-\g},\q \Th^i_{xx}(x)=-\g \th_i(t)x^{-\g-1},\nn\\
&\Th^0_x(t,x)=\l\th_1(t) x^{-\g}+(1-\l)\th_2(t) x^{-\g},\nn\\
&\Th^0_{xx}(x)=-\l\g \th_1(t)x^{-\g-1}-(1-\l)\g \th_2(t)x^{-\g-1}.\nn
\end{align}
The equilibrium investment strategy \rf{EIS} and the equilibrium consumption strategy \rf{ECS} become
\begin{align}
\mathbb{U}(t,x)&={(\mu-r)\over \g\si^2}x,\q x\in\dbR,\label{U}
\end{align}
and
\bel{C}
\mathbb{C}(t,x)={[\l\th_1(t) +(1-\l)\th_2(t) ]^{1\over \a-1}
\over [\l\th_1(t)^{1-\g-\a\over 1-\g}+(1-\l)\th_2(t)^{1-\g-\a\over 1-\g}]^{1\over\a-1}} x,\q x\in\dbR^n,
\ee
with
\bel{M-HJB}
\left\{
\begin{aligned}
&\dot{\th}_1(t)+(1-\g)\th_1(t)\[r+{(\mu-r)^2\over 2\g\si^2}-{[\l\th_1(t) +(1-\l)\th_2(t) ]^{1\over \a-1}\over [\l\th_1(t)^{1-\g-\a\over 1-\g}+(1-\l)\th_2(t)^{1-\g-\a\over 1-\g}]^{1\over\a-1}}\]\\
&\qq-(1-\g)\rho_1\a^{-1}\th_1(t)
+{\a^{-1}(1-\g)\th_1(t)^{1-\g-\a\over1-\g}[\l\th_1(t) +(1-\l)\th_2(t) ]^{\a\over \a-1}
\over [\l\th_1(t)^{1-\g-\a\over 1-\g}+(1-\l)\th_2(t)^{1-\g-\a\over 1-\g}]^{\a\over\a-1}}=0;\\
&\dot{\th}_2(t)+(1-\g)\th_2(t)\[r+{(\mu-r)^2\over 2\g\si^2}-{[\l\th_1(t) +(1-\l)\th_2(t) ]^{1\over \a-1}\over [\l\th_1(t)^{1-\g-\a\over 1-\g}+(1-\l)\th_2(t)^{1-\g-\a\over 1-\g}]^{1\over\a-1}}\]\\
&\qq-(1-\g)\rho_2\a^{-1}\th_2(t)
+{\a^{-1}(1-\g)\th_2(t)^{1-\g-\a\over1-\g}[\l\th_1(t) +(1-\l)\th_2(t) ]^{\a\over \a-1}
\over [\l\th_1(t)^{1-\g-\a\over 1-\g}+(1-\l)\th_2(t)^{1-\g-\a\over 1-\g}]^{\a\over\a-1}}=0;\\
&\th_1(T)=\th_2(T)=1.
\end{aligned}\right.
\ee
%
%
%
%
%
%

\begin{proposition}
The system \rf{M-HJB} of ODEs admits a unique solution $(\th_1(\cd),\th_2(\cd))$.
The strategies $\mathbb{U}(\cd,\cd)$ and $\mathbb{C}(\cd,\cd)$, given by \rf{U}--\rf{C},
is an equilibrium investment strategy and an equilibrium consumption strategy, respectively.
\end{proposition}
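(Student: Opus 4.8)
The plan is to treat the two assertions separately: first show that the coupled ODE system \rf{M-HJB} has a unique solution on $[0,T]$, and then verify, through \autoref{theorem-equilibrium-BSVIE-Main}, that the feedback pair $(\dbU,\dbC)$ given by \rf{U}--\rf{C} is an equilibrium.

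\emph{Well-posedness of \rf{M-HJB}.} I would read \rf{M-HJB} as a terminal-value problem $\dot{\BBth}=G(t,\BBth)$ for $\BBth=(\th_1,\th_2)$ with $\BBth(T)=(1,1)$, solved backward from $t=T$. On the open cone $\cO=\{(\th_1,\th_2):\th_1>0,\ \th_2>0\}$ the map $G$ is locally Lipschitz — every fractional power $\th_i^{(1-\g-\a)/(1-\g)}$ and every power of $\l\th_1+(1-\l)\th_2$ occurring in \rf{M-HJB} is smooth there — so the Cauchy--Lipschitz theorem yields a unique maximal solution staying in $\cO$. The crux is to show that this maximal interval is all of $[0,T]$, i.e.\ to keep $(\th_1,\th_2)$ in a compact subset of $\cO$. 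I would do this by a comparison argument exploiting the sign structure of \rf{M-HJB}: the common consumption ratio $F(\th_1,\th_2)$ appearing in \rf{C} is nonnegative, and the ``production'' term in each equation has a definite sign (being $\a^{-1}(1-\g)$ times positive quantities, and $\a/(1-\g)>0$), so that any solution is trapped between explicit super- and sub-solutions of linear scalar ODEs; this gives $0<c\les\th_i(t)\les C$ on the maximal interval with $c,C$ depending only on $T,r,\m,\si,\g,\a,\rho_1,\rho_2$. These a priori bounds prevent the trajectory from reaching $\partial\cO$ or escaping to infinity in finite time, so the maximal solution is defined on all of $[0,T]$ and, by local uniqueness, is the unique global one.

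\emph{Reduction of the HJB system.} Since the social planner maximizes, I apply \autoref{theorem-equilibrium-BSVIE-Main} to the problem with cost functional $-J^\l$ (equivalently, every $\inf$ becomes a $\sup$). Substituting the power ansatz $\Th^i(t,x)=\th_i(t)x^{1-\g}/(1-\g)$ for $i=1,2$ and $\Th^0(t,x)=\big(\l\th_1(t)+(1-\l)\th_2(t)\big)x^{1-\g}/(1-\g)$ into \rf{M-HJB0}, and using the derivative formulas recorded just before \rf{U}: (i) the pointwise optimization of the Hamiltonian in $(u,c)$ has first-order conditions solved exactly by \rf{EIS}--\rf{ECS}, which under the ansatz become the linear-in-$x$ feedbacks \rf{U}--\rf{C} (one checks the critical point is the genuine optimizer in the relevant parameter regime, where the Hamiltonian is concave in $(u,c)$); (ii) after cancelling the common factor $x^{1-\g}/(1-\g)$, the first and second PDEs of \rf{M-HJB0} become precisely the first and second ODEs of \rf{M-HJB}, while the third PDE is automatically satisfied since $\Th^0=\l\Th^1+(1-\l)\Th^2$. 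By the previous step these ODEs hold on $[0,T]$, so $(\Th^1,\Th^2,\Th^0)$ is a classical solution of the equilibrium HJB equation \rf{M-HJB0}.

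\emph{Admissibility and conclusion.} From the first step, $\th_1,\th_2$ are continuous and bounded away from $0$ on $[0,T]$, so $\dbU(t,x)=\frac{\m-r}{\g\si^2}x$ and $\dbC(t,x)=\big(\text{a bounded continuous function of }t\big)x$ are linear in $x$ with bounded coefficients. Hence the closed-loop forward SDE in \rf{M-state1} is a linear SDE with bounded coefficients, which admits a unique strong solution $X(\cd)$ with moments of all orders; the outcome of $(\dbU,\dbC)$ therefore lies in $\sU[t,T]$, the associated BSDEs for $(Y_1,Y_2)$ are solved by $Y_i(r)=\th_i(r)X(r)^{1-\g}/(1-\g)$ consistently with \autoref{Prop:FK-BSVIE}, and $(\dbU,\dbC)$ is a feedback strategy in the sense of \autoref{feedback-strategy-In}. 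Applying \autoref{theorem-equilibrium-BSVIE-Main} then yields that $(\dbU,\dbC)$ is an equilibrium strategy, as claimed. The only genuinely delicate point is the global a priori estimate keeping $(\th_1,\th_2)$ inside $\cO$, i.e.\ away from the singularities of the fractional powers; everything in the last two paragraphs is routine bookkeeping with the explicit ansatz and the already-established verification theorem.
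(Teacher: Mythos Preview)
Your high-level strategy for well-posedness—local Cauchy–Lipschitz in the open positive cone plus a priori bounds to rule out blow-up or hitting $\partial\cO$—is exactly the paper's, and your treatment of the equilibrium verification via the ansatz and \autoref{theorem-equilibrium-BSVIE-Main} is fine (indeed more explicit than the paper, which takes that reduction as already carried out in the derivation preceding the proposition).

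The gap is in the a priori bound step. Your claim that the solution is ``trapped between explicit super- and sub-solutions of linear scalar ODEs'' glosses over the real difficulty: the consumption ratio $F(\th_1,\th_2)$ in \rf{C} is not a priori bounded in terms of a single $\th_i$, so the two equations are genuinely coupled and one cannot compare each $\th_i$ to a scalar linear ODE without first controlling that coupling. The paper's device is to first establish an \emph{ordering}: assuming without loss of generality $\rho_1\ges\rho_2$, one writes the ODE satisfied by $\D\th=\th_1-\th_2$ and observes that its inhomogeneous term $(1-\g)(\rho_2-\rho_1)\a^{-1}\th_2$ is $\les0$, whence $\th_1\les\th_2$ on the maximal interval. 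That ordering is then used to show that in the $\th_1$-equation the sum of the (negative) consumption term and the (positive) production term is nonnegative—this is the nontrivial algebraic inequality in the paper's proof, hinging on $\th_1^{-\a/(1-\g)}\th_2\ges\th_2^{(1-\g-\a)/(1-\g)}$. Only after dropping those two terms together does one get the clean exponential lower bound $\th_1(t)\ges\exp\{-T|1-\g|\,|r-\rho_1\a^{-1}+(\m-r)^2/(2\g\si^2)|\}=:\d>0$, and then $\th_2\ges\th_1\ges\d$. The upper bound follows routinely once the lower bound is in hand. Without the ordering step your comparison argument has no way to decouple the system, so as written the proposal is incomplete at precisely the point you flagged as ``genuinely delicate''.
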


\begin{proof}
It suffices to show that if $(\th_1(\cd),\th_2(\cd))$ is a postive solution of \rf{M-HJB}
on $[t_0,T]$, then
$$
{\d}\les\th_i(s)\les \kappa,\q s\in[t_0,T];\q i=1,2,
$$
for some positive constants $\d,\kappa>0$ independent of $t_0$.
Without loss of generality, let $\rho_1\ges\rho_2$.
Denote $\D\th(\cd)=\th_1(\cd)-\th_2(\cd)$.
Note that
$$
\left\{
\begin{aligned}
&\D\dot{\th}(t)+(1-\g)\D\th(t)\[r+{(\mu-r)^2\over 2\g\si^2}-{[\l\th_1(t) +(1-\l)\th_2(t) ]^{1\over \a-1}\over [\l\th_1(t)^{1-\g-\a\over 1-\g}+(1-\l)\th_2(t)^{1-\g-\a\over 1-\g}]^{1\over\a-1}}\]\\
&\qq-(1-\g)\rho_1\a^{-1}\D\th(t)+(1-\g)(\rho_2-\rho_1)\a^{-1}\th_2(t)\\
&\qq+{\a^{-1}(1-\g-\a)[\l\th_1(t) +(1-\l)\th_2(t) ]^{\a\over \a-1}
\over [\l\th_1(t)^{1-\g-\a\over 1-\g}+(1-\l)\th_2(t)^{1-\g-\a\over 1-\g}]^{\a\over\a-1}}\int_0^1[l\th_1(t)+(1-l)\th_2(t)]^{-\a\over 1-\g}dl\D\th(t)=0,\\
&\D\th(T)=0.
\end{aligned}\right.
$$
and
$$(1-\g)(\rho_2-\rho_1)\a^{-1}\th_2(t)\les 0.$$
Then, $\th_1(\cd)\les\th_2(\cd)$.
Thus,
\begin{align}
&{\th_1(t)^{1-\g-\a\over1-\g}[\l\th_1(t) +(1-\l)\th_2(t) ]^{\a\over \a-1}
\over [\l\th_1(t)^{1-\g-\a\over 1-\g}+(1-\l)\th_2(t)^{1-\g-\a\over 1-\g}]^{\a\over\a-1}}\nn\\
&\q={\th_1(t)\th_1(t)^{-\a\over1-\g}[\l\th_1(t) +(1-\l)\th_2(t) ][\l\th_1(t) +(1-\l)\th_2(t) ]^{1\over \a-1}
\over [\l\th_1(t)^{1-\g-\a\over 1-\g}+(1-\l)\th_2(t)^{1-\g-\a\over 1-\g}] [\l\th_1(t)^{1-\g-\a\over 1-\g}+(1-\l)\th_2(t)^{1-\g-\a\over 1-\g}]^{1\over\a-1}}\nn\\
&\q={\th_1(t)[\l\th_1(t)^{1-\g-\a\over 1-\g} +(1-\l)\th_1(t)^{-\a\over1-\g}\th_2(t) ][\l\th_1(t) +(1-\l)\th_2(t) ]^{1\over \a-1}
\over [\l\th_1(t)^{1-\g-\a\over 1-\g}+(1-\l)\th_2(t)^{1-\g-\a\over 1-\g}] [\l\th_1(t)^{1-\g-\a\over 1-\g}+(1-\l)\th_2(t)^{1-\g-\a\over 1-\g}]^{1\over\a-1}}\nn\\
&\q\ges{\th_1(t)[\l\th_1(t)^{1-\g-\a\over 1-\g} +(1-\l)\th_2(t)^{1-\g-\a\over 1-\g} ][\l\th_1(t) +(1-\l)\th_2(t) ]^{1\over \a-1}
\over [\l\th_1(t)^{1-\g-\a\over 1-\g}+(1-\l)\th_2(t)^{1-\g-\a\over 1-\g}] [\l\th_1(t)^{1-\g-\a\over 1-\g}+(1-\l)\th_2(t)^{1-\g-\a\over 1-\g}]^{1\over\a-1}}\nn\\
&\q={\th_1(t)[\l\th_1(t) +(1-\l)\th_2(t) ]^{1\over \a-1}
\over  [\l\th_1(t)^{1-\g-\a\over 1-\g}+(1-\l)\th_2(t)^{1-\g-\a\over 1-\g}]^{1\over\a-1}}\nn,
\end{align}
which implies
\begin{align*}
&(1-\g)\th_1(t)\[-{[\l\th_1(t) +(1-\l)\th_2(t) ]^{1\over \a-1}\over [\l\th_1(t)^{1-\g-\a\over 1-\g}+(1-\l)\th_2(t)^{1-\g-\a\over 1-\g}]^{1\over\a-1}}\]\nn\\
&
+{\a^{-1}(1-\g)\th_1(t)^{1-\g-\a\over1-\g}[\l\th_1(t) +(1-\l)\th_2(t) ]^{\a\over \a-1}
\over [\l\th_1(t)^{1-\g-\a\over 1-\g}+(1-\l)\th_2(t)^{1-\g-\a\over 1-\g}]^{\a\over\a-1}}\\
&\q\ges{(\a^{-1}-1)(1-\g)\th_1(t)[\l\th_1(t) +(1-\l)\th_2(t) ]^{1\over \a-1}
\over  [\l\th_1(t)^{1-\g-\a\over 1-\g}+(1-\l)\th_2(t)^{1-\g-\a\over 1-\g}]^{1\over\a-1}}\ges 0.
\end{align*}
If follows that
$$
\th_1(t)\ges e^{\int_t^T(1-\g)[r-\rho_1\a^{-1}+{(\mu-r)^2\over 2\g\si^2}]ds}\ges  e^{-T|1-\g||r-\rho_1\a^{-1}+{(\mu-r)^2\over 2\g\si^2}|}=:\d>0.
$$
By $\th_2\ges\th_1$, we get $\th_2\ges\d$. Then
\begin{align}
&\th_i(t)^{1-\g-\a\over1-\g} \les \d^{1-\g-\a\over1-\g};\,i=1,2,\q [\l\th_1(t) +(1-\l)\th_2(t) ]^{\a\over \a-1}\les \d^{\a\over \a-1},\nn\\
& [\l\th_1(t)^{1-\g-\a\over 1-\g}+(1-\l)\th_2(t)^{1-\g-\a\over 1-\g}]^{1\over 1-\a}\les\d^{1-\g-\a\over (1-\a)(1-\g)}.\nn
\end{align}
From the above, we get that there exists a constant $\kappa>0$, independent of $t_0$, such that
$$
\th_i\les\kappa.
$$
Then a routine argument applies to get the well-posedness of the equation.
\end{proof}

\subsection{Stackelberg Games}\label{subsec:SG}
In this subsection, we consider a specific Stackelberg game (also called a leader-follower game).
We will show that the leader's problem in this Stackelberg game is an optimal control problem for FBSDEs,
whose optimal control is time-inconsistent. By applying \autoref{theorem-equilibrium-BSVIE-Main},
we can find a time-consistent equilibrium for the leader.  This will give a very good illustration.

\begin{example}
Consider the following one-dimensional state equation
\bel{state-LF-E}\left\{\2n\ba{ll}
\ds\dot X(s)=u_1(s)-u_2(s),\q s\in[t,1],\\
\ns\ds X(t)=x,\ea\right.\ee
and the cost functionals
\begin{align}
J_1(t,x;u_1(\cd),u_2(\cd))&=|X(1)|^2+\int_t^1\big[|u_1(s)|^2-|u_2(s)|^2\big]ds,\label{Cost-LF-E1}\\
J_2(t,x;u_1(\cd),u_2(\cd))&=\int_t^1\[-u_1(s)+{X(s)\over s-2}+ u_2(s)+|u_2(s)|^2\]ds.\label{Cost-LF-E2}
\end{align}
In the above, Player $2$ is the leader (or the principal), who announces his/her control $u_2(\cd)$ first, and Player $1$ is the follower (or the agent), who chooses his/her control $u_1(\cd)$ accordingly. Whatever the leader announces, the follower will select a control $\bar u_1(\cd\,;t,x,u_2(\cd))$ (depending the control $u_2(\cd)$ announced by the leader as well as the initial pair $(t,x)$) such that $u_1(\cd)\mapsto J_1(t,x;u_1(\cd),u_2(\cd))$ is minimized.
Knowing this, the leader will choose a $\bar u_2(\cd)$ a priori so that $u_2(\cd)\mapsto J_2(t,x;\bar u_1(\cd\,;t,x,u_2(\cd)),u_2(\cd))$ is minimized.
For any given initial pair $(t,x)$ and control $u_2(\cd)$ of the leader,
by the standard results of LQ control problems (see \cite[Chapter 6]{Yong-Zhou1999}),
the follower admits a unique optimal strategy $\bar u_1(\cd\,;t,x,u_2(\cd))$.
Then by some straightforward calculations, the leader's problem can be stated as follows:
Find a control $u_2(\cd)$ to minimize
\bel{Cost-LF-E2-1}
J_2(t,x;\bar u_1(\cd\,;t,x,u_2(\cd)),u_2(\cd))=\int_t^1\big[Y(s)+u_2(s)+|u_2(s)|^2\big]ds,\ee
with the backward evolution equation
\bel{state-LF-E1}\left\{\ba{ll}
\ds\dot Y(s)={1\over 2-s}Y(s)+{1\over 2-s}u_2(s),\q s\in[t,1],\\
\ns\ds Y(1)=0.\ea\right.\ee
Note that
$$
Y(s)={1\over s-2}\int_s^1 u_2(r)dr.
$$
Then
\begin{align}
J_2(t,x;\bar u_1(\cd),u_2(\cd))&=\int_t^1\[{1\over s-2}\int_s^1 u_2(r)dr+u_2(s)+|u_2(s)|^2\]ds\nn\\
&=\int_t^1\Big\{\big[\ln(2-s)-\ln(2-t)+1\big]u_2(s)+|u_2(s)|^2\Big\}ds.\nn
\end{align}
It follows that the unique optimal control of the leader is given by
$$
\bar u_2(s;t,x)={\ln(2-t)-\ln(2-s)-1\over 2},\q s\in[t,1].
$$
In particular, at the initial pair $(0,x)$, the unique optimal control of the leader is
$$
\bar u_2(s;0,x)={\ln2-\ln(2-s)-1\over 2},\q s\in[0,1].
$$
Let $\bar X(\cd)\equiv \bar X(\cd;0,x)$ be the state process with initial pair $(0,x)$
and optimal controls $(\bar u_1(s;t,x,\bar u_2),\,\bar u_2(\cd))$.
For any given $t\in(0,1)$, at the initial pair $(t,\bar X(t;0,x))$, the unique optimal control of the leader is
$$
\bar u_2(s;t,\bar X(t;0,x))={\ln(2-t)-\ln(2-s)-1\over 2},\q s\in[t,1].
$$
Thus, on the time interval $[t,1]$,
$$
\bar u_2(\cd;0,x)\neq \bar u_2(\cd;t,\bar X(t;0,x)),
$$
which implies that the leader's problem  is time-inconsistent.
By \autoref{theorem-equilibrium-BSVIE-Main},
we can easily obtain the time-consistent equilibrium strategy of the leader,
which is given by
$$
\bar\Psi(s,x)=-{1\over 2},\q (s,x)\in[0,1]\times\dbR^n.
$$
\end{example}

\begin{remark}
We refer the reader to \cite{Stackelberg1952,Yong2002,Cvitanic-Zhang2012, Bensoussan2018,Sun-Wang-Wen2021} for
some  theoretical  results and financial applications of Stackelberg games.
It is worthy of pointing out that the well-known {\it principal-agent problem}  (see \cite{Cvitanic-Zhang2012})
can be regarded as a special case.
\end{remark}

\section{Verification Theorem}\label{sec:Verification}

In this section, we shall show that the function $\bar\Psi(\cd\,,\cd)$, determined by \rf{bar Psi},
is an equilibrium strategy of Problem (N).
In other words, we would like to rigorously prove the verification theorem
(i.e., \autoref{theorem-equilibrium-BSVIE-Main}).
To do this, we assume that the equilibrium HJB equation \rf{HJB-BSVIE-Main}
admits a classical solution and the function $\bar \Psi(\cd\,,\cd)$ defined by \rf{bar Psi} is a feedback strategy.
We also assume that all the involved functions are bounded and differentiable with bounded derivatives.

\ms

Let $(\bar X(\cd),\bar Y(\cd),\bar Z(\cd))$ and $(\bar Y^0(\cd),\bar Z^0(\cd,\cd))$ be the solutions to FBSDE \rf{State} and BSVIE \rf{cost-BSVIE1},
respectively,  corresponding to the strategy $\bar \Psi(\cd\,,\cd)$ and the initial pair $(0,\xi)$.
For any $t\in[0,T)$, $u\in L_{\cF_t}^2(\Om;U)$ and $\e\in[0,T-t)$,
define the strategy $\Psi^\e(\cd\,,\cd)$ by \rf{def-equilibrium2}.
With the initial pair $(t,\bar X(t))\in\sD$, take the strategy $\Psi^\e(\cd\,,\cd)$,
then the corresponding state equation \rf{State} and cost functional \rf{cost-BSVIE0}--\rf{cost-BSVIE1} become
\bel{state-e}
\left\{\begin{aligned}
dX^\e(s)&=b(s,X^\e (s),\bar\Psi(s,X^\e(s)))ds+\si(s,X^\e (s),\bar\Psi(s,X^\e(s)))dW(s),\q s\in[t+\e,T];\\
dX^\e(s)&=b(s,X^\e (s),u )ds+\si(s,X^\e (s),u)dW(s),\q s\in[t,t+\e),\\
dY^\e (s)&=-g(s,X^\e(s),\bar\Psi(s,X^\e(s)),Y^\e(s),Z^\e(s))ds+Z^\e(s)dW(s),\q s\in[t+\e,T];\\
dY^\e (s)&=-g(s,X^\e(s),u,Y^\e(s),Z^\e(s))ds+Z^\e(s)dW(s),\q s\in[t,t+\e),\\
X^\e(t)&=\bar X(t),\q Y^\e(T)=h(X^\e(T)),
\end{aligned}\right.
\ee
and
\bel{cost-e}
J(t,\bar X(t);\Psi^\e(\cd))=Y^{0,\e}(t),
\ee
respectively, with
\bel{BSVIE-e11}
\left\{\begin{aligned}
&Y^{0,\e}(r)=h^0(r,X^\e(r),X^\e(T),Y^\e(r))-\int_r^T Z^{0,\e}(r,s)dW(s)\\
&\q+\int_r^T g^0(r,s,X^\e(r),X^\e(s),\bar\Psi(s,X^\e(s)),Y^\e(s),Z^\e(s),Y^{0,\e}(s),Z^{0,\e}(r,s))dr,\q r\in[t+\e,T];\\
&Y^{0,\e}(r)=h^0(r,X^\e(r),X^\e(T),Y^\e(r))-\int_r^TZ^{0,\e}(r,s)dW(s)\\
&\q+\int_{t+\e}^Tg^0(r,s,X^\e(r),X^\e(s),\bar\Psi(s,X^\e(s)),Y^\e(s),Z^\e(s),Y^{0,\e}(s),Z^{0,\e}(r,s))ds\\
&\q +\int_r^{t+\e}g^0(r,s,X^\e(r),X^\e(s),u,Y^\e(s),Z^\e(s),Y^{0,\e}(s),Z^{0,\e}(r,s))ds,\q r\in[t,t+\e).
\end{aligned}\right.
\ee

Next, let us deduce the PDEs associated with the FBSDE \rf{state-e} and BSVIE \rf{BSVIE-e11}.

\ms
By the Feynman--Kac formula for BSDEs  (see Pardoux--Peng \cite{Pardoux--Peng1992}, for example),
we get
\begin{align}
&Y^\e(s)=\Th( s,X^\e(s)),\q Z^\e(s)=\Th_x(s,X^\e(s))\si(s,X^\e(s),\bar\Psi(s,X^\e(s))),\q s\in[t+\e,T],\nn
\end{align}
where $\Th(\cd,\cd)$ is the unique solution to the first PDE in \rf{HJB-BSVIE-Main}.
Then on the time interval $[t,t+\e]$, we can rewrite \rf{state-e} as follows:
$$
\left\{\begin{aligned}
dX^\e(s)&=b(s,X^\e (s),u )ds+\si(s,X^\e (s),u)dW(s),\q s\in[t,t+\e],\\
dY^\e (s)&=-g(s,X^\e(s),u,Y^\e(s),Z^\e(s))ds+Z^\e(s)dW(s),\q s\in[t,t+\e],\\
X^\e(t)&=\bar X(t),\q Y^\e(t+\e)=\Th(t+\e,X^\e(t+\e)).
\end{aligned}\right.
$$
Note that the control $u\in L^2_{\cF_t}(\Om;U)$ is $\cF_t$-measurable.
Then by the Feynman--Kac formula for BSDEs again, we get
\bel{Y-e-Th-e}
Y^\e(s)=\Th^\e( s,X^\e(s)),\q Z^\e(s)=\Th^\e_x(s,X^\e(s))\si(s,X^\e(s),u),\q s\in[t,t+\e],
\ee
where $\Th^\e(\cd,\cd)$ is the unique classical solution to the following {\it perturbation} PDE:
\bel{Th-e}\left\{
\begin{aligned}
&\Th^\e_s(s,x)+\Th^\e_x(s,x)b(s,x,u)+\tr[\Th^\e_{xx}(s,x)a(s,x,u)]\\
&\q+g(s,x,u,\Th^\e(s,x),\Th^\e_x(s,x)\si(s,x,u))=0,\q s\in[t,t+\e],\\
&\Th^\e(t+\e,x)=\Th(t+\e,x).
\end{aligned}\right.
\ee

\begin{remark}
Indeed, \rf{Th-e} is a PDE with random parameters, because $u\in L^2_{\cF_t}(\Om;U)$ is a random variable.
However, note that $u$ is $\cF_t$-measurable and \rf{Th-e} is considered only on $[t,t+\e]$.
The random PDE \rf{Th-e}  can  be treated as a deterministic one.
\end{remark}

By \autoref{Prop:FK-BSVIE},  on the time interval $[t+\e,T]$, we get
\bel{cY-e-t+e}
Y^{0,\e}(s)= \Th^0(s,s,x,x,\Th(s,X^\e(s))),
\ee
where $\Th^0(\cd)$ is the solution of the second PDE in \rf{HJB-BSVIE-Main}.
Motivated by Wang--Yong--Zhang \cite{Wang-Yong-Zhang2021}, we introduce the following auxiliary processes with two time variables:
\bel{BSVIE-e}
\left\{\begin{aligned}
d Y^{0,\e}(r;s)&=-g^0\big(r,s,X^\e(r),X^\e (s),\bar\Psi(s,X^\e(s)),Y^\e(s), Z^\e(s),Y^{0,\e}(s),Z^{0,\e}(r;s)\big)ds\\
&\q+Z^{0,\e}(r;s)dW(s),\q s\in[(t+\e)\vee r,T],~r\in[t,T];\\
dY^{0,\e} (r;s)&=-g^0\big(r,s,X^\e(r),X^\e (s),u,Y^\e(s), Z^\e(s),Y^{0,\e}(s),Z^{0,\e}(r;s)\big)ds\\
&\q+Z^{0,\e}(r;s)dW(s),\q (r,s)\in\D^*[t,t+\e];\\
Y^{0,\e}(r;T)&= h^0(r,X^\e(r),X^\e(T),Y^\e(r)),\q r\in[t,T],
\end{aligned}\right.
\ee
which can give the unique solution of BSVIE \rf{BSVIE-e11} by
\bel{BSVIE-AP-Re}
Y^{0,\e}(s)=Y^{0,\e}(s;s),\q Z^{0,\e}(r,s)=Z^{0,\e}(r;s),\q (r,s)\in\D^*[t,T].
\ee
Notice that for any fixed $r\in[t,T]$, \rf{BSVIE-e} is a BSDE.
Recall the representations \rf{Y-e-Th-e} and \rf{cY-e-t+e}.
Then by  the Feynman--Kac formula for BSDEs again,  we get that for any $r\in[t,t+\e]$,
\begin{align}
& Y^{0,\e}(r;s)=\Th^0(r,s,X^\e(r),X^\e(s),\Th^\e(r,X^\e(r))),\q s\in [t+\e,T],\nn\\
& Z^{0,\e}(r;s)=\Th^0_x(r,s,X^\e(r),X^\e(s),\Th^\e(r,X^\e(r)))\si(s,X^\e(s),\bar\Psi(s,X^\e(s))), \q s\in [t+\e,T].\label{cY-e-Th}
\end{align}
On the other hand, by the flow property of the auxiliary process $Y^{0,\e}(\cd;\cd)$, we have
\begin{align}
Y^{0,\e}(r;r)&=Y^{0,\e}(r;t+\e)+\int_r^{t+\e}g^0(r,s,X^\e(r),X^\e(s),u,Y^\e(s),Z^\e(s),Y^{0,\e}(s),Z^{0,\e}(r;s))ds\nn\\
&\q-\int_r^{t+\e} Z^{0,\e}(r;s)dW(s),\q r\in[t,t+\e].\nn
\end{align}
Substituting  \rf{cY-e-Th} into the above and noting \rf{BSVIE-AP-Re}, we get
\begin{align}
Y^{0,\e}(r)&=\Th^0(r,t+\e,X^\e(r),X^\e(t+\e),\Th^\e(r,X^\e(r)))-\int_r^{t+\e} Z^{0,\e}(r,s)dW(s)\nn\\
&\q+\int_r^{t+\e}g^0(r,s,X^\e(r),X^\e(s),u,Y^\e(s),Z^\e(s),Y^{0,\e}(s),Z^{0,\e}(r,s))ds,\q r\in[t,t+\e].\label{BSVIE-e-t+e}
\end{align}
Then by \autoref{Prop:FK-BSVIE} (recalling \rf{Y-e-Th-e}--\rf{Th-e}),
we have the following representation:
\begin{align}
& Y^{0,\e}(r)= \Th^{0,\e}(r,r,X^\e(r),X^\e(r),\Th^\e(r,X^\e(r))),\nn\\
& Z^{0,\e}(r,s)= \Th_x^{0,\e}(r,s,X^\e(r),X^\e(s),\Th^\e(r,X^\e(r)))\si(s,X^\e(s),u),\q (r,s)\in\D^*[t,t+\e],
\label{cY-t-te}
\end{align}
where $\Th^{0,\e}(\cd)$ is the unique solution to the following PDE:
\bel{V-e-t}\left\{
\begin{aligned}
&\Th_s^{0,\e}(r,s,\ti x,x,y)+\Th_x^{0,\e}(r,s,\ti x,x,y)b(s,x,u)+\tr[\Th_{xx}^{0,\e}(r,s,\ti x,x,y)a(s,x,u)]\\
&\q+g^0\big(r,s,\ti x,x,u,\Th^\e(s,x),\Th^\e_x(s,x)\si(s,x,u),\Th^{0,\e}(s,s,x,x,\Th^\e(s,x)),\\
&\qq\q \Th_x^{0,\e}(r,s,\ti x,x,y)\si(s,x,u)\big)=0,\q (r,s)\in\D^*[t,t+\e],\\
& \Th^{0,\e} (r,t+\e,\ti x,x,y)=\Th^{0} (r,t+\e,\ti x,x,y),\q r\in[t,t+\e].
\end{aligned}\right.
\ee

\begin{remark}
Note that both \rf{Th-e} and \rf{V-e-t} are  semilinear parabolic equations.
To guarantee the well-posedness of PDEs \rf{Th-e} and \rf{V-e-t},
we assume that the following non-degenerate condition holds:
There exist two constants $\l_0,\l_1>0$ such that
\bel{non-degen-condition}
\l_0 I\les a(t,x,u)\les \l_1 I,\q \forall(t,x,u)\in[0,T]\times\dbR^n\times U.
\ee
\end{remark}

%

Under the assumption \rf{non-degen-condition},
we have the following convergence result of the families $\{\Th^\e(\cd)\}_{\e>0}$ and $\{\Th^{0,\e}(\cd)\}_{\e>0}$.

\begin{proposition}\label{lem:convergence}
Let \rf{non-degen-condition} hold. Then the PDEs \rf{Th-e} and \rf{V-e-t} admit unique classical solutions
$\Th^\e(\cd)$ and $\Th^{0,\e}(\cd)$, respectively.
Moreover, there exists a constant $K>0$, only depending on $\|\Th(\cd)\|_{C^{{\a\over 2},2+\a}}$ and $\|\Th^0(\cd)\|_{C^{{\a\over 2},{\a\over 2},\a,\a,1+\a}}$, such that
\bel{lem:convergence-main}
\|\Th^\e(\cd)-\Th(\cd)\|_{C^{0,2}[t,t+\e]}+\|\Th^{0,\e}(\cd)-\Th^0(\cd)\|_{C^{0,0,0,0,1}[t,t+\e]}\les K\e^{\a\over 2},
\ee
where $\a\in(0,1)$ is a constant.
\end{proposition}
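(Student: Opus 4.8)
The plan is to fix a sample point $\om$ (so that $u\in L^2_{\cF_t}(\Om;U)$ becomes a deterministic element of $U$), regard \rf{Th-e} and \rf{V-e-t} as deterministic parabolic problems on the thin strip $[t,t+\e]\times\dbR^n$, and run a short-time Schauder/Duhamel analysis in which every constant is kept uniform in $\e$, in the frozen control $u\in U$, and in the frozen parameters $(r,\ti x,y)$ of the second equation. Throughout I use the uniform ellipticity \rf{non-degen-condition}, the standing boundedness/differentiability hypotheses, and the already available regularity $\Th\in C^{\a/2,2+\a}$ and $\Th^0\in C^{\a/2,\a/2,\a,\a,1+\a}$ of the solution of \rf{HJB-BSVIE-Main}.

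\emph{Well-posedness.} Equation \rf{Th-e} is uniformly parabolic and semilinear with a nonlinearity depending on $(\Th^\e,\Th^\e_x)$ only locally, and with terminal datum $\Th(t+\e,\cd)$, which is bounded in $C^{2+\a}(\dbR^n)$ uniformly in $\e$ since it is a restriction of the fixed function $\Th$. Parabolic Schauder theory plus a Banach fixed-point argument on the short strip $[t,t+\e]$ gives a unique classical solution $\Th^\e\in C^{\a/2,2+\a}([t,t+\e]\times\dbR^n)$ with $\|\Th^\e\|_{C^{\a/2,2+\a}}\les K$, $K$ depending only on $\|\Th\|_{C^{\a/2,2+\a}}$, the coefficient bounds and $\bar\Psi$. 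For \rf{V-e-t}, once $\Th^\e$ is known, each slice $\Th^{0,\e}(r,\cd,\cd,\cd,\cd)$ solves a linear uniformly parabolic equation in $(s,x)$ except that its inhomogeneity contains the nonlocal diagonal value $\Th^{0,\e}(s,s,x,x,\Th^\e(s,x))$; following Wang--Yong--Zhang \cite{Wang-Yong-Zhang2021}, one solves it by iterating on the candidate diagonal function, an iteration that contracts because the nonlocal term is produced by an $s$-integration over a window of length at most $\e$. This gives a unique classical solution $\Th^{0,\e}$ with $\|\Th^{0,\e}\|_{C^{\cdots,1+\a}}\les K$ depending on $\|\Th^0\|_{C^{\cdots,1+\a}}$.

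\emph{Stability of $\Th^\e$.} Set $w^\e:=\Th^\e-\Th$ on $[t,t+\e]\times\dbR^n$. Since $\Th$ solves the first equation of \rf{HJB-BSVIE-Main} (i.e.\ \rf{Th-e} with $u$ replaced by $\bar\Psi(s,x)$) and $\Th^\e(t+\e,\cd)=\Th(t+\e,\cd)$, subtracting the two equations and linearizing $g$ in its last two slots shows $w^\e$ solves a linear uniformly parabolic equation with zero terminal datum and inhomogeneity $f^\e$, where $f^\e(s,x)=\Th_x\big(b(s,x,\bar\Psi)-b(s,x,u)\big)+\tr\big[\Th_{xx}\big(a(s,x,\bar\Psi)-a(s,x,u)\big)\big]+\big(g(s,x,\bar\Psi,\Th,\Th_x\si(s,x,\bar\Psi))-g(s,x,u,\Th,\Th_x\si(s,x,u))\big)$. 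Crucially $f^\e$ is bounded in the spatial H\"older norm $C^{\a}(\dbR^n)$ uniformly in $\e$ and $u$, the bound depending only on $\|\Th\|_{C^{\a/2,2+\a}}$, the coefficient norms and $\bar\Psi$ (it is $O(1)$, not small, because $\bar\Psi(s,x)\neq u$ near $s=t$). Representing $w^\e$ by Duhamel against the fundamental solution of the principal part and using the parabolic smoothing estimates $\|D_x^j e^{\tau A}\varphi\|_\infty\les C\tau^{-j/2}\|\varphi\|_\infty$ and $\|D_x^2 e^{\tau A}\varphi\|_\infty\les C\tau^{-1+\a/2}\|\varphi\|_{C^\a}$, one gets $\|w^\e(s,\cd)\|_\infty\les\int_s^{t+\e}\|f^\e\|_\infty\les K\e$, $\|D_xw^\e(s,\cd)\|_\infty\les\int_s^{t+\e}(s'-s)^{-1/2}\|f^\e\|_\infty ds'\les K\e^{1/2}$, and $\|D_x^2w^\e(s,\cd)\|_\infty\les\int_s^{t+\e}(s'-s)^{-1+\a/2}\|f^\e\|_{C^\a}ds'\les K\e^{\a/2}$. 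Since $\a<1$ the last bound dominates, so $\|\Th^\e-\Th\|_{C^{0,2}[t,t+\e]}\les K\e^{\a/2}$.

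\emph{Stability of $\Th^{0,\e}$ and the main obstacle.} Set $v^\e:=\Th^{0,\e}-\Th^0$. For each fixed $(r,\ti x,y)$, subtracting the equation in \rf{V-e-t} from the second equation of \rf{HJB-BSVIE-Main} and linearizing $g^0$ in its $\Th_x^0\si$ slot shows $v^\e$ solves a linear uniformly parabolic equation in $(s,x)$ with zero terminal datum and source composed of: the control mismatch of $b,a,g^0$ ($O(1)$ in $L^\infty$ and in $C^\a_x$); the replacement of $\Th$ by $\Th^\e$ inside $g^0$ (controlled by $\|w^\e\|_{C^{0,1}}\les K\e^{1/2}$ from the previous step); and the nonlocal diagonal difference $g^0(\cds,\Th^{0,\e}(s,s,x,x,\Th^\e(s,x)),\cds)-g^0(\cds,\Th^0(s,s,x,x,\Th(s,x)),\cds)$, which by the Lipschitz property of $g^0$ and $\|\Th^0_y\|_\infty\|w^\e\|_\infty\les K\e$ is bounded by $K\big(\|v^\e(s,\cd)\|_\infty+\e\big)$. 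Hence $\|v^\e(s,\cd)\|_\infty\les\int_s^{t+\e}\big(K+K\|v^\e(s',\cd)\|_\infty\big)ds'\les K\e+K\e\sup_{s'}\|v^\e(s',\cd)\|_\infty$; for $\e$ small the self-referential term is absorbed, giving $\|v^\e\|_\infty\les K\e$, and then $\|D_xv^\e(s,\cd)\|_\infty\les\int_s^{t+\e}(s'-s)^{-1/2}K\,ds'\les K\e^{1/2}\les K\e^{\a/2}$, so $\|\Th^{0,\e}-\Th^0\|_{C^{0,0,0,0,1}[t,t+\e]}\les K\e^{\a/2}$; uniformity in $(r,\ti x,y)$ is preserved since every constant came from sup-norms and from the fixed parabolic-H\"older norms of $\Th,\Th^0$. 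I expect the main obstacle to be twofold and to lie in the stability part: first, establishing the $C^\a$-in-$x$ control of the source terms, which is exactly what allows one to integrate the borderline-singular kernel $(s'-s)^{-1+\a/2}$ and obtain the sharp rate $\e^{\a/2}$ rather than a logarithmically divergent estimate — this forces careful bookkeeping of the parabolic H\"older regularity of $\Th,\Th^0$ and of the smoothness hypotheses on $b,\si,g,g^0,\psi$; second, the nonlocal diagonal term in \rf{V-e-t}, both for well-posedness and in the estimate, which is tamed only by the smallness of $\e$ turning the coupling into a Gronwall-absorbable perturbation.
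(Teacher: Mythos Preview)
Your approach is correct and reaches the stated rate, but it is organized differently from the paper's argument. You subtract the two equations and run a Duhamel/parabolic-smoothing estimate on the difference $w^\e=\Th^\e-\Th$ (then $v^\e=\Th^{0,\e}-\Th^0$), exploiting that the control-mismatch forcing is $O(1)$ in $C^\a_x$ so that integration of the borderline kernel $(s'-s)^{-1+\a/2}$ over the thin strip gives $\e^{\a/2}$. The paper never writes a difference equation: it establishes a priori that $\Th^\e,\Th^\e_x,\Th^\e_{xx}$ and $\Th^{0,\e},\Th^{0,\e}_x$ are by themselves H\"older-$\tfrac\a2$ in $s$ with constants depending only on the terminal data (this reuses the fundamental-solution computations from the proof of \autoref{thm:HJB}), and then invokes the terminal matching $\Th^\e(t+\e,\cd)=\Th(t+\e,\cd)$ to conclude via the triangle inequality, e.g.\ $|\Th^\e_{xx}(s,x)-\Th_{xx}(s,x)|\les|\Th^\e_{xx}(s,x)-\Th^\e_{xx}(t+\e,x)|+|\Th_{xx}(t+\e,x)-\Th_{xx}(s,x)|\les K\e^{\a/2}$. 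Your route makes the mechanism (small strip $\times$ $C^\a$ forcing) very explicit; the paper's is terser because it recycles the time-H\"older machinery wholesale. Two minor points: for your $D_x^2w^\e$ bound you implicitly need the lower-order part of the source in $C^\a_x$, i.e.\ a uniform $C^{1+\a}$ bound on $w^\e_x$, which you do have from the Schauder estimate $\|\Th^\e\|_{C^{\a/2,2+\a}}\les K$ in your well-posedness paragraph; and the norm $C^{0,0,0,0,1}$ in the statement is the $y$-derivative, whereas your $v^\e$ step produces $v^\e_x$ --- the $v^\e_y$ bound follows by the same argument after differentiating \rf{V-e-t} in the parameter $y$ (compare the parameter-derivative estimates in \autoref{lem:HJB-par}).
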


\begin{remark}
The proof of \autoref{lem:convergence} is sketched in \autoref{sec:Proofs} as a byproduct of \autoref{thm:HJB}.
We emphasize that for \autoref{lem:convergence}, the assumption \rf{non-degen-condition} should not be necessary,
because one could replace the analytic approach by a probabilistic argument
(see \cite{Pardoux--Peng1992,Wang-Yong-Zhang2021}).
\end{remark}

\begin{remark}
The estimate \rf{lem:convergence-main} plays the same role as the convergence  assumption (H3)  in
Wei--Yong--Yu \cite{Wei-Yong-Yu2017}, which was proved only for some special cases
(see \cite[Theorem 6.2]{Wei-Yong-Yu2017}).
In \autoref{lem:convergence}, we can show that \rf{lem:convergence-main} holds in general.
The key point is that \rf{lem:convergence-main} is only a byproduct of the stability of semilinear parabolic equations,
while the assumption (H3) in \cite{Wei-Yong-Yu2017} is concerned with the fully nonlinear PDEs.
The deeper reason is that in our paper the main technique is the Feynman--Kac formula for BSVIEs/BSDEs,
while in \cite{Wei-Yong-Yu2017} they heavily rely on  the HJB equation approach.
\end{remark}

\subsection{Proof of \autoref{theorem-equilibrium-BSVIE-Main}}

For any fixed $t\in[0,T)$, $\e\in[0,T-t]$ and $u\in L_{\cF_t}^2(\Om;U)$,
let $\Th^\e(\cd)$ and $\Th^{0,\e}(\cd)$ be the unique classical solution to PDEs \rf{Th-e} and \rf{V-e-t}, respectively.
With the representations \rf{Y-e-Th-e} and \rf{cY-t-te},
by \rf{BSVIE-e-t+e} we can represent $Y^{0,\e}(t)$ as follows:
\begin{align}
Y^{0,\e}(t)=\dbE_t\[\Th^0(t,t+\e,X^\e(t),X^\e(t+\e),\Th^\e(t,X^\e(t)))+\int_t^{t+\e}g^{0,\e}(t,s,u)ds\],
\end{align}
where
\begin{align}
&g^{0,\e}(t,s,u):= g^0\Big(t,\,s,\,X^\e(t),\,X^\e(s),\,u,\,\Th^\e(X^\e(s),s),\Th^\e_x(s,X^\e(s))\si(s,X^\e(s),u),\nn\\
&\qq\qq\q \Th^{0,\e}(s,s,X^\e(s),X^\e(s),\Th^\e(s,X^\e(s))),\, \Th_x^{0,\e}(t,s,X^\e(t),X^\e(s),\Th^\e(t,X^\e(t)))\si(s,X^\e(s),u)\Big).
\label{g-0-e-(t,s,u)}
\end{align}
Note that $X^\e(t)=\bar X(t)$.
Applying It\^{o}'s formula to the mapping $s\mapsto \Th^0(t,s,X^\e(t),X^\e(s),\Th^\e(t,X^\e(t)))$ yields that
\begin{align*}
Y^{0,\e}(t)&=\dbE_t\Big\{\Th^0(t,t,\bar X(t),\bar X(t),\Th^\e(t,\bar X(t)))+\int_t^{t+\e}
\[\Th_s^0(t,s,\bar X(t),X^\e(s),\Th^\e(t,\bar X(t))) \\
&\qq+ \Th_x^0(t,s,\bar X(t),X^\e(s),\Th^\e(t,\bar X(t)))b(s,X^\e(s),u)+g^{0,\e}(t,s,u)\\
&\qq+\tr\big[\Th_{xx}^0(t,s,\bar X(t),X^\e(s),\Th^\e(t,\bar X(t)))a(s,X^\e(s),u)\big]\]ds\Big\}.
\end{align*}
Using the fact $\Th^\e(t+\e,\cd)=\Th(t+\e,\cd)$, we get
\begin{align}
Y^{0,\e}(t)&=\dbE_t\Big\{  \Th^0(t,t,\bar X(t),\bar X(t),\Th^\e(t,\bar X(t)))-\Th^0(t,t,\bar X(t),\bar X(t),\Th^\e(t+\e,\bar X(t+\e)))\nn\\
&\qq +\Th^0(t,t,\bar X(t),\bar X(t),\Th(t+\e,\bar X(t+\e)))+\int_t^{t+\e}\[ \Th_s^0(t,s,\bar X(t),X^\e(s),\Th^\e(t,\bar X(t))) \nn\\
&\qq+ \Th_x^0(t,s,\bar X(t),X^\e(s),\Th^\e(t,\bar X(t)))b(s,X^\e(s),u)+g^{0,\e}(t,s,u)\nn\\
&\qq+\tr\big[\Th_{xx}^0(t,s,\bar X(t),X^\e(s),\Th^\e(t,\bar X(t)))a(s,X^\e(s),u)\big]\]ds\Big\}.\nn
\end{align}
Then by applying the It\^{o}'s formula to the mapping
$s\mapsto \Th^0(t,t,\bar X(t),\bar X(t),\Th^\e(s,\bar X(s)))$, we have
\begin{align}
Y^{0,\e}(t)&=\dbE_t\Big\{\Th^0(t,t,\bar X(t),\bar X(t),\Th(t+\e,\bar X(t+\e)))\nn\\
&\qq-{1\over 2}\int_t^{t+\e}\tr\[\Th^0_{yy}(t,t,\bar X(t),\bar X(t),\Th^\e(s,\bar X(s)))
\Th_x^\e(s,\bar X(s))\bar\si(s)[\Th_x^\e(s,\bar X(s))\bar \si(s)]^\top\]ds\nn\\
&\qq-\int_t^{t+\e}\Th^0_y(t,t,\bar X(t),\bar X(t),\Th^\e(s,\bar X(s)))\[\Th_s^\e(s,\bar X(s)) +\Th_x^\e(s,\bar X(s))\bar b(s)\nn\\
&\qq+\tr[\Th_{xx}^\e(s,\bar X(s))\bar a(s)]\]ds+\int_t^{t+\e}\[ \Th_s^0(t,s,\bar X(t),X^\e(s),\Th^\e(t,\bar X(t))) \nn\\
&\qq+ \Th_x^0(t,s,\bar X(t),X^\e(s),\Th^\e(t,\bar X(t)))b(s,X^\e(s),u)+g^{0,\e}(t,s,u)\nn\\
&\qq+\tr[\Th_{xx}^0(t,s,\bar X(t),X^\e(s),\Th^\e(t,\bar X(t)))a(s,X^\e(s),u)]\]ds\Big\},\label{cY-tt}
\end{align}
where
\begin{align}
\bar \f(s):=\f(s,\bar X(s),\bar\Psi(s,\bar X(s))),\,\, s\in[t,t+\e],\q \hbox{for}\q \f(\cd)=b(\cd),\si(\cd),a(\cd).
\label{bar-f}
\end{align}
Recalling \rf{Th-e} and \rf{HJB-BSVIE-Main}, we get that on $[t,t+\e]$,
\begin{align}
&\Th_s^\e(s,\bar X(s)) +\Th_x^\e(s,\bar X(s))\bar b(s)+\tr[\Th_{xx}^\e(s,\bar X(s))\bar a(s)]\nn\\
&\q=\Th_x^\e(s,\bar X(s))[\bar b(s)-b(s,\bar X(s),u)]+\tr\big\{\Th_{xx}^\e(s,\bar X(s))[\bar a(s)-a(s,\bar X(s),u)]\big\}
-g^{\e}(s,u),\label{Th-e-s}
\end{align}
and
\begin{align}
&\Th_s^0(t,s,\bar X(t),X^\e(s),\Th^\e(t,\bar X(t))) + \Th_x^0(t,s,\bar X(t),X^\e(s),\Th^\e(t,\bar X(t)))b(s,X^\e(s),u)\nn\\
&+\tr[\Th_{xx}^0(t,s,\bar X(t),X^\e(s),\Th^\e(t,\bar X(t)))a(s,X^\e(s),u)]\nn\\
&\q=\Th_x^0(t,s,\bar X(t),X^\e(s),\Th^\e(t,\bar X(t)))[b(s,X^\e(s),u)-\bar b^{\e}(s)]\nn\\
&\qq+\tr\big\{\Th_{xx}^0(t,s,\bar X(t),X^\e(s),\Th^\e(t,\bar X(t)))[a(s,X^\e(s),u)-\bar a^{\e}(s)]\big\}-\bar g^{0,\e}(t,s),\label{V-e-s}
\end{align}
where
\begin{align}
&\bar\f^{\e}(s):= \f(s,X^\e(s),\bar\Psi(s,X^\e(s))),\q \hbox{for}\q \f(\cd)=b(\cd),\,\si(\cd),\,a(\cd);\nn\\
&g^{\e}(s,u):= g\big(s,\bar X(s),u,\Th^\e(s,\bar X(s)),\Th^\e_x(s,\bar X(s))\si(s,\bar X(s),u)\big);\nn\\
&\bar g^{0,\e}(t,s):= g^0\big(t,s,\bar X(t),X^\e(s),\bar\Psi(s,X^\e(s)),\Th(s,X^\e(s)), \Th_x(s,X^\e(s))\bar\si^{\e}(s),\nn\\
&\q \Th^0(s,s,X^\e(s),X^\e(s),\Th(s,X^\e(s))), \Th_x^0(t,s,\bar X(t),X^\e(s),\Th(t,\bar X(t)))\bar\si^{\e}(s)\big).\label{g-e-(s,u)}
\end{align}
Substituting \rf{Th-e-s} and \rf{V-e-s} into \rf{cY-tt} yields that
\begin{align}
 Y^{0,\e}(t)& =\dbE_t\Big\{\Th^0(t,t,\bar X(t),\bar X(t),\Th(t+\e,\bar X(t+\e)))\nn\\
&\qq+\int_t^{t+\e}\Th^0_y(t,t,\bar X(t),\bar X(t),\Th^\e(s,\bar X(s)))\[g^\e(s,u)+\Th_x^\e(s,\bar X(s))\nn\\
&\qq\q\times[b(s,\bar X(s),u)-\bar b(s)]+\tr\big\{\Th_{xx}^\e(s,\bar X(s))[a(s,\bar X(s),u)-\bar a(s)]\big\}\]ds\nn\\
&\qq-{1\over 2}\int_t^{t+\e}\tr\[\Th^0_{yy}(t,t,\bar X(t),\bar X(t),\Th^\e(s,\bar X(s)))
\Th_x^\e(s,\bar X(s))\bar\si(s)[\Th_x^\e(s,\bar X(s))\bar\si(s)]^\top\]ds\nn\\
&\qq+\int_t^{t+\e}\[g^{0,\e}(t,s,u)-\bar g^{0,\e}(t,s)+\Th_x^0(t,s,\bar X(t),X^\e(s),\Th^\e(t,\bar X(t)))[b(s,X^\e(s),u)-\bar b^\e(s)]\nn\\
&\qq\q+\tr\big\{\Th_{xx}^0(t,s,\bar X(t),X^\e(s),\Th^\e(t,\bar X(t)))[a(s,X^\e(s),u)-\bar a^\e(s)]\big\}\]ds\Big\}.\label{cYe}
\end{align}
Applying the above arguments to $\bar Y^0(t)$, we have
\begin{align}
J(t,\bar X(t);\bar\Psi(\cd))&\equiv \bar Y^0(t)=\dbE_t\Big\{\Th^0(t,t,\bar X(t),\bar X(t),\Th(t+\e,\bar X(t+\e)))\nn\\
&\q-{1\over 2}\int_t^{t+\e}
\tr\big\{\Th^0_{yy}(t,t,\bar X(t),\bar X(t),\Th(s,\bar X(s)))
\Th_x(s,\bar X(s))\bar\si(s)[\Th_x(s,\bar X(s))\bar\si(s)]^\top\big\}ds\nn\\
&\q+\int_t^{t+\e}\Th^0_y(t,t,\bar X(t),\bar X(t),\Th(s,\bar X(s)))\bar g(s,\bar\Psi(s,\bar X(s)))ds\Big\},\label{cY}
\end{align}
where
\begin{align}
\bar g(s,u)&:= g\big(s,\bar X(s),u,\Th(s,\bar X(s)),\Th_x(s,\bar X(s))\si(s,\bar X(s),u)\big),\q (s,u)\in[t,t+\e]\times U.
\label{bar-g-(s,u)}
\end{align}
Combining \rf{cYe} with \rf{cY} together, we get
\begin{align}
Y^{0,\e}(t)-\bar Y(t)&=\dbE_t\Big\{\int_t^{t+\e}\Th^0_y(t,t,\bar X(t),\bar X(t),\Th^\e(s,\bar X(s)))\[g^\e(s,u)+\Th_x^\e(s,\bar X(s))\nn\\
&\q\times[b(s,\bar X(s),u)-\bar b(s)]+\tr\big[\Th_{xx}^\e(s,\bar X(s))[a(s,\bar X(s),u)-\bar a(s)]\big]\]ds\nn\\
&\q-{1\over 2}\int_t^{t+\e}\tr\[\Th^0_{yy}(t,t,\bar X(t),\bar X(t),\Th^\e(s,\bar X(s)))
\Th_x^\e(s,\bar X(s))\bar\si(s)[\Th_x^\e(s,\bar X(s))\bar\si(s)]^\top\]ds\nn\\
&\q+\int_t^{t+\e}\[g^{0,\e}(t,s,u)-\bar g^{0,\e}(t,s)+\Th_x^0(t,s,\bar X(t),X^\e(s),\Th^\e(t,\bar X(t)))[b(s,X^\e(s),u)-\bar b^\e(s)]\nn\\
&\q+\tr\big\{\Th_{xx}^0(t,s,\bar X(t),X^\e(s),\Th^\e(t,\bar X(t)))[a(s,X^\e(s),u)-\bar a^\e(s)]\big\}\]ds\nn\\
&\q+{1\over 2}\int_t^{t+\e}\tr\big[\Th^0_{yy}(t,t,\bar X(t),\bar X(t),\Th(s,\bar X(s)))
\Th_x(s,\bar X(s))\bar\si(s)[\Th_x(s,\bar X(s))\bar\si(s)]^\top\big]ds\nn\\
&\q-\int_t^{t+\e}\Th^0_y(t,t,\bar X(t),\bar X(t),\Th(s,\bar X(s)))\bar g(s,\bar\Psi(s,\bar X(s)))ds\Big\}.\label{cYe-cY}
\end{align}
By the standard results of SDEs, we get
\begin{align}
&\dbE_t\[\sup_{s\in[t,t+\e]}\big(|\bar X(s)|^2+|X^\e(s)|^2\big)\]\les K(1+|\bar X(t)|^2),\nn\\
&\dbE_t\[\sup_{s\in[t,t+\e]}|\bar X(s)-X^\e(s)|^2\]\les K(1+|\bar X(t)|^2)\e.\label{X-Xe}
\end{align}
By \autoref{lem:convergence}, we have
\begin{align}
&\dbE_t\big[|\Th^\e(s,\bar X(s))-\Th(s,\bar X(s))|+|\Th_x^\e(s,\bar X(s))-\Th_x(s,\bar X(s))|\nn\\
&\q+|\Th_{xx}^\e(s,\bar X(s))-\Th_{xx}(s,\bar X(s))|\big]\les K\e^{\a\over 2}.
\end{align}
It follows that
\begin{align}
&\dbE_t\[\big|\Th^0_y(t,t,\bar X(t),\bar X(t),\Th^\e(s,\bar X(s)))-\Th^0_y(t,t,\bar X(t),\bar X(t),\Th(s,\bar X(s)))\big|\nn\\
&\q+\big|\Th^0_{yy}(t,t,\bar X(t),\bar X(t),\Th^\e(s,\bar X(s)))-\Th^0_{yy}(t,t,\bar X(t),\bar X(t),\Th(s,\bar X(s)))\big|\nn\\
&\q+\big|\Th_x^0(t,s,\bar X(t),X^\e(s),\Th^\e(t,\bar X(t)))-\Th_x^0(t,s,\bar X(t),\bar X(s),\Th(t,\bar X(t)))\big|\nn\\
&\q+\big|\Th_{xx}^0(t,s,\bar X(t),X^\e(s),\Th^\e(t,\bar X(t)))-\Th_{xx}^0(t,s,\bar X(t),\bar X(s),\Th(t,\bar X(t)))\big|\]\nn\\
&\qq\les K \e^{\a\over 2}+ K\e^{1\over 2}(1+|\bar X(t)|)\les K \e^{\a\over 2}(1+|\bar X(t)|),
\end{align}
and
\begin{align}
&\dbE_t\[|g^\e(s,u)-\bar g(s,u)|+|\bar g^{0,\e}(t,s)-\bar g^0(t,s,\bar\Psi(s,\bar X(s)))|\nn\\
&\q+|\bar b^{\e}(s)-\bar b(s)|+|\bar\si^{\e}(s)-\bar\si(s)|\]\les K\e^{\a\over2}(1+|\bar X(t)|),\label{g-e-g}
\end{align}
where
\begin{align}
\bar g^{0}(t,s,u)&:=g^0\Big(t,\,s,\,\bar X(t),\,\bar X(s),\,u,\,\Th(s,\bar X(s)),\,\Th_x(s,\bar X(s))\si(s,\bar X(s),u),\nn\\
&\qq\q \Th^0(s,s,\bar X(s),\bar X(s),\Th(s,\bar X(s))),\,\Th_x^0(t,s,\bar X(t),\bar X(s),\Th(t,\bar X(t)))\si(s,\bar X(s),u)\Big),
\end{align}
and the form of other functions is given in \rf{bar-f}, \rf{g-e-(s,u)}, and \rf{bar-g-(s,u)}.
Moreover, by \autoref{lem:convergence} and \rf{X-Xe}, we have
\begin{align}
&\dbE_t\[|g^{0,\e}(t,s,u)-\bar g^0(t,s,u)|\]\les K\e^{\a\over2}(1+|\bar X(t)|),\label{g-e-g11}
\end{align}
where $g^{0,\e}(t,s,u)$ is given by \rf{g-0-e-(t,s,u)}.
With the above estimates \rf{X-Xe}--\rf{g-e-g11}, from \rf{cYe-cY} we have
\begin{align}
 Y^{0,\e}(t)-\bar Y(t)&=\dbE_t\Big\{\int_t^{t+\e}\Th^0_y(t,t,\bar X(t),\bar X(t),\Th(s,\bar X(s)))
\[\bar g(s,u)-\bar g(s,\bar\Psi(s,\bar X(s)))\nn\\
&\qq+\Th_x(s,\bar X(s))[b(s,\bar X(s),u)-\bar b(s)]+\tr\big\{\Th_{xx}(s,\bar X(s))[a(s,\bar X(s),u)-\bar a(s)]\big\}\]ds\nn\\
&\qq+\int_t^{t+\e}\[\bar g^0(t,s,u)-\bar g(t,s,\bar\Psi(s,\bar X(s)))+\Th^0_x(t,s,\bar X(t),\bar X(s),\Th(s,\bar X(s)))\nn\\
&\qq\times[b(s,\bar X(s),u)-\bar b(s)]+\tr\big\{\Th^0_{xx}(t,s,\bar X(t),\bar X(s),\Th(s,\bar X(s)))[a(s,\bar X(s),u)-\bar a(s)]\big\}\]ds\Big\}\nn\\
&\q+o(\e)(1+|\bar X(t)|). \nn
\end{align}
Thus,
\begin{align}
&\liminf_{\e\to 0^+} {J(t,\bar X(t);\Psi^\e(\cd))-J(t,\bar X(t);\bar\Psi(\cd))\over\e}
=\liminf_{\e\to 0^+} {Y^{0,\e}(t)-\bar Y^0(t)\over\e}\nn\\
&\q=\bar\Th^0_y(t)\Big\{\bar\Th_x(t)[b(t,\bar X(t),u)-\bar b(t)]+\tr\big\{\bar\Th_{xx}(t)[a(t,\bar X(t),u)-\bar a(t)]\big\}\nn\\
&\qq+\bar g(t,t,u)-\bar g(t,t,\bar \Psi(t,\bar X(t)))\Big\}+ \bar\Th^0_x(t)[b(t,\bar X(t),u)-\bar b(t)]\nn\\
 &\qq+\tr\big\{\bar\Th^0_{xx}(t)[a(t,\bar X(t),u)-\bar a(t)]\big\}+\bar g^{0}(t,t,u)-\bar g(t,t,\bar\Psi(t,\bar X(t))),\nn
\end{align}
where
\begin{align}
\bar\Th(t):=\Th(t,\bar X(t)),\q \bar\Th^0(t):= \Th^0(t,t,\bar X(t),\bar X(t),\Th(t,\bar X(t))),\q t\in[0,T].\nn
\end{align}
Then by the local optimality condition \rf{local-opt} of $\bar\Psi(\cd,\cd)$, we have
$$
\liminf_{\e\to 0^+} {J(t,\bar X(t);\Psi^\e(\cd))-J(t,\bar X(t);\bar\Psi(\cd))\over\e}\ges 0,
$$
which completes the proof.

\section{Some Proofs}\label{sec:Proofs}
\subsection{Proof of \autoref{thm:HJB}}

For the ease of presentation, in the rest of the paper we restrict to the case with $m=1$ only.
However, all our results hold true in the multiple dimensional situation.
To begin with, let us first adopt some notations.

\ms\no
\textbf{Some Notations:}
For any functions $\varsigma:[S,T]\to\dbR$ and $\nu:\dbR^n\to\dbR$, with $\a\in(0,1)$ and $S\in[0,T)$, let
$$
\|\varsigma(\cd)\|_{\a\over 2}=\sup_{s_1,s_2\in[S,T],\,s_1\neq s_2}{|\varsigma(s_1)-\varsigma(s_2)|\over|s_1-s_2|^{\a\over 2}},\q
\|\nu(\cd)\|_{\a}=\sup_{x_1,x_2\in\dbR^n,\,x_1\neq x_2}{|\nu(x_1)-\nu(x_2)|\over|x_1-x_2|^\a}.
$$
For any $\f:[S,T]\times\dbR^n\to\dbR$, let
\begin{align*}
\|\f(\cd\,,\cd)\|_{C^{{\a\over 2},\a}([S,T]\times\dbR^n;\dbR)}&=\|\f(\cd\,,\cd)\|_{L^\i([S,T]\times\dbR^n;\dbR)}
+\sup_{x\in\dbR^n}\|\f(\cd\,,x)\|_{\a\over 2}+\sup_{s\in[S,T]}\|\f(s,\cd)\|_\a,\\
\|\f(\cd\,,\cd)\|_{C^{{\a\over 2},1+\a}([S,T]\times\dbR^n;\dbR)}&=\|\f(\cd\,,\cd)\|_{C^{0,1}([S,T]\times\dbR^n;\dbR)}
+\|\f(\cd\,,\cd)\|_{C^{{\a\over 2},\a}([S,T]\times\dbR^n;\dbR)}\\
&\q+\|\f_x(\cd\,,\cd)\|_{C^{{\a\over 2},\a}([S,T]\times\dbR^n;\dbR)}.
\end{align*}
We will often simply write $C^{{\a\over 2},1+\a}([S,T]\times\dbR^n;\dbR)$ as $C^{{\a\over 2},1+\a}$ when there is no confusion.
Similarly, we can define  $C^{{\a\over 2},{\a\over 2},\a,1+\a,2}([S,T]\times[S,T]\times\dbR^n\times\dbR^n\times\dbR;\dbR)$, etc.

\ms

For any $\th(\cd)\in C^{{\a\over 2},1+\a}$ and $\th^0(\cd)\in C^{{\a\over 2},{\a\over 2},\a,1+\a,2}$,
let us consider the following PDE:
\bel{Th-v}
\left\{\begin{aligned}
& \Th_s(s,x)+ \cL[s;\th(\cd),\th^0(\cd)]\Th(s,x)+\ti g\big(s,x,\th(s,x),\th_x(s,x),\th^0(s,x,s,x,\th(s,x)),\\
&\q \th_x^0(s,x,s,x,\th(s,x)),\th_y^0(s,x,s,x,\th(s,x))\big)=0,\\
&\Th^0_s(t,s,\ti x,x,y)+ \cL[s;\th(\cd),\th^0(\cd)]\Th^0(t,s,\ti x,x,y)+\ti g^0\big(t,s,\ti x,x,\th(s,x),\th_x(s,x),\\
&\q\th^0(s,x,s,x,\th(s,x)), \th_x^0(s,x,s,x,\th(s,x)),\th_y^0(s,x,s,x,\th(s,x)),\Th^0_x(t,s,\ti x,x,y)\big)=0,\\
&\Th(T,x)=h(x),\q  \Th^0_s(t,T,\ti x,x,y)=h^0(t,\ti x,x,y),
\end{aligned}\right.\ee
where the  differential operator $\cL[s;\th(\cd),\th^0(\cd)]$ is defined by the following:
\begin{align}
\cL[s;\th(\cd),\th^0(\cd)]\varphi(x)&=\tr[\varphi_{xx}(x)a(s,x)]+\varphi_{x}(x)
\ti b\big(s,x,\th(s,x),\th_x(s,x),\th^0(s,x,s,x,\th(s,x)), \nn\\
&\q\, \th^0_x(s,x,s,x,\th(s,x)), \th^0_y(s,x,s,x,\th(s,x))\big),\q \forall \varphi(\cd)\in C^2(\dbR^n;\dbR).
\label{def-def-operator}
\end{align}
We first present a result for the well-posedness of  PDE \rf{Th-v}.

\begin{lemma}\label{lem:HJB-linear}
Fix a $(t,\ti x, y)\in[0,T]\times\dbR^n\times\dbR$.
Then for any $\th(\cd)\in C^{{\a\over 2},1+\a}$ and $\th^0(\cd)\in C^{{\a\over 2},{\a\over 2},\a,1+\a,2}$,
the PDE \rf{Th-v} admits a unique classical solution $(\Th(\cd,\cd),\Th^0(t,\cd,\ti x,\cd,y))
\in C^{1+{\a\over 2},2+\a}\times C^{1+{\a\over 2},2+\a}$. Moreover, the following relationship holds:
\begin{align}
\nn &\Th(s,x)=\int_{\dbR^n}\Xi(s,x,T,\mu)h(\mu)d\mu+\int_s^T\int_{\dbR^n}\Xi(s,x,r,\mu)\[\Th_{x}(r,\mu)\nn\\
&\q\times\ti b\big(r,\mu,\th(r,\mu),\th_x(r,\mu),\th^0(r,\mu,r,\mu,\th(r,\mu)), \th_x^0(r,\mu,r,\mu,\th(r,\mu)),\th_y^0(r,\mu,r,\mu,\th(r,\mu))\big)\nn\\
&\q+\ti g\big(r,\mu,\th(r,\mu),\th_x(r,\mu),\th^0(r,\mu,r,\mu,\th(r,\mu)), \th_x^0(r,\mu,r,\mu,\th(r,\mu)),\th_y^0(r,\mu,r,\mu,\th(r,\mu))\big)\]d\mu dr;\label{lemma-PDE-main1}\\
\nn &\Th^0(t,s,\ti x,x,y)=\int_{\dbR^n}\Xi(s,x,T,\mu)L(t,\ti x,\mu,y)d\mu+\int_s^T\int_{\dbR^n}\Xi(s,x,r,\mu)\nn\\
&\q\times\[\Th^0_{x}(t,r,\ti x, \mu,y)\ti  b\big(r,\mu,\th(r,\mu),\th_x(r,\mu),\th^0(r,\mu,r,\mu,\th(r,\mu)), \th_x^0(r,\mu,r,\mu,\th(r,\mu)),\nn\\
&\qq\th_y^0(r,\mu,r,\mu,\th(r,\mu))\big)+\ti g^0\big(t,r,\ti x,\mu,\th(r,\mu),\th_x(r,\mu), \th^0(r,\mu,r,\mu,\th(r,\mu)),\nn\\
&\qq \th_x^0(r,\mu,r,\mu,\th(r,\mu)),\th_y^0(r,\mu,r,\mu,\th(r,\mu)),\Th^0_{x}(t,r,\ti x, \mu,y)\big)\]d\mu dr\label{lemma-PDE-main2},
\end{align}
where $\Xi(\cd\,,\cd\,,\cd\,,\cd)$ is given by the following explicitly:
\begin{align}
\Xi(s,x,r,\mu)={1\over (4\pi(r-s))^{{n\over 2}}(\det[a(r,\mu)])^{{1\over 2}}}e^{-{\lan a(r,\mu)^{-1}(x-\mu),(x-\mu)\ran\over 4(r-s)}},
\q (s,x),(r,\mu)\in[0,T]\times\dbR^n.\label{def-Xi}
\end{align}
\end{lemma}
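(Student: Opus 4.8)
The plan is to exploit the fact that, with $\th(\cd)$ and $\th^0(\cd)$ \emph{fixed}, the system \rf{Th-v} is no longer genuinely coupled: the first equation is a closed scalar equation for $\Th$, and once $\Th$ is known the second equation becomes a scalar equation for the map $(s,x)\mapsto\Th^0(t,s,\ti x,x,y)$, with $(t,\ti x,y)$ playing the role of inert parameters. So the whole statement reduces to the well-posedness and Duhamel representation of a single linear (resp.\ mildly semilinear) uniformly parabolic Cauchy problem on $[0,T]\times\dbR^n$. Write, for brevity, $\wt b(s,x):=\ti b\big(s,x,\th(s,x),\th_x(s,x),\th^0(s,x,s,x,\th(s,x)),\th_x^0(s,x,s,x,\th(s,x)),\th_y^0(s,x,s,x,\th(s,x))\big)$, and similarly $\wt g(s,x)$ for the corresponding evaluation of $\ti g$. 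The first step is to check that, thanks to \ref{ass:H4} and to $\th(\cd)\in C^{{\a\over2},1+\a}$, $\th^0(\cd)\in C^{{\a\over2},{\a\over2},\a,1+\a,2}$, the coefficient $\wt b$ and the source $\wt g$ are bounded and lie in $C^{{\a\over2},\a}([0,T]\times\dbR^n;\dbR)$; the only delicate point is that the ``diagonal'' evaluations $\th^0(s,x,s,x,\th(s,x))$, $\th_x^0(s,x,s,x,\th(s,x))$, $\th_y^0(s,x,s,x,\th(s,x))$ are Hölder in $(s,x)$, which holds because $(s,x)\mapsto(s,x,s,x,\th(s,x))$ is Hölder and $\th^0$ (together with $\th_x^0,\th_y^0$) is Lipschitz in its arguments.

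With this in hand, the first equation in \rf{Th-v},
$$\Th_s+\tr[\Th_{xx}a(s,x)]+\Th_x\,\wt b(s,x)+\wt g(s,x)=0,\q\Th(T,\cd)=h,$$
is a linear backward Cauchy problem that is uniformly parabolic (by the two-sided bound $\l_0 I\les a\les\l_1 I$ of \ref{ass:H4}), with $C^{{\a\over2},\a}$ coefficients, $C^{{\a\over2},\a}$ source, and $C^{2+\a}$ terminal data. By the classical Schauder theory for parabolic equations (Friedman, \emph{Partial Differential Equations of Parabolic Type}, or Ladyzhenskaya--Solonnikov--Ural'tseva) it admits a unique bounded classical solution $\Th(\cd\,,\cd)\in C^{1+{\a\over2},2+\a}$, uniqueness being a consequence of the maximum principle for linear parabolic operators. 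The representation \rf{lemma-PDE-main1} is the variation-of-constants (Duhamel) formula: letting $\Xi$ be the fundamental solution of the principal part $\partial_s+\tr[a(\cd\,,\cd)\partial_{xx}]$ — whose principal Gaussian term is \rf{def-Xi}, and which, together with $\Xi_x,\Xi_{xx}$, obeys the usual Gaussian bounds — and regarding $\Th_x\,\wt b+\wt g$ as the inhomogeneity, one obtains \rf{lemma-PDE-main1}; it may be verified directly by differentiating under the integral, using the equation satisfied by $\Xi$, and invoking uniqueness (equivalently, the Levi parametrix construction delivers existence and this representation at once).

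It remains to treat the second equation. Inserting the now-known $\Th$ (so that $\Th(s,x)$ and $\Th_x(s,x)$ are $C^{{\a\over2},\a}$), the equation for $\Th^0(t,\cd\,,\ti x,\cd\,,y)$ reads
$$\Th^0_s+\tr[\Th^0_{xx}a(s,x)]+\Th^0_x\,\wt b(s,x)+\ti g^0\big(t,s,\ti x,x,\dots,\Th^0_x\big)=0,\q\Th^0(t,T,\ti x,\cd\,,y)=h^0(t,\ti x,\cd\,,y),$$
which is again uniformly parabolic with Hölder coefficients and Hölder-in-$(s,x)$ terminal data, except that the zeroth-order term $\ti g^0$ now depends on $\Th^0_x$ — but only in a globally Lipschitz way, since $g^0$ has bounded derivatives. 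Thus this is a semilinear parabolic Cauchy problem with a first-order nonlinearity that is Lipschitz in the gradient; existence and uniqueness of a classical $C^{1+{\a\over2},2+\a}$ solution follow by a standard contraction/continuation argument — freeze $\Th^0_x$ inside $\ti g^0$, solve the resulting linear problem by the previous paragraph, and iterate, obtaining a contraction on short time intervals (the required a priori bound coming from the maximum principle and the linear Schauder estimate), which are then concatenated — or by quoting directly the solvability theory for quasilinear parabolic equations. The representation \rf{lemma-PDE-main2} is once more Duhamel against $\Xi$; since all data depend smoothly on the parameters $(t,\ti x,y)$ and the kernel \rf{def-Xi} is explicit, these representations and the $(s,x)$-regularity are uniform in $(t,\ti x,y)$.

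The main obstacle here is not conceptual but careful bookkeeping: verifying that the various composite coefficients and sources — above all the diagonal evaluations of $\th^0$ and its derivatives — land in the correct Hölder classes with constants controlled by $\|\th(\cd)\|_{C^{{\a\over2},1+\a}}$ and $\|\th^0(\cd)\|_{C^{{\a\over2},{\a\over2},\a,1+\a,2}}$, so that the Schauder machinery applies with uniform bounds (this uniformity is what will later feed the fixed-point argument for the genuinely nonlinear equilibrium HJB equation in \autoref{thm:HJB}). A secondary point is the gradient-dependence of $\ti g^0$ in the second equation, handled by the Lipschitz fixed-point argument above. Once these are in place, the statement follows from textbook linear and semilinear parabolic theory.
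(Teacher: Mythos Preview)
Your proposal is correct and follows essentially the same approach as the paper: verify that the frozen diagonal evaluations $\th^0(s,s,x,x,\th(s,x))$, $\th^0_x(\cdots)$, $\th^0_y(\cdots)$ and the resulting composite coefficients lie in $C^{{\a\over2},\a}$, then invoke the classical Schauder/parametrix theory for uniformly parabolic Cauchy problems (the paper cites Friedman and Ladyzhenskaya--Solonnikov--Ural'tseva directly). Your write-up is simply a more detailed unpacking of what the paper compresses into a one-line citation; one minor remark is that in the frozen system \rf{Th-v} the second equation does not actually depend on the solution $\Th$ of the first (only on the fixed $\th,\th^0$ and on $\Th^0_x$), so the two equations are in fact completely decoupled rather than sequentially coupled.
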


\begin{proof}
For any fixed  $\th(\cd)\in C^{{\a\over 2},1+\a}$ and $\th^0(\cd)\in C^{{\a\over 2},{\a\over 2},\a,1+\a,2}$, denote
\begin{align*}
&\nu_1(s,x)=\th(s,x),\q \nu_2(s,x)=\th_x(s,x),\q \nu_3(s,x)=\th^0(s,s,x,x,\th(s,x)),\\
& \nu_4(s,x)=\th_x^0(s,s,x,x,\th(s,x)),\q \nu_5(s,x)=\th_y^0(s,s,x,x,\th(s,x)),\q (s,x)\in[0,T]\times\dbR^n.
\end{align*}
Then we have $\nu_i(\cd)\in C^{{\a\over 2},\a}$, for $i=1,...,5$.
Taking $(t,\ti x,y)$ as parameters, by the standard results of parabolic equations
(see \cite[Theorem 12, Chapter 1]{Friedman1964}  or \cite[Chapter IV, Sections 13--14]{Ladyzenskaja1968}, for example),
we get that PDE \rf{Th-v} admits a unique classical solution and \rf{lemma-PDE-main1}--\rf{lemma-PDE-main2} hold.
\end{proof}

Direct computations show that (see \cite{Friedman1964,Ladyzenskaja1968}, for example)
\bel{G-x-z}
\Xi_\mu(s,x,r,\mu)=-\Xi_x(s,x,r,\mu)-\Xi(s,x,r,\mu)\rho(s,x,r,\mu),
\ee
where
\bel{rho-def}
\left\{\begin{aligned}
& \rho(s,x,r,\mu)={(\det[a(r,\mu)])_\mu\over 2\det[a(r,\mu)]}+{\lan[a(r,\mu)^{-1}]_\mu(x-\mu),(x-\mu)\ran\over 4(r-s)},\\
& \lan[a(r,\mu)^{-1}]_\mu(x-\mu),(x-\mu)\ran=\begin{pmatrix}\lan [a(r,\mu)^{-1}]_{\mu_1}(x-\mu),(x-\mu)\ran \\
                                                    \lan [a(r,z)^{-1}]_{\mu_2}(x-\mu),(x-\mu)\ran\\
                                                    \vdots\\
                                                    \lan [a(r,z)^{-1}]_{\mu_n}(x-\mu),(x-\mu)\ran\end{pmatrix}.
\end{aligned}\right.
\ee
Moreover, under assumption \ref{ass:H4}, it is easy to check that
\bel{G-G-x-est}
\left\{\begin{aligned}
& |\Xi(s,x,r,\mu)|\les K{1\over (r-s)^{{n\over 2}}}e^{{-\l|x-\mu|^2\over 4(r-s)}},\\
& |\Xi_x(s,x,r,\mu)|\les K{1\over (r-s)^{{n+1\over 2}}}e^{{-\l|x-\mu|^2\over 4(r-s)}},\\
& |\rho(s,x,r,\mu)|\les K\left(1+{|x-\mu|^2\over (r-s)}\right),
\end{aligned}\right.
\ee
for some $0<\l<\l_0$. In what follows, we denote
$$
\ti\f(s,x;\th,\th^0):=\ti \f\big(s,\,x,\,\th(s,x),\,\th_x(s,x),\,\th^0(s,x,s,x,\th(s,x)),\,
\th_x^0(s,x,s,x,\th(s,x)),\,\th_y^0(s,x,s,x,\th(s,x))\big),
$$
for  $\f(\cd)=\ti b(\cd),\ti g(\cd)$, and
\begin{align}
\ti g^0(t,s,\ti x,x,y;\th,\th^0,\Th^0)&:=\ti g^0\big(t,\,s,\,\ti x,\,x,\,\th(s,x),\,\th_x(s,x),\,\th^0(s,x,s,x,\th(s,x)),\,\th_x^0(s,x,s,x,\th(s,x)),\nn\\
&\qq\q \th_y^0(s,x,s,x,\th(s,x)),\, \Th_x^0(t,s,\ti x,x,y)\big).
\end{align}

\ms

First, we establish a  $C^{0,1}$-norm estimate for $(\Th(\cd,\cd),\Th^0(t,\cd,\ti x,\cd,y))$.

\begin{lemma}\label{lem:HJB-Th-x}
There exists a constant $\kappa>0$, independent of  $\th(\cd)$ and $\th^0(\cd)$, such that
\begin{align}
&\|\Th(\cd,\cd)\|_{C^{0,1}}+\sup_{t,\ti x,y\in[0,T]\times\dbR^n\times\dbR}\|\Th^0(t,\cd,\ti x,\cd,y))\|_{C^{0,1}}\nn\\
&\q \les \k\[1+\| h(\cd)\|_{C^1}+\sup_{t,\ti x,y\in[0,T]\times\dbR^n\times\dbR}\|h^0(t,\ti x,\cd,y)\|_{C^1}\].
\label{lem:HJB-Th-x1}
\end{align}
\end{lemma}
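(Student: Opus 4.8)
The plan is to derive the $C^{0,1}$-bound directly from the Duhamel-type representations \rf{lemma-PDE-main1}--\rf{lemma-PDE-main2} furnished by \autoref{lem:HJB-linear}, and then to close a Gr\"onwall estimate. The key structural input is that, by \ref{ass:H4}, the reduced coefficients $\ti b(\cd)$, $\ti g(\cd)$, $\ti g^0(\cd)$ are bounded by a constant that does \emph{not} depend on the frozen functions $\th(\cd),\th^0(\cd)$ (nor on $\Th^0_x$, which enters $\ti g^0$ only through a bounded argument). Accordingly, I would set
$$\phi(s):=\|\Th(s,\cd)\|_{L^\i}+\|\Th_x(s,\cd)\|_{L^\i}+\sup_{(t,\ti x,y)}\Big(\|\Th^0(t,s,\ti x,\cd,y)\|_{L^\i}+\|\Th^0_x(t,s,\ti x,\cd,y)\|_{L^\i}\Big),\q s\in[0,T],$$
which is finite by \autoref{lem:HJB-linear}; estimate \rf{lem:HJB-Th-x1} is then equivalent to a bound on $\sup_s\phi(s)$ in terms of $\Lambda:=1+\|h(\cd)\|_{C^1}+\sup_{(t,\ti x,y)}\|h^0(t,\ti x,\cd,y)\|_{C^1}$.

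First I would bound the $L^\i$-parts of $\phi$: taking absolute values inside \rf{lemma-PDE-main1}--\rf{lemma-PDE-main2} and using the Gaussian bound on $\Xi$ from \rf{G-G-x-est}, which gives $\int_{\dbR^n}|\Xi(s,x,r,\mu)|\,d\mu\les K$ uniformly in $0\les s<r\les T$ and $x$, together with the uniform boundedness of $\ti b,\ti g,\ti g^0$, yields a bound of the form $K\Lambda+K\int_s^T(1+\phi(r))\,dr$. The gradient parts require differentiating \rf{lemma-PDE-main1}--\rf{lemma-PDE-main2} in $x$ (licit since the solutions are classical), which brings in the kernel $\Xi_x$. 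In the \emph{terminal}-data terms $\int\Xi_x(s,x,T,\mu)h(\mu)\,d\mu$ and its $\Th^0$-analogue, the pointwise bound $|\Xi_x|\les K(r-s)^{-(n+1)/2}e^{-\l|x-\mu|^2/(4(r-s))}$ is not $\mu$-integrable with a constant independent of $T-s$; here I would integrate by parts in $\mu$ via the identity \rf{G-x-z}, transferring $\partial_\mu$ onto the $C^1$ data and leaving only $\Xi$ and $\Xi\rho$, for which \rf{G-G-x-est} gives $\int_{\dbR^n}(|\Xi|+|\Xi\rho|)(s,x,r,\mu)\,d\mu\les K$ uniformly — this bounds the terminal contribution by $K\Lambda$. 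In the \emph{time-integral} terms the data is only bounded/H\"older and cannot be differentiated, so instead I would use the crude estimate $\int_{\dbR^n}|\Xi_x(s,x,r,\mu)|\,d\mu\les K(r-s)^{-1/2}$ (again from \rf{G-G-x-est}); since the integrands there are $\Th_x\cd\ti b+\ti g$ and $\Th^0_x\cd\ti b+\ti g^0$ with $\ti b,\ti g,\ti g^0$ bounded, this contributes $K\int_s^T(r-s)^{-1/2}(1+\phi(r))\,dr$.

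Collecting these estimates, $\phi$ satisfies the singular Gr\"onwall inequality
$$\phi(s)\les C_0\Lambda+C_1\int_s^T\big(1+(r-s)^{-1/2}\big)\phi(r)\,dr,\q s\in[0,T],$$
with $C_0,C_1$ depending only on $T$, $n$, $\l_0$, $\l_1$ and the bounds in \ref{ass:H4}, hence \emph{not} on $\th(\cd),\th^0(\cd)$. Since $r\mapsto 1+(r-s)^{-1/2}$ is integrable on $[s,T]$, iterating the inequality once produces an $L^\i$ kernel, after which the classical Gr\"onwall lemma gives $\phi(s)\les\k\Lambda$ for all $s\in[0,T]$, which is precisely \rf{lem:HJB-Th-x1}.

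The step I expect to be the main obstacle is obtaining the gradient estimate \emph{uniformly up to the terminal time} $s=T$: the factor $(r-s)^{-(n+1)/2}$ in $\Xi_x$ is not harmless near $r=s$, so one must spend either the $C^1$-regularity of the terminal data (through the integration-by-parts identity \rf{G-x-z}) or the extra half power of $r-s$ gained upon integrating $\Xi_x$ in $\mu$, and one must check afterwards that the resulting singular Gr\"onwall kernel is still integrable. The uniformity in $\th(\cd),\th^0(\cd)$, by contrast, is essentially automatic, because the relevant nonlinearities enter only through the bounded maps postulated in \ref{ass:H4}.
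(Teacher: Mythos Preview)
Your proposal is correct and follows essentially the same route as the paper: integrate by parts in the terminal-data term via \rf{G-x-z} to trade $\Xi_x$ for $\Xi$ and $\Xi\rho$ acting on the $C^1$ data, keep the crude $(r-s)^{-1/2}$ bound on $\int|\Xi_x|\,d\mu$ in the Duhamel term, and close by a singular Gr\"onwall argument. The only cosmetic difference is that the paper treats $\Th_x$, then $\Th$, then $\Th^0$ sequentially rather than bundling them into a single $\phi$, but the estimates and the key ideas coincide.
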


\begin{proof}
By \rf{lemma-PDE-main1} and \rf{G-x-z}, using the  method of integration by parts, we have
%
%
%
%
%
%
\begin{align}
\nn \Th_x(s,x)&=\int_{\dbR^n}\big[\Xi(s,x,T,\mu)h_x(\mu)-\Xi(s,x,T,\mu)\rho(s,x,T,\mu)h(\mu)\big]d\mu\\
&\q+\int_s^T\int_{\dbR^n}\Xi_x(s,x,r,\mu)\[\Th_{x}(r,\mu)\ti b(r,\mu;\th,\th^0)
+\ti g(r,\mu;\th,\th^0)\]d\mu dr.\label{proof-step11-Th-x}
\end{align}
Then from the estimate \rf{G-G-x-est}, we get
\begin{align*}
|\Th_x(s,x)|&\les \int_{\dbR^n}K{1\over (r-s)^{{n\over 2}}}e^{{-\l|x-\mu|^2\over 4(r-s)}}
                                  \[|h_x(\mu)|+\(1+{|x-\mu|^2\over (r-s)}\)|h(\mu)|\]d\mu\\
&\q+\int_s^T\int_{\dbR^n} K{1\over (r-s)^{{n+1\over 2}}}e^{{-\l|x-\mu|^2\over 4(r-s)}}(1+|\Th_x(r,\mu)|)d\mu dr\\
&\les K(1+\|h(\cd)\|_{C^1})+\int_s^T\int_{\dbR^n} K{1\over (r-s)^{{n+1\over 2}}}e^{{-\l|x-\mu|^2\over 4(r-s)}}|\Th_x(r,\mu)|d\mu dr.
\end{align*}
By Gr\"{o}nwall's inequality, we obtain
$$
|\Th_x(s,x)|\les  K(1+\|h(\cd)\|_{C^1}),\q\forall (s,x)\in[0,T]\times\dbR^n.
$$
Substituting the above into \rf{lemma-PDE-main1} and then by \rf{G-G-x-est} again, we have
$$
|\Th(s,x)|\les  K(1+\|h(\cd)\|_{C^1}),\q\forall (s,x)\in[0,T]\times\dbR^n.
$$
It follows that
\bel{Th-C(0,1)-estimate}
\|\Th(\cd\,,\cd)\|_{C^{0,1}}\les  K(1+\|h(\cd)\|_{C^1}).
\ee
Similar to \rf{proof-step11-Th-x}, we get
\begin{align}
\nn\Th_x^0(t,s,\ti x,x,y)&=\int_{\dbR^n}\big[\Xi(s,x,T,\mu)h^0_x(t,\ti x,\mu,y)-\Xi(s,x,T,\mu)\rho(s,x,T,\mu)h^0(t,\ti x,\mu,y)\big]d\mu\\
&\q+\int_s^T\int_{\dbR^n}\Xi_x(s,x,r,\mu)\[\Th_x^0(t,r,\ti x,\mu,y)\ti b(r,\mu;\th,\th^0)
+\ti g^0(t,r,\ti x,\mu,y;\th,\th^0,\Th^0)\]d\mu dr\label{proof-step1-Vx}.
\end{align}
Note that $(t,\ti x,y)$  serve only as parameters in \rf{lemma-PDE-main2}.
By the same argument as the above, we get
$$
\sup_{t,\ti x,y\in[0,T]\times\dbR^n\times\dbR}\|\Th^0(t,\cd\,,\ti x,\cd\,,y))\|_{C^{0,1}}
\les  K\sup_{t,\ti x,y\in[0,T]\times\dbR^n\times\dbR}\big[1+\|h^0(t,\ti x,\cd\,,y)\|_{C^1}\big].
$$
Combining the above with \rf{Th-C(0,1)-estimate}, the desired estimate \rf{lem:HJB-Th-x1} is obtained.
\end{proof}

The following  gives the regularity estimate of
$\Th^0(t,s,\ti x,x,y)$ with respect to the parameters $t$, $\ti x$ and $y$.

\begin{lemma}\label{lem:HJB-par}
There exists a constant $\k>0$, independent of  $\th(\cd)$ and $\th^0(\cd)$, such that
\bel{lem:HJB-par-estimate}
\|\Th^0(\cd)\|_{C^{{\a\over 2},0,\a,1,2}}\les\k\big[1+\|h^0(\cd)\|^2_{C^{{\a\over 2},\a,1,2}}\big].
\ee
\end{lemma}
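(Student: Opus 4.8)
The plan is to treat $(t,\ti x,y)$ as parameters and exploit the structure of the integral representations \rf{lemma-PDE-main1}, \rf{lemma-PDE-main2} and \rf{proof-step1-Vx}. The decisive observation is that in these formulas the parameters $(t,\ti x,y)$ enter $\Th^0$ \emph{only} through the terminal data $h^0(t,\ti x,\mu,y)$ and through the last argument $\Th^0_x(t,r,\ti x,\mu,y)$ of $\ti g^0$: the kernel $\Xi$, the drift $\ti b(r,\mu;\th,\th^0)$, and all the remaining slots of $\ti g^0$ are built from the fixed functions $\th,\th^0$ evaluated along the diagonal $(r,\mu,r,\mu,\th(r,\mu))$ (recall \rf{barPsi}), hence are independent of $(t,\ti x,y)$. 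Consequently, differencing or differentiating \rf{lemma-PDE-main2} and \rf{proof-step1-Vx} in the parameters yields \emph{linear} integral equations for the corresponding parameter-increments of the pair $(\Th^0,\Th^0_x)$, carrying the \emph{same} weakly-singular kernels $\Xi,\Xi_x$ controlled by \rf{G-G-x-est}, with bounded coefficients (the bounds on $\Th^0,\Th^0_x$ being uniform in $(t,\ti x,y)$ by \autoref{lem:HJB-Th-x}), and driven only by the corresponding parameter-regularity of $h^0$. Each such equation can then be closed by the same weakly-singular Gr\"{o}nwall argument already used in the proof of \autoref{lem:HJB-Th-x}.

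\textbf{The steps.} Step 1: \autoref{lem:HJB-Th-x} already supplies boundedness of $\Th^0$ and $\Th^0_x$ uniformly in $(t,\ti x,y)$, which covers the ``$C^1$ in $x$'' and boundedness parts of $\|\Th^0(\cd)\|_{C^{{\a\over 2},0,\a,1,2}}$, while \autoref{lem:HJB-linear} gives (much more than) continuity in the active time variable $s$. Step 2 ($y$-regularity): since only $h^0$ and the slot $\Th^0_x$ depend on $y$, differentiating \rf{proof-step1-Vx} once in $y$ produces for $\Th^0_{xy}$ a linear integral equation with kernel $\Xi_x$ and bounded coefficient $\ti b+\partial_{\h p^0}\ti g^0$, whence $|\Th^0_{xy}|\les K\big(1+\|h^0\|_{C^{{\a\over 2},\a,1,2}}\big)$ by Gr\"{o}nwall, and \rf{lemma-PDE-main2} gives the same bound for $|\Th^0_y|$. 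Differentiating once more, the chain rule through the $y$-dependent slot produces, besides a term linear in $\Th^0_{xyy}$, the quadratic term $\big(\partial^2_{\h p^0\h p^0}\ti g^0\big)(\Th^0_{xy})^2$; estimating it by the Step-2 bound yields an inhomogeneity of size $\les K\big(1+\|h^0\|^2_{C^{{\a\over 2},\a,1,2}}\big)$, so Gr\"{o}nwall gives $|\Th^0_{xyy}|$ and then $|\Th^0_{yy}|$ of the same order --- this is precisely where the squared norm on the right-hand side of \rf{lem:HJB-par-estimate} originates. Step 3 ($t$- and $\ti x$-regularity): here $h^0$ is only H\"{o}lder in $t$ (exponent $\frac{\a}{2}$) and in $\ti x$ (exponent $\a$), so instead of differentiating I would estimate these H\"{o}lder seminorms directly; writing the increment $\Th^0(t,\cd,\ti x_1,\cd,y)-\Th^0(t,\cd,\ti x_2,\cd,y)$ from \rf{lemma-PDE-main2} and \rf{proof-step1-Vx}, the $h^0$-terms contribute $\les\|h^0\|_{C^{{\a\over 2},\a,1,2}}|\ti x_1-\ti x_2|^\a$, the explicit $(t,\ti x)$-dependence of $\ti g^0$ is Lipschitz by \ref{ass:H4} (hence better than H\"{o}lder), and the remaining $\ti g^0$-contribution reduces to the $\ti x$-H\"{o}lder modulus of $\Th^0_x$, so one obtains a \emph{coupled} linear integral inequality for the pair of parameter-H\"{o}lder moduli of $(\Th^0,\Th^0_x)$, again closed by Gr\"{o}nwall; the $t$-direction is identical, with $|\ti x_1-\ti x_2|^\a$ replaced by $|t_1-t_2|^{{\a\over 2}}$. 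Collecting Steps 1--3 gives \rf{lem:HJB-par-estimate}.

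\textbf{Main obstacle.} The principal difficulty is not any single estimate but the bookkeeping of the \emph{coupled} system: because $\ti g^0$ depends on $\Th^0_x$, the bound for a given parameter-regularity of $\Th^0$ always feeds on the \emph{same} parameter-regularity of $\Th^0_x$, so at each step one must set up and close a $2\times 2$ Gr\"{o}nwall inequality rather than a scalar one, uniformly in the frozen parameters, with the weakly-singular kernels $\Xi,\Xi_x$ (which behave like $(r-s)^{-1/2}$ after integration in $\mu$, by \rf{G-G-x-est}). The genuinely new feature, beyond routine parabolic estimates, is the quadratic term $(\Th^0_{xy})^2$ arising in the second $y$-derivative equation; isolating it cleanly and controlling it by the linear Step-2 bound is what produces --- and what makes unavoidable --- the quadratic dependence on $\|h^0\|$ in \rf{lem:HJB-par-estimate}.
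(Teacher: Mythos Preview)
Your proposal is correct and follows essentially the same route as the paper: differentiate the integral representations \rf{lemma-PDE-main2} and \rf{proof-step1-Vx} in $y$ to get linear weakly-singular equations for $(\Th^0_y,\Th^0_{xy})$ and then $(\Th^0_{yy},\Th^0_{xyy})$ (the latter producing the quadratic term $\ti g^0_{p^0p^0}\,\Th^0_{xy}\otimes\Th^0_{xy}$ that forces the squared norm), and handle the $t$- and $\ti x$-H\"older regularity by differencing and a coupled Gr\"onwall. The only point to add is that the full $C^{{\a\over 2},0,\a,1,2}$-norm also requires the $\ti x$- and $t$-H\"older moduli of the $y$-derivatives $\Th^0_y,\Th^0_{xy},\Th^0_{yy},\Th^0_{xyy}$, which the paper obtains by the same differencing argument applied to the differentiated equations; this is the same mechanism you describe, just iterated once more.
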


\begin{proof}
From \rf{lemma-PDE-main2} and \rf{proof-step1-Vx}, it is easily seen that
both $\Th^0(t,s,\ti x,x,y)$ and $\Th_x^0(t,s,\ti x,x,y)$ are differentiable with respect to the parameter $y$.
Moreover, the derivatives are given by
\begin{align}
\nn \Th_y^0(t,s,\ti x,x,y)&=\int_{\dbR^n}\Xi(s,x,T,\mu)h^0_y(t,\ti x,\mu,y)d\mu\\
&\q+\int_s^T\int_{\dbR^n}\Xi(s,x,r,\mu)\Th_{xy}^0(t,r,\ti x,\mu,y)\big[\ti b(r,\mu;\th,\th^0)
+\ti g_{p^0}^0(t,r,\ti x,\mu,y;\th,\th^0,\Th^0)\big] d\mu dr,\nn\\
\nn\Th_{xy}^0(t,s,\ti x,x,y)
&=\int_{\dbR^n}\big[\Xi(s,x,T,\mu)h^0_{xy}(t,\ti x,\mu,y)-\Xi(s,x,T,\mu)\rho(s,x,T,\mu)h^0_y(t,\ti x,\mu,y)\big]d\mu\\
\nn&\q+\int_s^T\int_{\dbR^n}\Xi_x(s,x,r,\mu)\Th_{xy}^0(t,r,\ti x,\mu,y)\big[\ti b(r,\mu;\th,\th^0)
+\ti g_{p^0}^0(t,r,\ti x,\mu,y;\th,\th^0,\Th^0)\big] d\mu dr.
\end{align}
Applying the arguments employed in the proof of \autoref{lem:HJB-Th-x}, we have
\bel{V-y-estimate}
\|\Th^0_y(\cd)\|_{L^\i}+\|\Th^0_{xy}(\cd)\|_{L^\i}\les K\big(1+\|h^0(\cd)\|_{C^{0,0,1,1}}\big).
\ee
By continuing the above argument, we get
\begin{align}
\nn\Th_{yy}^0(t,s,\ti x,x,y)&=\int_{\dbR^n}\Xi(s,x,T,\mu)h^0_{yy}(t,\ti x,\mu,y)d\mu+\int_s^T\int_{\dbR^n}\Xi(s,x,r,\mu)\\
&\q\times\Big\{\Th_{xyy}^0(t,r,\ti x,\mu,y)\big[\ti b(r,\mu;\th,\th^0)
+\ti g_{p^0}^0(t,r,\ti x,\mu,y;\th,\th^0,\Th^0)\big]\nn\\
&\q+\big\lan\ti g_{p^0p^0}^0(t,r,\ti x,\mu,y;\th,\th^0,\Th^0)\Th_{xy}^0(t,r,\ti x,\mu,y),\,
 \Th_{xy}^0(t,r,\ti x,\mu,y)\big\ran \Big\}d\mu dr.\nn\\
\nn\Th_{xyy}^0(t,s,\ti x,x,y)&=\int_{\dbR^n}
\big[\Xi(s,x,T,\mu)h^0_{xyy}(t,\ti x,\mu,y)-\Xi(s,x,T,\mu)\rho(s,x,T,\mu)h^0_{yy}(t,\ti x,\mu,y)\big]d\mu\\
\nn&\q+\int_s^T\int_{\dbR^n}\Xi_x(s,x,r,\mu)
\Big\{\Th_{xyy}^0(t,r,\ti x,\mu,y)\big[\ti b(r,\mu;\th,\th^0)+\ti g_{p^0}^0(t,r,\ti x,\mu,y;\th,\th^0,\Th^0)\big]\nn\\
&\q+\big\lan\ti g_{p^0p^0}^0(t,r,\ti x,\mu,y;\th,\th^0,\Th^0)\Th_{xy}^0(t,r,\ti x,\mu,y),\,
 \Th_{xy}^0(t,r,\ti x,\mu,y)\big\ran \Big\}d\mu dr.\nn
\end{align}
Note from \rf{V-y-estimate} that
$$
\big|\big\lan\ti g_{p^0p^0}^0(t,r,\ti x,\mu,y;\th,\th^0,\Th^0)\Th_{xy}^0(t,r,\ti x,\mu,y),\,
 \Th_{xy}^0(t,r,\ti x,\mu,y)\big\ran\big|\les K\big(1+\|h^0(\cd)\|^2_{C^{0,0,1,1}}\big).
$$
Then by the arguments employed in the proof of \autoref{lem:HJB-Th-x} again, we get
\bel{V-C(1,2)-estimate}
\|\Th^0_{yy}(\cd)\|_{L^\i}+\|\Th^0_{xyy}(\cd)\|_{L^\i}\les  K\big(1+\|h^0(\cd)\|^2_{C^{0,0,1,2}}\big).
\ee
For any $\ti x_1,\ti x_2\in\dbR^n$, denote
$$
\d \Th^0(t,s,x,y)=\Th^{0,1}(t,s,x,y)-\Th^{0,2}(t,s,x,y)\q \hbox{with}\q \Th^{0,i}(t,s,x,y)=\Th^0(t,s,\ti x_i,x,y),\q i=1,2.
$$
Then, we have
\begin{align*}
\d\Th^0(t,s,x,y)&=\int_{\dbR^n}\Xi(s,x,T,\mu)[h^0(t,\ti x_1,\mu,y)-h^0(t,\ti x_2,\mu,y)]d\mu\nn\\
&\q+\int_s^T\int_{\dbR^n}\Xi(s,x,r,\mu)\[\d\Th^0_{x}(t,r,\mu,y)\ti b(r,\mu;\th,\th^0)\nn\\
&\q+\ti g^0(t,r,\ti x_1,\mu,y;\th,\th^0,\Th^{0,1})-\ti g^0(t,r,\ti x_2,\mu,y;\th,\th^0,\Th^{0,2})\]d\mu dr,\\
\d\Th_y^0(t,s,x,y)&=\int_{\dbR^n}\Xi(s,x,T,\mu)[h_y^0(t,\ti x_1,\mu,y)-h_y^0(t,\ti x_2,\mu,y)]d\mu\nn\\
&\q+\int_s^T\int_{\dbR^n}\Xi(s,x,r,\mu)
\Big\{\d\Th^0_{xy}(t,r,\mu,y)\big[\ti b(r,\mu;\th,\th^0)+\ti g_{p^0}^0(t,r,\ti x_1,\mu,y;\th,\th^0,\Th^{0,1})\big]\\
&\q+\Th^{0,2}_{xy}(t,r,\mu,y)\big[\ti g_{p^0}^0(t,r,\ti x_1,\mu,y;\th,\th^0,\Th^{0,1})
-\ti g_{p^0}^0(t,r,\ti x_2,\mu,y;\th,\th^0,\Th^{0,2})\big]\Big\}d\mu dr,\nn
\end{align*}
and
\begin{align*}
\d\Th_x^0(t,s,x,y)&=\int_{\dbR^n}\Xi(s,x,T,\mu)[h_x^0(t,\ti x_1,\mu,y)-h_x^0(t,\ti x_2,\mu,y)]d\mu\nn\\
&\q+\int_{\dbR^n}\Xi(s,x,T,\mu)\rho(s,x,T,\mu)[h^0(t,\ti x_1,\mu,y)-h^0(t,\ti x_2,\mu,y)]d\mu\\
&\q+\int_s^T\int_{\dbR^n}\Xi_x(s,x,r,\mu)\[\d\Th^0_{x}(t,r,\mu,y)\ti b(r,\mu;\th,\th^0)\\
&\q+\ti g^0(t,r,\ti x_1,\mu,y;\th,\th^0,\Th^{0,1})-\ti g^0(t,r,\ti x_2,\mu,y;\th,\th^0,\Th^{0,2})\]d\mu dr,\\
\d\Th_{xy}^0(t,s,x,y)&=\int_{\dbR^n}\Xi(s,x,T,\mu)[h_{xy}^0(t,\ti x_1,\mu,y)-h_{xy}^0(t,\ti x_2,\mu,y)]d\mu\\
&\q+\int_{\dbR^n}\Xi(s,x,T,\mu)\rho(s,x,T,\mu)[h_y^0(t,\ti x_1,\mu,y)-h_y^0(t,\ti x_2,\mu,y)]d\mu\\
&\q+\int_s^T\int_{\dbR^n}\Xi_x(s,x,r,\mu)
\Big\{\d\Th^0_{xy}(t,r,\mu,y)\big[\ti b(r,\mu;\th,\th^0)+\ti g_{p^0}^0(t,r,\ti x_1,\mu,y;\th,\th^0,\Th^{0,1})\big]\\
&\q+\Th^{0,2}_{xy}(t,r,\mu,y)\big[\ti g_{p^0}^0(t,r,\ti x_1,\mu,y;\th,\th^0,\Th^{0,1})
-\ti g_{p^0}^0(t,r,\ti x_2,\mu,y;\th,\th^0,\Th^{0,2})\big]\Big\}d\mu dr.
\end{align*}
Using the estimates \rf{G-G-x-est} and \rf{V-y-estimate}, we get
\begin{align*}
|\d\Th^0(t,s,x,y)|&\les K\int_{\dbR^n}{1\over (r-s)^{{n\over 2}}}e^{{-\l|x-\mu|^2\over 4(r-s)}}
   \|h^0(\cd)\|_{C^{0,\a,0,0}}d\mu|\ti x_1-\ti x_2|^\a\\
&\q+K\int_s^T\int_{\dbR^n} {1\over (r-s)^{n\over 2}}e^{{-\l|x-\mu|^2\over 4(r-s)}}\[|\ti x_1-\ti x_2|^\a+|\d\Th^0_x(t,r,\mu,y)|\]d\mu dr,\\
| \d\Th_y^0(t,s,x,y)|&\les K\int_{\dbR^n}{1\over (r-s)^{{n\over 2}}}e^{{-\l|x-\mu|^2\over 4(r-s)}}
  \|h^0(\cd)\|_{C^{0,\a,0,1}}d\mu|\ti x_1-\ti x_2|^\a\\
&\q+K\int_s^T\int_{\dbR^n} {1\over (r-s)^{n\over 2}}e^{{-\l|x-\mu|^2\over 4(r-s)}}\Big\{|\d\Th^0_{xy}(t,r,\mu,y)|\\
&\q+\big[|\ti x_1-\ti x_2|^\a+|\d\Th^0_x(t,r,\mu,y)|\big]|\Th^{0,2}_{xy}(t,r,\mu,y)|\Big\}d\mu dr,
\end{align*}
and
\begin{align*}
|\d\Th_x^0(t,s,x,y)|&\les K\int_{\dbR^n}{1\over (r-s)^{{n\over 2}}}e^{{-\l|x-\mu|^2\over 4(r-s)}}
   \[\|h^0(\cd)\|_{C^{0,\a,1,0}}+\(1+{|x-\mu|^2\over (r-s)}\)\|h^0(\cd)\|_{C^{0,\a,0,0}}\]d\mu|\ti x_1-\ti x_2|^\a\\
&\q+K\int_s^T\int_{\dbR^n} {1\over (r-s)^{n+1\over 2}}e^{{-\l|x-\mu|^2\over 4(r-s)}}\[|\ti x_1-\ti x_2|^\a+|\d\Th^0_x(t,r,\mu,y)|\]d\mu dr,\\
|\d\Th_{xy}^0(t,s,x,y)|&\les K\int_{\dbR^n}{1\over (r-s)^{{n\over 2}}}e^{{-\l|x-\mu|^2\over 4(r-s)}}
   \[\|h^0(\cd)\|_{C^{0,\a,1,1}}+\(1+{|x-\mu|^2\over (r-s)}\)\|h^0(\cd)\|_{C^{0,\a,0,1}}\]d\mu|\ti x_1-\ti x_2|^\a\nn\\
&\q+K\int_s^T\int_{\dbR^n} {1\over (r-s)^{n+1\over 2}}e^{{-\l|x-\mu|^2\over 4(r-s)}}\Big\{|\d\Th^0_{xy}(t,r,\mu,y)|\\
&\q+\big[|\ti x_1-\ti x_2|^\a+|\d\Th^0_x(t,r,\mu,y)|\big]|\Th^{0,2}_{xy}(t,r,\mu,y)|\Big\}d\mu dr.
\end{align*}
Note from \rf{V-y-estimate} that  $\Th^{0,2}_{xy}(\cd)$  is globally bounded,
and the estimate  of $\|\Th^{0,2}_{xy}(\cd)\|_{L^\i}$ is independent of $(\th(\cd),\th^0(\cd))$.
Thus, by the definition of the seminorm $\|\cd \|_\a$ and the Gr\"{o}nwall's inequality, we obtain
\begin{align}
&\sup_{(t,s,x,y)}\left[\|\Th^0(t,s,\cd\,,x,y)\|_{\a}+\|\Th_{x}^0(t,s,\cd\,,x,y)\|_{\a}
+\|\Th_{y}^0(t,s,\cd\,,x,y)\|_{\a}+\|\Th_{xy}^0(t,s,\cd\,,x,y)\|_{\a}\right]\nn\\
&\qq\les K\big(1+\|h^0(\cd)\|_{C^{0,\a,1,1}}\big).\nn
\end{align}
By continuing the above arguments, we can also have
\begin{align}
\sup_{(t,s,x,y)}\big[\|\Th_{yy}^0(t,s,\cd\,,x,y)\|_{\a}
+\|\Th_{xyy}^0(t,s,\cd\,,x,y)\|_{\a}\big]\les K\big(1+\|h^0(\cd)\|^2_{C^{0,\a,1,2}}\big).\nn
\end{align}
Similarly, for the parameter $t$, we have
\begin{align}
&\sup_{(s,\ti x,x,y)}\[\|\Th^0(\cd\,,s,\ti x,x,y)\|_{\a\over 2}+\|\Th_{x}^0(\cd,s,\ti x,x,y)\|_{\a\over 2}
+\|\Th_{y}^0(\cd,s,\ti x,x,y)\|_{\a\over 2}+\|\Th_{xy}^0(\cd\,,s,\ti x,x,y)\|_{\a\over 2}\nn\\
&\qq\q+\|\Th_{yy}^0(\cd\,,s,\ti x,x,y)\|_{\a\over 2}+\|\Th_{xyy}^0(\cd\,,s,\ti x,x,y)\|_{\a\over 2}\]\les K\big(1+\|h^0(\cd)\|_{C^{{\a\over 2},0,1,2}}\big).\nn
\end{align}
Combining the above together,
we get the estimate \rf{lem:HJB-par-estimate} immediately.
\end{proof}

\begin{remark}
Note that \rf{lem:HJB-par-estimate} is a global prior estimate and the constant $\k>0$ is independent of the choice of
$(\th(\cd),\th^0(\cd))$. Thus, we could always assume that
\bel{lem:v-par-estimate}
\|\th(\cd)\|_{C^{0,1}}+\|\th^0(\cd)\|_{C^{{\a\over 2},0,\a,1,2}}
\les\k\big[1+\|h^0(\cd)\|^2_{C^{{\a\over 2},\a,1,2}}\big],
\ee
where $\k>0$ is same as that given in \autoref{lem:HJB-par}.
\end{remark}

\ms

Next, we are going to establish the $C^{1+\a}$-norm estimate for $\Th^0(t,s,\ti x,\cd,y)$ and $\Th(s,\cd)$.
To achieve this, we need to make some preparations.
By making the transforms $x-\mu=(\sqrt{T-s})\ti \mu$  and $x-\mu=(\sqrt{r-s})\ti \mu$
in the first integral term and the second integral term of \rf{proof-step11-Th-x}--\rf{proof-step1-Vx},
respectively,  we have
\begin{align}
\nn\Th_x(s,x)&=\int_{\dbR^n}\ti\Xi(s,x,T,x-\sqrt{T-s}\ti\mu)\big[h_x(x-\sqrt{T-s}\ti\mu)-\rho(s,x,T,x-\sqrt{T-s}\ti\mu)h(x-\sqrt{T-s}\ti\mu)\big]d\ti\mu\\
\nn&\q+\int_s^T\int_{\dbR^n}\ti\Xi_x(s,x,r,x-\sqrt{r-s}\ti\mu)
\[\Th_{x}(r,x-\sqrt{r-s}\ti\mu)\ti b(r,x-\sqrt{r-s}\ti\mu;\th,\th^0)\nn\\
&\q+\ti g(r,x-\sqrt{r-s}\ti\mu;\th,\th^0)\]d\ti\mu dr,\label{proof-step1-Th-x-change}
\end{align}
and
\begin{align}
\nn\Th^0_x(t,s,\ti x,x,y)&=\int_{\dbR^n}\ti \Xi(s,x,T,x-\sqrt{T-s}\ti\mu)\[h^0_x(t,\ti x,x-\sqrt{T-s}\ti\mu,y)\\
\nn&\q-\rho(s,x,T,x-\sqrt{T-s}\ti\mu)h^0(t,\ti x,x-\sqrt{T-s}\ti\mu,y)\]d\ti\mu
\nn\\
&\q+\int_s^T\int_{\dbR^n}\ti\Xi_x(s,x,r,x-\sqrt{r-s}\ti\mu)\[\Th^0_{x}(t,r,\ti x,x-\sqrt{r-s}\ti\mu,y)\ti b(r,x-\sqrt{r-s}\ti\mu;\th,\th^0)\nn\\
&\q+\ti g^0(t,r,\ti x,x-\sqrt{r-s}\ti\mu;\th,\th^0,\Th^0)\]d\ti\mu dr,\label{proof-step1-V-x-change}
\end{align}
where
\begin{align}
\nn\wt \Xi(s,x,r,x-\sqrt{r-s}\ti\mu)&:= -\Xi(s,x,r,x-\sqrt{r-s}\ti\mu)(r-s)^{n\over 2}\\
\nn&=-{1 \over (4\pi)^{n\over 2}(\det[a(r,x-\sqrt{r-s}\ti\mu)])^{1\over 2}}
e^{-{\lan a(r,x-\sqrt{r-s}\ti\mu)^{-1}\ti\mu,\ti\mu\ran\over 4}},\\
\nn\rho(s,x,r,x-\sqrt{r-s}\ti\mu)&={(\det[a(r,x-\sqrt{r-s}\ti\mu)])_\mu\over 2\det[a(r,x-\sqrt{r-s}\ti\mu)]}
+{\lan[a(r,x-\sqrt{r-s}\ti\mu)^{-1}]_\mu\ti\mu,\,\ti\mu\ran\over 4},\\
\nn\wt\Xi_x(s,x,r,x-\sqrt{r-s}\ti\mu)&:=-\Xi_x(s,x,r,x-\sqrt{r-s}\ti\mu)(r-s)^{n\over 2}\\
\label{ti-G}&={1 \over (4\pi)^{n\over 2}(\det[a(r,x-\sqrt{r-s}\ti\mu)])^{1\over 2}}
e^{-{\lan a(r,x-\sqrt{r-s}\ti\mu)^{-1}\ti\mu,\ti\mu\ran\over 4}}{a(r,x-\sqrt{r-s}\ti\mu)^{-1}\over 2\sqrt{r-s}}\ti\mu.
\end{align}
By some straightforward calculations, it is clearly seen that
\begin{align}
\nn&|\wt\Xi(s,x,r,x-\sqrt{r-s}\ti\mu)|\les Ke^{-\l|\ti\mu|^2},\q &&|\rho(s,x,r,x-\sqrt{r-s}\ti\mu)|\les K(1+|\ti\mu|^2),\\
\nn& |\wt\Xi_x(s,x,r,x-\sqrt{r-s}\ti\mu)|\les {K\over \sqrt{r-s}}e^{-\l|\ti\mu|^2},
\q&&\|\wt\Xi(s,\cd,r,\cd-\sqrt{r-s}\ti\mu)\|_\a\les Ke^{-\l|\ti\mu|^2},\\
 &
\|\rho(s,\cd,r,\cd-\sqrt{r-s}\ti\mu)\|_\a\les K(1+|\ti\mu|^2),\q &&\|\wt\Xi_x(s,\cd,r,\cd-\sqrt{r-s}\ti\mu)\|_\a\les {K\over \sqrt{r-s}}e^{-\l|\ti\mu|^2},
\label{Th-x-alpha}
\end{align}
for some $0<\l<\l_0$.

\begin{proposition}\label{Prop:Holder}
There exist two constants $0<\e\les T$ and $\bar\k>0$ such for any $\th(\cd)\in C^{{\a\over 2},1+\a}$
and $\th^0(\cd)\in C^{{\a\over 2},{\a\over 2},\a,1+\a,2}$ with
\bel{Prop:Holder1}
\|\th_x(\cd)\|_{C^{0,\a}([T-\e,T])}
+\|\th^0_x(\cd)\|_{C^{0,0,0,\a,0}([T-\e,T])}\les 2\bar\k\big[1+\|h(\cd)\|_{C^{1+\a}}+\|h^0(\cd)\|_{C^{0,0,1+\a,0}}\big],\ee
the unique solution $(\Th(\cd),\Th^0(\cd))$ of PDE \rf{Th-v} satisfies
\begin{align}
\|\Th_x(\cd)\|_{C^{0,\a}([T-\e,T])}
+\|\Th^0_x(\cd)\|_{C^{0,0,0,\a,0}([T-\e,T])}\les 2\bar\k\big[1+\|h(\cd)\|_{C^{1+\a}}+\|h^0(\cd)\|_{C^{0,0,1+\a,0}}\big].
\label{Prop:Holder2}
\end{align}
Moreover, there exists a constant $\h\k>0$, which depends on $\bar\k$, such that
\begin{align}
\|\Th^0_{xy}(\cd)\|_{C^{0,0,0,\a,0}([T-\e,T])}\les \h\k\big[1+\|h(\cd)\|_{C^{1+\a}}+\|h^0(\cd)\|_{C^{0,0,1+\a,1}}\big].
\label{Prop:Holder3}
\end{align}
\end{proposition}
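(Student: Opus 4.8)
The plan is to realise $(\Th,\Th^0)$ as the image of $(\th,\th^0)$ under the solution operator of the \emph{linear} problem \rf{Th-v} provided by \autoref{lem:HJB-linear}, and to show that on a sufficiently short terminal interval $[T-\e,T]$ this operator maps the ball described by \rf{Prop:Holder1} into itself, \emph{with the same radius} $2\bar\k\bigl(1+\|h(\cd)\|_{C^{1+\a}}+\|h^0(\cd)\|_{C^{0,0,1+\a,0}}\bigr)$. All the work is done on the representation formulas \rf{proof-step1-Th-x-change} and \rf{proof-step1-V-x-change} for $\Th_x$ and $\Th^0_x$, using the kernel estimates \rf{Th-x-alpha}: after the parabolic rescaling $x-\mu=\sqrt{r-s}\,\ti\mu$ the kernels $\wt\Xi$, $\wt\Xi_x$ and the factor $\rho$ have $x$-Hölder seminorms that are integrable Gaussians in $\ti\mu$, with $\wt\Xi_x$ carrying only the integrable singularity $(r-s)^{-1/2}$.

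First I would estimate the terminal-data contributions in \rf{proof-step1-Th-x-change} and \rf{proof-step1-V-x-change}: using \rf{Th-x-alpha} together with $\|h(\cd)\|_{C^{1+\a}}$ and $\|h^0(\cd)\|_{C^{0,0,1+\a,0}}$, the $x$-Hölder seminorm of the first integral in each formula is bounded by a universal constant times $1+\|h(\cd)\|_{C^{1+\a}}$, resp. $1+\|h^0(\cd)\|_{C^{0,0,1+\a,0}}$, and this fixes $\bar\k$. Next come the Duhamel (time-integral) terms. Since $\ti b,\ti g,\ti g^0$ have bounded derivatives by \ref{ass:H4} and $(\th,\th_x,\th^0,\th^0_x,\th^0_y)$ have $L^\infty$ bounds that are independent of the particular pair $(\th,\th^0)$ by \autoref{lem:HJB-Th-x}, \autoref{lem:HJB-par} (and one may assume \rf{lem:v-par-estimate}), the $x$-Hölder seminorm of each composite $\ti b(r,\cd\,;\th,\th^0)$, and likewise of $\ti g$ and $\ti g^0$, is controlled by $K\bigl(1+\|\th_x(r,\cd)\|_\a+\|\th^0_x(r,\cd)\|_\a+\|\th^0_{xy}(r,\cd)\|_\a\bigr)$ plus lower-order quantities, while $\Th^0_x$ enters $\ti g^0$ only linearly. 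Splitting a product's Hölder seminorm as (Hölder of kernel)$\times L^\infty$ $+$ $L^\infty$(kernel)$\times$(Hölder of factor), and using $\int_s^T(r-s)^{-1/2}\,dr\le 2\sqrt\e$, the $x$-Hölder seminorm of each Duhamel term on $[T-\e,T]$ is bounded by
\[
C\sqrt\e\Bigl(\|\Th_x(\cd)\|_{C^{0,\a}([T-\e,T])}+\|\Th^0_x(\cd)\|_{C^{0,0,0,\a,0}([T-\e,T])}+\|\th_x(\cd)\|_{C^{0,\a}([T-\e,T])}+\|\th^0_x(\cd)\|_{C^{0,0,0,\a,0}([T-\e,T])}+1\Bigr).
\]
Adding the two resulting inequalities, bounding the $\th$-quantities by $2\bar\k\bigl(1+\|h(\cd)\|_{C^{1+\a}}+\|h^0(\cd)\|_{C^{0,0,1+\a,0}}\bigr)$ via \rf{Prop:Holder1}, and choosing $\e$ so small that $C\sqrt\e\le\frac14$, a routine absorption and rearrangement yields \rf{Prop:Holder2}.

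For the bound \rf{Prop:Holder3} on $\Th^0_{xy}$, the step is to differentiate \rf{proof-step1-V-x-change} in the parameter $y$ — legitimate exactly as in the proof of \autoref{lem:HJB-par} — obtaining a representation of Duhamel type in which $\Th^0_{xy}$ appears linearly against the bounded factor $\ti b+\ti g^0_{p^0}$, the terminal datum is controlled by $\|h^0(\cd)\|_{C^{0,0,1+\a,1}}$ (together with $\|h(\cd)\|_{C^{1+\a}}$ through the composition), and all $\th$-dependent and $\Th^0_x$-dependent terms are now estimated by means of the bound \rf{Prop:Holder2} just established. Running the same short-time absorption, now on the already-fixed interval $[T-\e,T]$, produces \rf{Prop:Holder3} with a constant $\h\k$ depending on $\bar\k$.

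The hard part will be the bookkeeping of the $x$-Hölder seminorms of the \emph{diagonal/nonlocal} compositions such as $x\mapsto\th^0\bigl(s,x,s,x,\th(s,x)\bigr)$ and $x\mapsto\th^0_x\bigl(s,x,s,x,\th(s,x)\bigr)$, and the verification that every Duhamel contribution genuinely carries the gain $\sqrt\e$ — this is precisely what makes the self-map argument close. One must also be careful to organise the estimates so that the ``$h$-part'' produces exactly the constant $\bar\k$ while the ``$\th$-part'' is absorbed, so that the ball is reproduced with radius $2\bar\k(\cdot)$ and not merely with \emph{some} larger constant; otherwise the subsequent iteration/fixed-point step (in \autoref{thm:HJB}) would not be available.
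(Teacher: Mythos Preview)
Your plan is essentially the paper's own proof: start from the rescaled representation formulas \rf{proof-step1-Th-x-change}--\rf{proof-step1-V-x-change}, use the kernel bounds \rf{Th-x-alpha}, let the terminal-data integrals fix $\bar\k$, gain $\sqrt\e$ on the Duhamel terms via $\int_s^T (r-s)^{-1/2}\,dr\les 2\sqrt\e$, choose $\e$ with $\bar\k\sqrt\e\les\frac14$, absorb, and then differentiate in $y$ for \rf{Prop:Holder3}. One point to sharpen: when you bound the $x$-H\"older seminorm of the composites $\ti b,\ti g,\ti g^0$, you should \emph{not} need $\|\th^0_{xy}(r,\cd)\|_\a$; the dependence on $\th^0_y(s,s,x,x,\th(s,x))$ is handled by the a~priori bound \rf{lem:v-par-estimate} (from \autoref{lem:HJB-par}), which already gives uniform $L^\infty$ control of $\th^0_{xy}$, $\th^0_{yy}$ and $\a$-H\"older control of $\th^0_y$ in $\ti x$, so that the only ``live'' quantities are $\|\th_x(r,\cd)\|_\a$ and $\|\th^0_x(r,\cd,r,\cd,\cd)\|_{C^{0,\a,0}}$, exactly the ones appearing in \rf{Prop:Holder1}. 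If you carry $\|\th^0_{xy}\|_\a$ as an input you would couple \rf{Prop:Holder2} and \rf{Prop:Holder3} and the self-map would not close on the stated ball.
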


\begin{proof}
For any $x_1,\,x_2\in\dbR^n$, from \rf{proof-step1-Th-x-change} and then by the estimate \rf{Th-x-alpha}, we have
\begin{align}
\nn |\Th_x(s,x_1)-\Th_x(s,x_2)|&\les \int_{\dbR^n} Ke^{-\l|\ti\mu|^2}(1+|\ti\mu|^2) d\ti\mu\|h(\cd)\|_{C^{1}}|x_1-x_2|^\a\\
\nn&\q
+\int_s^T\int_{\dbR^n} {K\over \sqrt{r-s}}e^{-\l|\ti\mu|^2}d\ti\mu dr\big[1+\|\Th_x(\cd)\|_{L^\i}\big]|x_1-x_2|^\a \\
\nn&\q
+\int_{\dbR^n} Ke^{-\l|\ti\mu|^2}\big|h_x(x_1-\sqrt{T-s}\ti\mu)-h_x(x_2-\sqrt{T-s}\ti\mu)\big|d\ti\mu \\
\nn&\q
+\int_{\dbR^n} Ke^{-\l|\ti\mu|^2}(1+|\ti\mu|^2)\big|h(x_1-\sqrt{T-s}\ti\mu)-h(x_2-\sqrt{T-s}\ti\mu)\big|d\ti\mu \\
\nn&\q+\int_s^T\int_{\dbR^n}{K\over \sqrt{r-s}}e^{-\l|\ti\mu|^2}
\Big[\big| \Th_x(r,x_1-\sqrt{r-s}\ti\mu)-\Th_x(r,x_2-\sqrt{r-s}\ti\mu)\big|\\
\nn&\qq+\big| \ti b(r,x_1-\sqrt{r-s}\ti\mu;\th,\th^0)-\ti b(r,x_2-\sqrt{r-s}\ti\mu;\th,\th^0)\big|\|\Th_x(\cd)\|_{L^\i}\\
&\qq+\big| \ti g(r,x_1-\sqrt{r-s}\ti\mu;\th,\th^0)-\ti g(r,x_2-\sqrt{r-s}\ti\mu;\th,\th^0)\big|\Big]d\ti\mu dr.
\label{proof-step1-Th-x-sub}
\end{align}
Note that for $\ti\f(\cd)=\ti b(\cd),\ti g(\cd)$,
\begin{align}
&\ti\f(r,x_1-\sqrt{r-s}\ti\mu;\th,\th^0)-\ti\f(r,x_2-\sqrt{r-s}\ti\mu;\th,\th^0)\nn\\
&\q=\ti \f\Big(r,x_1-\sqrt{r-s}\ti\mu,\th(r,x_1-\sqrt{r-s}\ti\mu),\th_x(r,x_1-\sqrt{r-s}\ti\mu),\nn\\
&\qq\q\, \th^0(r,x_1-\sqrt{r-s}\ti\mu,r,x_1-\sqrt{r-s}\ti\mu,\th(r,x_1-\sqrt{r-s}\ti\mu)),\nn\\
&\qq\q\, \th_x^0(r,x_1-\sqrt{r-s}\ti\mu,r,x_1-\sqrt{r-s}\ti\mu,\th(r,x_1-\sqrt{r-s}\ti\mu)),\nn\\
&\qq\q\, \th_y^0(r,x_1-\sqrt{r-s}\ti\mu,r,x_1-\sqrt{r-s}\ti\mu,\th(r,x_1-\sqrt{r-s}\ti\mu)) \Big)\nn\\
&\qq-\ti \f\Big(r,x_2-\sqrt{r-s}\ti\mu,\th(r,x_2-\sqrt{r-s}\ti\mu),\th_x(r,x_2-\sqrt{r-s}\ti\mu),\nn\\
&\qq\q\, \th^0(r,x_2-\sqrt{r-s}\ti\mu,r,x_2-\sqrt{r-s}\ti\mu,\th(r,x_2-\sqrt{r-s}\ti\mu)),\nn\\
&\qq\q\, \th_x^0(r,x_2-\sqrt{r-s}\ti\mu,r,x_2-\sqrt{r-s}\ti\mu,\th(r,x_2-\sqrt{r-s}\ti\mu)),\nn\\
&\qq\q\, \th_y^0(r,x_2-\sqrt{r-s}\ti\mu,r,x_2-\sqrt{r-s}\ti\mu,\th(r,x_2-\sqrt{r-s}\ti\mu)) \Big).\nn
\end{align}
Then by \autoref{lem:HJB-par} (or \rf{lem:v-par-estimate}), we get
\begin{align}
&|\ti\f(r,x_1-\sqrt{r-s}\ti\mu;\th,\th^0)-\ti\f(r,x_2-\sqrt{r-s}\ti\mu;\th,\th^0)|\nn\\
&\q\les K\big(1+\|\th_x(r,\cd)\|_{C^\a}+\|\th^0_x(r,\cd,r,\cd,\cd)\|_{C^{0,\a,0}}\big)|x_1-x_2|^\a,\nn
\end{align}
where $K>0$ depends on $h(\cd)$ and $h^0(\cd)$.
Substituting the above into \rf{proof-step1-Th-x-sub} and then by \autoref{lem:HJB-Th-x}, we have
\begin{align}
\nn|\Th_x(s,x_1)-\Th_x(s,x_2)|&\les K\big(1+\|h(\cd)\|_{C^{1+\a}}\big)|x_1-x_2|^\a\\
\nn&\q+\int_s^T{K\over \sqrt{r-s}}
\[\|\Th_x(r,\cd)\|_\a+\|\th_x(r,\cd)\|_{C^{\a}}+\|\th^0_x(r,\cd,r,\cd,\cd)\|_{C^{0,\a,0}}\] dr|x_1-x_2|^\a,
\end{align}
which implies that
\bel{Th-x-holder}
\|\Th_x(s,\cd)\|_{\a}\les K\big(1+\|h(\cd)\|_{C^{1+\a}}\big)
+\int_s^T{K\over \sqrt{r-s}}\[\| \Th_x(r,\cd)\|_{\a}+\|\th_x(r,\cd)\|_{C^{\a}}+\|\th^0_x(r,\cd,r,\cd,\cd)\|_{C^{0,\a,0}}\]dr.\ee
By the same argument as the above (noting \rf{proof-step1-V-x-change}), we also have
\begin{align}
\nn \|\Th^0_x(t,s,\ti x,\cd,y)\|_{\a}&\les K\big(1+\|h^0(t,\ti x,\cd,y)\|_{C^{1+\a}}\big)\\
&\q+\int_s^T{K\over \sqrt{r-s}}
\[ \|\Th^0_x(t,s,\ti x,\cd,y)\|_{\a}+\|\th_x(r,\cd)\|_{\a}+\|\th^0_x(r,\cd,r,\cd,\cd)\|_{C^{0,\a,0}}\]dr,\label{V-x-holder}\\
\nn \|\Th^0_{xy}(t,s,\ti x,\cd,y)\|_{\a}&\les K\big(1+\|h^0(t,\ti x,\cd,\cd)\|_{C^{1+\a,1}}\big)\\
&\q+\int_s^T{K\over \sqrt{r-s}}
\[ \|\Th^0_{xy}(t,s,\ti x,\cd,y)\|_{\a}+\|\th_x(r,\cd)\|_{\a}+\|\th^0_x(r,\cd,r,\cd,\cd)\|_{C^{0,\a,0}}\]dr.
\label{V-y-holder}
\end{align}
Combining \rf{Th-x-holder} and \rf{V-x-holder} yields that
\begin{align}
\nn& \|\Th_x(\cd)\|_{C^{0,\a}([T-\e,T])}+\|\Th^0_x(\cd)\|_{C^{0,0,0,\a,0}([T-\e,T])}\nn\\
&\q\les \bar\k\big[1+\|h(\cd)\|_{C^{1+\a}}+\|h^0(\cd)\|_{C^{0,0,1+\a,0}}\big]
+\bar\k\sqrt{\e}\big[ \|\Th_x(\cd)\|_{C^{0,\a}([T-\e,T])}\nn\\
&\qq+\|\Th^0_x(\cd)\|_{C^{0,0,0,\a,0}([T-\e,T])}+ \|\th_x(\cd)\|_{C^{0,\a}([T-\e,T])}
+\|\th^0_x(\cd)\|_{C^{0,0,0,\a,0}([T-\e,T])}\big],
\label{V-x-holder1}
\end{align}
where $\bar\k>\k$, only depending on $( h(\cd),\ti g(\cd), h^0(\cd),\ti g^0(\cd))$, is a fixed constant.
Let $\e$ be small enough such that $\bar\k\sqrt{\e}\les{1\over 4}$ and
\bel{v-x-esti}
 \|\th_x(\cd)\|_{C^{0,\a}([T-\e,T])}+\|\th^0_x(\cd)\|_{C^{0,0,0,\a,0}([T-\e,T])}
\les 2\bar\k\big[1+\|h(\cd)\|_{C^{1+\a}}+\| h^0(\cd)\|_{C^{0,0,1+\a,0}}\big].
\ee
Then from \rf{V-x-holder1}, we get
\begin{align}
 \|\Th_x(\cd)\|_{C^{0,\a}([T-\e,T])}+\|\Th^0_x(\cd)\|_{C^{0,0,0,\a,0}([T-\e,T])}
\les 2\bar\k\big[1+\|h(\cd)\|_{C^{1+\a}}+\| h^0(\cd)\|_{C^{0,0,1+\a,0}}\big].\nn
\end{align}
Substituting \rf{v-x-esti}  into \rf{V-y-holder} also yields \rf{Prop:Holder3} immediately.
The proof is complete.
\end{proof}

The following gives  the $C^{\a\over 2}$-norm estimate for $\Th^0(t,\cd,\ti x,x,y)$ and $\Th(\cd,x)$.

\begin{proposition}\label{Prop:Holder-t}
There exists a constant $\check{\k}>0$ such for any  $\th(\cd)\in C^{{\a\over 2},1+\a}$
and $\th^0(\cd)\in C^{{\a\over 2},{\a\over 2},\a,1+\a,2}$ satisfying \rf{Prop:Holder1},
the unique solution of  PDE \rf{Th-v} satisfies
\begin{align}
&\|\Th(\cd,x)\|_{\a\over2}+\|\Th_x(\cd,x)\|_{\a\over2}+\|\Th^0(t,\cd,\ti x,x,y)\|_{\a\over 2}
+\|\Th_x^0(t,\cd,\ti x,x,y)\|_{\a\over 2}+\|\Th_y^0(t,\cd,\ti x,x,y)\|_{\a\over 2}\nn\\
&+\|\Th_{xy}^0(t,\cd,\ti x,x,y)\|_{\a\over 2}\,\les\, \check{\k}\big(1+\|h(\cd)\|_{C^{1+\a}}+\| h^0(\cd)\|_{C^{0,0,1+\a,0}}\big),
 \q\forall (t,\ti x,x,y)\in[0,T]\times\dbR^n\times\dbR^n\times\dbR,\label{Prop:Holder-t1}
\end{align}
on the time interval $[T-\e,T]$.
\end{proposition}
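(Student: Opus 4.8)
The plan is to mimic the spatial H\"older estimates of \autoref{Prop:Holder}, but now tracking the time variables. Fix $(t,\ti x, x, y)$ and the parameters as needed, and start from the integral representations \rf{lemma-PDE-main1}--\rf{lemma-PDE-main2}, together with the differentiated versions \rf{proof-step1-Th-x-change}--\rf{proof-step1-V-x-change}. For two times $s_1,s_2\in[T-\e,T]$ with (say) $s_1<s_2$, I would split each of $\Th(s_1,x)-\Th(s_2,x)$, $\Th_x(s_1,x)-\Th_x(s_2,x)$, and the analogous differences for $\Th^0,\Th^0_x,\Th^0_y,\Th^0_{xy}$ into three pieces: (i) the contribution of the terminal data, where one uses that $h(\cd)\in C^{1+\a}$, $h^0(\cd)\in C^{0,0,1+\a,\cdot}$ and the smoothing of the Gaussian-type kernel $\Xi$ in \rf{def-Xi}; (ii) the difference coming from the different \emph{lower} limit of integration, i.e. $\int_{s_1}^{s_2}(\cdots)dr$, which is $O(|s_1-s_2|^{1/2})$ because the singularity $(r-s)^{-1/2}$ is integrable and the integrands are bounded by \autoref{lem:HJB-Th-x} and \autoref{lem:HJB-par}; and (iii) the difference in the integrand over the common interval $[s_2,T]$, where one writes, after the substitutions $x-\mu=\sqrt{r-s_i}\,\ti\mu$ as in \rf{proof-step1-Th-x-change}, the kernels $\wt\Xi(s_1,x,r,\cdot)-\wt\Xi(s_2,x,r,\cdot)$ and uses the bounds \rf{Th-x-alpha} plus the already-established $C^{0,\a}$-control from \autoref{Prop:Holder} and \rf{Prop:Holder1} on $\th$.

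The key technical input for piece (iii) is a H\"older-in-$s$ bound for the rescaled kernels $\wt\Xi,\wt\Xi_x,\rho$: I would show $\|\wt\Xi(\cd,x,r,x-\sqrt{r-\cd}\,\ti\mu)\|_{\a/2}\les K e^{-\l|\ti\mu|^2}$ and $\|\wt\Xi_x(\cd,x,r,x-\sqrt{r-\cd}\,\ti\mu)\|_{\a/2}\les K(r-s)^{-1/2}e^{-\l|\ti\mu|^2}$ (and similarly for $\rho$), which follow from the smoothness of $a(\cd)$ under \ref{ass:H4} and elementary estimates on $\partial_s$ of expressions like $e^{-\lan a(r,x-\sqrt{r-s}\ti\mu)^{-1}\ti\mu,\ti\mu\ran/4}$; the extra $(r-s)^{-1/2}$ in $\wt\Xi_x$ is still integrable on $[s_2,T]$. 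Feeding these into the representation gives a closed Gr\"onwall-type inequality on the quantity
$$ \Phi(s):=\|\Th(s,x)\|_{\cdot}+\|\Th_x(s,x)\|_{\cdot}+\|\Th^0(t,s,\ti x,x,y)\|_{\cdot}+\|\Th^0_x\|_{\cdot}+\|\Th^0_y\|_{\cdot}+\|\Th^0_{xy}\|_{\cdot} $$
(all $\|\cd\|_{\cdot}$ being the $\a/2$-seminorm in the relevant time variable over $[T-\e,T]$), of the schematic form $\Phi\les\check\k(1+\|h\|_{C^{1+\a}}+\|h^0\|_{C^{0,0,1+\a,0}})+\check\k\sqrt{\e}\,\big(\Phi+\|\th\|\text{-terms}\big)$, exactly parallel to \rf{V-x-holder1}. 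Shrinking $\e$ (consistently with the $\e$ already fixed in \autoref{Prop:Holder}) so that $\check\k\sqrt\e\le\tfrac14$, and using the bound on the $\th$-terms coming from \rf{Prop:Holder1}--\rf{Prop:Holder2}, absorbs the last term and yields \rf{Prop:Holder-t1}.

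The main obstacle, as usual in this circle of estimates, is the combination of the non-local couplings and the time-singular kernel $\wt\Xi_x$. Concretely: the differences of the composite terms $\ti g,\ti g^0$ involve $\th^0$ evaluated along the ``diagonal'' $(s,x,s,x,\th(s,x))$, so a time increment $s_1\to s_2$ propagates simultaneously through the first slot, the third slot, and through $\th(s,x)$ itself; one must check that each of these contributions is controlled by $\|\th\|_{C^{\a/2,\cdot}}$-type seminorms that are themselves part of the hypothesis \rf{Prop:Holder1} (or have already been estimated by \autoref{lem:HJB-par}), and is not circularly dependent on the very $C^{\a/2}$-bound being proved for $\Th^0$. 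The bookkeeping is delicate but not deep: everything is linear once the kernel estimates are in hand, and the $\sqrt\e$ smallness does the rest. A secondary, purely technical point is justifying differentiation under the integral sign in $y$ for the $\Th^0_y,\Th^0_{xy}$ representations on $[T-\e,T]$, but this is identical to what was already done in \autoref{lem:HJB-par} and \autoref{Prop:Holder}.
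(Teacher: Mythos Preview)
Your splitting into terminal data, the short interval $[s_1,s_2]$, and the common interval $[s_2,T]$ is exactly what the paper does, but you have one misconception that makes your proposed closure more complicated than necessary. After the substitution $x-\mu=\sqrt{r-s}\,\ti\mu$, the only dependence of the integrand on the \emph{outer} time $s$ is through the spatial argument $x-\sqrt{r-s}\,\ti\mu$; the slot $r$ in $\th(r,\cdot)$, $\th^0(r,r,\cdot,\cdot,\th(r,\cdot))$ and in $\Th_x(r,\cdot)$ is the integration variable and does not move when you pass from $s_1$ to $s_2$. Hence the differences you worry about in your ``main obstacle'' paragraph are purely spatial: e.g.
\[
\big|\Th_x(r,x-\sqrt{r-s_1}\,\ti\mu)-\Th_x(r,x-\sqrt{r-s_2}\,\ti\mu)\big|
\les \|\Th_x(r,\cdot)\|_\a\,|\ti\mu|^\a\,|s_1-s_2|^{\a/2},
\]
and similarly for $\ti b,\ti g,\ti g^0$. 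Since the required spatial $C^\a$-bounds on $\Th_x,\Th^0_x,\Th^0_{xy}$ were already obtained in \autoref{Prop:Holder} (via \rf{Prop:Holder1}), the time-H\"older estimate follows \emph{directly}: no Gr\"onwall in $\Phi$, no smallness of $\e$, and no $C^{\a/2}$-in-$s$ control on $\th,\th^0$ is ever invoked. The paper makes exactly this observation; the only nontrivial kernel computation is
\[
\int_{s_2}^{T}\Big|\frac{1}{\sqrt{r-s_2}}-\frac{1}{\sqrt{r-s_1}}\Big|\,dr\les K|s_1-s_2|^{1/2},
\]
which handles the $\ti\Xi_x$-difference.

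This matters for the subsequent fixed-point argument: as the Remark immediately following the proposition stresses, $\check\k$ must be independent of the $\tfrac{\a}{2}$-H\"older seminorm of $(\th,\th^0)$ in $s$. Your route---closing a Gr\"onwall on $\Phi$ and absorbing $\|\th\|_{C^{\a/2,\cdot}}$-terms by shrinking $\e$---would either fail to deliver that independence or would require a further reduction of $\e$ not sanctioned by the statement (which uses the same $\e$ as \autoref{Prop:Holder}). So drop the Gr\"onwall step: once you see that the $s$-dependence is only through spatial shifts, the estimate is immediate from \rf{Prop:Holder2}--\rf{Prop:Holder3} and the kernel bounds \rf{Th-x-alpha}.
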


\begin{proof}
For any $T-\e\les s_1\les s_2\les T$, by \rf{proof-step1-Th-x-change} we have
\begin{align}
\nn |\Th_x(s_1,x)-\Th_x(s_2,x)|&\les \int_{\dbR^n}\[|\ti\Xi(s_1,x,T,x-\sqrt{T-s_1}\ti\mu)|\|h_x(\cd)\|_\a|\ti\mu|^\a|s_1-s_2|^{\a\over 2}\nn\\
&\q +\big|\ti\Xi(s_1,x,T,x-\sqrt{T-s_1}\ti\mu)-\ti\Xi(s_2,x,T,x-\sqrt{T-s_2}\ti\mu)\big|\|h_x(\cd)\|_{L^\i}\nn\\
&\q+|\ti\Xi(s_1,x,T,x-\sqrt{T-s_1}\ti\mu)\rho(s_1,x,T,x-\sqrt{T-s_1}\ti\mu)|\|h(\cd)\|_\a|\ti\mu|^\a|s_1-s_2|^{\a\over 2}\nn\\
\nn&\q+\big|\ti\Xi(s_1,x,T,x-\sqrt{T-s_1}\ti\mu)\rho(s_1,x,T,x-\sqrt{T-s_1}\ti\mu)\nn\\
\nn&\qq-\ti\Xi(s_2,x,T,x-\sqrt{T-s_2}\ti\mu)\rho(s_2,x,T,x-\sqrt{T-s_2}\ti\mu)\big|\|h(\cd)\|_{L^\i}\]d\ti\mu\\
\nn&\q+\int_{s_1}^{s_2}\int_{\dbR^n}|\ti\Xi_x(s_1,x,r,x-\sqrt{r-s_1}\ti\mu)|\|\Th_{x}(\cd)\ti b(\cd;\th,\th^0)+\ti g\big(\cd;\th,\th^0\big)\|_{L^\i}d\ti\mu dr\\
\nn&\q+\int_{s_2}^{T}\int_{\dbR^n}\big|\ti\Xi_x(s_1,x,r,x-\sqrt{r-s_1}\ti\mu)-\ti\Xi_x(s_2,x,r,x-\sqrt{r-s_2}\ti\mu)\big|\nn\\
\nn&\qq\times\|\Th_{x}(\cd)\ti b(\cd;\th,\th^0)+\ti g\big(\cd;\th,\th^0\big)\|_{L^\i}d\ti\mu dr\\
\nn&\q+\int_{s_2}^{T}\int_{\dbR^n}|\ti\Xi_x(s_1,x,r,x-\sqrt{r-s_1}\ti\mu)|\[\big
 |\Th_{x}(r,x-\sqrt{r-s_1}\ti\mu)\ti b(r,x-\sqrt{r-s_1}\ti\mu;\th,\th^0)\nn\\
&\qq -\Th_{x}(r,x-\sqrt{r-s_2}\ti\mu)\ti b(r,x-\sqrt{r-s_2}\ti\mu;\th,\th^0)\big|+\big|\ti g(r,x-\sqrt{r-s_1}\ti\mu;\th,\th^0)\nn\\
&\qq-\ti g(r,x-\sqrt{r-s_2}\ti\mu;\th,\th^0)\big|\]d\ti\mu dr.\label{Th-Holder-t}
\end{align}
Note that (recalling \rf{ti-G})
\begin{align}
\nn&\int_{s_2}^{T}\int_{\dbR^n}\big|\ti\Xi_x(s_1,x,r,x-\sqrt{r-s_1}\ti\mu)-\ti\Xi_x(s_2,x,r,x-\sqrt{r-s_2}\ti\mu)\big|d\ti\mu dr\\
\nn&\q\les K|s_1-s_2|^{\a\over 2}+K \int_{s_2}^{T}\Big|{1\over\sqrt{r-s_2}}-{1\over\sqrt{r-s_1}}\Big|dr\les K|s_1-s_2|^{\a\over 2}.
\end{align}
Then by \rf{lem:v-par-estimate}, \autoref{Prop:Holder} and \rf{Th-x-alpha},
from \rf{Th-Holder-t} we obtain
\begin{align}
|\Th_x(s_1,x)-\Th_x(s_2,x)|\les K\big(1+\|h(\cd)\|_{C^{1+\a}}+\|h^0(\cd)\|_{C^{0,0,1+\a,0}}\big)|s_1-s_2|^{\a\over 2},\nn\\
\forall T-\e\les s_1\les s_2\les T,\, x\in\dbR^n,\label{Prop:Holder-t-11}
\end{align}
where $K>0$ depends on  $\bar\k$.
Recall \rf{proof-step1-V-x-change} and note that
\begin{align}
\nn\Th^0_y(t,s,\ti x,x,y)&=\int_{\dbR^n}\ti \Xi(s,x,T,x-\sqrt{T-s}\ti\mu)h^0_y(t,\ti x,x-\sqrt{T-s}\ti\mu,y)d\ti\mu\\
\nn&\q+\int_s^T\int_{\dbR^n}\ti\Xi(s,x,r,x-\sqrt{r-s}\ti\mu)
\[\Th^0_{xy}(t,r,\ti x,x-\sqrt{r-s}\ti\mu,y)\ti b(r,x-\sqrt{r-s}\ti\mu;\th,\th^0)\nn\\
&\q+\ti g_{p^0}^0(t,r,\ti x,x-\sqrt{r-s}\ti\mu;\th,\th^0,\Th^0)\Th^0_{xy}(t,r,\ti x,x-\sqrt{r-s}\ti\mu,y)\]d\ti\mu dr,\nn
\end{align}
and
\begin{align}
\nn\Th^0_{xy}(t,s,\ti x,x,y)&=\int_{\dbR^n}\ti \Xi(s,x,T,x-\sqrt{T-s}\ti\mu\[h^0_{xy}(t,\ti x,x-\sqrt{T-s}\ti\mu,y)\nn\\
\nn&\q-\rho(s,x,T,x-\sqrt{T-s}\ti\mu)h^0_y(t,\ti x,x-\sqrt{T-s}\ti\mu,y)\]d\ti\mu\\
\nn&\q+\int_s^T\int_{\dbR^n}\ti\Xi_x(s,x,r,x-\sqrt{r-s}\ti\mu)
\[\Th^0_{xy}(t,r,\ti x,x-\sqrt{r-s}\ti\mu,y)\ti b(r,x-\sqrt{r-s}\ti\mu;\th,\th^0)\nn\\
&\q+\ti g_{p^0}^0(t,r,\ti x,x-\sqrt{r-s}\ti\mu;\th,\th^0,\Th^0)\Th^0_{xy}(t,r,\ti x,x-\sqrt{r-s}\ti\mu,y)\]d\ti\mu dr.\nn
\end{align}
By the same argument as the above, we also have
\begin{align}
&|\Th^0_x(t,s_1,\ti x,x,y)-\Th^0_x(t,s_2,\ti x,x,y)|\les K\big(1+\|h(\cd)\|_{C^{1+\a}}+\|h^0(\cd)\|_{C^{0,0,1+\a,0}}\big)|s_1-s_2|^{\a\over 2},\nn\\
&|\Th^0_y(t,s_1,\ti x,x,y)-\Th^0_y(t,s_2,\ti x,x,y)|+|\Th^0_{xy}(t,s_1,\ti x,x,y)-\Th^0_{xy}(t,s_2,\ti x,x,y)|\nn\\
&\q\les K\big(1+\|h(\cd)\|_{C^{1+\a}}+\|h^0(\cd)\|_{C^{0,0,1+\a,1}}\big)|s_1-s_2|^{\a\over 2},\nn\\
&\qq\qq\qq\qq\qq\forall T-\e\les s_1\les s_2\les T,\, (t,y,\ti x,x)\in[0,T]\times\dbR\times\dbR^n\times\dbR^n.\label{V-x-y-s-holder}
\end{align}
The rest can be obtained easily.
\end{proof}

\begin{remark}
Note that the constant $\check{\k}>0$ in \autoref{Prop:Holder-t} is independent of
the ${\a\over 2}$-H\"{o}lder norm of $(\th(\cd,\cd),\th^0(\cd,\cd,\cd,\cd,\cd))$ with respect to the second argument $s$.
We assume that
\begin{align}
&\|\th(\cd,x)\|_{\a\over2}+\|\th_x(\cd,x)\|_{\a\over2}+\|\th^0(t,\cd,\ti x,x,y)\|_{\a\over 2}
+\|\th_x^0(t,\cd,\ti x,x,y)\|_{\a\over 2}+\|\th_y^0(t,\cd,\ti x,x,y)\|_{\a\over 2}\nn\\
&+\|\th_{xy}^0(t,\cd,\ti x,x,y)\|_{\a\over 2}
\les \check{\k}\big[1+\|h(\cd)\|_{C^{1+\a}}+\| h^0(\cd)\|_{C^{0,0,1+\a,0}}\big],\nn\\
&\qq\qq\qq\qq\qq\qq\qq\qq \forall (t,\ti x,x,y)\in[0,T]\times\dbR^n\times\dbR^n\times\dbR,\label{Prop:Holder-t-v}
\end{align}
on the time interval $[T-\e,T]$.
\end{remark}

The following is concerned with the local solvability of the equilibrium HJB equation \rf{HJB-si-ti}.

\begin{proposition}
There exists a constant $\bar\e\in(0,\e]$ such that the equilibrium HJB equation \rf{HJB-si-ti}
admits a unique classical solution on the time interval $[T-\bar\e,T]$.
\end{proposition}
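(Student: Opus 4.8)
The plan is to establish local (in time) well-posedness of the equilibrium HJB equation \rf{HJB-si-ti} by a fixed-point argument on the ``fixed'' data $(\th(\cd),\th^0(\cd))$ that enter the diagonal/non-local terms. Concretely, I would work on a time interval $[T-\bar\e,T]$ and define the solution map $\cT$ sending a pair $(\th(\cd),\th^0(\cd))$, lying in the closed bounded subset of $C^{{\a\over 2},1+\a}\times C^{{\a\over 2},{\a\over 2},\a,1+\a,2}$ cut out by the \emph{a priori} bounds \rf{lem:v-par-estimate}, \rf{Prop:Holder1}, and \rf{Prop:Holder-t-v}, to the unique classical solution $(\Th(\cd),\Th^0(\cd))$ of the linearized-in-the-non-local-terms PDE \rf{Th-v}, whose solvability and representation formulas \rf{lemma-PDE-main1}--\rf{lemma-PDE-main2} are supplied by \autoref{lem:HJB-linear}. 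A fixed point of $\cT$ is precisely a classical solution of \rf{HJB-si-ti}, since plugging $\th=\Th$ and $\th^0=\Th^0$ into \rf{Th-v} reproduces \rf{HJB-si-ti}. The key is that \autoref{lem:HJB-Th-x}, \autoref{lem:HJB-par}, \autoref{Prop:Holder}, and \autoref{Prop:Holder-t} together guarantee that $\cT$ maps this bounded set into itself once $\bar\e$ is small enough: the $C^{0,1}$ and parameter-regularity estimates hold globally (so are automatic), while the $C^{0,\a}$-in-$x$ and $C^{\a/2}$-in-$(t,s)$ estimates propagate the bound $2\bar\k[\cdots]$ provided $\bar\k\sqrt{\bar\e}\le\frac14$, exactly as in the derivation of \rf{V-x-holder1}.

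\textbf{Contraction step.} Next I would show $\cT$ is a contraction on this set in a suitable weaker norm — e.g.\ the $C^{0,0}$ (sup) norm on $\Th$ together with the $C^{0,0,0,0,0}$ plus $C^{0,0,0,0,1}$ sup norms on $\Th^0$ and its $y$-derivative, all on $[T-\bar\e,T]$. Given two inputs $(\th_i,\th^0_i)$ with outputs $(\Th_i,\Th^0_i)$, subtract the representation formulas \rf{lemma-PDE-main1}--\rf{lemma-PDE-main2}: the terminal-data integrals cancel (same $h,h^0$), and every remaining difference carries either a factor $(\th_1-\th^0_1)-(\th_2-\th^0_2)$ through the Lipschitz dependence of $\ti b,\ti g,\ti g^0$ on their arguments (assumption \ref{ass:H4}, via $\psi(\cd)$ from \ref{ass:H3}), or a factor $\Th_1-\Th_2$, $\Th^0_1-\Th^0_2$. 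Using the heat-kernel bounds \rf{G-G-x-est} one integrates $\int_s^T (r-s)^{-1/2}dr = 2\sqrt{T-s}\le 2\sqrt{\bar\e}$ for terms hitting $\Xi_x$, and $\int_s^T 1\,dr\le\bar\e$ for terms hitting $\Xi$, so after a Gr\"onwall step the map is Lipschitz with constant $C\sqrt{\bar\e}$. Shrinking $\bar\e$ makes this $<1$; Banach's fixed point theorem then yields a unique fixed point in the set, hence existence and uniqueness of a classical solution of \rf{HJB-si-ti} on $[T-\bar\e,T]$. One should also check that the fixed point inherits the full $C^{1+\a/2,2+\a}$ interior parabolic regularity claimed for a ``classical solution'', which follows by feeding the fixed point back into \autoref{lem:HJB-linear} (now its data $(\th,\th^0)$ genuinely lie in the right H\"older classes by \autoref{Prop:Holder} and \autoref{Prop:Holder-t}).

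\textbf{Main obstacle.} The delicate point — and the reason all the preceding lemmas are needed — is that the non-local terms are evaluated at the ``twisted diagonal'' $(s,s,x,x,\Th(s,x))$, i.e.\ the $y$-slot is filled with the \emph{unknown} $\Th(s,x)$ rather than with $x$ as in \cite{Bjork-Khapko-Murgoci2017,Yong2012}. This forces the estimates to control not just $\Th^0$ and $\Th^0_x$ along the diagonal but also $\Th^0_y$ and $\Th^0_{xy}$ (since differentiating the composition $x\mapsto\Th^0(s,s,x,x,\Th(s,x))$ produces a $\Th^0_y\,\Th_x$ term), which is precisely the content of \rf{V-y-estimate}, \rf{V-C(1,2)-estimate}, and \rf{Prop:Holder3}; and these bring in quadratic dependence on the data (the $\|h^0(\cd)\|^2$ in \autoref{lem:HJB-par}) through the $\Th^0_{yy}$ equation. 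So the self-mapping constant $\bar\k$ must be chosen after, and consistently with, the parameter estimate constant $\k$ of \autoref{lem:HJB-par} (the nesting $\bar\k>\k$, $\h\k$ depending on $\bar\k$, $\check\k$ depending on all of these), and one must verify this chain of constant choices closes without circularity — all of which has been arranged in the statements above, so the remaining work is the bookkeeping of assembling them and choosing $\bar\e$ smaller than $\e$ to also win the contraction factor.
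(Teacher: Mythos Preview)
Your overall architecture matches the paper's: define $\cT$ on the closed set cut out by \rf{lem:v-par-estimate}, \rf{Prop:Holder1}, \rf{Prop:Holder-t-v}, show self-mapping via \autoref{lem:HJB-Th-x}--\autoref{Prop:Holder-t}, then prove a contraction and apply Banach. The identification of the ``twisted diagonal'' $\Th^0(s,s,x,x,\Th(s,x))$ as the main obstacle, and the consequent need to track $\Th^0_y,\Th^0_{xy},\Th^0_{yy}$, is exactly right.

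The gap is in your contraction norm. You propose to contract in the sup norms of $\Th,\Th^0,\Th^0_y$ with \emph{no} $x$-derivatives. But look at what $\ti b,\ti g,\ti g^0$ actually depend on (see \rf{barPsi} and the definitions above \rf{HJB-si-ti}): they take $\th_x(s,x)$ and $\th^0_x(s,s,x,x,\th(s,x))$ as arguments. So when you subtract, the Lipschitz dependence produces $|\d\th_x|$ and $|\d\th^0_x|$ on the right-hand side (this is \rf{D-b} in the paper), and these cannot be bounded by your weak norm of the input. The contraction simply does not close in a norm that omits first $x$-derivatives; the paper's Step~1 already shows that even the $C^{0,1}$ estimate for $\d\Th$ requires $\|\d\th\|_{C^{0,1}}+\|\d\th^0\|_{C^{0,0,0,1,1}}$ on the right (see \rf{D-V-Th-0,1}).

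The paper therefore carries out the contraction in the \emph{full} strong norm $C^{{\a\over2},1+\a}\times C^{{\a\over2},{\a\over2},\a,1+\a,2}$ (see \rf{Contraction}), which forces a genuinely longer argument: after the $C^{0,1}$ step one must also estimate $\d\Th^0_y,\d\Th^0_{xy},\d\Th^0_{yy},\d\Th^0_{xyy}$ (Step~2), then the $\a$-H\"older in $x$ of $\d\Th_x,\d\Th^0_x$ (Step~3) --- where the composition $\th^0_x(s,s,x,x,\th(s,x))$ forces the delicate splitting \rf{Dv-estimate} bringing in $\|\th^0_{xyy}\|_{L^\infty}$ and $\|\th^0_{xy}\|_{C^{0,0,\a,\a,0}}$ --- and finally the $\a/2$-H\"older in $s$ (Step~4). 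Each step picks up a factor of $\sqrt{\e}$ or $\e^{(1-\a)/2}$ from the time integration, and only at the end do you shrink $\bar\e\le\e$ to make the total Lipschitz constant $\le\tfrac12$. Your sketch of the self-mapping step and the bookkeeping of constants is accurate; what you are missing is that the contraction must be proved at the same derivative level as the nonlinearity, which is first-order in $x$ (and, because of the $y$-slot composition, also first- and second-order in $y$).
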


\begin{proof}
Denote
$$
\cB_\e=\big\{ (\th(\cd),\th^0(\cd))\in  C^{{\a\over 2},1+\a}\times C^{{\a\over 2},{\a\over 2},\a,1+\a,2}~\big|~
 (\th(\cd),\th^0(\cd)) \hbox{ satisfies } \rf{lem:v-par-estimate}, \rf{Prop:Holder1} \hbox{ and } \rf{Prop:Holder-t-v}\big\}.
$$
For any $(\th(\cd),\th^0(\cd))\in\cB_\e$,  by \autoref{lem:HJB-linear},
PDE \rf{Th-v} admits a unique classical solution $(\Th(\cd),\Th^0(\cd))$.
Moreover, from  \autoref{lem:HJB-Th-x}, \autoref{lem:HJB-par}, \autoref{Prop:Holder} and \autoref{Prop:Holder-t},
we know that $(\Th(\cd),\Th^0(\cd))\in\cB_\e$.
Thus, the mapping $\G(\cd,\cd):\cB_\e\to\cB_\e$, given by
$$
\G(\th(\cd),\th^0(\cd))=(\Th(\cd),\Th^0(\cd)),
$$
is well-defined. For any $(\th^i(\cd),\th^{0,i}(\cd))\in \cB_\e$ ($i=1,2$), let
$$
(\Th^i(\cd),\Th^{0,i}(\cd))=\G(\th^i(\cd),\th^{0,i}(\cd)),\q i=1,2.
$$
Denote
\begin{align}
&\d\th(s,x)=\th^1(s,x)-\th^2(s,x),\q \d\th^0(t,s,\ti x, x,y)=\th^{0,1}(t,s,\ti x, x,y)-\th^{0,2}(t,s,\ti x, x,y),\nn\\
&\d\Th(s,x)=\Th^1(s,x)-\Th^2(s,x),\q \d\Th^0(t,s,\ti x, x,y)=\Th^{0,1}(t,s,\ti x, x,y)-\Th^{0,2}(t,s,\ti x, x,y),\nn\\
&\d\ti \f(s,x)=\ti \f(s,x;\th^1,\th^{0,1})-\ti \f(s,x;\th^2,\th^{0,2}),\q\hbox{for}\q \f(\cd)=b(\cd),g(\cd),\nn\\
&\d\ti g^0(t,s,\ti x,x,y)=\ti g^0(t,s,\ti x,x, y;\th^1,\th^{0,1},\Th^{0,1})-\ti g^0(t,s,\ti x,x, y;\th^1,\th^{0,2},\Th^{0,2}).\nn
\end{align}
We hope to show that
\bel{Contraction}
\|\d\Th(\cd)\|_{C^{{\a\over 2},1+\a}}+\|\d\Th^0(\cd)\|_{C^{{\a\over 2},{\a\over 2},\a,1+\a,2}}
\les {1\over 2}\big[\|\d\th(\cd)\|_{C^{{\a\over 2},1+\a}}+\|\d\th^0(\cd)\|_{C^{{\a\over 2},{\a\over 2},\a,1+\a,2}}\big],
\ee
on some time interval $[T-\bar\e,T]\subseteq[T-\e,T]$.
Thus, $\G(\cd)$ is a contraction mapping and then it admits a unique fixed point
$ (\Th(\cd),\Th^0(\cd))\in  C^{{\a\over 2},1+\a}\times C^{{\a\over 2},{\a\over 2},\a,1+\a,2}$.
Then we get the well-posedness of the following  equation:
\begin{align*}
\nn &\Th(s,x)=\int_{\dbR^n}\Xi(s,x,T,\mu)h(\mu)d\mu+\int_s^T\int_{\dbR^n}\Xi(s,x,r,\mu)\[\Th_{x}(r,\mu)\ti b\big(r,\mu,\Th(r,\mu),\Th_x(r,\mu),\\
&\q\,\,\Th^0(r,\mu,r,\mu,\Th(r,\mu)), \Th_x^0(r,\mu,r,\mu,\Th(r,\mu)),\Th_y^0(r,\mu,r,\mu,\Th(r,\mu))\big)\\
&\q+\ti g\big(r,\mu,\Th(r,\mu),\Th_x(r,\mu),\Th^0(r,\mu,r,\mu,\Th(r,\mu)), \Th_x^0(r,\mu,r,\mu,\Th(r,\mu)),\Th_y^0(r,\mu,r,\mu,\Th(r,\mu))\big)\]d\mu dr;\\
\nn &\Th^0(t,s.\ti x,x,y)=\int_{\dbR^n}\Xi(s,x,T,\mu)L(t,\ti x,\mu,y)d\mu+\int_s^T\int_{\dbR^n}\Xi(s,x,r,\mu)\\
&\q\times\[\Th^0_{x}(t,r,\ti x, \mu,y)\ti  b\big(r,\mu,\Th(r,\mu),\Th_x(r,\mu),\Th^0(r,\mu,r,\mu,\Th(r,\mu)), \Th_x^0(r,\mu,r,\mu,\Th(r,\mu)),\\
&\q\,\,\Th_y^0(r,\mu,r,\mu,\Th(r,\mu))\big)+\ti g^0\big(t,r,\ti x,\mu,\Th(r,\mu),\Th_x(r,\mu), \Th^0(r,\mu,r,\mu,\Th(r,\mu)),\\
&\q\,\, \Th_x^0(r,\mu,r,\mu,\Th(r,\mu)),\Th_y^0(r,\mu,r,\mu,\Th(r,\mu)),\Th^0_{x}(t,r,\ti x, \mu,y)\big)\]d\mu dr.
\end{align*}
By \autoref{lem:HJB-linear},  $(\Th(\cd),\Th^0(\cd))$ is the unique classical solution of
equilibrium HJB equation \rf{HJB-si-ti} on $[T-\bar\e,T]$.

\ms

In the following, let us show that \rf{Contraction} really holds for some $\bar\e>0$.

\ms
\textbf{Step 1.}
From \rf{lemma-PDE-main1}--\rf{lemma-PDE-main2}, we have
\begin{align}
 \d\Th(s,x)&=\int_s^T\int_{\dbR^n}\Xi(s,x,r,\mu)\[ \Th^1_x(r,\mu)\d\ti b(r,\mu)+\d\Th_x(r,\mu)\ti b(r,\mu;\th^2,\th^{0,2})+\d\ti g(r,\mu)\]d\mu dr,\label{D-Th}\\
\d\Th^0(t,s,\ti x,x,y)&=\int_s^T\int_{\dbR^n}\Xi(s,x,r,\mu)\[\Th_x^{0,1}(t,r,\ti x,\mu,y) \d\ti b(r,\mu)
+\Th_x^{0,1}(t,r,\ti x,\mu,y)\ti b(r,\mu;\th^2,\th^{0,2})\nn\\
&\q+\d\ti g^0(t,r,\ti x,\mu,y)\]d\mu dr.\label{D-V}
\end{align}
By  \autoref{lem:HJB-Th-x} and the estimate \rf{G-G-x-est}, from \rf{D-Th} we get
\bel{D-Th-1}
 |\d\Th(s,x)|\les K\int_s^T\int_{\dbR^n}{e^{-{\l|x-\mu|^2\over 4(r-s)}}\over (r-s)^{{n\over 2}}}
\Big[|\d\ti b(r,\mu)|+|\d\Th_x(r,\mu)|+|\d\ti g(r,\mu)|\Big]d\mu dr.
\ee
For $\f(\cd)=b(\cd),g(\cd)$, by \autoref{lem:HJB-par} (or \rf{lem:v-par-estimate}) we have
\begin{align}
|\d\ti \f(s,x)|&\les K\[|\d\th(s,x)|+|\d\th_x(s,x)|+|\th^{0,1}(s,x,s,x,\th^1(s,x))-\th^{0,2}(s,x,s,x,\th^2(s,x))|
\nn\\
&\q +|\th_x^{0,1}(s,x,s,x,\th^1(s,x))-\th_x^{0,2}(s,x,s,x,\th^2(s,x))|\nn\\
&\q+|\th_y^{0,1}(s,x,s,x,\th^1(s,x))-\th_y^{0,2}(s,x,s,x,\th^2(s,x))|\]\nn\\
&\les K\[|\d\th(s,x)|+|\d\th_x(s,x)|+|\d\th^{0}(s,x,s,x,\th^1(s,x))|\nn\\
&\q+|\d\th_x^{0}(s,x,s,x,\th^1(s,x))|+|\d\th_y^{0}(s,x,s,x,\th^1(s,x))|\].\label{D-b}
\end{align}
Substituting the above into \rf{D-Th-1} yields that
\begin{align}
\nn |\d\Th(s,x)|&\les K(T-s)\big[ \|\d\th(\cd)\|_{L^\i}+\|\d\th_x(\cd)\|_{L^\i}+\|\d \th^0(\cd)\|_{L^\i}
+\|\d\th^0_x(\cd)\|_{L^\i}+\|\d\th^0_y(\cd)\|_{L^\i}\big] \\
\label{D-Th-esti}
&\q+K\int_s^T\int_{\dbR^n}{e^{-{\l|x-\mu|^2\over 4(r-s)}}\over (r-s)^{{n\over 2}}}|\d\Th_x(r,\mu)|d\mu dr.
\end{align}
Similar to \rf{D-b},  using \autoref{lem:HJB-par} (or \rf{lem:v-par-estimate}) again, we have
\begin{align}
|\d\ti g^0(t,s,\ti x,x,y)|&\les K\[|\d\th(s,x)|+|\d\th_x(s,x)|+|\d\th^{0}(s,x,s,x,\th^1(s,x))|
+|\d\th_x^{0}(s,x,s,x,\th^1(s,x))|\nn\\
&\q+|\d\th_y^{0}(s,x,s,x,\th^1(s,x))|+\d\Th^0(t,s,\ti x,x,y)\].\label{D-f}
\end{align}
Substituting the above into \rf{D-V}, we get
\begin{align}
\nn |\d\Th^0(t,s,\ti x,x,y)|&\les K(T-s)\big[ \|\d\th(\cd)\|_{L^\i}+\|\d\th_x(\cd)\|_{L^\i}+\|\d \th^0(\cd)\|_{L^\i}
+\|\d\th^0_x(\cd)\|_{L^\i}+\|\d\th^0_y(\cd)\|_{L^\i}\big]\\
&\q +K\int_s^T\int_{\dbR^n}{e^{-{\l|x-\mu|^2\over 4(r-s)}}\over (r-s)^{{n\over 2}}}|\d\Th^0_x(t,r,\ti x,\mu,y)|d\mu dr.\label{D-V-esti}
\end{align}
From \rf{proof-step11-Th-x} and \rf{proof-step1-Vx}, we have%
\begin{align}
\d\Th_x(s,x)&=\int_s^T\int_{\dbR^n}\Xi_x(s,x,r,\mu)\[ \Th^1_x(r,\mu)\d\ti b(r,\mu)+\d\Th_x(r,\mu)\ti b(r,\mu;\th^2,\th^{0,2})+\d\ti g(r,\mu)\]d\mu dr,\label{D-Th-x}\\
\nn \d\Th_x^0(t,s,\ti x,x,y)&=\int_s^T\int_{\dbR^n}\Xi_x(s,x,r,\mu)\[\Th_x^{0,1}(t,r,\ti x,\mu,y) \d\ti b(r,\mu)+\Th_x^{0,1}(t,r,\ti x,\mu,y)\ti b(r,\mu;\th^2,\th^{0,2})\\
&\qq\qq\qq\qq\qq +\d\ti g^0(t,r,\ti x,\mu,y)\]d\mu dr.\label{D-V-x}
\end{align}
By \autoref{lem:HJB-Th-x} and the estimates \rf{G-G-x-est}, \rf{D-b} and \rf{D-f}, we get
\begin{align}
\nn |\d\Th_x(s,x)|&\les K\sqrt{T-s}\big[ \|\d\th(\cd)\|_{L^\i}+\|\d\th_x(\cd)\|_{L^\i}+\|\d \th^0(\cd)\|_{L^\i}
+\|\d\th^0_x(\cd)\|_{L^\i}+\|\d\th^0_y(\cd)\|_{L^\i}\big]\\
\label{D-Th-x-esti}
&\q+K\int_s^T\int_{\dbR^n}{e^{-{\l|x-\mu|^2\over 4(r-s)}}\over (r-s)^{n+1\over2}}|\d\Th_x(r,\mu)|d\mu dr,\\
\nn |\d\Th^0_x(t,s,\ti x,x,y)|&\les K\sqrt{T-s}\big[ \|\d\th(\cd)\|_{L^\i}+\|\d\th_x(\cd)\|_{L^\i}+\|\d \th^0(\cd)\|_{L^\i}
+\|\d\th^0_x(\cd)\|_{L^\i}+\|\d\th^0_y(\cd)\|_{L^\i}\big]\\
\label{D-V-x-esti}
&\q+K\int_s^T\int_{\dbR^n}{e^{-{\l|x-\mu|^2\over 4(r-s)}}\over (r-s)^{n+1\over2}}|\d\Th^0_x(t,r,\ti x,\mu,y)|d\mu dr.
\end{align}
Combining \rf{D-Th-esti}, \rf{D-V-esti}, \rf{D-Th-x-esti} and \rf{D-V-x-esti} together,
and then by the Gr\"{o}nwall's inequality, we get
\begin{align}
&\|\d\Th(\cd)\|_{C^{0,1}([T-\e,T])}+\|\d\Th^0(\cd)\|_{C^{0,0,0,1,0}([T-\e,T])}\nn\\
&\q\les K\sqrt{\e}\big[\|\d\th(\cd)\|_{C^{0,1}([T-\e,T])}+\|\d\th^0(\cd)\|_{C^{0,0,0,1,1}([T-\e,T])}\big].
\label{D-V-Th-0,1}
\end{align}

\ms
\textbf{Step 2.}
From the proof of \autoref{lem:HJB-par}, by some direct computations, it is easily seen that
\begin{align}
\nn\d\Th^0_{y}(t,s,\ti x,x,y)&=\int_s^T\int_{\dbR^n}\Xi(s,x,r,\mu)\[\Th^{0,1}_{xy}(t,r,\ti x,\mu,y)\d\ti b(r,\mu)
\\
&\q+\d\Th_{xy}^{0}(t,r,\ti x,\mu,y)\ti b(r,\mu;\th^2,\th^{0,2})
+ \d\ti g^0_{p^0}(t,r,\ti x,\mu,y)\Th^{0,1}_{xy}(t,r,\ti x,\mu,y)\nn\\
&\q+ \ti g^0_{p^0}(t,r,\ti x,\mu,y;\th^1,\th^{0,1},\Th^{0,1})\d\Th_{xy}^{0}(t,r,\ti x,\mu,y)\]d\mu dr,\nn\\
\nn\d\Th^0_{xy}(t,s,\ti x,x,y)&=\int_s^T\int_{\dbR^n}\Xi_x(s,x,r,\mu)
\[\Th^{0,1}_{xy}(t,r,\ti x,\mu,y)\d\ti b(r,\mu)\\
&\q+\d\Th_{xy}^{0}(t,r,\ti x,\mu,y)\ti b(r,\mu;\th^2,\th^{0,2})
+ \d\ti g^0_{p^0}(t,r,\ti x,\mu,y)\Th^{0,1}_{xy}(t,r,\ti x,\mu,y)\nn\\
&\q+ \ti g^0_{p^0}(t,r,\ti x,\mu,y;\th^1,\th^{0,1},\Th^{0,1})\d\Th_{xy}^{0}(t,r,\ti x,\mu,y)\]d\mu dr,\nn
\end{align}
and
\begin{align}
\nn\d\Th^0_{yy}(t,s,\ti x,x,y)&=\int_s^T\int_{\dbR^n}\Xi(s,x,r,\mu)\[\Th^{0,1}_{xyy}(t,r,\ti x,\mu,y)\d\ti b(r,\mu)
\\
&\q+\d\Th^{0}_{xyy}(t,r,\ti x,\mu,y)\ti b(r,\mu;\th^2,\th^{0,2})
+\d\ti g^0_{p^0}(t,r,\ti x,\mu,y)\Th^{0,1}_{xyy}(t,r,\ti x,\mu,y)\nn\\
&\q + \ti g^0_{p^0}(t,r,\ti x,\mu,y;\th^1,\th^{0,1},\Th^{0,1})\d\Th_{xyy}^{0}(t,r,\ti x,\mu,y)\nn\\
&\q+ \lan \d\ti g^0_{p^0p^0}(t,r,\ti x,\mu,y)\Th^{0,1}_{xy}(t,r,\ti x,\mu,y),\, \Th^{0,1}_{xy}(t,r,\ti x,\mu,y)\ran \nn\\
&\q + \lan\ti g^0_{p^0p^0}(t,r,\ti x,\mu,y;\th^2,\th^{0,2},\Th^{0,2})
\d\Th^{0}_{xy}(t,r,\ti x,\mu,y),\, \Th^{0,1}_{xy}(t,r,\ti x,\mu,y)\ran \nn\\
&\q + \lan\ti g^0_{p^0p^0}(t,r,\ti x,\mu,y;\th^2,\th^{0,2},\Th^{0,2})
\Th^{0,2}_{xy}(t,r,\ti x,\mu,y),\, \d\Th^{0}_{xy}(t,r,\ti x,\mu,y)\ran\]d\mu dr,\nn\\
\nn\d\Th^0_{xyy}(t,s,\ti x,x,y)&=\int_s^T\int_{\dbR^n}\Xi_x(s,x,r,\mu)
\[\Th^{0,1}_{xyy}(t,r,\ti x,\mu,y)\d\ti b(r,\mu)\\
&\q+\d \Th^{0}_{xyy}(t,r,\ti x,\mu,y)\ti b(r,\mu;\th^2,\th^{0,2})
+\d\ti g^0_{p^0}(t,r,\ti x,\mu,y)\Th^{0,1}_{xyy}(t,r,\ti x,\mu,y)\nn\\
&\q + \ti g^0_{p^0}(t,r,\ti x,\mu,y;\th^1,\th^{0,1},\Th^{0,1})\d\Th_{xyy}^{0}(t,r,\ti x,\mu,y)\nn\\
&\q+ \lan \d\ti g^0_{p^0p^0}(t,r,\ti x,\mu,y)\Th^{0,1}_{xy}(t,r,\ti x,\mu,y),\, \Th^{0,1}_{xy}(t,r,\ti x,\mu,y)\ran \nn\\
&\q + \lan\ti g^0_{p^0p^0}(t,r,\ti x,\mu,y;\th^2,\th^{0,2},\Th^{0,2})
\d\Th^{0}_{xy}(t,r,\ti x,\mu,y),\, \Th^{0,1}_{xy}(t,r,\ti x,\mu,y)\ran \nn\\
&\q + \lan\ti g^0_{p^0p^0}(t,r,\ti x,\mu,y;\th^2,\th^{0,2},\Th^{0,2})
\Th^{0,2}_{xy}(t,r,\ti x,\mu,y),\, \d\Th^{0}_{xy}(t,r,\ti x,\mu,y)\ran\]d\mu dr.\nn
\end{align}
Then by \autoref{lem:HJB-par} and the estimate \rf{G-G-x-est}, we get
\begin{align}
\nn |\d\Th^0_y(t,s,\ti x,x,y)|&\les K(T-s)\[ \|\d\th(\cd)\|_{L^\i}+\|\d\th_x(\cd)\|_{L^\i}
+\|\d\th^0(\cd)\|_{L^\i}+\|\d\th^0_x(\cd)\|_{L^\i}+\|\d\th^0_y(\cd)\|_{L^\i}\]\\
&\q+ K\int_s^T\int_{\dbR^n}{e^{-{\l|x-\mu|^2\over 4(r-s)}}\over (r-s)^{{n\over 2}}}
\[|\d\Th^0_{x}(t,r,\ti x,\mu,y)|+|\d\Th^0_{xy}(t,r,\ti x,\mu,y)|\]d\mu dr,\nn\\
\nn |\d\Th^0_{xy}(t,s,\ti x,x,y)|&\les K\sqrt{T-s}\[ \|\d\th(\cd)\|_{L^\i}+\|\d\th_x(\cd)\|_{L^\i}
+\|\d\th^0(\cd)\|_{L^\i}+\|\d\th^0_x(\cd)\|_{L^\i}+\|\d\th^0_y(\cd)\|_{L^\i}\]\\
&\q+ K\int_s^T\int_{\dbR^n}{e^{-{\l|x-\mu|^2\over 4(r-s)}}\over (r-s)^{{n+1\over 2}}}
\[|\d\Th^0_{x}(t,r,\ti x,\mu,y)|+|\d\Th^0_{xy}(t,r,\ti x,\mu,y)|\]d\mu dr,\nn
\end{align}
and
\begin{align}
\nn |\d\Th^0_{yy}(t,s,\ti x,x,y)|&\les K(T-s)\[ \|\d\th(\cd)\|_{L^\i}+\|\d\th_x(\cd)\|_{L^\i}
+\|\d\th^0(\cd)\|_{L^\i}+\|\d\th^0_x(\cd)\|_{L^\i}+\|\d\th^0_y(\cd)\|_{L^\i}\]\\
&\q+ K\int_s^T\int_{\dbR^n}{e^{-{\l|x-\mu|^2\over 4(r-s)}}\over (r-s)^{{n\over 2}}}
\[|\d\Th^0_{x}(t,r,\ti x,\mu,y)|+|\d\Th^0_{xy}(t,r,\ti x,\mu,y)|+|\d\Th^0_{xyy}(t,r,\ti x,\mu,y)|\]d\mu dr,\nn\\
\nn |\d V_{xyy}(t,y,z,s,x)|&\les K\sqrt{T-s}\[ \|\d\th(\cd)\|_{L^\i}+\|\d\th_x(\cd)\|_{L^\i}
+\|\d\th^0(\cd)\|_{L^\i}+\|\d\th^0_x(\cd)\|_{L^\i}+\|\d\th^0_y(\cd)\|_{L^\i}\]\\
&\q+ K\int_s^T\int_{\dbR^n}{e^{-{\l|x-\mu|^2\over 4(r-s)}}\over (r-s)^{{n+1\over 2}}}
\[|\d\Th^0_{x}(t,r,\ti x,\mu,y)|+|\d\Th^0_{xy}(t,r,\ti x,\mu,y)|+|\d\Th^0_{xyy}(t,r,\ti x,\mu,y)|\]d\mu dr.\nn
\end{align}
Combining the above with the estimate \rf{D-V-Th-0,1} together, by Gr\"{o}nwall's inequality again we get
\begin{align*}
&\|\d\Th(\cd)\|_{C^{0,1}([T-\e,T])}+\|\d\Th^0(\cd)\|_{C^{0,,0,1,2}([T-\e,T])}\\
&\q\les \sqrt{\e} K\big[\|\d\th(\cd)\|_{C^{0,1}([T-\e,T])}+\|\d\th^0(\cd)\|_{C^{0,0,0,1,1}([T-\e,T])}\big].
\end{align*}
For any $\ti x_1,\ti x_2\in\dbR^n$, denote
$$
\d\h\Th^0(t,s,x,y)=\d\Th^0(t,s,\ti x_1,x,y)-\d\Th^0(t,s,\ti x_2,x,y).
$$
Then from \rf{D-V-x}, we have
\begin{align*}
\d\h\Th_x^0(t,s,x,y)&=\int_s^T\int_{\dbR^n}\Xi_x(s,x,r,\mu)
\Big\{[\Th_x^{0,1}(t,r,\ti x_1,\mu,y)-\Th_x^{0,1}(t,r,\ti x_2,\mu,y)]\d\ti b(r,\mu)\\
&\q +\d\h\Th^0_x(t,r,\mu,y)\ti b(r,\mu;\th^2,\th^{0,2})
+\d\ti g^0(t,r,\ti x_1,\mu,y)-\d\ti g^0(t,r,\ti x_2,\mu,y)\Big\}d\mu dr.
\end{align*}
Note that
\begin{align*}
&|\d\ti g^0(t,r,\ti x_1,\mu,y)-\d\ti g^0(t,r,\ti x_2,\mu,y)|\nn\\
&\q\les K|\d\h\Th_x^0(t,s,x,y)|+K |\d\Th_x^0(t,s,\ti x_1,x,y)|\big[1+\|\Th^{0,1}_x(t,s,\cd,x,y)\|_\a+\|\Th^{0,2}_x(t,s,\cd,x,y)\|_\a\big]
|\ti x_1-\ti x_2|^\a.
\end{align*}
Then by the fact that $\|\Th^{0,1}_x(t,s,\cd,x,y)\|_\a+\|\Th^{0,2}_x(t,s,\cd,x,y)\|_\a$
is uniformly bounded (see \autoref{lem:HJB-par}), we get
\begin{align*}
\d\h\Th_x^0(t,s,x,y)&\les K\int_s^T\int_{\dbR^n}{e^{-{\l|x-\mu|^2\over 4(r-s)}}\over (r-s)^{{n+1\over 2}}}
\Big\{|\d\h\Th_x^0(t,r,\mu,y)|+|\d\Th_x^0(t,r,\ti x_1,\mu,y)|
|\ti x_1-\ti x_2|^\a\\
&\q+\big[ \|\d\th(\cd)\|_{L^\i}+\|\d\th_x(\cd)\|_{L^\i}+\|\d \th^0(\cd)\|_{L^\i}+\|\d\th^0_x(\cd)\|_{L^\i}+\|\d\th^0_y(\cd)\|_{L^\i}\big]|\ti x_1-\ti x_2|^\a\Big\}d\mu dr.
\end{align*}
It follows that
\begin{align*}
&\|\d\Th^0(\cd)\|_{C^{0,0,\a,1,0}([T-\e,T])}\les
\sqrt{\e} K\big[\|\d\th(\cd)\|_{C^{0,1}([T-\e,T])}+\|\d\th^0(\cd)\|_{C^{0,0,0,1,1}([T-\e,T])}\big].
\end{align*}
By continuing the above arguments, we have
\begin{align*}
&\|\d\Th(\cd)\|_{C^{0,1}([T-\e,T])}+\|\d\Th^0(\cd)\|_{C^{{\a\over 2},0,\a,1,2}([T-\e,T])}\nn\\
&\q\les \sqrt{\e}K\big[\|\d\th(\cd)\|_{C^{0,1}([T-\e,T])}+\|\d\th^0(\cd)\|_{C^{0,0,0,1,1}([T-\e,T])}\big].
\end{align*}

\ms

\textbf{Step 3.} Recalling \rf{proof-step1-Th-x-change}--\rf{proof-step1-V-x-change},
similar to \rf{D-Th-x}--\rf{D-V-x},  we have
\begin{align}
\nn \d\Th_x(s,x)&=\int_s^T\int_{\dbR^n}\ti\Xi_x(s,x,r,x-\sqrt{r-s}\mu)\[\d\ti g(r,x-\sqrt{r-s}\mu)\nn\\
&\q +\Th^1_x(r,x-\sqrt{r-s}\mu)\d\ti b(r,x-\sqrt{r-s}\mu)\nn\\
&\q +\d\Th_x(r,x-\sqrt{r-s}\mu)\ti b(r,x-\sqrt{r-s}\mu;\th^2,\th^{0,2})\]d\mu dr,\label{D-Thx}\\
\nn\d\Th^0_x(t,s,\ti x,x,y)&=\int_s^T\int_{\dbR^n}\ti\Xi_x(s,x,r,x-\sqrt{r-s}\mu)\[\d\ti g^0(t,r,\ti x,x-\sqrt{r-s}\mu,y)\\
&\q+\Th^{0,1}_x(t,r,\ti x,x-\sqrt{r-s}\mu,y) \d\ti b(r,x-\sqrt{r-s}\mu)\nn\\
&\q+\d\Th^0_x(t,r,\ti x,x-\sqrt{r-s}\mu,y)\ti b(r,x-\sqrt{r-s}\mu;\th^2,\th^{0,2})\]d\mu dr,\label{D-Vx}
\end{align}
where $\ti\Xi(\cd)$ is defined by \rf{ti-G}. For any $x_1,x_2\in\dbR^n$, we have
\begin{align}
\nn &\d\Th^0_x(t,s,\ti x,x_1,y)-\d\Th^0_x(t,s,\ti x,x_2,y)\\
\nn&\q=\int_s^T\int_{\dbR^n}\big[\ti\Xi_x(s,x_1,r,x_1-\sqrt{r-s}\mu)-\ti\Xi_x(s,x_2,r,x_2-\sqrt{r-s}\mu)\big]\\
\nn&\qq\times\[\d\ti g^0(t,r,\ti x,x_1-\sqrt{r-s}\mu,y)+
\Th^{0,1}_x(t,r,\ti x,x_1-\sqrt{r-s}\mu,y)\d\ti b(r,x_1-\sqrt{r-s}\mu)\\
\nn&\qq+\d\Th^0_x(t,r,\ti x,x_1-\sqrt{r-s}\mu,y)\ti b(r,x_1-\sqrt{r-s}\mu;\th^2,\th^{0,2})\]d\mu dr\nn\\
\nn&\qq+\int_s^T\int_{\dbR^n}\ti\Xi_x(s,x_2,r,x_2-\sqrt{r-s}\mu)\Big\{
\big[\d\ti g^0(t,r,\ti x,x_1-\sqrt{r-s}\mu,y)\\
\nn&\qq-\d\ti g^0(t,r,\ti x,x_2-\sqrt{r-s}\mu,y)\big]
+\big[\Th^{0,1}_x(t,r,\ti x,x_1-\sqrt{r-s}\mu,y) \d\ti b(r,x_1-\sqrt{r-s}\mu)\\
\nn&\qq-\Th^{0,1}_x(t,r,\ti x,x_2-\sqrt{r-s}\mu,y) \d\ti b(r,x_2-\sqrt{r-s}\mu)\big]
\\
\nn &\qq+\big[\d\Th^0_x(t,r,\ti x,x_1-\sqrt{r-s}\mu,y)\ti b(r,x_1-\sqrt{r-s}\mu;\th^2,\th^{0,2})\nn\\
&\qq-\d\Th^0_x(t,r,\ti x,x_2-\sqrt{r-s}\mu,y)\ti b(r,x_2-\sqrt{r-s}\mu;\th^2,\th^{0,2})
\big]\Big\}d\mu dr.\label{D-Th-x-a}
\end{align}
Note that on $[T-\e,T]$, by \autoref{lem:HJB-par} (or \rf{lem:v-par-estimate}) and \autoref{Prop:Holder} we have
\begin{align}
\nn &\big|[\th^{0,1}_x(r,x_1,r,x_1,\th^1(r,x_1))-\th^{0,2}_x(r,x_1,r,x_1,\th^2(r,x_1))]\\
\nn &-[\th^{0,1}_x(r,x_2,r,x_2,\th^1(r,x_2))-\th^{0,2}_x(r,x_2,r,x_2,\th^2(r,x_2))]\big|\\
\nn &\q\les \big|[\th^{0,1}_x(r,x_1,r,x_1,\th^1(r,x_1))-\th^{0,2}_x(r,x_1,r,x_1,\th^1(r,x_1))]\nn\\
&\qq-[\th^{0,1}_x(r,x_2,r,x_2,\th^1(r,x_2))-\th^{0,2}_x(r,x_2,r,x_2,\th^1(r,x_2))]\big|\nn\\
\nn &\qq+\big|[\th^{0,2}_x(r,x_1,r,x_1,\th^1(r,x_1))-\th^{0,2}_x(r,x_1,r,x_1,\th^2(r,x_1))]\\
\nn &\qq-[\th^{0,2}_x(r,x_2,r,x_2,\th^1(r,x_2))-\th^{0,2}_x(r,x_2,r,x_2,\th^2(r,x_2))]\big|\\
\nn&\q\les \|\d\th^0_{xy}(\cd)\|_{L^\i}\|\th^1(\cd)\|_{C^{0,\a}}|x_1-x_2|^\a
+\|\d\th^0_{x}(\cd)\|_{C^{0,0,\a,\a,0}}|x_1-x_2|^\a\\
\nn&\qq +\Big|\int_0^1 \th^{0,2}_{xy}\big(r,x_1,r,x_1,l \th^1(r,x_1)+(1-l)\th^2(r,x_1)\big)dl
[\th^1(r,x_1)- \th^2(r,x_1)]\nn\\
\nn &\qq-\int_0^1  \th^{0,2}_{xy}\big(r,x_2,r,x_2,l \th^1(r,x_2)+(1-l)\th^2(r,x_2)\big)dl
[\th^1(r,x_2)- \th^2(r,x_2)]\Big|\nn\\
\nn &\q\les \Big\{\|\d\th^0_{xy}(\cd)\|_{L^\i}\|\th^1(\cd)\|_{C^{0,\a}}
+ \|\d\th^0_{x}(\cd)\|_{C^{0,0,\a,\a,0}}+\|\th^{0,2}_{xy}(\cd)\|_{L^\i}\|\d\th(\cd)\|_{C^{0,\a}}\\
\nn&\qq +\big[\|\th^{0,2}_{xyy}(\cd)\|_{L^\i}(\|\th^1(\cd)\|_{C^{0,\a}}+\|\th^2(\cd)\|_{C^{0,\a}})
+\|\th^{0,2}_{xy}(\cd)\|_{C^{0,0,\a,\a,0}}\big]\|\d\th(\cd)\|_{L^\i}\Big\}|x_1-x_2|^\a\nn\\
&\q\les K\big[ \|\d\th^0_{x}(\cd)\|_{C^{0,0,\a,\a,1}}+\|\d\th(\cd)\|_{C^{0,\a}}\big]|x_1-x_2|^\a.
\label{Dv-estimate}
\end{align}
By the same arguments as the above, we have
\begin{align}
\nn &\big|[\th^{0,1}(r,x_1,r,x_1,\th^1(r,x_1))-\th^{0,2}(r,x_1,r,x_1,\th^2(r,x_1))]\\
\nn &-[\th^{0,1}(r,x_2,r,x_2,\th^1(r,x_2))-\th^{0,2}(r,x_2,r,x_2,\th^2(r,x_2))]\big|\\
&\q\les  K\big[ \|\d\th^0(\cd)\|_{C^{0,0,\a,\a,1}}+\|\d\th(\cd)\|_{C^{0,\a}}\big]|x_1-x_2|^\a,
\end{align}
and
\begin{align}
\nn &\big|[\th_y^{0,1}(r,x_1,r,x_1,\th^1(r,x_1))-\th_y^{0,2}(r,x_1,r,x_1,\th^2(r,x_1))]\\
\nn &-[\th_y^{0,1}(r,x_2,r,x_2,\th^1(r,x_2))-\th_y^{0,2}(r,x_2,r,x_2,\th^2(r,x_2))]\big|\\
&\q\les  K\big[ \|\d\th_y^0(\cd)\|_{C^{0,0,\a,\a,1}}+\|\d\th(\cd)\|_{C^{0,\a}}\big]|x_1-x_2|^\a.
\label{Dv-estimate1}
\end{align}
For any $\f(\cd)\in C^{1,2}$, from \rf{Dv-estimate} and the fact
\begin{align}
\d\f(x_1)-\d\f(x_2)&:= \big[\f(x_1,\th^{0,1}_x(r,x_1,r,x_1,\th^1(r,x_1)))
-\f(x_1,\th^{0,2}_x(r,x_1,r,x_1,\th^2(r,x_1)))\big]\nn\\
\nn &\q-\big[\f(x_2,\th^{0,1}_x(r,x_2,r,x_2,\th^1(r,x_2)))-\f(x_2,\th^{0,2}_x(r,x_2,r,x_2,\th^2(r,x_2)))\big]\\
\nn &=\int_0^1 \f_{\th^0}\big(x_1,\ell \th^{0,1}_x(r,x_1,r,x_1,\th^1(r,x_1))+(1-\ell) \th^{0,2}_x(r,x_1,r,x_1,\th^2(r,x_1))\big)d\ell\nn\\
\nn&\q\times \Big\{[\th^{0,1}_x(r,x_1,r,x_1,\th^1(r,x_1))-\th^{0,2}_x(r,x_1,r,x_1,\th^2(r,x_1))]\\
\nn &\qq-[\th^{0,1}_x(r,x_2,r,x_2,\th^1(r,x_2))-\th^{0,2}_x(r,x_2,r,x_2,\th^2(r,x_2))]\Big\}\\
\nn &\q+\int_0^1 \Big[\f_{\th^0}\big(x_1,\ell \th^{0,1}_x(r,x_1,r,x_1,\th^1(r,x_1))+(1-\ell) \th^{0,2}_x(r,x_1,r,x_1,\th^2(r,x_1))\big)\nn\\
\nn&\q-\f_{\th^0}\big(x_2,\ell\th^{0,1}_x(r,x_2,r,x_2,\th^1(r,x_2))+(1-\ell) \th^{0,2}_x(r,x_2,r,x_2,\th^2(r,x_2))\big)\Big]d\ell\nn\\
&\q\times\big[\th^{0,1}_x(r,x_2,r,x_2,\th^1(r,x_2))-\th^{0,2}_x(r,x_2,r,x_2,\th^2(r,x_2))\big],\nn
\end{align}
we have the following estimate:
\begin{align}
|\d\f(x_1)-\d\f(x_2)|\les
K\big[\|\d\th^0_{x}(\cd)\|_{C^{0,0,\a,\a,1}}+\|\d\th(\cd)\|_{C^{0,\a}}\big]|x_1-x_2|^\a\q \hbox{on}\q [T-\e,T],
\label{esti-phi}
\end{align}
where $K$ depends on $\|\f(\cd)\|_{C^{0,1}}$.
Recall that
\begin{align}
\nn &\d\ti g^0(t,r,\ti x,x_1,y)-\d\ti g^0(t,r,\ti x,x_2,y)\\
\nn &\q=\[\ti g^0\big(t,r,\ti x,x_1,\th^1(r,x_1),\th^1_x(r,x_1), \th^{0,1}(r,x_1,r,x_1,\th^1(r,x_1)),
\th_x^{0,1}(r,x_1,r,x_1,\th^1(r,x_1)),\nn\\
&\qq\, \th_y^{0,1}(r,x_1,r,x_1,\th^1(r,x_1)),\Th^{0,1}_{x}(t,r,\ti x,x_1,y)\big)
-\ti g^0\big(t,r,\ti x,x_1,\th^2(r,x_1),\th^2_x(r,x_1),\nn\\
&\qq\,\th^{0,2}(r,x_1,r,x_1,\th^2(r,x_1)), \th_x^{0,2}(r,x_1,r,x_1,\th^2(r,x_1)),\th_y^{0,2}(r,x_1,r,x_1,\th^2(r,x_1)),\Th^{0,2}_{x}(t,r,\ti x,x_1,y)\big)\]\nn\\
&\qq\,-\[\ti g^0\big(t,r,\ti x,x_2,\th^1(r,x_2),\th^1_x(r,x_2), \th^{0,1}(r,x_2,r,x_2,\th^1(r,x_2)),\th_x^{0,1}(r,x_2,r,x_2,\th^1(r,x_2)),\nn\\
&\qq\qq \th_y^{0,1}(r,x_2,r,x_2,\th^1(r,x_2)),\Th^{0,1}_{x}(t,r,\ti x,x_2,y)\big)
-\ti g^0\big(t,r,\ti x,x_2,\th^2(r,x_2),\th^2_x(r,x_2),\nn\\
&\qq\qq\th^{0,2}(r,x_2,r,x_2,\th^2(r,x_2)), \th_x^{0,2}(r,x_2,r,x_2,\th^2(r,x_2)),\th_y^{0,2}(r,x_2,r,x_2,\th^2(r,x_2)),\Th^{0,2}_{x}(t,r,\ti x,x_2,y)\big)\].\nn
\end{align}
Using \rf{esti-phi}, by the estimates \rf{Dv-estimate}--\rf{Dv-estimate1},
we get
\begin{align}
\nn& |\d\ti g^0(t,r,\ti x,x_1,y)-\d\ti g^0(t,r,\ti x,x_2,y)|\\
&\q\les K\big[ \|\d\th^0(\cd)\|_{C^{0,0,\a,1+\a,2}}+\|\d\th(\cd)\|_{C^{0,1+\a}}
+\|\d\Th^0_x(t,r,\ti x,\cd,y)\|_{\a}\big]|x_1-x_2|^\a.
\label{D-f-esti}
\end{align}
Similarly,
\begin{align}
 |\d\ti b(r,x_1)-\d\ti b(r,x_2)|
 \les K\big[ \|\d\th^0(\cd)\|_{C^{0,0,\a,1+\a,2}}+\|\d\th(\cd)\|_{C^{0,1+\a}}\big]|x_1-x_2|^\a.
 \label{D-b-esti}
\end{align}
By \rf{Th-x-alpha}, \autoref{lem:HJB-Th-x} and \autoref{Prop:Holder}, from \rf{D-Th-x-a} we obtain
\begin{align*}
\|\d\Th^0_x(t,s,\ti x,\cd,y)\|_{\a}&\les\int_s^T\int_{\dbR^n}{K\over \sqrt{r-s}}e^{-\l|\mu|^2}\[| \d\ti b(r,x-\sqrt{r-s}\mu)|\\
&\q+|\d\ti g^0(t,r,\ti x,x-\sqrt{r-s}\mu,y)|+|\d\Th^0_x(t,r,\ti x,x-\sqrt{r-s}\mu,y)|\]d\mu dr\nn\\
&\q+\int_s^T\int_{\dbR^n}{K\over \sqrt{r-s}}e^{-\l|\mu|^2}\[| \d\ti b(r,x_1-\sqrt{r-s}\mu)|+\| \d\ti b(r,\cd)\|_{\a}\\
&\q+|\d\Th^0_x(t,r,\ti x,x_1-\sqrt{r-s}\mu,y)|+\|\d\Th^0_x(t,r,\ti x,\cd,y)\|_{\a}+\|\d\ti g^0(t,r,\ti x,\cd,y)\|_{\a}\]d\mu dr.
\end{align*}
Substituting the estimates \rf{D-b}, \rf{D-f}, \rf{D-f-esti} and \rf{D-b-esti} into the above,
and then by Gr\"{o}nwall's inequality, we get
\begin{align}
\|\d\Th^0_x(t,s,\ti x,\cd,y)\|_{\a}\les K\sqrt{\e}\big[\|\d\th^0(\cd)\|_{C^{0,0,\a,1+\a,2}}
+\|\d\th(\cd)\|_{C^{0,1+\a}}\big].\label{D-V-Holder}
\end{align}
By the same arguments as the above, we have
$$\|\d\Th_x(s,\cd)\|_{\a}\les K\sqrt{\e}\big[\|\d\th^0(\cd)\|_{C^{0,0,\a,1+\a,2}}
+\|\d\th(\cd)\|_{C^{0,1+\a}}\big].
$$
By continuing the above arguments, we get
$$
\|\d\Th(\cd)\|_{C^{0,1+\a}}+\|\d\Th^0(\cd)\|_{C^{0,0,0,1+\a,2}}\les K\sqrt{\e}\big[\|\d\th^0(\cd)\|_{C^{0,0,\a,1+\a,2}}+\|\d\th(\cd)\|_{C^{0,1+\a}}\big].
$$

\ms
\textbf{Step 4.}
For any $T-\e\les s_1\les s_2\les T$, by \rf{D-Vx} we have
\begin{align}
\nn&\d\Th^0_x(t,s_1,\ti x,x,y)-\d\Th^0_x(t,s_2,\ti x,x,y)\\
&\q=\int_{s_1}^{s_2}\int_{\dbR^n}\ti\Xi_x(s_1,x,r,x-\sqrt{r-s_1}\mu)\[\d\ti g^0(t,r,\ti x,x-\sqrt{r-s_1}\mu,y)
+ \Th^{0,1}_x(t,r,\ti x,x-\sqrt{r-s_1}\mu,y)\nn\\
&\qq\q\times \d\ti b(r,x-\sqrt{r-s_1}\mu)
+\d\Th^0_x(t,r,\ti x,x-\sqrt{r-s_1}\mu,y)\ti b(r,x-\sqrt{r-s_1}\mu;\th^2,\th^{0,2})\]d\mu dr\nn\\
\nn&\qq+\int_{s_2}^T\int_{\dbR^n}\big[\ti\Xi_x(s_1,x,r,x-\sqrt{r-s_1}\mu)-\ti\Xi_x(s_2,x,r,x-\sqrt{r-s_2}\mu)\big]\\
\nn&\qq\q\times\[\d\ti g^0(t,r,\ti x,x-\sqrt{r-s_1}\mu,y)+ \Th^{0,1}_x(t,r,\ti x,x-\sqrt{r-s_1}\mu,y)
\d\ti b(r,x-\sqrt{r-s_1}\mu)\nn\\
&\qq\q
+\d\Th^0_x(t,r,\ti x,x-\sqrt{r-s_1}\mu,y)\ti b(r,x-\sqrt{r-s_1}\mu;\th^2,\th^{0,2})\]d\mu dr\nn\\
\nn&\qq+\int_{s_2}^T\int_{\dbR^n}\ti\Xi_x(s_2,x,r,x-\sqrt{r-s_2}\mu)\Big\{\big[\d\ti g^0(t,r,\ti x,x-\sqrt{r-s_1}\mu,y)
-\d\ti g^0(t,r,\ti x,x-\sqrt{r-s_2}\mu,y)\big]\\
\nn&\qq\q+ \big[ \Th^{0,1}_x(t,r,\ti x,x-\sqrt{r-s_1}\mu,y)\d\ti b(r,x-\sqrt{r-s_1}\mu)
-\Th^{0,1}_x(t,r,\ti x,x-\sqrt{r-s_2}\mu,y)\\
\nn&\qq\q\times \d\ti b(r,x-\sqrt{r-s_2}\mu)\big]
+\big[\d\Th^0_x(t,r,\ti x,x-\sqrt{r-s_1}\mu,y)\ti b(r,x-\sqrt{r-s_1}\mu;\th^2,\th^{0,2})\\
\nn &\qq\q-\d\Th^0_x(t,r,\ti x,x-\sqrt{r-s_2}\mu,y)\ti b(r,x-\sqrt{r-s_2}\mu;\th^2,\th^{0,2})\big]\Big\}d\mu dr\nn\\
&\q=: (I)+(II)+(III). \label{D-Th-s-a}
\end{align}
By \autoref{lem:HJB-Th-x} and \rf{Th-x-alpha}, we get
\bel{(I)}
|(I)|\les \sqrt{s_2-s_1}K\big[ \|\d\th^0(\cd)\|_{C^{0,0,0,1,1}}+\|\d\th(\cd)\|_{C^{0,1}}
+\|\d\Th^0(\cd)\|_{ C^{0,0,0,0,1}}\big].
\ee
By  \autoref{lem:HJB-Th-x} and \rf{ti-G}--\rf{Th-x-alpha}, we get
\begin{align}
|(II)|&\les K\[\Big|\int_{s_2}^T\int_{\dbR^n}\big[\ti\Xi_x(s_1,x,r,x-\sqrt{r-s_1}\mu)-\ti\Xi_x(s_2,x,r,x-\sqrt{r-s_1}\mu)\big]d\mu dr\Big|+|s_2-s_1|^{\a\over 2}\sqrt{T-s_1}\]
\nn\\
&\q\times \[ \|\d\th^0(\cd)\|_{C^{0,0,0,1,1}}+\|\d\th(\cd)\|_{C^{0,1}}+\|\d\Th^0(\cd)\|_{ C^{0,0,0,1,0}}\]\nn\\
&\les K\[\int_{s_2}^T\int_{\dbR^n}e^{-\l|\mu|^2\over 4}\Big|{1\over\sqrt{r-s_1}}-{1\over\sqrt{r-s_2}}\Big|d\mu dr+|s_2-s_1|^{\a\over 2}\sqrt{T-s_1}\]
\nn\\
&\q\times \[ \|\d\th^0(\cd)\|_{C^{0,0,0,1,1}}+\|\d\th(\cd)\|_{C^{0,1}}+\|\d\Th^0(\cd)\|_{ C^{0,0,0,1,0}}\]\nn\\
%
&\les K\big[|s_1-s_2|^{1\over 2}+|s_2-s_1|^{\a\over 2}\sqrt{\e}\big]
\big[ \|\d\th^0(\cd)\|_{C^{0,0,0,1,1}}+\|\d\th(\cd)\|_{C^{0,1}}+\|\d\Th^0(\cd)\|_{ C^{0,0,0,1,0}}\big].
\label{(II)}
\end{align}
By the estimates \rf{Th-x-alpha}, \rf{D-f-esti} and \rf{D-b-esti}, we get
\begin{align}
&|(III)|\les K\sqrt{\e}\big[ \|\d\th^0(\cd)\|_{C^{0,0,\a,1+\a,2}}
+\|\d\th(\cd)\|_{C^{0,1+\a}}+\|\d\Th^0(\cd)\|_{ C^{0,0,0,1+\a,0}}\big]|s_1-s_2|^{\a\over 2}.
\label{(III)}
\end{align}
Combining the estimates \rf{(I)}--\rf{(III)} together, we have
\begin{align*}
&\|\d\Th^0_x(t,\cd,\ti x,x,y)\|_{\a\over 2}\les K\e^{1-\a\over2}
\big[ \|\d \th^0(\cd)\|_{C^{0,0,\a,1+\a,2}}+\|\d\th(\cd)\|_{C^{0,1+\a}}+\|\d\Th^0(\cd)\|_{ C^{0,0,0,1+\a,0}}\big].
\end{align*}
Then it follows from \rf{D-V-Holder} that
\begin{align*}
&\|\d\Th^0_x(t,\cd,\ti x,x,y)\|_{\a\over 2}\les K\e^{1-\a\over2}
\big[ \|\d \th^0(\cd)\|_{C^{0,0,\a,1+\a,2}}+\|\d\th(\cd)\|_{C^{0,1+\a}}\big].
\end{align*}
With \rf{D-Thx}, by the same arguments as the above, we have
\begin{align*}
&\|\d\Th_x(\cd,x)\|_{\a\over 2}\les K\e^{1-\a\over2}
\big[ \|\d\th^0(\cd)\|_{C^{0,0,\a,1+\a,2}}+\|\d\th(\cd)\|_{C^{0,1+\a}}\big].
\end{align*}
By continuing the above arguments, we get
\begin{align*}
\|\d\Th(\cd)\|_{C^{{\a\over 2},1}}+\|\d\Th^0(\cd)\|_{C^{0,{\a\over 2},0,1,2}}\les K\e^{1-\a\over2}
\big[ \|\d \th^0(\cd)\|_{C^{0,0,\a,1+\a,2}}+\|\d\th(\cd)\|_{C^{0,1+\a}}\big].
\end{align*}

\ms
\textbf{Step 5.} Combining the estimates in Steps 1--4 together, we get
\begin{align*}
\|\d\Th(\cd)\|_{C^{{\a\over 2},1}}+\|\d\Th^0(\cd)\|_{C^{{\a\over 2},{\a\over 2},\a,1+\a,2}}\les K\e^{1-\a\over2}
\big[ \|\d \th^0(\cd)\|_{C^{0,0,\a,1+\a,2}}+\|\d\th(\cd)\|_{C^{0,1+\a}}\big].
\end{align*}
Then by choosing an $0<\bar\e\les \e$ small enough, we get that on $[T-\bar\e,T]$,
\begin{align*}
\|\d\Th(\cd)\|_{C^{{\a\over 2},1}}+\|\d\Th^0(\cd)\|_{C^{{\a\over 2},{\a\over 2},\a,1+\a,2}}\les {1\over 2}
\big[ \|\d \th^0(\cd)\|_{C^{0,0,\a,1+\a,2}}+\|\d\th(\cd)\|_{C^{0,1+\a}}\big].
\end{align*}
Thus, \rf{Contraction} holds and this completes the proof.
\end{proof}

\ms

\no
{\bf Complete the proof of \autoref{thm:HJB}.}
By a routine argument, we can prove that equilibrium HJB equation \rf{HJB-si-ti} admits a unique classical solution
$(\Th(\cd),\Th^0(\cd))$ on $[T-\e,T]$, where $\e$ is given by \autoref{Prop:Holder}.
Thus, to extend the solution to the whole time interval $[0,T]$,
it suffices to prove a global prior estimate for $\|\Th(\cd)\|_{C^{0,1+\a}}$ and  $\|\Th^0(\cd)\|_{C^{0,0,0,1+\a,0}}$.

\ms
By \rf{Th-x-holder} and \rf{V-x-holder}, we get
\begin{align*}
\|\Th_x(t,s,\ti x,\cd,y)\|_{\a}+ \|\Th_x(s,\cd)\|_{\a}
&\les K\big[1+\|h(\cd)\|_{C^{1+\a}}+\|h^0(\cd)\|_{C^{0,0,1+\a,0}}\big]\\
&\q+\int_s^T{2 K\over \sqrt{r-s}}
\big[\|\Th^0_x(t,r,\ti x,\cd,y)\|_\a+\|\Th_x(r,\cd)\|_\a\big]dr.
\end{align*}
Let $\cM(\cd)$ be the unique solution of the following integral equation
\begin{align*}
\cM(s)=K\big[1+\|h(\cd)\|_{C^{1+\a}}+\|h^0(\cd)\|_{C^{0,0,1+\a,0}}\big]+\int_s^T{2K\over \sqrt{r-s}}\cM(r)dr,
\end{align*}
and define
$$
\h\cM\deq \sup_{s\in[0,T]}\cM(s).
$$
Then by the comparison theorem for Volterra integral equations (see \cite{Wang-Yong2015}, for example), we get
$$
\|\Th_x(s,\cd)\|_{C^{\a}}+\|\Th^0_x(\cd,s,\cd,\cd,\cd)\|_{C^{0,0,0,\a,0}}\les\cM(s)\les\h\cM \q\hbox{on}\q [0,T].
$$
This completes the proof.

\subsection{Proof of \autoref{lem:convergence}}

We first establish a priori estimate for the second-order derivative terms $\Th^\e_{xx}(\cd)$.
Recalling from \rf{Th-e}, similar to \rf{proof-step1-Th-x-change}, we have
\begin{align}
\nn\Th^\e_x(s,x)&=-\int_{\dbR^n}\ti\Xi(s,x,t+\e,x-\sqrt{t+\e-s}\mu;u)\[\Th_x(t+\e,x-\sqrt{t+\e-s}\mu)\nn\\
\nn&\q-\ti\rho(s,x,t+\e,x-\sqrt{t+\e-s}\mu;u)\Th(t+\e,x-\sqrt{t+\e-s}\mu)\]d\mu\\
\nn&\q+\int_s^{t+\e}\int_{\dbR^n}\ti\Xi_x(s,x,r,x-\sqrt{r-s}\mu;u)
\[\Th^\e_{x}(r,x-\sqrt{r-s}\mu) b(r,x-\sqrt{r-s}\mu,u)\nn\\
&\q+g\big(r,x-\sqrt{r-s}\mu,u,\Th^\e(r,x-\sqrt{r-s}\mu),
\Th^\e_x(r,x-\sqrt{r-s}\mu)\si(r,x-\sqrt{r-s}\mu,u)\big)\]d\mu dr,\nn
\end{align}
where
\begin{align}
\nn&\wt \Xi(s,x,r,x-\sqrt{r-s}\mu;u):={1 \over (4\pi)^{n\over 2}(\det[a(r,x-\sqrt{r-s}\mu,u)])^{1\over 2}}
e^{-{\lan a(r,x-\sqrt{r-s}\mu,u)^{-1}\mu,\mu\ran\over 4}},\\
\nn&\ti\rho(s,x,r,x-\sqrt{r-s}\mu;u)\nn\\
&\q:={(\det[a(r,x-\sqrt{r-s}\mu,u)])_x\over 2\det[a(r,x-\sqrt{r-s}\mu,u)]}
+{\lan[a(r,x-\sqrt{r-s}\mu,u)^{-1}]_x \mu,\mu\ran\over 4},\nn\\
\nn&\wt\Xi_x(s,x,r,x-\sqrt{r-s}\mu;u)\\
\nn&\q:={-1 \over (4\pi)^{n\over 2}(\det[a(r,x-\sqrt{r-s}\mu,u)])^{1\over 2}}
e^{-{\lan a(r,x-\sqrt{r-s}\mu,u)^{-1}\mu,\mu\ran\over 4}}{a(r,x-\sqrt{r-s}\mu,u)^{-1}\over 2\sqrt{r-s}}\mu.
\end{align}
Note that the functions $\ti\Xi(\cd)$ and $\ti\rho(\cd)$ depend on the variable $x$ only through the function $a(\cd)$.
Recall that $a(\cd)$ is a smooth function with bounded derivatives and it satisfies the non-degenerate condition \rf{non-degen-condition}.
Thus, it will not cause any difficulties in the proof.
For the ease of presentation, we assume that
$$
a(s,x,u)=a(s,u),\q (s,x,u)\in[0,T]\times\dbR^n\times U.
$$
Then by some straightforward calculations, it is easy to obtain that
\begin{align}
\nn\Th^\e_{xx}(s,x)&=-\int_{\dbR^n}\ti\Xi(s,x,t+\e,x-\sqrt{t+\e-s}\mu;u)\[\Th_{xx}(t+\e,x-\sqrt{t+\e-s}\mu)\nn\\
\nn&\qq-\ti\rho(s,x,t+\e,x-\sqrt{t+\e-s}\mu;u)\Th_x(t+\e,x-\sqrt{t+\e-s}\mu)\]d\mu\\
\nn&\q+\int_s^{t+\e}\int_{\dbR^n}\ti\Xi_x(s,x,r,x-\sqrt{r-s}\mu;u)
\[\Th^\e_{x}(r,x-\sqrt{r-s}\mu) b_x(r,x-\sqrt{r-s}\mu,u)\nn\\
\nn&\qq+ \Th^\e_{xx}(r,x-\sqrt{r-s}\mu) b(r,x-\sqrt{r-s}\mu,u)+ g_\th(r,x-\sqrt{r-s}\mu,u)
\Th^\e_{x}(r,x-\sqrt{r-s}\mu)\nn\\
&\qq+g_p(r,x-\sqrt{r-s}\mu,u)
\Th^\e_{xx}(r,x-\sqrt{r-s}\mu)+g_x(r,x-\sqrt{r-s}\mu,u)\]d\mu dr,\label{proof-lem-Con3}
\end{align}
where $g(r,x-\sqrt{r-s}\mu,u)\equiv g(r,x-\sqrt{r-s}\mu,u,\Th^\e(r,x-\sqrt{r-s}\mu),
\Th^\e_x(r,x-\sqrt{r-s}\mu)\si(r,x-\sqrt{r-s}\mu,u))$.
It follows that
\begin{align}
\nn|\Th^\e_{xx}(s,x)|&\les K \int_{\dbR^n}e^{-\l|\mu|^2\over 4}\|\Th(t+\e,\cd)\|_{C^2}d\mu\\
&\q+K\int_s^{t+\e}\int_{\dbR^n}{e^{-\l|\mu|^2\over4}\over \sqrt{r-\mu}}\[|\Th^\e_{x}(r,x-\sqrt{r-s}\mu)|
+|\Th^\e_{xx}(r,x-\sqrt{r-s}\mu)|+1\]d\mu dr.\nn
\end{align}
Then by Gr\"{o}nwall's inequality, we get
\bel{proof-lem-Con6}
\|\Th^\e_{xx}(\cd,\cd)\|_{L^\i}\les K\big[1+\|\Th(t+\e,\cd)\|_{C^{2}}\big].
\ee
By \autoref{lem:HJB-Th-x}, we also have
\bel{proof-lem-Con61}
\|\Th^\e(\cd,\cd)\|_{C^{0,1}}\les K\big[1+\|\Th(t+\e,\cd)\|_{C^{1}}\big].
\ee
Thus, similar to \rf{Th-x-holder}, using \rf{proof-lem-Con3} and the estimates \rf{proof-lem-Con6}--\rf{proof-lem-Con61}, we have
\begin{align}
\nn\|\Th^\e_{xx}(s,\cd)\|_{\a}&\les K \int_{\dbR^n}e^{-\l|\mu|^2\over 4}\|\Th(t+\e,\cd)\|_{C^{2+\a}}d\mu\\
&\q+K\int_s^{t+\e}\int_{\dbR^n}{e^{-\l|\mu|^2\over4}\over \sqrt{r-s}}\[\|\Th^\e(r,\cd)\|_{\a}+\|\Th^\e_{x}(r,\cd)\|_{\a}
+\|\Th^\e_{xx}(r,\cd)\|_{\a}+1\]d\mu dr.\nn
\end{align}
Then by Gr\"{o}nwall's inequality again, we have
\bel{proof-lem-Con7}
\|\Th^\e_{xx}(\cd,\cd)\|_{C^{\a}}\les K\big[1+\|\Th(t+\e,\cd)\|_{C^{2+\a}}\big].
\ee
For any $t\les s_1\les s_2\les t+\e$, by \rf{proof-lem-Con3} and \rf{proof-lem-Con7} we have
\begin{align}
\nn& |\Th^\e_{xx}(s_1,x)-\Th^\e_{xx}(s_2,x)|\nn\\
&\q\les K\int_{\dbR^n}e^{-\l|\mu|^2\over 4}\big[\|\Th_{xx}(t+\e,\cd)\|_{\a}
+\|\Th_{x}(t+\e,\cd)\|_{\a}\big]|s_1-s_2|^{\a\over 2}|\mu|^{\a\over 2}d\mu\nn\\
\nn&\qq
+K\int_{s_1}^{t+\e}\int_{\dbR^n}{e^{-\l|\mu|^2\over4}\over \sqrt{r-s_1}}\big[\|\Th^\e(r,\cd)\|_{\a}+\|\Th^\e_{x}(r,\cd)\|_{\a}
+\|\Th^\e_{xx}(r,\cd)\|_{\a}+1\big]|s_1-s_2|^{\a\over 2}|\mu|^{\a\over 2}d\mu dr\nn\\
\nn&\qq+K\sqrt{s_2-s_1}+K\int_{s_1}^{s_2}\int_{\dbR^n}{e^{-\l|\mu|^2\over4}\over \sqrt{r-s_1}}d\mu dr\nn\\
&\q\les K\big[1+\|\Th(t+\e,\cd)\|_{C^{2+\a}}\big]|s_1-s_2|^{\a\over 2}.\label{proof-lem-Con8}
\end{align}
Similar to \rf{Prop:Holder-t-11}, we have that for any $t\les s_1\les s_2\les t+\e$,
\begin{align}
|\Th^\e(s_1,x)-\Th^\e(s_2,x)|+ |\Th^\e_{x}(s_1,x)-\Th^\e_{x}(s_2,x)|
\les K\big[1+\|\Th(t+\e,\cd)\|_{C^{1+\a}}\big]|s_1-s_2|^{\a\over 2}.
\end{align}
Moreover, similar to \rf{V-x-y-s-holder}, we have that for any $t\les r \les s_1\les s_2\les t+\e$,
\begin{align}
\nn&|\Th^{0,\e}(r,s_1,\ti x,x,y)-\Th^{0,\e}(r,s_2,\ti x,x,y)|+
|\Th_x^{0,\e}(r,s_1,\ti x,x,y)-\Th_x^{0,\e}(r,s_2,\ti x,x,y)|\nn\\
&\q\les K\big[1+\|\Th(t+\e,\cd)\|_{C^{1+\a}}+\|\Th^0(r,t+\e,\cd\,,\cd\,,\cd)\|_{C^{\a,1+\a,2}}\big]|s_1-s_2|^{\a\over 2}.\label{proof-lem-Con9}
\end{align}
Note that $\Th^\e(t+\e,x)=\Th(t+\e,x)$, $\Th^{0,\e}(r,t+\e,\ti x,x,y)=\Th^0(r,t+\e,\ti x,x,y)$,
and that the functions $\Th(\cd)$ and $\Th^0(\cd)$ are smooth with bounded derivatives.
Then the desired results are obtained from \rf{proof-lem-Con8}--\rf{proof-lem-Con9}.

\ms


\begin{thebibliography}{90}
\addtolength{\itemsep}{-1.0ex}


\bibitem{Antonelli1993} F.~Atonelli,
\it Backward-forward stochastic differential equations,
\sl Ann. Appl. Probab.,
\rm {\bf3} (1993), 777--793.

\bibitem{Barrieu2005} P.~Barrieu and N.~El Karoui,
\it Inf-convolution of risk measures and optimal risk transfer,
\sl Finance Stoch.,
\rm {\bf9} (2005), 269--298.

\bibitem{Basak-Chabakauri2010} S.~Basak and G.~Chabakauri,
\it Dynamic mean-variance asset allocation,
\sl Rev. Finan. Stud.,
\rm {\bf23} (2010), 2970--3016.

\bibitem{Bensoussan2018} A.~Bensoussan,
\sl Estimation and Control of Dynamical Systems,
\rm Springer-Verlag, 2018.




\bibitem{Bismut1973} J.-M.~Bismut,
\it Th\'{e}orie Probabiliste du Contr\^{o}le des Diffusions,
\sl Mem. Amer. Math. Soc.
\rm 176, Providence, Rhode Island, 1973.

\bibitem{Bismut1978} J.-M.~Bismut,
\it An introductory approach to duality in optimal stochastic control,
\sl SIAM Rev.,
\rm {\bf20} (1978), 62--78.

\bibitem{Bjork-Khapko-Murgoci2017} T.~Bj\"{o}rk, M.~Khapko, and A.~Murgoci,
\it On time-inconsistent stochastic control in continuous time,
\sl Finance Stoch.,
\rm{\bf 21} (2017), 331--360.

\bibitem{Bjork-Murgoci2014} T.~Bj\"{o}rk and A.~Murgoci,
\it A theory of Markovian time-inconsistent stochastic control in discrete time,
\sl Finance Stoch.,
\rm{\bf 18} (2014), 545--592.

\bibitem{Bjork-Murgoci-Zhou2014}  T.~Bj\"{o}rk, A.~Murgoci, and X.~Y.~Zhou,
\it Mean-variance portfolio optimization with state-dependent risk aversion,
\sl Math. Finance,
\rm {\bf 24} (2014),  1--24.

\bibitem{Breton2014} M.~Breton and M.~Y.~Keoula,
\it A great fish war model with asymmetric players,
\sl Ecological economics,
\rm {\bf 97} (2014), 209--223.

\bibitem{Chen-Chen-Davison2005} Z.~Chen, T.~Chen, and M.~Davison,
\it Choquet expectation and Peng's g-expectation,
\sl Ann. Probab.,
\rm{\bf33} (2005), 1179--1199.

\bibitem{Chen-Epstein2002} Z.~Chen and L.~Epstein,
\it Ambiguity, risk, and asset returns in continuous time,
\sl Econometrica,
\rm{\bf70} (2002), 1403--1443.

\bibitem{Coquet2002} F.~Coquet, Y.~Hu, J.~M\'{e}min, and S.~Peng,
\it Filtration-consistent nonlinear expectations and related g-expectations,
\sl Probab. Theory Related Fields,
\rm {\bf123} (2002), 1--27.

\bibitem{Cvitanic-Zhang2012} J.~Cvitani\'{c} and J.~Zhang,
\sl Contract theory in continuous-time models,
\rm Springer Science and Business Media, 2012.

\bibitem{Detlefsen-Scandolo2005} K.~Detlefsen and G.~Scandolo,
\it Conditional and dynamic convex risk measures,
\sl Finance Stoch.,
\rm {\bf 9} (2005), 539--561.

\bibitem{Dokuchaev1999} M.~Dokuchaev and X.~Y.~Zhou,
\it Stochastic controls with terminal contingent conditions,
\sl J. Math. Anal. Appl.,
\rm {\bf238} (1999), 143--165.

\bibitem{Duffie-Epstein1992} D.~Duffie and L.~G.~Epstein,
\it Stochastic differential utility,
\sl Econometrica,
\rm{\bf 60} (1992), 353--394.

\bibitem{Duffie-Epstein1992-1} D.~Duffie and L.~G.~Epstein,
\it Asset pricing with stochastic differential utility,
\sl The Review of Financial Studies,
\rm{\bf 5} (1992), 411--436.

\bibitem{Duffie-Geoffard-Skiadas-1994} D.~Duffie, P.~Y.~Geoffard,  and C.~Skiadas,
\it Efficient and equilibrium allocations with stochastic differential utility,
\sl J. Math. Economics,
\rm {\bf23} (1994), 133--146.


\bibitem{El Karoui-Peng-Quenez1997} N.~EI~Karoui, S.~Peng, and M.~C.~Quenez,
\it Backward stochastic differential equations in finance,
\sl Math. Finance,
\rm {\bf7}  (1997), 1--71.

\bibitem{Ekeland2010} I.~Ekeland and A.~Lazrak,
\it The golden rule when preferences are time inconsistent,
\sl Math. Financ. Econ.,
\rm {\bf 4} (2010), 29--55.

\bibitem{Ekeland2008} I.~Ekeland and T.~A.~Pirvu,
\it Investment and consumption without commitment,
\sl Math. Financ. Econ.,
\rm {\bf 2} (2008), 57--86.


\bibitem{Friedman1964} A.~Friedman,
\sl Partial Differential Equations of Parabolic Type,
\rm Prentice Hall, 1964.

\bibitem{Hamaguchi2020} Y.~Hamaguchi,
\it Extended backward stochastic Volterra integral equations and their applications to time-inconsistent stochastic recursive control problems,
\sl Math. Control Relat. Fields,
\rm {\bf11} (2021), 433--478.

\bibitem{He-Jiang2021} X. D. He and Z. L. Jiang,
\it On the equilibrium strategies for time-inconsistent problems in continuous time,
\sl  SIAM J. Control Optim.,
\rm {\bf59} (2021),  3860--3886.

\bibitem{Hernandez-2020} C.~Hern\'{a}ndez and D.~Possamai,
\it Me, myself and I: a general theory of non-Markovian time-inconsistent stochastic control for sophisticated agents,
\sl Ann. Appl. Probab., \rm to appear; arXiv:2002.12572.



\bibitem{Hu-Ji-Xue2018} M.~Hu, S.~Ji, and X.~Xue,
\it A global stochastic maximum principle for fully coupled forward-backward stochastic systems,
\sl SIAM J. Control Optim.,
\rm {\bf56} (2018), 4309--4335.

\bibitem{Hu-Ji-Xue2019} M.~Hu, S.~Ji, and X.~Xue,
\it Linear quadratic problems for fully coupled forward-backward stochastic control systems,
\rm preprint, arXiv:1902.09758.

\bibitem{Hu-Jin-Zhou2012} Y.~Hu, H.~Jin, and X.~Y.~Zhou,
\it Time-inconsistent stochastic linear--quadratic control,
\sl SIAM J. Control Optim.,
\rm {\bf 50} (2012), 1548--1572.

\bibitem{Hu-Jin-Zhou2017} Y.~Hu, H.~Jin, and X.~Y.~Zhou,
\it Time-inconsistent stochastic linear-quadratic control: characterization and uniqueness of equilibrium,
\sl SIAM J. Control Optim.,
\rm {\bf 55} \rm (2017), 1261--1279.

\bibitem{Hu-Peng1995} Y.~Hu and S.~Peng,
\it Solution of forward-backward stochastic differential equations,
\sl Probab. Theory Related Fields,
\rm {\bf103} (1995), 273--283.

\bibitem{Huang-Wang-Wu2016} J.~Huang, S.~Wang, and Z.~Wu,
\it Backward mean-field linear-quadratic-Gaussian (LQG) games: full and partial information,
\sl IEEE Trans. Automat. Control,
\rm {\bf61} (2016), 3784--3796.

\bibitem{Huang-Wang-Xiong2009} J.~Huang, G.~Wang, and J.~Xiong,
\it A maximum principle for partial information backward stochastic control problems with applications,
\sl SIAM J. Control Optim.,
\rm {\bf48}, (2009), 2106--2117.

\bibitem{Ji-Zhou2006} S.~Ji and X.~Y.~Zhou,
\it A maximum principle for stochastic optimal control with terminal state constraints and its applications,
\sl Commun. Inf. Syst.,
\rm {\bf6} (2006), 321--337.

\bibitem{Karp2007} L.~Karp,
\it Non-constant discounting in continuous time,
\sl Journal of Economic Theory,
\rm {\bf 132}  (2007), 557--568.


\bibitem{Ladyzenskaja1968} O.~A.~Ladyzenskaja, V.~A.~Solonnikov, and N.~N.~Ural'tseva,
\sl Linear and Quasi-linear Equations of Parabolic Type,
\rm AMS, Providence, R.I., 1968.

\bibitem{Lazrak2004} A.~Lazrak,
\it Generalized stochastic differential utility and preference for information,
\sl Ann. Appl. Probab.,
\rm {\bf14}, (2004), 2149--2175.

\bibitem{Lazrak-Quenez2003} A.~Lazrak and M.~C.~Quenez,
\it A generalized stochastic differential utility,
\sl Math. Oper. Res.,
\rm {\bf 28} (2003), 154--180.

\bibitem{Lei-Pun2021} Q.~Lei and C.~S.~Pun,
\it Nonlocal fully nonlinear parabolic differential equations arising in time-inconsistent problems,
\rm preprint, arXiv:2110.04237.

\bibitem{Li-Sun-Xiong2019} X.~Li, J.~Sun, and J.~Xiong,
\it Linear quadratic optimal control problems for mean-field backward stochastic differential equations,
\sl Appl. Math. Optim.,
\rm {\bf 80} (2019), 223--250.

\bibitem{Lim-Zhou2001} A.~E.~B.~Lim and X.~Y.~Zhou,
\it Linear-quadratic control of backward stochastic differential equations,
\sl SIAM J. Control Optim.,
\rm {\bf40} (2001), 450--474.

\bibitem{Ma-Protter-Yong1994} J.~Ma, P. Protter, and J.~Yong,
\it Sovling forward-backward stochastic differential equations explicitly --- a four step scheme,
\sl Probab. Theory Related Fields,
\rm {\bf 98} (1994), 339--359.

\bibitem{Ma-Yong1999} J.~Ma and J.~Yong,
\sl Forward-Backwaed Stochastic Differential Equations and Their Applications,
\rm Lecture Notes in Math., Vol. 1702, Springer-Verlag, 1999.

\bibitem{Marin-Solano2010} J.~Marin-Solano and J.~Navas,
\it Consumption and portfolio rules for time-inconsistent investors,
\sl European J. Oper. Res.,
\rm {\bf201} (2010), 860--872.

\bibitem{Marin-Solano2011} J.~Marin-Solano and E.~V.~Shevkoplyas,
\it Non-constant discounting and differential games with random time horizon,
\sl Automatica J. IFAC,
\rm {\bf47} (2011), 2626--2638.

\bibitem{Mei-Yong2019} H.~Mei and J.~Yong,
\it Equilibrium strategies for time-inconsistent stochastic switching systems,
\sl ESAIM Control Optim. Calc. Var.,
\rm {\bf25} (2019), 64.

\bibitem{Mei-Zhu2020} H.~Mei and C.~Zhu,
\it Closed-loop equilibrium for time-inconsistent McKean--Vlasov controlled problem,
\sl SIAM J. Control Optim.,
\rm {\bf58} (2020), 3842--3867.

\bibitem{Pardoux-Peng1990} E.~Pardoux and S.~Peng,
\it Adapted solution of a backward stochastic differential equation,
\sl Systems Control Lett.,
\rm {\bf14} (1990), 55--61.

\bibitem{Pardoux--Peng1992} E.~Pardouc and S.~Peng,
\it Backward stochastic differential equations and quasilinear parabolic partial differential equations, \sl Stochastic partial differential equations and their applications,
\rm Springer, Berlin, Heidelberg, (1992),  200--217.

\bibitem{Peng1993} S.~Peng,
\it Backward stochastic differential equations and applications to optimal control,
\sl Appl. Math. Optim.,
\rm {\bf 27} (1993), 125--144.

\bibitem{Peng1997} S.~Peng,
\it Backward SDE and related g-expectation, \sl Backward Stochastic Differential Equations (Paris, 1995--1996),
\rm Pitman Res. Notes Math. Ser. 364, Longman, Harlow, 1997, 141--159.

\bibitem{Peng19971} S.~Peng,
\it Backward stochastic differential equations and stochastic optimizations,
\sl Topics in Stochastic Analysis,
\rm J.~Yan, S.~Peng, S.~Fang, and L.~Wu, eds., Science Press, Beijing, 1997 (in Chinese).

\bibitem{Pollak1968} R.~A.~Pollak,
\it Consistent planning,
\sl Rev. Econ. Stud.,
\rm {\bf 35} (1968), 185--199.

\bibitem{Riedel2004} F.~Riedel,
\it Dynamic coherent risk measures,
\sl Stochastic Process. Appl.,
\rm {\bf112} (2004), 185--200.

\bibitem{Shi-Wu2006} J.~Shi and Z.~Wu,
\it The maximum principle for fully coupled forward-backward stochastic control system,
\sl Acta Automat. Sinica,
\rm {\bf32} (2006), 161--169.

\bibitem{Stackelberg1952} V.~H.~Stackelberg,
\sl The Theory of Market Economy,
\rm Oxford Univ. Press, Oxford, 1952.

\bibitem{Strotz1955} R.~H.~Strotz,
\it Myopia and inconsistency in dynamic utility maximization,
\sl Review of Econ. Studies,
\rm {\bf 23} (1955), 165--180.

\bibitem{Sun-Wang2019} J.~Sun and H.~Wang,
\it Linear-quadratic optimal control for backward stochastic differential equations with random coefficients,
\sl ESAIM Control Optim. Calc. Var.,
\rm {\bf27} (2021), 46.

\bibitem{Sun-Wang-Wen2021} J.~Sun, H.~Wang, and J.~Wen,
\it Zero-sum Stackelberg stochastic linear-quadratic differential games,
\sl SIAM J. Control Optim., \rm to appear;   arXiv:2109.14893.

\bibitem{Sun-Wang-Wu2021} J.~Sun, H.~Wang, and Z.~Wu,
\it Mean-field linear-quadratic stochastic differential games,
\sl J. Differential Equations,
\rm {\bf296} (2021), 299--334.

\bibitem{Sun-Wu-Xiong2021} J.~Sun, Z.~Wu, and J.~Xiong,
\it Indefinite backward stochastic linear-quadratic optimal control problems,
\rm preprint, arXiv:2104.04747.

\bibitem{Wang-Wu-Xiong2013} G.~Wang, Z.~Wu, and J.~Xiong,
\it Maximum principles for forward-backward stochastic control systems with correlated state and observation noises,
\sl SIAM J. Control Optim.,
\rm {\bf51} (2013), 491--524.

\bibitem{Wang-Wu-Xiong2015} G.~Wang, Z.~Wu, and J.~Xiong,
\it A linear-quadratic optimal control problem of forward-backward stochastic differential equations with partial information,
\sl IEEE Trans. Automat. Control,
\rm {\bf60} (2015), 2904--2916.

\bibitem{Wang-Xiao-Xiong2018} G.~Wang, H.~Xiao, and J.~Xiong,
\it A kind of LQ non-zero sum differential game of backward stochastic differential equation with asymmetric information,
\sl Automatica J. IFAC,
\rm {\bf97} (2018), 346--352.

\bibitem{Wang2021} H.~Wang,
\it Extended backward stochastic Volterra integral equations, quasilinear parabolic equations, and Feynman--Kac formula,
\sl Stoch. Dyn.,
\rm {\bf 21} (2021), 2150004.


\bibitem{Wang-Yong2021} H.~Wang and J.~Yong,
\it Time-inconsistent stochastic optimal control problems and backward stochastic Volterra integral equations,
\sl ESAIM Control Optim. Calc. Var.,
\rm {\bf27}  (2021), 22.

\bibitem{Wang-Yong-Zhang2021} H.~Wang, J.~Yong, and J.~Zhang,
\it Path dependent Feynman--Kac formula for forward backward stochastic Volterra integral equations,
\sl  Ann. Inst. Henri Poincar\'{e} Probab. Stat.,
\rm  {\bf58} (2022), 603--638.

\bibitem{Wang-Yong2015} T.~Wang and J.~Yong,
\it Comparison theorems for some backward stochastic Volterra integral equations,
\sl Stochastic Process. Appl.,
\rm {\bf125} (2015), 1756--1798.

\bibitem{Wang-Yong2019} T.~Wang and J.~Yong,
\it Backward stochastic Volterra integral equations---representation of adapted solutions,
\sl Stochastic Process. Appl.,
\rm {\bf129} (2019), 4926--4964.

\bibitem{Wei-Yong-Yu2017} Q.~Wei, J.~Yong, and Z.~Yu,
\it Time-inconsistent recursive stochastic optimal control problems,
\sl SIAM J. Control Optim.,
\rm {\bf 55} (2017), 4156--4201.

\bibitem{Xu1995} W.~Xu,
\it Stochastic maximum principle for optimal control problem of forward and backward system,
\sl J. Austral. Math. Soc. Ser. B,
\rm {\bf37} (1995), 172--185.

\bibitem{Yong2002} J.~Yong,
\it A leader-follower stochastic linear quadratic differential game,
\sl SIAM J. Control Optim.,
\rm {\bf41}  (2002), 1015--1041.



\bibitem{Yong2008} J.~Yong,
\it Well-posedness and regularity of backward stochastic Volterra integral equations,
\sl Probab. Theory Related Fields,
\rm {\bf 142} (2008), 21--77.


\bibitem{Yong2010} J.~Yong,
\it Optimality variational principle for controlled forward-backward stochastic differential equations with mixed initial-terminal conditions,
\sl SIAM J. Control Optim.,
\rm {\bf48} (2010), 4119--4156.


\bibitem{Yong2012} J.~Yong,
\it Time-inconsistent optimal control problems and the equilibrium HJB equation,
\sl Math. Control Relat. Fields,
\rm {\bf2} (2012), 271--329.

\bibitem{Yong2013} J.~Yong,
\it Linear-quadratic optimal control problems for mean-field stochastic differential equations,
\sl SIAM J. Control Optim.,
\rm {\bf51} (2013), 2809--2838.

\bibitem{Yong2014} J.~Yong,
\it Time-inconsistent optimal control problems,
\sl Proceedings of 2014 ICM, Section 16. Control Theory and Optimization,
\rm (2014), 947--969.


\bibitem{Yong2017} J.~Yong,
\it Linear-quadratic optimal control problems for mean-field stochastic differential equations--time-consistent solutions,
\sl Trans. Amer. Math. Soc.,
\rm {\bf 369}  (2017), 5467--5523.

\bibitem{Yong-Zhou1999} J.~Yong and X.~Y.~Zhou,
\sl Stochastic Control: Hamiltonian Systems and HJB Equations,
\rm Springer-Verlag, New York, 1999.

\bibitem{Zhang2017} J.~Zhang,
\sl Backward Stochastic Differential Equations: From Linear to Fully Nonlinear Theory,
\rm Springer, New York, 2017.

\end{thebibliography}
\end{document}